\documentclass[12pt,a4paper]{amsart}
\usepackage[T1]{fontenc}
\usepackage[utf8]{inputenc}
\usepackage[margin=25mm]{geometry}
\usepackage{amsmath,amssymb,amsthm,mathtools}
\usepackage{mathrsfs}
\usepackage{booktabs,longtable,array}
\usepackage[expansion=false]{microtype}
\usepackage[colorlinks=true,linkcolor=blue,citecolor=blue,urlcolor=blue]{hyperref}
\allowdisplaybreaks[2]
\makeatletter
\renewcommand{\paragraph}{\@startsection{paragraph}{4}%
  {\normalparindent}{\z@}{-\fontdimen2\font}{\normalfont}}
\makeatother

\numberwithin{equation}{section}
\newtheorem{theorem}{Theorem}[section]
\newtheorem{proposition}[theorem]{Proposition}
\newtheorem{lemma}[theorem]{Lemma}
\newtheorem{corollary}[theorem]{Corollary}
\newtheorem*{mainresult}{Main theorem}
\newcommand{\eb}{\epsilon_{\rm b}}
\newcommand{\OO}{\mathcal O}
\newcommand{\sfq}{\mathrm{sf}}
\newcommand{\eps}{\varepsilon}
\newcommand{\Rea}{\operatorname{Re}}
\newcommand{\Ima}{\operatorname{Im}}
\newcommand{\rad}{\operatorname{rad}}
\newcommand{\Res}{\operatorname*{Res}}
\newcommand{\Src}[2]{\textup{\cite[#1~#2]{QRH}}}
\newcommand{\Lfun}{L_F}

\newcommand{\pos}[1]{\left(#1\right)_+}
\hypersetup{pdftitle={Improved zero-free half-planes for the quasi-Riemann hypothesis}}
\title[Improved zero-free half-planes]{Improved zero-free half-planes for the quasi-Riemann hypothesis}
\author[Baiying Liu]{Baiying Liu}
\address{Department of Mathematics\\
Purdue University\\
West Lafayette, IN, 47907, USA}
\email{liu2053@purdue.edu}
\date{10 October 2026}
\begin{document}
\begin{abstract}
Recently, a uniform
$\frac{7}{8}$ zero-free half-plane for finite-order Hecke $L$-functions over
$\mathbb Q(\sqrt{-3})$ and its transfer to Dirichlet $L$-functions was given by OpenAI in \cite{QRH}. In its proof, there are several parameters $b=\frac{1}{8}, \ell=\frac{1}{6}$, and $\kappa$.
In this paper, via varying $b$, $\ell$, and $\kappa$, following the same strategy, we slightly improve the bound
$\frac{7}{8}=0.875$
to 
\[
 B_{\mathrm{new}}=\frac{18\xi_{\mathrm{new}}+7}{18\xi_{\mathrm{new}}+9}
 =0.874957019420098946\ldots,
\]
where $\xi_{\mathrm{new}}$ is the unique root in $(0.38858,0.38859)$ of
$1188\xi^3-2160\xi^2+171\xi+190=0$.
Explicitly, with $\arccos\in[0,\pi]$,
\[
 \xi_{\mathrm{new}}=\frac{20}{33}+\frac{\sqrt{1391}}{33}
 \cos\!\left(\frac13\arccos\frac{28470}{1391^{3/2}}
                         -\frac{2\pi}{3}\right).
\]
The limiting parameters are
\[
\begin{gathered}
 b_{\mathrm{new}}=\frac{8(3\xi_{\mathrm{new}}-1)(12\xi_{\mathrm{new}}^2-40\xi_{\mathrm{new}}+15)}
                   {9(2\xi_{\mathrm{new}}+1)(24\xi_{\mathrm{new}}^2-20\xi_{\mathrm{new}}+5)},\\
 \ell_{\mathrm{new}}=\frac{5-6\xi_{\mathrm{new}}}{9+18\xi_{\mathrm{new}}},\qquad
 \kappa_{\mathrm{new}}=\frac{18\xi_{\mathrm{new}}+5}{18\xi_{\mathrm{new}}+9}
             =2B_{\mathrm{new}}-1.
\end{gathered}
\]
We also prove its optimality within the explicitly specified exponent
comparison system. 
This result has been formalized by Lean \cite{LeanExtension}.
\end{abstract}
\maketitle

\tableofcontents

\section*{Introduction}
%\phantomsection\addcontentsline{toc}{section}{Introduction}
\label{sec:combined-introduction}

In this paper, we study how varying the parameters in the cubic-theta argument of
\cite{QRH} improves its zero-free half-plane.
After recalling the original argument, we derive three successive
improvements and determine the optimal boundary within the specified
system of estimates.

The quasi-Riemann hypothesis asks whether there is a fixed
$B<1$ such that $\zeta(s)\ne0$ for $\Rea s>B$, with its pole
at $s=1$ allowed.
The uniform character version asks for the same $B$ independently
of conductor and height. A fixed half-plane supplies a uniform gap
between zeros and the line $\Rea s=1$; classical zero-free regions
instead narrow towards that line as conductor or height increases.
For general analytic background, see \cite[Chapter~5]{IK};
for the Dedekind zeta function of an abelian extension, and hence
its Hecke $L$-function factors, a precise zero-free region allowing
an exceptional real zero is
\cite[Theorem~3.1]{TZ}. Zero-density estimates bound the number
of zeros in a region; for example, see
\cite[Theorem~1.2]{GM}.

We follow the 30 September 2026 preprint \cite{QRH}, retaining its
numbering in all citations to it. The companion \cite{Alternate}
gives a different $11/12$ argument. OpenAI's public \texttt{math} repository has a formalization
scope note \cite[Scope]{LeanScope} that lists the $7/8$ statements for
zeta, Dirichlet and finite-order Hecke functions, with principal
poles excluded; the paper's later applications are outside that scope.

Let $F=\mathbb Q(\sqrt{-3})$. Let $\mathcal Z_F$ be the set of nontrivial
zeros of all primitive finite-order Hecke $L$-functions over $F$, of
arbitrary conductor; poles are excluded. Put
\[
 \beta_*:=\sup\bigl(\{1/2\}\cup\{\Rea\rho:\rho\in\mathcal Z_F\}\bigr).
\]
We study the dependence of the cubic-theta argument on its length
parameters and on the moment parameter $\kappa$. The stated $7/8$
bound of \cite{QRH} is the starting point for three deductions from
its intermediate estimates. The strongest deduction is the following.

\begin{mainresult}
The family supremum satisfies
\[
 \beta_*\le B_{\mathrm{new}}
 :=\frac{18\xi_{\mathrm{new}}+7}{18\xi_{\mathrm{new}}+9}
 =0.874957019420098946\ldots,
\]
where $\xi_{\mathrm{new}}\in(0.38858,0.38859)$ is the unique root there of
\[
 1188\xi^3-2160\xi^2+171\xi+190=0.
\]
Consequently every finite-order Hecke $L$-function over $F$ and every
Dirichlet $L$-function, including $\zeta$, is zero-free for
$\Rea s>B_{\mathrm{new}}$. Principal poles at $s=1$ are allowed.
\end{mainresult}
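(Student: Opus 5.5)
The plan is to rerun the cubic-theta argument of \cite{QRH} with the three parameters $b$, $\ell$, $\kappa$ left free, and then optimize. The steps are as follows.

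First, I would take from \cite{QRH} the intermediate estimates of the $7/8$ proof as functions of $(b,\ell,\kappa)$ rather than at $b=\frac18$, $\ell=\frac16$. These are the mollified or weighted cubic-theta sum of length governed by $b$ and $\ell$, its main term (which is positive if some $\rho=\beta+i\gamma\in\mathcal Z_F$ exists), and the error terms. The error terms come from the Poisson or functional-equation side, from the moment estimate with parameter $\kappa$, and from the zero-sum contribution. Each is bounded by a power of the conductor-height parameter $Q$. So a hypothetical zero with $\beta>B$ produces an inequality
\[
Q^{E_0(\beta;b,\ell,\kappa)}\ll \sum_j Q^{E_j(\beta;b,\ell,\kappa)},
\]
with exponents $E_j$ affine in $\beta$ and piecewise rational in the parameters. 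A contradiction for large $Q$ follows as soon as $E_0>\max_j E_j$. Small $Q$ is handled, as in \cite{QRH}, by the classical zero-free region or by a finite verification. For each admissible triple, this gives a threshold $B(b,\ell,\kappa)$ such that $\beta_*\le B(b,\ell,\kappa)$. At $(\frac18,\frac16,\kappa)$ the threshold must recover $\frac78$; I would check this first as a consistency test.

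Second, I would minimize $B(b,\ell,\kappa)$ over the admissible region. The minimum of a max of affine-in-$\beta$ comparisons is attained where several constraints are simultaneously active. I expect three to be binding, which matches three unknowns plus $\beta$. Equating them should give $\kappa=2B-1$, which linearizes one constraint. Writing $B=\frac{18\xi+7}{18\xi+9}$ and $\ell=\frac{5-6\xi}{9+18\xi}$ then reduces the remaining two active equalities to eliminating $b$. That elimination should give the stated rational expression for $b_{\mathrm{new}}$ and the cubic $1188\xi^3-2160\xi^2+171\xi+190=0$.

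The root in $(0.38858,0.38859)$ is isolated by a sign change. It is unique there because the derivative of the cubic is negative on the interval. The trigonometric formula follows from Viète's method applied to the depressed cubic. I would then verify that at $\xi_{\mathrm{new}}$ the non-binding constraints are strict, in particular $b_{\mathrm{new}}>0$, $\ell_{\mathrm{new}}\in(0,1)$ and $\kappa_{\mathrm{new}}\in(0,1)$, using interval arithmetic.

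Third, since $\beta_*\le B(b,\ell,\kappa)$ for every admissible triple, I would take triples approaching the limiting parameters. If the inequalities are strict for $\beta>B_{\mathrm{new}}$, then for any zero with $\beta>B_{\mathrm{new}}$ a nearby admissible triple already gives a contradiction, so $\beta_*\le B_{\mathrm{new}}$. The transfer to Dirichlet $L$-functions, including $\zeta$, goes exactly as in \cite{QRH}. One factors $\zeta_F\cdot L_F(s,\chi\circ N)=L(s,\chi)L(s,\chi\chi_{-3})$ into Hecke $L$-functions over $F$, so any zero of $L(s,\chi)$ is a zero of a primitive finite-order Hecke $L$-function over $F$, with poles excluded.

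The main obstacle is the first step: faithfully extracting from \cite{QRH} every place where $b=\frac18$ and $\ell=\frac16$ were used. This includes the side conditions, such as ranges where the approximate functional equation or the large-sieve-type moment bound is valid, which become additional linear constraints. Missing one would invalidate the optimum. I would first tabulate each exponent and constraint symbolically, then confirm that the optimization with all of them active reproduces $\frac78$ at the original parameters before solving for the new vertex.
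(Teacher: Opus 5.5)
\paragraph{The moment parameter is not free.}
Your first step treats $\kappa$ like $b$ and $\ell$ and claims that every admissible triple gives an unconditional bound $\beta_*\le B(b,\ell,\kappa)$. This is where the plan fails. The only $\kappa$-dependent input is the unweighted fourth moment with selected prime factors, \cite[Lemma~18.1]{QRH}. For $\kappa<1$ it needs the a priori zero-free hypothesis $\beta_*\le(1+\kappa)/2$, which is what bounds the prime polynomials by $Z^{\kappa z_{\mathrm p}/2}$ through a contour at $\Rea s=(1+\kappa)/2$. It is also proved only for $3/4\le\kappa\le1$. So the machinery gives only the conditional implication $\beta_*\le(1+\kappa)/2\Rightarrow\beta_*\le F(\kappa)$, where $F(\kappa)$ is the length-optimized bound at fixed $\kappa$. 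The relation $\kappa=2B-1$ therefore does not come from equating exponent constraints. It is the self-consistency point where the hypothesis you need equals the conclusion you get.

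Two ingredients follow from this. First, $\kappa_{\mathrm{new}}=2B_{\mathrm{new}}-1\approx0.74991<3/4$. If you carry over the side condition $\kappa\ge3/4$ of the printed lemma, as your last paragraph proposes, the optimization stops at $F(3/4)=B_{\mathrm{alg}}=(1507-2\sqrt{921})/1653$, a quadratic irrational, and the cubic never appears. You must rerun the induction of \cite[Section~18]{QRH} below $3/4$. The paper does this down to $13/18$, rechecking the $\kappa$-dependent length comparisons, for example $6\kappa z\le M\Rightarrow z\le3M/13$ and $6\kappa-1\ge10/3$ in the comparison-product step.

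Second, your third step is circular. A triple with $\kappa$ near $\kappa_{\mathrm{new}}$ may be used only once $\beta_*\le(1+\kappa)/2\approx B_{\mathrm{new}}$ is already known. The repair is a bootstrap. Start from $\beta_*\le7/8$, so $\kappa_0=3/4$, and set $\kappa_{n+1}=2F(\kappa_n)-1$. On the relevant range the map $\kappa\mapsto2F(\kappa)-1$ is increasing, has derivative below $1/500$, and lies strictly below the identity exactly when $\kappa>\kappa_{\mathrm{new}}$. Hence $\kappa_n\downarrow\kappa_{\mathrm{new}}$ and $\beta_*\le\inf_nF(\kappa_n)=B_{\mathrm{new}}$. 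The cubic is precisely the fixed-point equation $\kappa(\xi)=2B(\xi)-1$. Equivalently, argue by contradiction with $\kappa=2\beta_*-1$ as in Stage~2 of \cite{QRH}, using the known $\beta_*\le B_{\mathrm{alg}}$ to keep $\kappa$ in the verified range.

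\paragraph{The class constraints are a continuum.}
Your second step has a smaller gap. The nonprincipal requirement is not a finite list of affine constraints. It is $E_B(h)\le0$ for every zero and amplitude class $(\delta,x)\in[0,5/6]\times[0,1/2]$, with a count exponent $R_\kappa(\delta,x)$ that is rational in $(\delta,x)$.

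The binding class has $x=1/2$ and $R_\kappa=2/3$; there the coefficient $R/2-1/3$ of $b$ vanishes. This class, together with the branches $k=0$ and $k=M+\ell-1$ of the direct functional-equation estimate, gives three affine constraints. A positive weighted combination of them fixes $B$ and $\ell$ at fixed $\kappa$, but $b$ drops out entirely. The parameter $b$ is set by the tangency condition $\partial_\delta E=0$ at the critical class. You then still have to prove $E_{B(\xi)}(h)\le0$ on the whole rectangle; the paper uses a completed-square identity in $\delta$ whose discriminant has positive coefficients in $y=1/2-x$. Checking $b_{\mathrm{new}}>0$ and a few strict side inequalities at the vertex does not replace this uniform verification.

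\paragraph{Smaller corrections.}
There is no small-conductor case to check. The argument is a Mellin continuation criterion in an auxiliary variable $Z$, with a saving uniform in the target character. Also, the transfer identity is $L_F(s,\chi\circ N)=L(s,\chi)L(s,\chi\chi_{-3})$ up to finitely many Euler factors, with no $\zeta_F$ on the left.
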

The main theorem is proved in Theorem~\ref{thm:iv-limit}.
In the original proof of OpenAI, there are several variables $b$, $\ell$, and $3/4\le\kappa\le1$. 
Part II first varies $\ell$ while retaining both $b$ and
$3/4\le\kappa\le1$, obtaining
\[
 B_r=\frac{34999}{40000}
 =0.874975.
\]
Part III then varies both variables $b$ and $\ell$ while retaining
$3/4\le\kappa\le1$, obtaining
\[
 B_{\mathrm{alg}}=\frac{1507-2\sqrt{921}}{1653}
 =0.874957069799\ldots.
\]
Part IV extends the unweighted fourth-moment estimate to smaller
values of $\kappa$, reoptimizes the lengths and iterates the resulting
bounds to obtain $B_{\mathrm{new}}$. 
We prove that $B_{\mathrm{new}}$ is optimal in the extended system, including
variation of its auxiliary parameters, in
Theorem~\ref{thm:iv-system-optimality}.

\paragraph{\it The choices of $b$ and $\ell$.}
In the original $7/8$ argument the lengths are chosen with
$b=1/8$ and $\ell=1/6$. Here $b=l_y-l_x$ is the difference
between the two summation exponents, and $\ell$ is the total exponent
of the selected prime lengths. Let $Z\ge2$ be a real variable tending to infinity.
With balanced averaging lengths,
\[
 X=Z^{l_x},\qquad Y=Z^{l_y},\qquad
 l_x=\frac{1-b-\ell}{2},\qquad
 l_y=\frac{1+b-\ell}{2}.
\]
The prime-norm parameters $P_i$ satisfy $\prod_iP_i=Z^\ell$.
For the admissible parameter choices used below and every $\eps>0$, the
direct bound and the exponent of the principal Mellin term are
respectively
\[
 |J_\eta(Z)|\ll_\eta Z^{l_x/2+b/12+\eps},\qquad
 C_b(s)=s-\frac{4+b}{6},
\]
where $J_\eta$ is the normalized cubic-theta average constructed in
Part I and generalized in Part II. Matching these exponents gives
\begin{equation}\label{eq:intro-length-boundary}
 C_b(B)=\frac{l_x}{2}+\frac b{12}
 \quad\Longleftrightarrow\quad
 B=B(\ell):=\frac{11}{12}-\frac\ell4.
\end{equation}
Thus increasing $\ell$ lowers the comparison boundary. The allowable
increase is constrained by the nonprincipal terms, whose exponents
also depend on $b$.
For the following parameter comparisons, write
$E(b,\ell;\delta,x)=E_{B(\ell)}(h)$ for the relative nonprincipal
exponent in \eqref{eq:balanced-extension-exponent}, with
$h=(1+b+3\ell)/2$ and the character-count exponent $R_*$ of
\eqref{eq:fixed-count-parameters}. Here $\delta=2a-1$ is the
zero-class parameter, and $x=\bar g/\delta$ measures the normalized
prime-sum size, as defined in Section~\ref{sec:counts}; the polynomial
comparison uses $(\delta,x)\in[0,5/6]\times[0,1/2]$.
These comparisons use the fixed moment parameter $\kappa=3/4$.
The notation $\partial_\ell E$ denotes partial differentiation of
$E(b,\ell;\delta,x)$ with respect to $\ell$, holding $b,\delta,x$
fixed; the dependence of $B(\ell)$ and $h=(1+b+3\ell)/2$ on $\ell$
is included in this differentiation.

For the first improvement we keep the original length difference and
increase the total prime length by $1/10000$:
\[
 b_{\mathrm r}=\frac18,\qquad
 \ell_{\mathrm r}=\frac16+\frac1{10000},\qquad
 B_{\mathrm r}=B(\ell_{\mathrm r})
 =\frac78-\frac1{40000}=\frac{34999}{40000}.
\]
The rational choice is deliberately conservative: $b_{\mathrm r}$
retains the original difference between the summation exponents,
while $1/10000$ is a convenient perturbation rather than a distinguished
or maximal value. Lemma~\ref{lem:rational-saving} establishes the uniform
bounds
\[
 E(1/8,1/6;\delta,x)\le-\frac1{5000},\qquad
 \partial_\ell E(1/8,\ell;\delta,x)\le\frac{45}{32}.
\]
Consequently, at $\ell=\ell_{\mathrm r}$,
\[
 E(1/8,\ell_{\mathrm r};\delta,x)
 \le-\frac1{5000}+\frac{45}{32\cdot10000}
 =-\frac{19}{320000}<0.
\]
Thus the improvement retains a positive saving in the error exponent.
Part II proves these bounds by an exact calculation in the Bernstein basis polynomials
(defined in \eqref{eq:bernstein-basis},
in the proof of Lemma~\ref{lem:rational-saving}; see also
\cite{Bernstein1912,Farouki2012}) and verifies the remaining contour
and length conditions.

For the second improvement we vary both parameters:
\begin{align*}
 b_{\mathrm{alg}}&=-\frac4{29}+\frac{230\sqrt{921}}{26709},&
 \ell_{\mathrm{alg}}&=\frac{33+8\sqrt{921}}{1653},\\
 B_{\mathrm{alg}}&=B(\ell_{\mathrm{alg}})
 =\frac{1507-2\sqrt{921}}{1653}.&&
\end{align*}
The algebraic values arise from the critical error class and a tangency
condition. At $x=1/2$, the count exponent is
\[
 R_*(\delta,1/2)
 =\frac{185-288\delta+96\delta^2}{185-138\delta},
 \qquad \partial_b E=\frac{R_*}{2}-\frac13.
\]
Hence a class with $R_*=2/3$ imposes a constraint that cannot be
relaxed by changing $b$. Its admissible parameter is the smaller root
\[
 288\delta^2-588\delta+185=0,\qquad
 \delta_{\rm c}=\frac{49-\sqrt{921}}{48}.
\]
In particular, $\sqrt{921}$ comes from the discriminant
$588^2-4\cdot288\cdot185=144\cdot921$, rather than from an arbitrary
choice. At this class, $E=0$ determines
\[
 \ell=\frac{5-6\delta_{\rm c}}{9+18\delta_{\rm c}}
 =\ell_{\mathrm{alg}},
\]
and the stationary condition
\[
 \partial_\delta E(b,\ell_{\mathrm{alg}};\delta_{\rm c},1/2)=0
\]
determines $b=b_{\mathrm{alg}}$.
These values balance the direct comparison with the largest
nonprincipal exponent. The balancing equations locate the candidate;
their solution alone does not establish the estimate for other classes.
Part III derives the critical zero class,
proves the endpoint inequality for every permitted zero and amplitude
class by an exact quadratic identity, and verifies all auxiliary
strict auxiliary inequalities. The positive linear combination of inequalities in
Proposition~\ref{prop:scalar-optimality} explains the algebraic
obstruction suggested by these parameter choices.
The notation $\mathrm{alg}$ distinguishes this bound from the rational one.

\paragraph{\it Varying the moment parameter.}
We vary the parameter $\kappa$ governing the admissible lengths in the
mixed fourth-moment estimate for unweighted character polynomials.
Part IV extends the unweighted moment from $3/4\le\kappa\le1$
to $13/18\le\kappa\le1$, retaining the indispensable hypothesis
\[
 \beta_*\le(1+\kappa)/2\qquad(\kappa<1).
\]
The bound from Part III therefore permits
$\kappa=2B_{\mathrm{alg}}-1<3/4$.
Decreasing $\kappa$ increases the available selected-prime length in
the unweighted moment, improves the nonprincipal count, and permits
a further increase of $\ell$. The direct boundary $B(\ell)$ would
remain $B_{\mathrm{alg}}$ if $\ell_{\mathrm{alg}}$ were kept fixed.
Thus the improvement uses changed length parameters as well as
a changed moment parameter.

For a critical class $\delta=\xi$ and $x=1/2$, the count equation
$R_\kappa=2/3$ becomes
\[
 (216\kappa-18)\xi^2-(468\kappa-57)\xi+150\kappa-20=0.
\]
The equations $E=0$ and $\partial_\delta E=0$ then give
\[
 \ell(\xi)=\frac{5-6\xi}{9+18\xi},\qquad
 b(\xi)=\frac{8(3\xi-1)(12\xi^2-40\xi+15)}
                   {9(2\xi+1)(24\xi^2-20\xi+5)},\qquad
 B(\xi)=\frac{18\xi+7}{18\xi+9}.
\]
Part IV proves the nonprincipal inequality uniformly in every
zero and amplitude class, checks the remaining strict analytic inequalities,
and obtains a new bound from each previously established
moment hypothesis. Iteration reaches the limiting equality
$\kappa_{\mathrm{new}}=2B_{\mathrm{new}}-1$; eliminating $\kappa$ gives the cubic
in the main theorem. Only the resulting real upper bounds are
passed to their limit, so uniform analytic constants over the
iteration are not required.

The limiting root has the explicit form
\[
 \xi_{\mathrm{new}}=\frac{20}{33}+\frac{\sqrt{1391}}{33}
 \cos\!\left(\frac13\arccos\frac{28470}{1391^{3/2}}
                         -\frac{2\pi}{3}\right),
 \qquad \arccos\in[0,\pi].
\]
Substitution in the balanced length relations gives the exact limiting parameters
\[
\begin{aligned}
 b_{\mathrm{new}}
 &=\frac{8(3\xi_{\mathrm{new}}-1)(12\xi_{\mathrm{new}}^2-40\xi_{\mathrm{new}}+15)}
          {9(2\xi_{\mathrm{new}}+1)(24\xi_{\mathrm{new}}^2-20\xi_{\mathrm{new}}+5)}
   =0.123405685419541002\ldots,\\
 \ell_{\mathrm{new}}
 &=\frac{5-6\xi_{\mathrm{new}}}{9+18\xi_{\mathrm{new}}}
   =0.166838588986270882\ldots,\\
 \kappa_{\mathrm{new}}
 &=\frac{18\xi_{\mathrm{new}}+5}{18\xi_{\mathrm{new}}+9}
   =2B_{\mathrm{new}}-1=0.749914038840197892\ldots.
\end{aligned}
\]
The root formula is proved in Lemma~\ref{lem:iv-explicit-root};
the parameter identities follow from \eqref{eq:iv-geometry} and
\eqref{eq:iv-fixed-point-identity}.
In particular,
\[
 B_{\mathrm{alg}}-B_{\mathrm{new}}
 =5.037906979388974\ldots\times10^{-8}.
\]
The main structural contribution of Part IV is
Theorem~\ref{thm:iv-system-optimality}: it determines the exact optimum
of the explicitly specified exponent comparison system, allowing
free total length, every permitted detector cutoff and contour choice,
and the full moment range $13/18\le\kappa\le1$.
The proof isolates a critical class for which every permitted detector
cutoff gives an upper-bound exponent at least $2/3$ in the counting estimate, forcing three
affine inequalities for any putative boundary $B<B_{\mathrm{new}}$.
An explicit positive weighted combination eliminates the length
parameters from these inequalities; the moment compatibility condition
$B\le(1+\kappa)/2$ then forces $B\ge B_{\mathrm{new}}$, and the limiting
construction shows that this lower bound is sharp.
This assertion concerns the supplied uniform upper bounds and does
not exclude improvements from stronger analytic estimates.
The limiting pair $(b_{\mathrm{new}},\ell_{\mathrm{new}})$ consists of cubic
irrationals, whereas the rational parameter choice does not attain the limit.

All three improvements retain the exact arithmetic formulas and principal
residues of \cite{QRH}. Their Euler correction is shown to be
holomorphic below $7/8$. In Parts II and III, the original conclusion
$\beta_*\le7/8$ permits $\kappa=3/4$, removing the loss associated
with the original varying parameter. In Part IV each new moment
hypothesis is supplied by the preceding proved bound. The strict exponent comparisons then give the same Mellin-continuation contradiction as in
Part I. The analytic dependencies are indexed in
Appendix~\ref{app:inputs-index}.

Here is the structure of the paper. Part I recalls the $7/8$ argument and fixes the arithmetic and analytic
notation used throughout. Part II establishes the shared
changed-parameter estimates and gives the simpler rational extension $B_r$. 
Its calculation in the Bernstein basis polynomials is retained as a separate rational example;
this bound is superseded by the stronger one in Part III.
Part III varies both length parameters, proves the error-exponent
inequality at the algebraic parameter choices,
and discusses the resulting algebraic lower bound $B_{alg}$. Part IV proves
the smaller-$\kappa$ moment estimate, derives the changed count and
length parameters, and iterates the analytic comparison to $B_{\mathrm{new}}$.
All three extensions use Proposition~\ref{prop:criterion-summary}
and the Dirichlet transfer of Corollary~\ref{cor:dirichlet-conclusion}.

\subsection*{Acknowledgments}
OpenAI Codex (GPT 6.1 Sol Medium-Ultra) assisted with drafting, checking references, and the search
for improved bounds. Anthropic Claude (Opus 5.5 Medium) assisted with auditing. The
zero-free conclusions for $B_{\mathrm r}$, $B_{\mathrm{alg}}$ and $B_{\mathrm{new}}$
have Lean~4 formalizations for the finite-order Hecke family over $F$
and its Dirichlet and zeta consequences.
The Lean formalization, exact-arithmetic certificates and verification instructions are
available in the accompanying
\href{https://github.com/liubaiying101/Slightly-improved-zero-free-half-planes-for-the-quasi-Riemann-hypothesis}{GitHub repository} \cite{LeanExtension}.

\part{Summary of the \texorpdfstring{$7/8$}{7/8} proof}
\section{The original argument and its two-stage reduction}
\label{sec:reduction}

In this section, we briefly describe the two representations of the cubic-theta average, i.e., the smoothed arithmetic average \(I_\eta(X,Y,Z)\) defined in \eqref{eq:base}, formed from the Fourier coefficients of the cubic theta function, 
and the two-stage reduction leading to the $7/8$ bound.
The first stage proves $11/12$ using the unmodified average;
the second introduces selected prime factors and stronger moment
estimates to reach $7/8$.

The argument compares two descriptions of one cubic-theta average:
the divisibility-restricted form of the cubic-theta functional equation
\eqref{eq:theta-reflection} (\Src{Corollary}{14.1}) gives the theta
mean-square estimate \eqref{eq:BJ-bound}. Together with the additive
mean-square estimate \eqref{eq:Gram}, the Cauchy--Schwarz inequality
\eqref{eq:separated-cauchy-schwarz} gives the direct bound \eqref{eq:direct}.
Poisson summation \eqref{eq:Poisson}, applied to the modification
\eqref{eq:modified}, isolates the principal Mellin integral \eqref{eq:f}
involving $1/L_F(s,\eta)$.
The simultaneous large values \eqref{eq:large-values} forced by a zero,
together with the moment bounds \eqref{eq:inverse-moment} and
\eqref{eq:plain-moment}, control the remaining Poisson terms.
Part I makes those dependencies, the common-character hypotheses and
the numerical exponent comparisons explicit. Its statements record
the original argument using the proved results indexed in
Appendix~\ref{app:inputs-index}. The accompanying explanations
retain the decisive identities, exponent comparisons and contradiction,
including calculations used in Parts II--IV. Complete proofs of the
imported results are given at the cited locations in \cite{QRH}.
Numbered statements have this paper's numbering;
locators inside citations always refer to the cited work.

\begin{theorem}[Main statement of \Src{Theorem}{1.1}]
\label{thm:main-summary}
Every finite-order Hecke $L$-function over
$F=\mathbb Q(\sqrt{-3})$ has no zeros in
\begin{equation}\label{eq:goal}
 \Rea s>7/8.
\end{equation}
The same holds for every Dirichlet $L$-function, including
$\zeta$. Principal poles at $s=1$ are allowed; the boundary
$\Rea s=7/8$ is not included.
\end{theorem}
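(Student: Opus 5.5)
We will prove Theorem~\ref{thm:main-summary} by contradiction, following the two-stage structure described at the start of this section. Suppose $\rho=\beta+i\gamma$ is a zero of a primitive finite-order Hecke $L$-function $L_F(s,\eta)$ with $\beta>7/8$. We will compare the two descriptions of the cubic-theta average $I_\eta(X,Y,Z)$ and derive a contradiction.

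\textbf{The direct bound.} The first description comes from the theta reflection \eqref{eq:theta-reflection} and the mean-square estimates \eqref{eq:BJ-bound} and \eqref{eq:Gram}. Combined through the Cauchy--Schwarz step \eqref{eq:separated-cauchy-schwarz}, they give \eqref{eq:direct}, namely
\[
 |J_\eta(Z)|\ll Z^{l_x/2+b/12+\eps},
\]
with $b=1/8$, $\ell=1/6$ and the balanced lengths $l_x=(1-b-\ell)/2$.

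\textbf{The Poisson description.} The second description applies Poisson summation \eqref{eq:Poisson} to the modified average \eqref{eq:modified}. This writes $J_\eta$ as a principal Mellin integral \eqref{eq:f} involving $1/L_F(s,\eta)$, plus nonprincipal Poisson terms. We shift the contour in \eqref{eq:f} to the left. The principal term then has a pole at $\rho$, whose residue has size $Z^{C_b(\beta)}$ with $C_b(s)=s-(4+b)/6$, and this residue cannot be cancelled by the other contributions. By \eqref{eq:intro-length-boundary} with $\ell=1/6$, we have $C_b(7/8)=l_x/2+b/12$. So $\beta>7/8$ makes the principal residue exponent strictly exceed the direct exponent.

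\textbf{Controlling the nonprincipal terms.} These terms are handled using the zero. The zero forces simultaneous large values \eqref{eq:large-values} of the character polynomials over the selected primes of norms $P_i$ with $\prod P_i=Z^{1/6}$. We then bound the number of such characters with the inverse and plain fourth-moment bounds \eqref{eq:inverse-moment} and \eqref{eq:plain-moment}, taking $\kappa$ in $[3/4,1]$. This gives a nonprincipal exponent $E<0$ relative to the principal term, uniformly over the zero class $\delta$ and amplitude class $x$.

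We organise the argument in two stages.
\begin{enumerate}
\item The first stage is the unmodified average, with no selected primes. It gives $\beta_*\le 11/12$.
\item This bound makes the moment hypothesis $\beta_*\le(1+\kappa)/2$ available for $\kappa$ close to $1$.
\item The second stage inserts the selected primes. This lowers the boundary to $B(1/6)=7/8$, with $\kappa=3/4$ in the limit.
\end{enumerate}
Since $\beta$ is strictly greater than $7/8$, we can choose $\kappa$ slightly above $2\cdot 7/8-1$ while keeping all strict inequalities. Alternatively, we iterate the moment bound down from $11/12$ and pass only the real upper bounds to the limit.

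\textbf{Conclusion.} With the strict exponent comparisons in place, the Mellin-continuation argument of Proposition~\ref{prop:criterion-summary} shows that the residue at $\rho$ cannot appear. This contradicts $L_F(\rho,\eta)=0$, so every Hecke $L$-function over $F$ is zero-free for $\Rea s>7/8$. The Euler correction factor is holomorphic below $7/8$, so it introduces no spurious poles.

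\textbf{Transfer to Dirichlet $L$-functions.} Corollary~\ref{cor:dirichlet-conclusion} handles this step. For a Dirichlet character $\chi$, the function $L(s,\chi)L(s,\chi\chi_{-3})$ equals $L_F(s,\chi\circ N)$. A zero of $L(s,\chi)$ with real part greater than $7/8$ would therefore give a zero of a Hecke $L$-function, unless it is cancelled by a pole. Such a pole can only occur at $s=1$ for principal $\chi$.

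\textbf{Main obstacle.} The hardest step is the uniform verification that the nonprincipal exponent is negative over every zero class and amplitude class. This is the polynomial inequality in $(\delta,x)\in[0,5/6]\times[0,1/2]$ for the count exponent $R_*$. I would first reduce the problem to the boundary $x=1/2$ and then check the resulting one-variable polynomial in the Bernstein basis.
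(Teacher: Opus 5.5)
Your outline has the right architecture: the preliminary $11/12$ bound, the direct bound against the Poisson representation, counts from the two moments, the continuation criterion and the norm pullback. But the step that closes the second stage fails as you state it.

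The unweighted moment \eqref{eq:plain-moment} with selected primes requires $\beta_*\le(1+\kappa)/2$, i.e.\ $\kappa\ge2\beta_*-1$. Under the contradiction hypothesis you only know $7/8<\beta_*\le11/12$. So you cannot ``choose $\kappa$ slightly above $2\cdot7/8-1$'': the admissible $\kappa$ may have to be as large as $5/6$. Raising $\kappa$ shrinks the unweighted prime length $z_S=(1-2m)/(6\kappa)$. This raises the count exponent by an amount proportional to $\beta_*-7/8$, which can far exceed the fixed endpoint saving $49/440640$ available at $\kappa=3/4$.

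The missing idea is the quantitative trade-off the paper uses:
\begin{itemize}
\item take $\kappa=2\beta_*-1=3/4+2\Delta$ exactly, so the hypothesis holds with equality;
\item the count then loses at most $\Delta/4$ (the extra term in \eqref{eq:count-final});
\item hence $E$ loses at most $h\Delta/4=13\Delta/64$, because $R$ enters with coefficient $h=13/16$;
\item measuring the error against $C(\beta_*)=C(7/8)+\Delta$ gains exactly $\Delta$.
\end{itemize}
Together these give \eqref{eq:endpoint}, with net saving $49/440640+51\Delta/64$.

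Your iterative alternative can be made rigorous, but only with this same input. One step from a known bound $\beta_*\le\widehat B$ gives roughly $\beta_*-7/8\le\frac{13}{64}(\widehat B-7/8)$. Without a loss coefficient below $1$, such an iteration need not descend to $7/8$ at all. Compare Part IV, where the iteration stops at the fixed point $B_{\mathrm{new}}$ rather than at the end of the admissible $\kappa$-range.

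This also shows you have the role of the $11/12$ stage backwards. With $\kappa=2\beta_*-1\le1$ the moment hypothesis is automatic. What $\beta_*\le11/12$ provides is the upper bound $\kappa\le5/6=\alpha$. That gives every zero class $\delta\le\kappa\le\alpha$, which is needed because $R_{\rm long}$ contains $(\alpha-\delta)(t-1)$. It also gives $\Delta\le1/24$ and $\bar g\le5/12$, which are used to bound the $\kappa$-loss.

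Smaller points:
\begin{itemize}
\item The contradiction does not come from isolating a residue at a given zero $\rho$. It comes from the criterion, whose comparison must save a power relative to $C(\beta_*)$ for the supremum $\beta_*$, and which then contradicts zeros near $\beta_*$.
\item The zero forces large values only of the detecting polynomials $M_r,S_m$. The selected-prime sums are instead sorted into amplitude classes, entering both as the cost $\bar g\ell$ and as the saving $2\bar g z$ in the counts.
\item Reducing the endpoint inequality to $x=1/2$ needs a monotonicity argument in $x$ that you do not supply. The identity \eqref{eq:positive-identity} avoids this, being positive on the whole rectangle.
\end{itemize}
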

The Hecke and Dirichlet conclusions are proved below in
Theorem~\ref{thm:hecke-conclusion} and Corollary~\ref{cor:dirichlet-conclusion}.
Here $L_F(s,\eta)$ denotes the Hecke
$L$-function over $F$, and $\zeta(s)$ the Riemann zeta function;
$\eta$ denotes the character fixed in each application.

For the primitive Hecke family define
\begin{equation}\label{eq:beta}
 \beta_*=\sup\left(\{1/2\}\cup
 \left\{\Rea\rho:
 \begin{array}{l}
 \Lfun(\rho,\eta)=0,\quad 1/2\le\Rea\rho\le1,\\
 \eta\text{ primitive and finite-order}
 \end{array}\right\}\right).
\end{equation}
Poles are excluded. The inserted value $1/2$ gives $\beta_*\ge1/2$;
absolute convergence of the Euler product gives $\beta_*\le1$.
The family in \eqref{eq:beta} comprises \emph{all primitive
finite-order Hecke characters over $F$, of arbitrary conductor}.
Indeed, Poisson summation produces denominators
$L_F^S(s,\eta\overline{\chi_\bullet(u)})$ in
\eqref{eq:Poisson}. The primitive inducing characters of these
twists belong to that family, so the same $\beta_*$ controls their zeros.

\paragraph{\it The two stages.}
Stage 1 proves $\beta_*\le11/12$.
Stage 2 assumes, for contradiction, $\beta_*>7/8$ and sets
\begin{equation}\label{eq:two-stages}
 \Delta=\beta_*-7/8\in(0,1/24],\qquad
 \kappa=2\beta_*-1=3/4+2\Delta\le5/6.
\end{equation}
Group the nonprincipal terms by the location of the zeros of
their associated twists; these groups are the zero classes
of Section~\ref{sec:detector}. In the second stage they satisfy
\[
 \delta=2a-1\le\kappa\le\alpha:=5/6.
\]
This restriction is essential in the original paper's character-count
argument: the long-polynomial bound contains
$(\alpha-\delta)r$, and its comparison with $r\le t$ uses
$\alpha-\delta\ge0$. Thus the preliminary theorem cannot be
omitted from this proof without replacing that estimate.

\paragraph{\it The parameter choice giving $7/8$.}
The normalized modified average has a direct bound $Z^{L+\eps}$,
while its principal residue has Mellin exponent $C(s)$, where
\[
 L=\frac{l_x}{2}+\frac{b}{12},\qquad
 b=l_y-l_x,\qquad
 C(s)=s+\frac{l_x}{3}-\frac56+\frac\ell6.
\]
Under $l_x+l_y=1-\ell$, matching these exponents gives
\[
 C(B)=L\quad\Longleftrightarrow\quad B=\frac{11}{12}-\frac\ell4.
\]
The mean-square calculation using the functional equation in
\cite[Lemma~15.1, proof]{QRH} contains the algebraic excess
\[
 \left(\frac{1+3\ell'-2M'}4\right)_+
 =\left(\frac{d_J+5\ell-1}4\right)_+,
 \qquad \ell'=\ell-d_J,\quad M'=1-\ell-2d_J.
\]
For $0\le d_J\le\ell$, this term vanishes for every subset
exactly when $6\ell\le1$. The convenient choice $\ell=1/6$
therefore gives $B=7/8$ in the exponent matching above.
With the paper's parameter choices \eqref{eq:geometry},
\[
 B=7/8,\qquad C(s)=s-11/16,\qquad C(7/8)=L=3/16.
\]
This exponent matching motivates the parameters; proving a strict
saving for every nonprincipal contribution requires both large
values from the zero detector and the two moment estimates.
That saving continues every reciprocal $1/L_F(s,\eta)$ to the
left of $\beta_*$, producing the contradiction.

Sections~\ref{sec:arithmetic}--\ref{sec:average} introduce the
characters, continuation criterion and theta average.
Sections~\ref{sec:detector}--\ref{sec:first} give the detector and
the $11/12$ stage. Sections~\ref{sec:modified}--\ref{sec:parameters}
develop the modified average, moments, counts and exponent savings.
Sections~\ref{sec:contradiction}--\ref{sec:transfer} complete the
continuation and Dirichlet transfer.

\section{Arithmetic conventions}
\label{sec:arithmetic}

In this section, we fix the arithmetic conventions used in every subsequent construction,
including character presentations, zero extensions and reciprocity phases.
The family zero supremum $\beta_*$ used in the continuation argument
is defined in \eqref{eq:zeros}.
These choices determine the local factors of the theta average in
Section~\ref{sec:average} and must be retained when its parameters vary.

Set
\[
 \omega=e^{2\pi i/3},\quad
 \OO=\mathbb Z[\omega],\quad \lambda=\sqrt{-3},\quad
 q_a=|a|^2,\quad q_{\mathfrak a}=N\mathfrak a,\quad
 \varpi(a)=a/|a|.
\]
$\OO$ is norm-Euclidean. Ideals coprime to $3$ have unique
primary generators $a\equiv1\pmod3$. Ideal sums use these
generators; sums over elements may also include units.
Unless stated otherwise, ideal sums run over nonzero integral ideals.
A sum over $x\bmod c$ runs over residue classes in $\OO/(c)$.
For an integral ideal $\mathfrak a$, $N\mathfrak a=|\OO/\mathfrak a|$.
Write $v_p(a)$ for its prime valuation, $\rad(a)=\prod_{p\mid a}p$,
and $\mu$ for the ideal M\"obius function. For a nonzero integral ideal,
\[
 \mu(\mathfrak a)=
 \begin{cases}
  1,&\mathfrak a=\OO,\\
  (-1)^k,&\mathfrak a=\mathfrak p_1\cdots\mathfrak p_k
          \text{ with distinct prime ideals }\mathfrak p_j,\\
  0,&\mathfrak p^2\mid\mathfrak a\text{ for some prime ideal }\mathfrak p.
 \end{cases}
\]
When an ideal is represented by a primary generator $n$, we write
$\mu(n)=\mu((n))$. Thus M\"obius coefficients restrict a sum to
squarefree ideals and assign the sign $(-1)^k$ according to the
number of their prime factors. The notation $\sfq$ in a sum
means squarefree, and $1_E$ is the indicator of a condition $E$.
Thus $(a,b)=1$ means that the corresponding ideals are coprime.
A prime outside $S$ is called good.

Unless specified otherwise, $Z\ge2$ is a real variable tending to infinity;
$\eps,\eps_1>0$ are arbitrarily small losses. For $a\in\mathbb C$
and $b\ge0$, the notation $a\ll b$, read as ``$a$ is bounded by
a constant times $b$'', means $|a|\le Cb$ for some $C>0$.
The absolute value is unnecessary when $a$ is already nonnegative.
A subscript specifies allowed dependence of the constant: for example,
\[
 a\ll_\eps b\quad\Longleftrightarrow\quad
 |a|\le C_\eps b,
\]
where $C_\eps>0$ may depend on $\eps$ but is independent of the
other varying quantities; the notation $\ll_S$ has the analogous
meaning for the fixed prime set $S$.
For nonnegative $a,b$, the notation $a\gg b$, read as ``$a$ is
bounded below by a positive constant times $b$'', means $b\ll a$.
The notation $a\asymp b$, read as ``$a$ is comparable to $b$'', means
\[
 cb\le a\le Cb\qquad\text{for some }c,C>0.
\]
These constants are independent of the varying quantities, subject to
any indicated dependence; comparability does not assert that $a/b$
is constant. We also write $x_+=\max(x,0)$.

Throughout the paper, we often express an actual summation length
$N>0$ as a power of the common real variable $Z$, and use its
\emph{length exponent} $r$:
\begin{equation}\label{eq:length-exponent-convention}
 N=Z^r,\qquad r=\frac{\log N}{\log Z}.
\end{equation}
For instance, a sum supported on norms $q_n\asymp N$ is then
written as supported on $q_n\asymp Z^r$. Products of lengths
correspond to sums of their exponents:
\[
 Z^{r_1}Z^{r_2}=Z^{r_1+r_2}.
\]
In estimates stated in this notation, ``length $r$'' means the
length exponent; the actual norm parameter is $Z^r$.

Auxiliary letters are used independently in separately stated
lemmas: in particular, $K$ in
\eqref{eq:hybrid} is a norm bound, whereas $K$ in
Section~\ref{sec:modified} counts prime intervals.
For $c\in\mathbb R$, the notation $\int_{(c)}F(s)\,ds$ denotes
integration upwards along the vertical line $\Rea s=c$, interpreted
as the limit of the integrals from $c-iT$ to $c+iT$ as $T\to\infty$.
The test functions used to restrict norms are smooth functions supported
on fixed compact subintervals of $(0,\infty)$; their logarithmic
derivatives are bounded. Constants $A,J$ in height estimates denote finite
nonnegative exponents or derivative orders fixed before the height cutoff;
their values may differ from one estimate to another.

Fix a finite set $S$ containing the prime ideals above $6$, all prime
ideals dividing the defining moduli of the fixed characters, and all
prime ideals of norm at most $P_0$, where $P_0 \in \mathbb{R}_{>0}$ is the fixed prime-norm cutoff. Unless stated otherwise, ideal sums are restricted to ideals coprime
to every prime ideal in $S$. These arithmetic data are fixed independently
of $Z$ and the summation variables; they may depend on an already fixed
character $\eta$. Denote these fixed moduli, characters, ray group,
exclusions and normalizations collectively by $\mathcal A$.

\begin{center}
\small
\begin{tabular}{@{}p{.24\textwidth}p{.69\textwidth}@{}}
\toprule Symbol & Meaning and scope\\\midrule
$\beta_*,\Delta,\kappa$ & Family zero supremum; $\Delta=\beta_*-7/8$;
$\kappa=2\beta_*-1$.\\
$\alpha=5/6,\ \varpi(a)$ & Fixed amplification slope;
unit phase $a/|a|$ (written $\alpha(a)$ in \cite{QRH}).\\
$\eb,\ \eps$ & Fixed zero-class width (written $e$ in \cite{QRH});
arbitrarily small exponent loss.\\
$\mathfrak a(r)$ & Amplified moment exponent
$\max\{1,(1+5r)/6\}$ (written $e(r)$ in \cite{QRH}).\\
$Z,U=Z^d,T_1$ & Real variable tending to infinity; dyadic norm parameter;
zero-detection height.\\
$l_x,l_y,b,h,\ell$ & Averaging exponents; $b=l_y-l_x$;
combined frequency exponent; total auxiliary prime length.\\
$z_{\rm cap},z_0$ & Upper bound for total prime length in \eqref{eq:canonical};
real contour value $z_0=17/50$ in Section~\ref{sec:exponents}.\\
$\mathfrak r$ & Coprimality modulus in \eqref{eq:canonical}
(written $R$ in \cite{QRH}); distinct from a counting exponent $R$.\\
$\mathcal R,\mathcal M$ & Reciprocity phase; Mellin transform
of the Fourier-transformed weight.\\
$H_\eta,J_\eta,f_\eta$ & Nonvanishing correction; normalized average;
principal-residue Mellin integral.\\
$\Gamma_1(3),\Gamma_2;\ \Gamma(s)$ & Matrix groups; Euler gamma function.\\
\bottomrule
\end{tabular}
\end{center}
\noindent Other indices and length parameters are local to the
statement in which they are introduced. A \emph{zero class} is a
set of elements satisfying the same pair of zero-location inequalities
\eqref{eq:zero-class}.

\paragraph{\it Hecke characters, moduli and conductors.}
For a nonzero integral ideal $\mathfrak f$, called the modulus, the ray class group is
$I(\mathfrak f)/P_1(\mathfrak f)$: $I(\mathfrak f)$ consists of
fractional ideals coprime to $\mathfrak f$, and $P_1(\mathfrak f)$
of principal ideals generated by elements congruent to $1$
modulo $\mathfrak f$. A finite-order Hecke character is a
homomorphism from this finite group to roots of unity.
Its conductor $\mathfrak f_*$ is the smallest modulus through which
it factors. A \emph{character presentation} specifies a defining modulus
$\mathfrak f$ divisible by $\mathfrak f_*$, together with extension
by zero on integral ideals not coprime to $\mathfrak f$. The presentation
is primitive when $\mathfrak f=\mathfrak f_*$ and imprimitive otherwise.
More precisely, a character $\psi$ with defining modulus $\mathfrak f$
is induced by a unique primitive character $\psi^*$ of conductor
$\mathfrak f_*\mid\mathfrak f$. If $\mathrm{Cl}_{\mathfrak f}(F)$
denotes the ray class group above, the natural surjection gives
\begin{equation}\label{eq:primitive-induction}
 \pi:\mathrm{Cl}_{\mathfrak f}(F)\longrightarrow
       \mathrm{Cl}_{\mathfrak f_*}(F),\qquad
 \psi=\psi^*\circ\pi.
\end{equation}
Thus $\psi(\mathfrak a)=\psi^*(\mathfrak a)$ whenever
$(\mathfrak a,\mathfrak f)=1$. Primitivity means that $\psi^*$
does not factor through the ray class group of any proper divisor
of $\mathfrak f_*$. The zero extensions can nevertheless differ:
$\psi(\mathfrak a)=0$ when $(\mathfrak a,\mathfrak f)\ne1$,
including when $(\mathfrak a,\mathfrak f_*)=1$.
Thus the defining modulus records the coprimality restriction attached
to this presentation, including primes absent from the conductor; see \cite[Section 4]{QRH}.
Initially,
\[
 L_F(s,\eta)=\sum_{\mathfrak a\ne0}
       \eta(\mathfrak a)(N\mathfrak a)^{-s}
   =\prod_p(1-\eta(p)q_p^{-s})^{-1},
 \qquad \Rea s>1;
\]
outside this region the notation refers to its meromorphic
continuation. A norm twist multiplies a character by
$\mathfrak a\mapsto(N\mathfrak a)^{it}$, $t\in\mathbb R$.

For a prime $p\notin S$, the sextic residue character is
\[
 \chi_p(a)=(a/p)_6,\qquad
 \chi_p(a)\equiv a^{(q_p-1)/6}\pmod p\quad(p\nmid a),
 \qquad \chi_p(a)=0\quad(p\mid a).
\]
For a primary element $c$, meaning $c\equiv1\pmod3$, write
$c=\prod_p p^{v_p(c)}$ with primary prime generators $p$, and put
$\chi_c(a)=\prod_{p\mid c}\chi_p(a)^{v_p(c)}$.
Throughout, character powers are first taken on invertible residue
classes and then extended by zero on the remaining classes. This
convention applies to every integer exponent, including zero, negative
exponents and multiples of six:
\begin{equation}\label{eq:zeros}
 \chi_c(a)^6=1_{(a,c)=1}.
\end{equation}
Passing to a primitive inducing character therefore requires
retaining the coprimality indicator for the original defining modulus. Coefficients and
such conditions are independent of the variable being averaged,
except for the explicitly displayed residue symbols.

The additive normalization is
\[
 e(z)=\exp\!\left(2\pi i\operatorname{Tr}_{F/\mathbb Q}(z/\lambda)\right),
 \qquad d\mu(z)=\frac2{\sqrt3}\,d\xi_1\,d\xi_2.
\]
Here $z=\xi_1+i\xi_2$ with $\xi_1,\xi_2\in\mathbb R$;
for $z\in\mathbb C$ the trace in $e(z)$ means
$z/\lambda+\overline{z/\lambda}$. The pairing $(z_1,z_2)\mapsto e(z_1z_2)$
makes $\OO$ self-dual. For squarefree $c$ define
\[
 \gamma_j(c)=q_c^{-1/2}\sum_{v\bmod c}\chi_c(v)^j e(v/c).
\]
Fixed-numerator symbols $n\mapsto\chi_n(u)$ are finite-order
ray characters \Src{Lemma}{4.1}. Prime Gauss identities and
sextic reciprocity provide root-of-unity-valued functions $\mathcal R,G$
that depend only on fixed ray classes.
Precisely, $\mathcal R$ is the bicharacter on residue square
classes modulo $4$ defined in \Src{Lemma}{4.4}; on coprime
primary pairs it satisfies
\[
 \chi_b(a)=\mathcal R(a,b)\chi_a(b).
\]
The same lemma defines $G$ on all primary indices, with $|G|=1$.
Its value depends only on the ray class of its argument in a fixed
finite ray quotient; on squarefree $c$,
$G(c)=\overline{\chi_c(4)}\gamma_3(c)$. Its extension to
nonsquarefree indices is that finite-ray function, rather than a
Gauss sum, and satisfies
\[
 G(AB)=G(A)G(B)\mathcal R(A,B)
\]
on the admissible primary elements \Src{Lemmas}{4.2--4.4}.
These functions cancel the reciprocity factors linking distinct primes,
so that the coefficients can be expressed as an Euler product in
Section~\ref{sec:average}.

\section{A criterion for zero-free continuation}
\label{sec:criterion}

In this section, we isolate the analytic implication that converts the two estimates for
an average into a zero-free half-plane.
Proposition~\ref{prop:criterion-summary} compares the direct bound
\eqref{eq:criterion-direct} with the approximation by the principal
Mellin integral in \eqref{eq:criterion-comparison}.
All four parts use this criterion; their remaining task is to construct
the average and verify its hypotheses with a saving independent of the
target character.

For a finite excluded set,
\[
 \Lfun^S(s,\eta)=\Lfun(s,\eta)
       \prod_{p\in S}(1-\eta(p)q_p^{-s}).
\]
The deleted factors are nonzero for $\Rea s>0$.

The formulation below is taken from \Src{Proposition}{2.1}.
Its proof uses the classical principle that a growth bound for an
inverse Mellin transform yields holomorphic continuation of a reciprocal
$L$-function, and hence zero-freeness; compare Riesz's criterion
\cite[Sections~3--4]{Riesz1916}. Here the positive saving must be
uniform over the target characters, whereas the implied constants
may depend on the character.

\begin{proposition}[Continuation criterion, \Src{Proposition}{2.1}]
\label{prop:criterion-summary}
Fix $1/2<\sigma_0<1$ and $C(s)=s+c$, $c\in\mathbb R$.
Suppose that, whenever $\beta_*>\sigma_0$, the following
construction is available.
For every primitive $\eta$, suppose $H_\eta$ is holomorphic on
$\Rea s>\sigma_0$ and
\begin{equation}\label{eq:contraction}
 \sup_{\Rea s>\sigma_0}|H_\eta(s)-1|\le1/2.
\end{equation}
Define
\begin{equation}\label{eq:general-f}
 f_\eta(Z)=\frac1{2\pi i}\int_{(2)}
 Z^{C(s)}e^{(s-5/6)^2}\frac{H_\eta(s)}{\Lfun^S(s,\eta)}\,ds.
\end{equation}
Here $(v)$ denotes a vertical line of real part $v$.
Assume there is a normalized average $J_\eta$ satisfying
\begin{align}
 |J_\eta(Z)|&\ll_\eta Z^{C(\sigma_0)+\omega_0},
 &0<\omega_0<\beta_*-\sigma_0,\label{eq:criterion-direct}\\
 |J_\eta(Z)-f_\eta(Z)|&\ll_\eta Z^{C(\beta_*)-\sigma},
 &\sigma>0,\label{eq:criterion-comparison}
\end{align}
The positive numbers $\omega_0,\sigma$ are independent of $\eta$;
constants, finite excluded sets and lower thresholds may depend on $\eta$.
Then $\beta_*\le\sigma_0$.
\end{proposition}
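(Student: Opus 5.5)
We argue by contradiction: assume $\beta_*>\sigma_0$, so the construction is available. Fix $\omega_0,\sigma$ as in the statement and a primitive $\eta$. We will show that $1/\Lfun^S(s,\eta)$ continues holomorphically to a half-plane $\Rea s>\beta_*-\sigma'$ with $\sigma'>0$ independent of $\eta$. Taking $\eta$ with a zero $\rho$ satisfying $\Rea\rho>\beta_*-\sigma'$ then gives the contradiction. Such an $\eta$ exists by the definition \eqref{eq:beta} of the supremum, since $\beta_*>\sigma_0\ge1/2$ means the supremum is not the inserted value $1/2$, and the deleted Euler factors are nonzero there.

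The key step is to invert the Mellin integral \eqref{eq:general-f}. Put $G_\eta(s)=e^{(s-5/6)^2}H_\eta(s)/\Lfun^S(s,\eta)$. By Mellin inversion (Gaussian decay on vertical lines, $\Rea s=2$),
\[
 G_\eta(s)=\int_1^\infty f_\eta(Z)Z^{-C(s)-1}\,dZ
\]
for $\Rea s$ large, up to a harmless contribution from $Z<1$, which is entire after adjusting the lower limit or using a smooth cutoff. Write $f_\eta=J_\eta+(f_\eta-J_\eta)$.

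By \eqref{eq:criterion-direct}, the $J_\eta$ part of the integral converges absolutely and is holomorphic for $C(\Rea s)>C(\sigma_0)+\omega_0$, that is, for $\Rea s>\sigma_0+\omega_0$. Since $\omega_0<\beta_*-\sigma_0$, this region reaches strictly below $\beta_*$. By \eqref{eq:criterion-comparison}, the difference part is holomorphic for $\Rea s>\beta_*-\sigma$. Hence $G_\eta$ extends holomorphically to $\Rea s>\max(\sigma_0+\omega_0,\beta_*-\sigma)$, which is strictly less than $\beta_*$ by an amount independent of $\eta$. Finally, \eqref{eq:contraction} makes $H_\eta$ nonvanishing on $\Rea s>\sigma_0$, so $1/\Lfun^S=G_\eta e^{-(s-5/6)^2}/H_\eta$ is holomorphic there. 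This forbids zeros of $\Lfun$ in that strip, and the contradiction follows.

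The main obstacle is the Mellin inversion step. One must justify that $f_\eta(Z)$ is genuinely the inverse transform, so that shifting and uniqueness apply given only polynomial bounds. One must also identify the continuation with $1/\Lfun^S$ on the overlap $\Rea s>1$ by analytic continuation. The first point needs $\Lfun^S(s,\eta)^{-1}$ bounded on $\Rea s=2$ and the Gaussian factor, and I expect it to be routine but the point where care is required. The pole at $s=1$ for principal $\eta$ causes no trouble, since $1/\Lfun^S$ simply vanishes there.
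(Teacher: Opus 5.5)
Your proof is correct and is essentially the paper's own proof. Mellin-transform $f_\eta$, use \eqref{eq:criterion-direct} and \eqref{eq:criterion-comparison} to continue $e^{(s-5/6)^2}H_\eta/L_F^S$ holomorphically to $\Rea s>\beta_*-\min\{\beta_*-\sigma_0-\omega_0,\sigma\}$ (which lies inside $\Rea s>\sigma_0$), divide by the nonvanishing $H_\eta$, and contradict a family zero close to $\beta_*$. The differences are cosmetic: the paper first combines the two bounds pointwise into $f_\eta(Z)\ll_\eta Z^{C(\beta_*)-\epsilon_c}$ and transforms $f_\eta$ alone, which avoids integrating $J_\eta$ (possibly defined only for large $Z$), and it justifies your ``harmless'' $Z<1$ piece by shifting the contour to $\Rea s=b_c>2$, giving $f_\eta(Z)\ll Z^{b_c+c}$ there.
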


\paragraph{\it Idea of the continuation argument.}
Assume $\beta_*>\sigma_0$ and put
\[
 \epsilon_c=\min\{\beta_*-\sigma_0-\omega_0,\,
                  \sigma,\,(\beta_*-\sigma_0)/2\}>0.
\]
The estimates \eqref{eq:criterion-direct} and \eqref{eq:criterion-comparison} give
$f_\eta(Z)\ll_\eta Z^{C(\beta_*)-\epsilon_c}$ for $Z\ge1$.
Moving \eqref{eq:general-f} to any fixed line $\Rea s=b_c>2$
gives $f_\eta(Z)\ll_{\eta,b_c}Z^{b_c+c}$ for $0<Z\le1$.
Consequently
\[
 F_\eta(s)=\int_0^\infty f_\eta(Z)Z^{-s-c}\,\frac{dZ}{Z}
\]
is holomorphic on $\Rea s>\beta_*-\epsilon_c>\sigma_0$.
Mellin inversion and analytic continuation identify
\[
 F_\eta(s)=e^{(s-5/6)^2}H_\eta(s)/L_F^S(s,\eta)
 \qquad(\Rea s>1)
.
\]
Since $|H_\eta|\ge1/2$, the function
\[
 e^{-(s-5/6)^2}F_\eta(s)/H_\eta(s)
\]
is a holomorphic continuation of $1/L_F^S(s,\eta)$ to
$\Rea s>\beta_*-\epsilon_c$, for every primitive $\eta$.
The supremum definition supplies some $\eta_0$ and a zero $\rho$ there;
attainment of $\beta_*$ is unnecessary. Locally,
\[
 L_F^S(s,\eta_0)=(s-\rho)^{k_\rho}g(s),\qquad
 k_\rho\ge1,\quad g(\rho)\ne0.
\]
Its reciprocal has a pole, contradicting this holomorphic continuation.
Neither the Gaussian nor $H_{\eta_0}$ can cancel it. Principal poles,
in contrast, give zeros of the reciprocal and cause no obstruction.

The two applications in Part I use
\[
 \begin{array}{c|c|c}
 \sigma_0&C(s)&C(\sigma_0)\\ \hline
 11/12&s-2/3&1/4\\
 7/8&s-11/16&3/16
 \end{array}
\]
and different normalized averages.

\section{Cubic theta series and the smoothed average}
\label{sec:average}

In this section, we construct the cubic-theta average whose two representations enter
Proposition~\ref{prop:criterion-summary}.
The cubic-theta functional equation \eqref{eq:theta-reflection} supplies
the mean-square estimate \eqref{eq:unmarked} for the direct bound,
while Poisson summation \eqref{eq:Poisson} exposes the reciprocal $L$-function and the
nonprincipal terms to be controlled in the following sections.
These representations first give the $11/12$ bound and then, after
the modification in Section~\ref{sec:modified}, the $7/8$ bound;
Parts II--IV retain this construction with changed parameters.

\subsection{The cubic theta function and its Fourier coefficients}
\label{sec:theta-definition}

In this subsection, we introduce the cubic theta function and the Fourier
coefficients used to construct the smoothed average.
The transformation law leads to the functional equation
\eqref{eq:theta-reflection} for the theta sums, while the explicit coefficients connect those sums to
the cubic Gauss sums in \eqref{eq:completed}.

The function used here is \emph{Kubota's cubic metaplectic theta
function} $\theta_{\rm cub}$ on
$\mathbb H^3=\mathbb C\times\mathbb R_{>0}$.
Kubota developed the underlying metaplectic construction
\cite{Kubota}; Patterson determined the explicit cubic theta
coefficients \cite[Theorem~8.1]{Patterson}.
We use the cubic metaplectic theta function in the normalization of
Dunn--Radziwi{\l}{\l} \cite[Section~5.1 and Appendix~A]{DR},
recording below the transformation law and Fourier coefficients needed
in our argument, in the form stated in \Src{Proposition}{5.1}.
Here \emph{cubic} refers to the cubic residue symbol appearing in the
transformation law of $\theta_{\rm cub}$ displayed below. Set
\[
 \Gamma_1(3)=\{g\in{\rm SL}_2(\OO):g\equiv I\pmod3\},
 \qquad
 \Gamma_2=\langle{\rm SL}_2(\mathbb Z),\Gamma_1(3)\rangle.
\]
For $g=\left(\begin{smallmatrix}a&b\\c&d\end{smallmatrix}\right)
\in{\rm SL}_2(\mathbb C)$,
its action on $w=(z,v)\in\mathbb H^3$ is
\[
 g(z,v)=
 \left(
 \frac{(az+b)\overline{(cz+d)}+a\bar c\,v^2}
      {|cz+d|^2+|c|^2v^2},
 \frac{v}{|cz+d|^2+|c|^2v^2}
 \right).
\]
The transformation law is
\[
 \theta_{\rm cub}(gw)=\kappa_{\rm cub}(g)\theta_{\rm cub}(w),
 \qquad
 \kappa_{\rm cub}(g)=
 \begin{cases}
 (c/a)_3,&g\in\Gamma_1(3),\ c\ne0,\\
 1,&g\in\Gamma_1(3),\ c=0,\ \text{or }g\in{\rm SL}_2(\mathbb Z).
 \end{cases}
\]
The factor $\kappa_{\rm cub}(g)$ extends to $\Gamma_2$ as a homomorphism
\cite[Section~2]{Patterson};
the action and transformation law above are
\cite[equations~(5.1)--(5.4)]{DR}. The symbol $(c/a)_3$ is the cubic residue
symbol, multiplicative in its primary denominator; at good
primes $(x/p)_3=\chi_p(x)^2$. The notation $\kappa_{\rm cub}$
distinguishes this transformation factor from the real zero-bound parameter
$\kappa$.

The Fourier expansions needed in the proof are those at the
three cusps represented by
\[
 H_j=\begin{pmatrix}1&0\\t_j&1\end{pmatrix},
 \qquad(t_0,t_1,t_2)=(0,\omega^2,\omega),\qquad j=0,1,2.
\]
Each $H_j$ sends $\infty$ to a boundary cusp in
$F\cup\{\infty\}$; expanding $\theta_{\rm cub}\circ H_j$ gives
its Fourier expansion there.
We take the complex conjugate so that the Fourier coefficients involve
the cubic Gauss sums $\widetilde g_3(c)$ used below, rather than their
complex conjugates; Appendix~\ref{app:theta-conventions} gives the precise
coefficient conversion.
With
$\check e(x)=\exp(2\pi i(x+\bar x))$, these expansions are
\[
 \overline{\theta_{\rm cub}(H_j(z,v))}
 =C_j(v)+
 \sum_{\substack{\mu\in\lambda^{-4}\OO\\\mu\ne0}}
 d_j(\mu)\,vK_{1/3}(4\pi|\mu|v)\check e(\mu z).
\]
Here
\[
 C_0(v)=\frac{3^{5/2}}2v^{2/3},\qquad C_1(v)=C_2(v)=0.
\]
The symbol $d_j$ denotes the fixed
arithmetic Fourier-coefficient function, and $K_{1/3}$ the
modified Bessel function of the second kind. The precise correspondence with the notation of
\cite{DR} is given in Appendix~\ref{app:theta-conventions}.
In particular, for primary squarefree $c$ and primary $n$,
\[
 d_0(\lambda^{-3}cn^3)=3^{5/2}|n|\,\widetilde g_3(c),\qquad
 \widetilde g_3(c)=q_c^{-1/2}\sum_{x\bmod c}
                  (x/c)_3\check e(x/c).
\]
When $(c,S)=1$ this becomes
\[
 \widetilde g_3(c)=\chi_c(\lambda)^{-2}\gamma_2(c).
\]
Thus the squarefree Gauss coefficients and the cube multiples
in the following Dirichlet series come directly from these
Fourier coefficients. The automorphy law supplies the functional
equation; its explicit coefficients supply the mean-square
estimates. These are the theta identities and estimates used below; Appendix~\ref{app:theta-conventions}
checks the constants directly from \cite[equations~(5.6), (5.7), (5.15), (5.16)]{DR}.

\subsection{Cubic theta sums, their functional equation and mean-square estimates}

In this subsection, we define the theta sums including all cube multiples and record their
functional equation and mean-square estimates for the direct bound.
The sum \eqref{eq:completed} supplies the theta factor in
\eqref{eq:separation}; its mean-square estimate is combined with
the additive estimate by \eqref{eq:separated-cauchy-schwarz}.
The estimate \eqref{eq:unmarked} is used for the preliminary
$11/12$ bound, while the divisibility-restricted estimates described
below are used for the modified average and the subsequent bounds.

For a ray character $\Psi$ with the conventions of
Section~\ref{sec:arithmetic}, let $V\in C^\infty(0,\infty)$
satisfy, for every $C>0$ and $j\ge0$,
\[
 (x\partial_x)^jV(x)=O_{C,j}(x^C)\quad(x\to0),\qquad
 (x\partial_x)^jV(x)=O_{C,j}(x^{-C})\quad(x\to\infty).
\]
The indices $c,n$ below represent nonzero integral ideals coprime to
$S$, using their primary generators; $c$ is squarefree and $n$ is
unrestricted. Put
\begin{align*}
 D(s,\Psi)&=\sum_{c\ \sfq}\gamma_2(c)\overline{\varpi(c)}
                    \Psi(c)q_c^{-s},\\
 L_0(s,\Psi)&=\sum_n\overline{\varpi(n)}^{\,3}
                    \Psi(n)^3q_n^{-3s+1/2},\\
 T(s,\Psi)&=D(s,\Psi)L_0(s,\Psi).
\end{align*}
The series $T(s,\Psi)$ is the character-twisted Dirichlet series
built from the theta coefficients after the fixed rescaling and
phase conversion in \Src{Proposition}{5.1}; it converges
absolutely for $\Rea s>1$. Its smoothed expression is
\begin{equation}\label{eq:completed}
 \mathcal C_V(X;\Psi)=
 \sum_{c\ \sfq,n}
 \frac{\gamma_2(c)\overline{\varpi(cn^3)}\Psi(cn^3)}
      {\sqrt{q_c}\,q_n}\,
 V(q_cq_n^3/X).
\end{equation}
No condition $(c,n)=1$ is imposed. The cube factor is included
because the natural theta support consists of the indices $cn^3$;
thus the sum includes all cube multiples of each squarefree index $c$. For a primary $A$,
\[
 A=cn^3
 \quad\Longleftrightarrow\quad
 v_p(A)\equiv0\ \text{or }1\pmod3\quad\text{for every }p,
\]
with $c$ squarefree and $c,n$ uniquely determined primary
generators. The gamma factors enter separately through the functional equation.

\begin{proposition}[Cubic theta functional equation, \Src{Proposition}{5.1}]
Fix a finite family of ray characters with conductors supported on
$S$, and a common periodicity modulus $L_{\rm per}$ supported on
$S$, divisible by $18$ and their defining moduli, as specified in
\cite[Proposition~5.1, hypotheses]{QRH}.
Write $\Psi=\Psi_0\prod_{p\in\mathcal P}\chi_p^{j_p}$ on
primary elements, with $\mathcal P$ a finite set of distinct primes
outside $S$ and $j_p\in\{0,\ldots,5\}$; zero extensions are retained.
The product $T(s,\Psi)$ continues to an entire function with polynomial
growth on each fixed vertical strip, with constants depending on
the fixed arithmetic data and the strip.
For $X>0$ and the test $V$ specified above,
the precise functional-equation identity is
\begin{equation}\label{eq:theta-reflection}
 \mathcal C_V(X;\Psi)=\sum_{\mathfrak t}\zeta_{\mathfrak t}
 \sum_{0\ne\mu\in\lambda^{-4}\OO}
 \frac{d_{\mathfrak t}(\mu)\varpi(\mu)
       \vartheta_{\mathfrak t}(\lambda^4\mu)}{\sqrt{q_\mu}}
 \prod_{p\ {\mathrm{active}}} B_p(\lambda^4\mu)
 V^\sharp(q_\mu X/q_{c_{\mathfrak t}}^2).
\end{equation}
Here $\mathfrak t$ runs over the terms indexed by
$h_0\bmod L_{\rm per}$ and an active subset of $\mathcal P$
\cite[Proposition~5.1, item~(1)]{QRH}. Every prime with $j_p\ne0$
is active. The denominator is $c_{\mathfrak t}=c_F\prod_{p\ {\mathrm{active}}}p$,
where $c_F$ is the reduced denominator of $\lambda^2h_0/L_{\rm per}$,
with the normalizing unit fixed as in that item.
The function $d_{\mathfrak t}$ is one of $d_0,d_1,d_2$;
$\vartheta_{\mathfrak t}$ is an additive character modulo a fixed
modulus depending on $L_{\rm per}$; the scalar $\zeta_{\mathfrak t}$
is independent of $\mu$ and has $|\zeta_{\mathfrak t}|\le1/81$.
Their exact phases are specified in
\cite[Proposition~5.1, items~(2)--(3)]{QRH}.
In the following local factors, every character value is extended by
zero on noninvertible residue classes:
\[
 B_p(x)=\begin{cases}
 \chi_p(x)^{-j_p-2},&j_p\ne0,4,\\
 q_p^{-1/2}(-1+q_p1_{p\mid x}),&j_p=4,\\
 q_p^{-1/2}\chi_p(x)^{-2},&j_p=0\text{ and }p\text{ active}.
 \end{cases}
\]
The function $V^\sharp$ appearing in the functional equation is obtained
from $V$ by the integral transform
\[
 V^\sharp(x)=\frac1{2\pi i}\int_{(v)}\widehat V(-t)
 \prod_{\pm}\frac{\Gamma(1+t\pm1/6)}{\Gamma(1-t\pm1/6)}
 \left(\frac{(2\pi)^4x}{27}\right)^{-t}dt,\qquad v\ge0,
\]
where $\Gamma$ is the Euler gamma function and
$\widehat V(t)=\int_0^\infty V(x)x^t\,dx/x$.
The dual coefficients have support
$u\lambda^k c n^3$, where $u\in\OO^\times$ is a unit,
$k\in\mathbb Z$, $k\ge-4$, $c$ is squarefree, and $c,n$ are
primary. At such an index each of the three fixed cusp coefficient
functions $d_j$ of Section~\ref{sec:theta-definition}
has absolute value $\le27\,3^{k/6}|n|$. The formula retains the local character
zeros, including the coprimality conditions of imprimitive
presentations.
\end{proposition}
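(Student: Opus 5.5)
This proposition is imported from \Src{Proposition}{5.1}. I would prove it by the standard route from automorphy of $\theta_{\rm cub}$ to a Voronoi-type identity. The hypotheses are the ones stated there: ray characters with conductors on $S$, and a common periodicity modulus $L_{\rm per}$ divisible by $18$.

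\textbf{Additive decomposition of $\Psi$.} Write the twist $\Psi=\Psi_0\prod_{p\in\mathcal P}\chi_p^{j_p}$ as a finite combination of additive characters. First, $\Psi_0$ is periodic modulo $L_{\rm per}$. Expand it in additive characters $e(\lambda^2h_0 x/L_{\rm per})$, $h_0\bmod L_{\rm per}$; this produces the reduced denominators $c_F$.

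Next, at each $p\in\mathcal P$, expand $\chi_p^{j_p}$ by finite Fourier inversion modulo $p$, using the prime Gauss sums $\gamma_j(p)$. The terms fall into two kinds.
\begin{itemize}
\item The trivial additive frequency is the ``inactive'' term. It is present only when $j_p=0$, and it is what remains of the zero extension.
\item The nontrivial frequencies are the ``active'' terms. They contribute the denominator $p$ and Gauss-sum factors.
\end{itemize}
Combining the two expansions writes $\mathcal C_V(X;\Psi)$ as a sum over $\mathfrak t=(h_0,\text{active subset})$. Each summand is the theta sum twisted by one additive character with denominator $c_{\mathfrak t}=c_F\prod_{\text{active}}p$.

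\textbf{Functional equation for each term.} Choose $g\in\Gamma_2$ carrying the cusp $a/c_{\mathfrak t}$ to one of the three cusp representatives $H_j\infty$. This is possible because $\Gamma_2$ has exactly these three cusp classes, which is why $d_{\mathfrak t}\in\{d_0,d_1,d_2\}$.

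Apply the transformation law $\theta_{\rm cub}(gw)=\kappa_{\rm cub}(g)\theta_{\rm cub}(w)$. Then take Mellin transforms in $v$ of the Fourier expansions at the two cusps. This is the Hecke--Bochner step: integrate $\theta$ against $V$ along the geodesic over the cusp. It yields the kernel $V^\sharp$, whose gamma ratio $\prod_\pm\Gamma(1+t\pm1/6)/\Gamma(1-t\pm1/6)$ comes from the Mellin transform of $vK_{1/3}(4\pi|\mu|v)$. The scaling $q_\mu X/q_{c_{\mathfrak t}}^2$ comes from the action of $g$ on $v$.

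The multiplier $\kappa_{\rm cub}(g)$ and the additive twist combine into three factors:
\begin{itemize}
\item the scalar $\zeta_{\mathfrak t}$;
\item the additive character $\vartheta_{\mathfrak t}$;
\item for active $p$, the local factors $B_p$.
\end{itemize}
The $B_p$ arise by evaluating Gauss sums of the form $\sum_{v\bmod p}\chi_p(v)^{j_p}(v/p)_3\,e(\mu v/p)$, using $(x/p)_3=\chi_p(x)^2$. The exponent $-j_p-2$ appears generically. When $j_p+2\equiv0\pmod6$, i.e. $j_p=4$, the sum degenerates to a Ramanujan sum, giving $q_p^{-1/2}(-1+q_p1_{p\mid x})$. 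For $j_p=0$ with $p$ active, the additive twist alone gives $q_p^{-1/2}\chi_p(x)^{-2}$.

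The reciprocity relations \Src{Lemmas}{4.2--4.4}, namely $G(AB)=G(A)G(B)\mathcal R(A,B)$, let the factors at distinct primes decouple into this product.

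\textbf{Bookkeeping.}
\begin{itemize}
\item The bound $|\zeta_{\mathfrak t}|\le1/81$ follows by tracking the normalizations $3^{5/2}$ and $\lambda^{-4}$, together with the $1/|L_{\rm per}|$-type factor from Fourier inversion.
\item The support $u\lambda^kcn^3$ with $k\ge-4$, and the bound $|d_j|\le27\cdot3^{k/6}|n|$, follow from Patterson's explicit coefficients as normalized in Appendix~\ref{app:theta-conventions}.
\item Entirety of $T(s,\Psi)$ with polynomial growth in vertical strips follows from the same Mellin identity. $T$ is a finite combination of Mellin transforms of $\theta$ restricted to cusps. The constant term $C_0(v)$ would produce poles, but these cancel because $\Psi$ is nontrivial on the relevant classes, or are removed by the twist; this has to be checked exactly as in the source. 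Growth then follows from Phragmén--Lindelöf.
\end{itemize}

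\textbf{Main obstacle.} The hardest part is the exact phase bookkeeping. One must match the multiplier $\kappa_{\rm cub}(g)$ with the local Gauss-sum evaluations, and keep the zero-extension conventions at imprimitive primes intact. An error there changes $B_p$. I would verify it prime by prime against \eqref{eq:zeros} and the formula $\widetilde g_3(c)=\chi_c(\lambda)^{-2}\gamma_2(c)$.
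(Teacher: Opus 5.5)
The paper does not reprove this statement; it imports it from \Src{Proposition}{5.1}. Your skeleton is sound when measured against the statement itself: the additive expansion of $\Psi$, the three cusp classes of $\Gamma_2$, and the Gauss and Ramanujan sums producing $B_p$. The analytic core, however, proves a different identity.

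\textbf{The gap: the angular character.} Both sides of \eqref{eq:theta-reflection} carry an angular character of infinite order: $\overline{\varpi(cn^3)}$ in $\mathcal C_V$, and $\varpi(\mu)$ in the dual sum. No finite additive or ray-class expansion can absorb it. The Mellin transform of $\overline{\theta_{\rm cub}}$ along the vertical geodesic over a cusp $z_0$ gives, up to elementary factors, only
$\sum_\mu d(\mu)\check e(\mu z_0)|\mu|^{-1-\sigma}\Gamma(\frac{1+\sigma}2\pm\frac16)$, with no angular phase. Matching $|\mu|^{-1-\sigma}=q_\mu^{-1/2-t}$ forces $\sigma=2t$. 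Your Hecke step therefore yields the kernel $\prod_\pm\Gamma(\frac12+t\pm\frac16)/\Gamma(\frac12-t\pm\frac16)$, not the kernel $\prod_\pm\Gamma(1+t\pm\frac16)/\Gamma(1-t\pm\frac16)$ of $V^\sharp$. Your claim that the latter ``comes from the Mellin transform of $vK_{1/3}(4\pi|\mu|v)$'' misses exactly this half-unit shift, which is the signature of a $K$-type change.

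\textbf{Consequence for entirety.} The same omission breaks your entirety argument. For the untwisted series, the constant term $C_0(v)=\frac{3^{5/2}}2v^{2/3}$ produces the Kubota--Patterson pole, at $t=1/3$. The proposition asserts entirety for every $\Psi$, including $\mathcal P=\varnothing$. So no ``nontriviality of $\Psi$'' is available to cancel that pole.

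\textbf{The missing ingredient.} Restrict $\partial_{\bar z}\overline{\theta_{\rm cub}}$ (a raising operator) to the geodesic instead of $\overline{\theta_{\rm cub}}$ itself.
\begin{itemize}
\item It brings down $2\pi i\bar\mu=2\pi i\,\overline{\varpi(\mu)}|\mu|$.
\item It annihilates every constant term $C_j(v)$, which is why $T(s,\Psi)$ is entire.
\item Pairing it with $v^{1+2t}$ gives $\Gamma(1+t\pm\frac16)$.
\end{itemize}

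The price is that the transformation law is no longer scalar. Take $g=\left(\begin{smallmatrix}a&b\\c&d\end{smallmatrix}\right)$ and $w=(-d/c+\zeta,v)$. Then $gw$ has coordinates
$z'=a/c-\bar\zeta/\bigl(c^2(|\zeta|^2+v^2)\bigr)$ and $v'=v/\bigl(|c|^2(|\zeta|^2+v^2)\bigr)$.
Hence at $\zeta=0$ one has $\partial_{\bar\zeta}[F(gw)]=-(c^2v^2)^{-1}(\partial_{z'}F)(gw)$. This has three effects:
\begin{itemize}
\item $\partial_{\bar z}$ becomes $\partial_z$ at the target cusp, which produces $\varpi(\mu)$ on the dual side.
\item The factor $v^{-2}$, together with $v'=1/(|c|^2v)$, sends the Mellin exponent $1+2t$ to $1-2t$, giving the ratio $\Gamma(1+t\pm\frac16)/\Gamma(1-t\pm\frac16)$.
\item The phase of $-c^{-2}$ is an extra unimodular constant. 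It must be tracked together with $\kappa_{\rm cub}(g)$ in $\zeta_{\mathfrak t}$.
\end{itemize}
With this replacement, your local evaluation of $B_p$ and the remaining bookkeeping can proceed essentially as you describe.
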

The quadratic large sieve over number fields is
\cite[Definition~1 and Theorem~1.1]{GL}.

\begin{lemma}[Cubic theta mean square, \Src{Lemma}{5.8}]
Fix the arithmetic data, bounded ranges for $M,N$ and dyadic
support comparisons. For every $\eps>0$, there exist finite
seminorm orders $A,J$ such that, for all sufficiently large $Z$
and every nonempty powerful-part dyadic range $O$, one has
\begin{equation}\label{eq:unmarked}
 \sum_{\substack{0<q_m\ll Z^M\\q_{m_{\rm pow}}\asymp Z^O}}
 |\mathcal C_V(Z^N;\nu\chi_\bullet(m))|^2
 \ll Z^{O/2+\max(M-O,\,2M-O-N)+\eps}
      \mathfrak M(V)^2.
\end{equation}
Here $m$ ranges over nonzero elements of $\OO$, including all unit
associates. The parameters $M,O\ge0$ and $N\in\mathbb R$ are norm-range exponents, $\nu$ is a fixed
finite ray character, and $\chi_\bullet(m)$ means
$n\mapsto\chi_n(m)$. The ideal
$m_{\rm pow}=\prod_{v_p(m)\ge2}p^{v_p(m)}$ is the powerful part.
The seminorm is $\mathfrak M=\mathfrak M_{A,J}$ for sufficiently
large fixed $A,J$, where \Src{Proposition}{5.1} defines
\[
 \mathfrak M_{A,J}(V)=
 \sup_{-A\le\sigma\le1/4}\int_{\mathbb R}
 (1+|t|)^J|\widehat V(\sigma+it)|\,dt.
\]
Decomposition into the fixed ray classes and decay of the integral
transform defining $V^\sharp$ justify the specialization to the Gaussian test \Src{Lemmas}{5.2--5.7}.
\end{lemma}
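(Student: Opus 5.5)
The plan is to expand the square and separate the contributions of the squarefree and cube parts with the functional equation \eqref{eq:theta-reflection}. For each $m$ in the range, I would apply that functional equation to $\mathcal C_V(Z^N;\nu\chi_\bullet(m))$, taking $\Psi=\nu\chi_\bullet(m)$. By the reciprocity relation $\chi_b(a)=\mathcal R(a,b)\chi_a(b)$ and the finite-ray function $G$, this character is a fixed ray character times $\prod_{p\mid m}\chi_p^{v_p(m)}$. Write $m=m_{\rm sf}m_{\rm pow}'$ by valuation classes. The active primes are then the primes of $m$, and the denominator satisfies $q_{c_{\mathfrak t}}\asymp q_{\rm rad(m)}$ up to fixed factors. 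The number of terms $\mathfrak t$ is $\ll Z^{\eps}$, since it is bounded by the number of subsets of $\mathcal P$ containing the primes with $j_p\ne0$, and these are divisors of $m$.

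Next I would bound each side separately. On the original side, the smoothed sum has effective length $q_c q_n^3\asymp Z^N$. On the dual side, the decay of $V^\sharp$, controlled by $\mathfrak M_{A,J}(V)$ through shifting the $t$-contour, restricts to $q_\mu\ll Z^{\eps}q_{\rm rad(m)}^2/Z^N$. The local factors $B_p$ contribute a product of size $\prod q_p^{-1/2}$ at the primes where $j_p\in\{0,4\}$. These are exactly the primes whose exponent in $m$ is divisible by $3$ modulo $6$, or which lie in the powerful part with that residue. For the mean square over $m$, I would split $m$ by its powerful part $m_{\rm pow}$, with $q_{m_{\rm pow}}\asymp Z^O$; there are $\ll Z^{O/2+\eps}$ such powerful ideals. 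For fixed powerful part, the squarefree part $m'$ of norm $\ll Z^{M-O}$ enters through $\chi_{\bullet}(m')$, a quadratic-type family of sextic characters.

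In this setting I would apply the number-field large sieve \cite[Theorem~1.1]{GL} in its sextic/cubic form: the dual sum has length $K$ and the modulus family has size $Q=Z^{M-O}$. The large sieve gives the bound $(Q+K)\cdot\sum|a|^2$, with the coefficients $d_{\mathfrak t}(\mu)/\sqrt{q_\mu}$ of $\ell^2$-mass $\ll Z^\eps$ by the bound $|d_j|\le27\,3^{k/6}|n|$. The result is the dichotomy
\[
Z^{\max(M-O,\;2M-O-N)}.
\]
In this bound, the term $M-O$ counts the moduli (the diagonal), and the term $2(M-O)+O-N$ comes from the dual length $q_{\rm rad}^2/Z^N$. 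Here $q_{\rm rad(m)}\le Z^{M-O}\cdot Z^{O/2}$, after accounting for the normalization $q_{c_{\mathfrak t}}^{-1}$ that is absorbed via the $1/\sqrt{q_\mu}$ weights. Multiplying by the $Z^{O/2}$ count of powerful parts gives the claimed exponent.

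The main obstacle I expect is the bookkeeping of the powerful part. The powerful primes appear both in $c_{\mathfrak t}$ and in the local factors $B_p$, with the $j_p=4$ case giving the non-character factor $-1+q_p1_{p\mid x}$. Showing that this powerful part costs only $Z^{O/2}$, rather than $Z^{O}$, requires using the $q_p^{-1/2}$ saving in $B_p$ together with the divisibility constraint $p\mid\lambda^4\mu$. My first attempt would be to sum trivially over the powerful part, applying Cauchy–Schwarz within each fixed-$m_{\rm pow}$ class, and to verify that the excess matches the dyadic count of powerful ideals. The uniformity in $\mathfrak M(V)^2$ then follows from the Mellin representation of $V^\sharp$ with $J$ large.
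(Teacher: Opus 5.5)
Your architecture matches the ingredients the paper attributes to the cited proof: apply \eqref{eq:theta-reflection} for each $m$; truncate the dual sum at $q_\mu\ll Z^{\eps}q_{c_{\mathfrak t}}^2/Z^N\le Z^{2M-O-N+\eps}$, using $q_{c_{\mathfrak t}}\ll q_{\rad(m)}$ and $q_{\rad(m_{\rm pow})}^2\le q_{m_{\rm pow}}$; sieve over the squarefree part $m'$ with $m_{\rm pow}$ fixed; and sum trivially over the $\ll Z^{O/2}$ powerful parts.

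The gap is in the decisive step. You treat $m'$ as entering through sextic characters and invoke a ``sextic/cubic form'' of \cite[Theorem~1.1]{GL} with bound $(Q+K)\sum|a|^2$. No such bound is available for sextic or cubic families; their known large sieves carry an extra term, as in the sextic sieve of Section~\ref{sec:first}. With the extra term $(QK)^{2/3}$, your argument already fails at $M=N=1$, $O=0$ (so $Q=K=Z$), which is the case used for \eqref{eq:first-direct}: it gives $Z^{4/3}$ instead of $Z^{1}$.

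The missing observation is that dualizing lowers the order of the characters. A prime $p\,\|\,m$ has $j_p=1$, so $B_p(\lambda^4\mu)=\chi_p(\lambda^4\mu)^{-3}$. Hence the dual sum depends on $m'$ only through $\chi_{m'}(\lambda^4\mu)^{-3}$, a quadratic symbol with squarefree modulus $m'$. On the original side, $m$ enters through $\chi_c(m)$, which is genuinely sextic. Turning this into a quadratic family is the reason for applying the functional equation before the large sieve. Only then does \cite[Theorem~1.1]{GL} give $(Z^{M-O}+K)Z^{\eps}$ times the $\ell^2$-mass.

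To apply it, you must also make the dual coefficients independent of $m'$:
\begin{enumerate}
\item fix the ray class of $m$, the residue $h_0$ and the active set of primes with $j_p=0$;
\item separate the $q_{c_{\mathfrak t}}$-dependence of $V^\sharp(q_\mu Z^N/q_{c_{\mathfrak t}}^2)$ by Mellin inversion, which is where $\mathfrak M_{A,J}(V)$ enters;
\item reduce the arguments $\mu=u\lambda^kcn^3$ modulo squares.
\end{enumerate}

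The obstacle you single out is misplaced, and your description of $B_p$ is inaccurate. The condition $j_p\in\{0,4\}$ means $v_p(m)\equiv0$ or $4\pmod6$, and for $j_p=4$ one has $|B_p(x)|\asymp q_p^{1/2}$ when $p\mid x$. No saving from the powerful part is needed: $Z^{O/2}$ is just the number of powerful ideals, and the dual length was already controlled by $q_{\rad(m_{\rm pow})}^2\le q_{m_{\rm pow}}$. You only have to check that the local factors at powerful primes do not inflate the $\ell^2$-mass. Since $|d_{\mathfrak t}(\mu)|^2/q_\mu\ll(q_cq_n^2)^{-1}$ for $\mu=u\lambda^kcn^3$, the condition $p\mid cn^3$ costs a factor $\ll q_p^{-1}$, which offsets $|B_p|^2\le q_p$. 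The product over these primes is therefore $\ll3^{\#\{p:\,p\mid m\}}\ll Z^{\eps}$.
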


For the later divisibility restrictions, the corresponding
mean-square estimate is \Src{Lemma}{14.3}. Its arithmetic
ingredient \Src{Lemma}{14.2} combines the quadratic sieve
\cite[Theorem~1.1]{GL} and the cubic sieve \cite[Theorem~2]{HB}.
\begin{lemma}[Quadratic--cubic sieve, \Src{Lemma}{14.2}]
Let $K,N,B,L$ be independent positive real norm parameters:
\[
 K,N,B,L\ge1,\qquad
 q_k\ll K,\quad q_n\asymp N,\quad q_b\asymp B,\quad q_P\asymp L.
\]
Here $k,n,P$ are squarefree primary ideals and $b$ is any
primary ideal. Thus $KNBL$ is the product of the four norm
parameters, not a further variable. The indices $k,P$ avoid $S$;
$n,b$ may meet $S$ except at $\lambda$, as specified in the cited lemma.
For every $\eps>0$,
\begin{equation}\label{eq:hybrid}
\begin{split}
 &\sum_{k\ \sfq,\ q_k\ll K}
 \left|\sum_{P\ \sfq,\ q_P\asymp L}
 \frac{a(P)}{\sqrt{q_P}}1_{(P,k)=1}
 \sum_{\substack{n\ \sfq,\ q_n\asymp N\\q_b\asymp B}}
 \beta(n,b)\chi_k(nb)^3\chi_P(n)^{-2}1_{(P,b)=1}\right|^2\\
 &\hspace{15mm}\ll(KNBL)^\eps(K+NB)B\bigl[N+L+(NL)^{2/3}\bigr].
\end{split}
\end{equation}
The coefficient functions satisfy $|a(P)|,|\beta(n,b)|\le1$;
$a(P)$ is independent of $k,n,b$, and $\beta(n,b)$ of $k,P$.
\end{lemma}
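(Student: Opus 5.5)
The plan is to open the square and treat the double sum over $(P,P')$ with the two inner sums through a duality/large-sieve argument in the modulus $k$. First, fix the outer sum over squarefree $k$ with $q_k\ll K$. For each such $k$, write the inner expression as a single bilinear form
\[
 \Sigma(k)=\sum_{P}\sum_{n,b}\frac{a(P)\beta(n,b)}{\sqrt{q_P}}\,1_{(P,k)=1}1_{(P,b)=1}\,\chi_k(nb)^3\chi_P(n)^{-2}.
\]
The plan then separates into two complementary bounds, and the claimed estimate takes the shape of their product structure $(K+NB)\cdot B[N+L+(NL)^{2/3}]$. We treat the $k$-sum by the quadratic large sieve \cite[Theorem~1.1]{GL}, since $\chi_k(\cdot)^3$ is a quadratic symbol in $k$. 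Given any coefficient vector $c(m)$ supported on $q_m\ll NB$, this yields $\sum_k|\sum_m c(m)\chi_k(m)^3|^2\ll (KNB)^\eps(K+NB)\|c\|_2^2$. We will need to apply it after grouping by $m=nb$.

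Concretely, we set $m=nb$ and define
\[
 c(m)=\sum_{nb=m}\beta(n,b)\sum_P\frac{a(P)}{\sqrt{q_P}}1_{(P,b)=1}\chi_P(n)^{-2}.
\]
The condition $1_{(P,k)=1}$ couples $P$ and $k$, so we first remove it. Using multiplicativity, $\chi_k(nb)^3$ is already zero when $k$ meets $nb$, while $(P,k)\ne1$ concerns only $P$. We therefore expand $1_{(P,k)=1}=\sum_{d\mid(P,k)}\mu(d)$ and absorb the $d$-part into both variables at the cost of a divisor factor $Z^\eps$; alternatively, we note that $\chi_k(n)^3$ sees $k$ only through its coprimality with $n$, and treat the fixed common factor $d$ as a reindexing. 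After this step, the large sieve gives
\[
 (K+NB)\sum_m|c(m)|^2 .
\]
Because each $m$ has $\ll Z^\eps$ factorizations, this is bounded by $Z^\eps(K+NB)\sum_{n,b}|\sum_P\ldots|^2$.

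The remaining task is to bound $\sum_{b}\sum_n\bigl|\sum_P a(P)q_P^{-1/2}1_{(P,b)=1}\chi_P(n)^{-2}\bigr|^2$ by $B[N+L+(NL)^{2/3}]$. Trivially bounding the $b$-sum yields the factor $B$, uniformly in $b$, since the coefficient $a'(P)=a(P)1_{(P,b)=1}$ still has modulus at most $1$. The inner quantity is then a cubic large sieve, because $\chi_P(n)^{-2}=\overline{(n/P)_3}$ up to the reciprocity phase $\mathcal R$ and the fixed factor from Section~\ref{sec:arithmetic}. Heath-Brown's cubic large sieve \cite[Theorem~2]{HB}, applied over squarefree $n,P$, gives
\[
 \sum_n\Bigl|\sum_P a'(P)\chi_P(n)^{-2}\Bigr|^2\ll(NL)^\eps\bigl(N+L+(NL)^{2/3}\bigr)L .
\]
The normalization $q_P^{-1/2}\asymp L^{-1/2}$ squared cancels the factor $L$, so this yields exactly $B[N+L+(NL)^{2/3}]$.

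We expect the main obstacle to be the decoupling step, namely passing from the $k$-large sieve to the $P$-large sieve while the coefficient $c(m)$ depends on both $P$ and $n$ and the condition $(P,k)=1$ links $P$ to $k$. Our first attempt will be the Möbius expansion over $d\mid(P,k)$, bounding the $d$-sum by Cauchy--Schwarz with a $Z^\eps$ loss. We will also check that the reciprocity conversion, to put $n$ in the numerator for \cite{HB}, only introduces ray-class-dependent phases; those phases can be handled by splitting into the fixed finitely many classes.
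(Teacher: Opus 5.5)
\textbf{Gap at the quadratic sieve step.} Your architecture matches what the paper indicates: inclusion--exclusion for $1_{(P,k)=1}$, then the quadratic sieve in $k$ against $m=nb$ of norm $\ll NB$, then the cubic sieve in $(n,P)$. Two steps are fine.
\begin{itemize}
\item The M\"obius expansion over $d\mid(P,k)$ costs only a logarithm: restricting to $d\mid P$ while keeping the normalization $q_P^{-1/2}$ yields a factor $q_d^{-1}$.
\item The cubic step needs no reciprocity: by duality and the symmetry of $N+L+(NL)^{2/3}$, either variable may go outside.
\end{itemize}
The quadratic sieve step, however, fails as written. You claim, for an arbitrary vector $c(m)$ supported on $q_m\ll NB$, the bound $(K+NB)\|c\|_2^2$. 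That is false: an inequality of this shape needs squarefree arguments, as in \cite{GL}. Here $m=nb$ carries arbitrary square factors, since $b$ is any primary ideal and $n,b$ may share primes.

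For a counterexample, let $c$ be the indicator of the squares $m=j^2$. Then $\chi_k(j^2)^3=\chi_k(j)^6=1_{(k,j)=1}$ by \eqref{eq:zeros}. The left side is then of size $K\cdot NB$ up to logarithms, against your $(K+NB)(NB)^{1/2}$. Inside the lemma itself, take $N\asymp1$, $a\equiv1$ and $\beta(1,b)=1_{b=\square}$. This gives a left side of order $KBL$, whereas your intermediate bound is $(K+B)B^{1/2}L$. The lemma's own bound $(K+B)BL$ survives only because its factor $B$ absorbs this square-class multiplicity. In your accounting that factor is spent on the trivial bound for the $b$-sum, so nothing pays for the squares.

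\textbf{What is missing.} You need the non-squarefree form of the quadratic sieve together with a square-class count. Write $m=er^2$ with $e$ squarefree, so $\chi_k(m)^3=\chi_k(e)^3 1_{(k,r)=1}$. Expanding $1_{(k,r)=1}$ by M\"obius and applying \cite{GL} in the squarefree variables gives
\[
 \sum_k\Bigl|\sum_m c(m)\chi_k(m)^3\Bigr|^2\ll(KNB)^\eps(K+NB)\sum_{m_1m_2=\square}|c(m_1)c(m_2)|.
\]
The real work is to show that the last sum is $\ll(NBL)^{\eps}B[N+L+(NL)^{2/3}]$. This uses the product structure of the coefficients, in four steps.
\begin{enumerate}
\item Remove $1_{(P,b)=1}$ by inclusion--exclusion as well; its divisor terms are handled like the $d\mid(P,k)$ terms. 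The main term is then $c(m)=\sum_{nb=m}\beta(n,b)G(n)$, where $G(n)=\sum_Pa(P)q_P^{-1/2}\chi_P(n)^{-2}$ is independent of $b$.
\item Fix $n_1,n_2$ and put $f=(n_1,n_2)$. Then $b_1b_2$ must lie in the square class of the squarefree part of $n_1n_2$, which has norm $\asymp N^2/q_f^2$. Hence the number of pairs $(b_1,b_2)$ making $n_1n_2b_1b_2$ a square is $\ll B^{1+\eps}q_f/N$.
\item Sum over $f$ and apply Cauchy--Schwarz over the $n$ divisible by $f$. This bounds the square-class sum by $B^{1+\eps}\sum_n\tau(n)|G(n)|^2$.
\item Only at this point does the cubic sieve finish the proof.
\end{enumerate}
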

The ideal $P$ need not be prime. Inclusion-exclusion removes
the moving coprimality condition before applying the sieves.
The full estimate obtained from the functional equation follows by dyadic decomposition,
the local theta coefficients, and decay of $V^\sharp$.

\subsection{The smoothed average and its additive and theta factors}
\label{sec:average-separation}

In this subsection, we define the smoothed average $I_\eta(X,Y,Z)$
in \eqref{eq:base} and decompose it into the two factors in
\eqref{eq:separation}. The Cauchy--Schwarz inequality
\eqref{eq:separated-cauchy-schwarz} then reduces its direct bound
to their mean-square estimates; the next
subsection gives its Poisson representation, which contains the
reciprocal $L$-function needed in Proposition~\ref{prop:criterion-summary}.

Choose a fixed finite ray class group $\mathcal T$ through which the
functions $\mathcal R$, $G$, and the fixed root-of-unity factors arising
from the primary-generator convention and sextic reciprocity factor
\Src{Lemma}{4.4}. Thus these factors depend only on the corresponding
ray classes; the fixed congruence conditions involved are supported at
primes above $2$ and $3$. Here $\widehat{\mathcal T}$ is its
character group, and $1_{\mathcal T}$ its identity ray class.
Membership $p\in1_{\mathcal T}$ means that the primary generator
of $p$ has trivial image in $\mathcal T$. Write
$\Theta=\langle\eta,\widehat{\mathcal T}\rangle$ for the finite
group of Hecke characters generated by these characters; all
its conductor primes are in $S$ \Src{Section}{8}.
Let $b_*$ generate the squarefree product of primes in $S$ and
choose a primitive character $\xi\bmod b_*$ of order dividing
six, nonprincipal at each local prime. For a residue character
$\psi$ modulo a primary modulus $c$ and $h\in\OO$, set
\[
 g_\psi(c,h)=\sum_{d\bmod c}\psi(d)e(hd/c),\quad
 \tau_*=q_{b_*}^{-1/2}g_\xi(b_*,1),\quad |\tau_*|=1,\quad
 \Xi(A)=\xi(A)\chi_A(b_*).
\]
Here $A\in\OO$ ranges over primary elements, so
$A\equiv1\pmod{3\OO}$.
For the primary averaging variable $v$ let
\[
 \Psi_{m,v,\eta}(A)=\eta(A)\Xi(A)^{-1}\mathcal R(A,v)\chi_A(m)
\]
on primary elements coprime to $S$, extended by zero elsewhere, and define
\[
 T_{m,v,\eta}(t)=\sum_{c\ \sfq,n}
 \gamma_2(c)\overline{\varpi(cn^3)}\Psi_{m,v,\eta}(cn^3)
 \overline{G(cn^3)}q_c^{-1/2-t}q_n^{-1-3t}.
\]
For fixed nonnegative nonzero test functions $W_0,W_1$ and
$\Phi(t)=e^{t^2}$, define the \emph{smoothed average}
$I_\eta(X,Y,Z)$ used in the original paper by
\begin{equation}\label{eq:base}
\begin{split}
 I_\eta(X,Y,Z)={}&\frac1Y\sum_{\substack{v\ \mathrm{primary}\\(v,S)=1}}
 \frac{W_1(q_v/Y)\chi_v(b_*)}{\tau_*\xi(v)\sqrt{q_{b_*}q_vX}}
 \sum_{m\in\OO\setminus\{0\}}\xi(m)q_v^{-1/2}g_{\chi_v}(v,-m)\\
 &\times W_0(q_m/(q_{b_*}q_vX))
 \frac1{2\pi i}\int_{(4)}Z^t\Phi(t)T_{m,v,\eta}(t)dt.
\end{split}
\end{equation}
In \eqref{eq:base}, $v$ runs over the primary generators of nonzero
integral ideals coprime to $S$, with no squarefree restriction, and
$m$ runs over nonzero elements of $\OO$, including all unit associates.
The factor $\xi(m)$ vanishes when $(m,b_*)\ne1$; the compact supports
of $W_0,W_1$ make both sums finite and exclude $m=0$.
These same domains are used in the factors $A_{m,\sigma,t_0}$ and
$B_{m,\sigma}$ below; $v\in\sigma$ additionally restricts $v$ to
the indicated ray class.
Here \emph{smoothed} means that the summation ranges are restricted
by smooth functions rather than by characteristic functions of intervals:
$W_1(q_v/Y)$ restricts the $v$-sum and
$W_0(q_m/(q_{b_*}q_vX))$ restricts the $m$-sum.
The function $V_\Phi$ defined below similarly smooths the theta sum.
The term \emph{average} refers to the weighted summation over $m,v$,
with the normalization $1/Y$ and the other factors in \eqref{eq:base}.
The smooth functions permit Mellin inversion and Poisson summation
with decay estimates for their transforms.
The $m,v$ sums are finite, and the theta series including all cube multiples converges
absolutely on the initial line.

Expand $\overline{G(A)}=\sum_{\theta\in\widehat{\mathcal T}}
a_\theta\theta(A)$, where $\sum|a_\theta|\le|\mathcal T|^{1/2}$.
To relate \eqref{eq:base} directly to the theta sum
\eqref{eq:completed}, define
\[
 V_\Phi(x)=\frac1{2\pi i}\int_{(4)}\Phi(t)x^{-t}\,dt
 =\frac1{2\sqrt\pi}\exp\!\left(-\frac{(\log x)^2}{4}\right),
 \qquad x>0.
\]
The coefficient definitions and Mellin inversion give
\[
 \begin{aligned}
 T_{m,v,\eta}(t)
 &=\sum_{\theta\in\widehat{\mathcal T}}a_\theta
 T(t+1/2,\Psi_{m,v,\eta}\theta),\\
 \frac1{2\pi i}\int_{(4)}Z^t\Phi(t)T_{m,v,\eta}(t)\,dt
 &=\sum_{\theta\in\widehat{\mathcal T}}a_\theta
 \mathcal C_{V_\Phi}(Z;\Psi_{m,v,\eta}\theta).
 \end{aligned}
\]
Thus \eqref{eq:base} is a weighted average over $m$ and $v$ of
finite linear combinations of the cubic theta sums
\eqref{eq:completed}, with the character depending on $m,v,\eta$.
For a class $\sigma\in\mathcal T$ and representative $v_\sigma$,
put $\nu_\sigma(A)=\eta(A)\Xi(A)^{-1}\mathcal R(A,v_\sigma)$ and
\begin{align*}
 B_{m,\sigma}(Z)&=\sum_\theta a_\theta\frac1{2\pi i}
 \int_{(4)}Z^t\Phi(t)T(t+1/2,\nu_\sigma\theta\chi_\bullet(m))dt,\\
 A_{m,\sigma,t_0}(Y)&=\frac1Y\sum_{v\in\sigma}
 \frac{W_1(q_v/Y)\chi_v(b_*)}{\tau_*\xi(v)}
 (q_v/Y)^{-1/2+it_0}q_v^{-1/2}g_{\chi_v}(v,-m).
\end{align*}
Writing $Q=q_{b_*}XY$, Mellin inversion separates the average:
\begin{equation}\label{eq:separation}
 I_\eta=\frac{Q^{-1/2}}{2\pi}\sum_{\sigma\in\mathcal T}
 \int_{\mathbb R}\widehat W_0(it_0)
 \sum_{m\ne0}\Omega(q_m/Q)\xi(m)(q_m/Q)^{-it_0}
 A_{m,\sigma,t_0}(Y)B_{m,\sigma}(Z)\,dt_0.
\end{equation}
Here $t_0\in\mathbb R$ is the Fourier integration variable,
$\widehat W_0(it_0)=\int_0^\infty W_0(r)r^{it_0}\,dr/r$,
and $\Omega$ is a fixed smooth cutoff equal to one on the
support of $W_0$ \Src{Section}{6}.
Applying the Cauchy--Schwarz inequality to \eqref{eq:separation} gives
\begin{equation}\label{eq:separated-cauchy-schwarz}
\begin{split}
 |I_\eta|\ll Q^{-1/2}\sum_{\sigma\in\mathcal T}
 \int_{\mathbb R}|\widehat W_0(it_0)|
 &\left(\sum_{0<q_m\ll Q}|A_{m,\sigma,t_0}(Y)|^2\right)^{1/2}\\
 {}\times&\left(\sum_{0<q_m\ll Q}|B_{m,\sigma}(Z)|^2\right)^{1/2}dt_0.
\end{split}
\end{equation}
The fixed bound for $\Omega$ is absorbed into the implied constant.
Thus the direct estimate reduces to the two displayed mean squares.

\subsection{Poisson summation and the scalar Euler factors}
\label{sec:average-poisson}

In this subsection, we apply Poisson summation to the smoothed average
and express the resulting terms through their scalar Euler factors.
The term $u=1$ yields the principal Mellin integral involving
$H_\eta(s)/L_F^S(s,\eta)$ in \eqref{eq:general-f}; bounding the remaining
terms gives the comparison estimate \eqref{eq:criterion-comparison}.
Together with the direct mean-square bound from Section~\ref{sec:average},
this comparison allows Proposition~\ref{prop:criterion-summary} to be
applied at $11/12$ in Section~\ref{sec:first}.
The $7/8$ argument uses the same Poisson representation after inserting
the selected prime factors of Section~\ref{sec:modified}, with the refined
counts of Section~\ref{sec:counts} controlling the remaining terms.

Let $\widetilde W_0(|y|^2)$ be the Fourier transform on
$\mathbb C\cong\mathbb R^2$ of the function $x\mapsto W_0(|x|^2)$,
with the pairing and self-dual measure of Section~\ref{sec:arithmetic}:
\[
 \widetilde W_0(|y|^2)
 =\int_{\mathbb C}W_0(|x|^2)e(-xy)\,d\mu(x).
\]
For a fixed theta index $A=cn^3$ and primary $v$ coprime to $S$,
the arithmetic coefficient of the $m$-sum in \eqref{eq:base} is
periodic modulo $vb_*A$, even when $(v,A)\ne1$.
Define its finite Fourier coefficient by
\[
 \mathcal F_{v,A}(H)
 =\sum_{\substack{d\bmod v\\H\equiv b_*Ad\pmod v}}
 \chi_v(d)\,
 g_{\xi\chi_A}\!\left(b_*A,\frac{H-b_*Ad}{v}\right),
 \qquad H\in\OO.
\]
Expanding $g_{\chi_v}(v,-m)$ gives
\[
 \sum_{r\bmod vb_*A}\xi(r)\chi_A(r)q_v^{-1/2}
 g_{\chi_v}(v,-r)e\!\left(\frac{rH}{vb_*A}\right)
 =q_v^{1/2}\mathcal F_{v,A}(H).
\]
Poisson summation on $\OO$ therefore gives the explicit identity
\begin{equation}\label{eq:average-poisson-step}
\begin{split}
 &\sum_{m\in\OO\setminus\{0\}}
 \xi(m)\chi_A(m)q_v^{-1/2}g_{\chi_v}(v,-m)
 W_0\!\left(\frac{q_m}{q_{b_*}q_vX}\right)\\
 &\qquad=\frac{Xq_v^{1/2}}{q_A}
 \sum_{H\in\OO}\mathcal F_{v,A}(H)
 \widetilde W_0\!\left(\frac{Xq_H}{q_A}\right).
\end{split}
\end{equation}
Here $W_0$ is extended by zero at $0$. The normalization follows from
$q_{vb_*A}=q_vq_{b_*}q_A$ and the dilation factor
$q_{b_*}q_vX$; see \cite[Section~7, Poisson summation]{QRH}.
Primitivity of $\xi$ at the primes in $S$ gives
$\mathcal F_{v,A}(H)=0$ unless $(H,S)=1$, and in particular removes
$H=0$. Every remaining frequency is uniquely $H=ua^6$, where
$u\in\OO\setminus\{0\}$ includes its unit factor and has prime
valuations at most five, while $a$ is a primary generator.
Mellin inversion of the two test functions, followed by the local
calculation below, converts \eqref{eq:average-poisson-step} into
\eqref{eq:Poisson}. Set
\[
 \mathcal M(z)=\int_0^\infty\widetilde W_0(r)r^{z-1}dr,\quad
 \widehat W_1(w)=\int_0^\infty W_1(r)r^{w-1}dr,
\]
\[
 \mathcal W=X^{1/2-z}Z^{s+z-1}Y^{w-1}
               \Phi(s+z-1)\mathcal M(z)\widehat W_1(w).
\]
The complete local calculation \Src{Lemma}{7.1} gives
\begin{equation}\label{eq:Poisson}
\begin{split}
 I_\eta={}&\frac1{(2\pi i)^3}\int_{(3)}\int_{(3)}\int_{(2)}
 \mathcal W
 \sum_u^{(6)}q_u^{-z}\overline{\xi(u)}
 \frac{\zeta_F^S(6z)\Lfun^S(w,\chi_\bullet(u))}
      {\Lfun^S(s,\eta\overline{\chi_\bullet(u)})}
 \mathcal H_{\eta,u}(s,w,z)\,dz\,dw\,ds.
\end{split}
\end{equation}
Here $\chi_\bullet(u):n\mapsto\chi_n(u)$ and $\sum^{(6)}$
means sixth-power-free nonzero elements coprime to $S$.

Let $\zeta_F^S(s)=L_F^S(s,1)$ be the Dedekind zeta function
with the primes of $S$ deleted. The local coefficient
$P_p=P_p(s,w,z;\eta,u)$ is the complete valuation series
computed in \Src{Lemma}{7.1}, immediately before and in its
local identity: $v_p(c)\in\{0,1\}$ and $v_p(n),v_p(v),v_p(a)\ge0$
are summed, with $v_p(u)\in\{0,\ldots,5\}$ fixed. This summary's
$s$ is the variable called $x$ in that lemma. For its scalar
Euler factors write
\begin{equation}\label{eq:local-scalar-factors}
 V_p=q_p^{-6z},\quad W_p=\chi_p(u)q_p^{-w},\quad
 D_p=\eta(p)\overline{\chi_p(u)}q_p^{-s},\qquad
 H_p=P_p\frac{(1-V_p)(1-W_p)}{1-D_p}.
\end{equation}
Then $\mathcal H_{\eta,u}=\prod_{p\notin S}H_p$.
Gauss identities, reciprocity and the inserted phases factor the
coefficient into these local terms. The six cases $v_p(u)=0,\ldots,5$
are summed before extracting the scalar quotient.
The correction is holomorphic in the two regions
\begin{align}
 \mathcal D_1:\quad &s_r\ge51/100,\quad z_r\ge17/50,\quad
 w_r\ge-1/100,\quad s_r+w_r\ge1+\eps_0,\label{eq:D1}\\
 \mathcal D_2:\quad &s_r\ge7/8,\quad z_r\ge33/200,\quad
 w_r\ge19/20,\label{eq:D2}
\end{align}
where $s_r=\Rea s$, $w_r=\Rea w$, $z_r=\Rea z$,
and $\eps_0>0$ is the fixed gap above $1$ in the condition
$s_r+w_r\ge1+\eps_0$, with
$\mathcal H_{\eta,u}\ll_\eps q_u^\eps$ uniformly in heights.

The numerator character $\chi_\bullet(u)$ in \eqref{eq:Poisson} is
principal only when $u=1$: a good prime dividing $u$
forces nontrivial ramification, and a nontrivial unit gives a
nontrivial Kummer character. At $u=1$ the poles are
$w=1,z=1/6$. Define
\begin{equation}\label{eq:H-c}
 H_\eta(s)=\mathcal H_{\eta,1}(s,1,1/6),\qquad
 c_S=\widehat W_1(1)\mathcal M(1/6)
       \frac{(\Res_{v=1}\zeta_F^S(v))^2}{6}>0.
\end{equation}
The factor $1/6$ comes from taking the residue of $\zeta_F^S(6z)$
at $z=1/6$:
\[
 \operatorname{Res}_{z=1/6}\zeta_F^S(6z)
 =\frac16\operatorname{Res}_{s=1}\zeta_F^S(s).
\] A positive Gaussian-integral
representation proves $\mathcal M(1/6)>0$.
Since $(s,1,1/6)\in\mathcal D_2$ for $\Rea s>7/8$,
\eqref{eq:D2} and \eqref{eq:H-c} give holomorphy of $H_\eta$ there.
The uniform local estimate $H_\eta(s)=1+O(P_0^{-4/5})$
\Src{Lemma}{7.1} allows $P_0$ to be chosen sufficiently large that
\begin{equation}\label{eq:H-uniform}
 \sup_{\Rea s>7/8}|H_\eta(s)-1|\le1/2.
\end{equation}
The choice of $P_0$ is uniform in the fixed target characters and
root-of-unity factors; see also
\cite[Section~11, principal correction]{QRH}.

\section{Zero detection by two character polynomials}
\label{sec:detector}

In this section, we associate the nonprincipal terms in the Poisson
representation \eqref{eq:Poisson} of Section~\ref{sec:average-poisson}
with zero classes and detecting character polynomials.
The zero classes are defined in \eqref{eq:zero-class}.
A selected zero forces the two detecting sums $M_r,S_m$, with
the same character presentation and norm-twist height, to satisfy
the simultaneous large-value bounds \eqref{eq:large-values}. The zero classes and the large value of $M_r$
are first used with the sextic sieve in Section~\ref{sec:first}
to obtain the preliminary $11/12$ bound. For the sharper $7/8$
bound, Sections~\ref{sec:moments}--\ref{sec:counts} combine
both large values with the two moment estimates to obtain a
stronger count of elements in each class.

For the zero-detection method, see
\cite[Appendix~C, Zero detecting setup]{MP}.
The simultaneous Hecke-character version used here is
\Src{Proposition}{8.3}.

For nonzero sixth-power-free elements $u\in\OO$ coprime to $S$,
including all unit associates, with $q_u\asymp U=Z^d$, consider
\[
 \mathcal X_u=\{\nu(n)\chi_n(u)^{\pm1}:\nu\in\Theta\}.
\]
For a presentation $\psi$, let $\psi^*$ denote its primitive
inducing character as in \eqref{eq:primitive-induction}: it has
the minimal defining modulus and agrees with $\psi$ on ideals
coprime to the defining modulus of $\psi$.
Let $L(s,\psi)$ be the Hecke series of $\psi$ with the
original zero extension; thus $L(s,\psi^*)=L_F(s,\psi^*)$.
Primitive conductors have norm $\ll_\mathcal A U$; any
Euler factors omitted by the original presentations occur only at prime
ideals in $S$ or dividing $(u)$.
Choose $0<\eb<10^{-3}$ and $T_1=Z^\tau\le U^{1/100}$.
Set $I_{\eb}=\lceil1/\eb\rceil+2$. With $a_0=51/100$ and
$j\in\{1,\ldots,I_{\eb}\}$, let
\[
 M_j(u)=\max\bigl(\{a_0\}\cup
 \{\Rea\rho:\psi\in\mathcal X_u,\ L(\rho,\psi^*)=0,\
             \Rea\rho\ge a_0,\ |\Ima\rho|\le3jT_1\}\bigr).
\]
A pigeonhole argument gives $1\le i<I_{\eb}$ and
$a\in(a_0+\eb\mathbb Z_{\ge0})\cap[a_0,1]$ such that
\begin{equation}\label{eq:zero-class}
 a\le M_i(u)<a+\eb,\qquad M_{i+1}(u)<a+2\eb
 \qquad\Src{Lemma}{8.1}.
\end{equation}
Hence the associated functions have a zero-free rectangle to
the right of $a+2\eb$ at the needed height. Concretely,
\[
 \mathcal U_{a,i}=
 \{s\in\mathbb C:a+2\eb\le\Rea s\le2,\ 
                 |\Ima s|\le3(i+1)T_1\}
\]
contains no zero of $L_F(s,\psi^*)$ for $\psi\in\mathcal X_u$,
by \eqref{eq:zero-class} and the definition of $M_{i+1}$.
If $a>a_0$, a selected
zero $\rho=\sigma+i\gamma$ satisfies
$a\le\sigma<a+\eb$, $|\gamma|\le3iT_1$.
Since these are actual family zeros, $a\le\beta_*$.

Let $\mathcal B$ denote a subset of sixth-power-free elements
$q_u\asymp U$ in one fixed $(i,a)$ class; later it may also fix
prime-sum size ranges. All class counts are finite for fixed
parameters \Src{Lemma}{8.1}.
For one presentation $\psi\in\mathcal X_u$, real length exponents
$r,m\ge0$, and test functions $W_M,W_S$, define
\[
 M_r=U^{-r/2}\sum_n\mu(n)\psi(n)W_M(q_n/U^r),\qquad
 S_m=U^{-m/2}\sum_n\psi(n)W_S(q_n/U^m).
\]
Here $n$ runs over nonzero integral ideals, represented by their
primary generators where a residue symbol is used; the character
values retain their stated zero extensions.
The test functions may include factors $q_n^{it}$ with $t\in\mathbb R$;
these are the norm twists specified in Section~\ref{sec:arithmetic}.
Polynomial bounds on fixed vertical strips, logarithmic bounds
in zero-free disks and deleted
Euler estimates \Src{Lemmas}{4.8--4.10} yield
\begin{equation}\label{eq:pointwise}
 |M_r|^2\ll U^{\delta r+\eps},\qquad
 |S_m|^2\ll U^{\delta\min(m,1-m)+\eps},
 \qquad\delta=2a-1,
\end{equation}
when $(1+T_1)^A\le U^{\eps/10}$ \Src{Lemma}{8.2}.
Here $T_1$ is the height cutoff used in \eqref{eq:zero-class}, and
$A\ge0$ is a fixed finite exponent for its polynomial dependence,
as in Section~\ref{sec:arithmetic}.
The second estimate takes the better of the direct bound and the bound
obtained by applying the Hecke functional equation. For the classical functional equation
in this setting, see \cite[equation~(1.1)]{GZ}; the zero-class bounds
with deleted Euler factors are those of \Src{Lemma}{8.2}.

\begin{proposition}[Simultaneous large values, \Src{Proposition}{8.3}]
Fix $\eps>0$, a class $(i,a)$ from \eqref{eq:zero-class} with
$a>a_0$, and a sixth-power-free $u$ of norm $\asymp U$ in that
class. Assume $T_1\le U^{1/100}$ and the height bound
$(1+T_1)^A\le U^{\eps/10}$ from \Src{Lemma}{8.2}, after
choosing its preliminary losses sufficiently small.
For every $t\in[1,3/2]$ and all $U\ge U_0(\mathcal A,\eps)$,
there exist a character presentation $\psi\in\mathcal X_u$, a
zero $\rho$ of $L_F(s,\psi^*)$, where $\psi^*$ is the primitive
character inducing $\psi$, and two sums $M_r,S_m$ of the form
defined above, both using this same presentation (including its
defining modulus and zero extension) and the same norm-twist
height, such that
\begin{align}
 r&\le t+O(1/\log U),&
 r+m&\ge t-O(1/\log U),\label{eq:lengths}\\
 t-\tfrac12-O(\eps)&\le r,&
 0\le m&\le\tfrac12+O(\eps),\label{eq:short-plain}\\
 |M_r|^2&\gg U^{\delta r-\eps},&
 |S_m|^2&\gg U^{\delta m-\eps}
.\label{eq:large-values}
\end{align}
\end{proposition}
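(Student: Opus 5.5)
The plan follows the zero-detection method of \cite[Appendix~C]{MP}, adapted to the simultaneous Hecke setting. Fix the class $(i,a)$ and $u$, and let $\rho=\sigma+i\gamma$ be a zero selected by \eqref{eq:zero-class}. Then $a\le\sigma<a+\eb$ and $|\gamma|\le3iT_1$, and $\rho$ is a zero of $L_F(s,\psi^*)$ for some $\psi\in\mathcal X_u$; we fix this presentation $\psi$, its zero extension and the twist height $\gamma$ for both sums. The identity to exploit is
\[
 1=\frac{L(s,\psi)}{L(s,\psi)}=\Bigl(\sum_n\mu(n)\psi(n)q_n^{-s}\Bigr)\Bigl(\sum_n\psi(n)q_n^{-s}\Bigr).
\]
We smooth it by a Mellin weight of scale $U^t$ and shift the contour to $\Rea s=\sigma$, near $\rho$. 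Concretely, consider
\[
 \frac1{2\pi i}\int\frac{L(w+\rho,\psi)}{L(w+\rho,\psi)}\,\Gamma\text{-type weight}\cdot U^{tw}\,dw .
\]
Compare it with the $n=1$ term, which is $\asymp1$. Using $L(\rho,\psi^*)=0$ to kill the contribution at $w=0$, we dyadically decompose both Dirichlet polynomials in $n$. This gives a sum of $O(\log^2U)$ products $M_r S_m$, with lengths $U^r,U^m$ and $r+m\approx t$, plus tails.

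The next step controls the tails. Terms with $r+m>t+O(1/\log U)$ are negligible by the rapid decay of the smooth weight. The contour shift to the left of $\Rea s=\sigma$ stays inside the zero-free rectangle $\mathcal U_{a,i}$, shifted appropriately. That is where the condition $M_{i+1}(u)<a+2\eb$ and the height cutoff $(1+T_1)^A\le U^{\eps/10}$ enter: by Lemmas~4.8--4.10 of \cite{QRH}, $1/L$ and $L$ have polynomial, respectively logarithmic, bounds there. Deleted Euler factors at primes in $S$ or dividing $u$ are absorbed via the original zero extension; this is why the same presentation must be used in both sums. Pigeonholing over the dyadic pairs then yields one pair $(r,m)$ with
\[
 |M_rS_m|\gg U^{(r+m)(1/2-\sigma)... }
\]
in the normalized form $|M_r||S_m|\gg U^{\delta(r+m)/2-\eps}$, after accounting for the normalizations $U^{-r/2},U^{-m/2}$ and $q_n^{-\sigma}$ versus $q_n^{-1/2}$.

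The remaining step, and the main obstacle, is to separate this into the \emph{simultaneous} bounds \eqref{eq:large-values} together with the shape restrictions \eqref{eq:short-plain}. For this we use the pointwise bounds \eqref{eq:pointwise}. First, if $m>1/2$, we apply the functional equation to $S_m$ to replace it by a dual sum of length $1-m$. Equivalently, the bound $|S_m|^2\ll U^{\delta\min(m,1-m)+\eps}$ shows that the product can only be large when $m\le1/2+O(\eps)$; otherwise we reorganize by moving part of the Möbius length into the plain sum. Then $r\ge t-1/2-O(\eps)$ follows from $r+m\ge t-O(1/\log U)$. Second, combine the product lower bound $|M_r|^2|S_m|^2\gg U^{\delta(r+m)-2\eps}$ with the upper bounds $|M_r|^2\ll U^{\delta r+\eps}$ and $|S_m|^2\ll U^{\delta m+\eps}$. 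These force each factor individually to be within $U^{O(\eps)}$ of its upper bound, which gives both inequalities in \eqref{eq:large-values} after renaming $\eps$. The delicate points are keeping the $O(1/\log U)$ length losses uniform in $t\in[1,3/2]$ and checking that $T_1\le U^{1/100}$ suffices for the rectangle shift. In doing so, the norm-twist height $\gamma$ is incorporated into $W_M,W_S$ as a factor $q_n^{-i\gamma}$.
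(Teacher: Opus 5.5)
Your final step is the same as the paper's: you use \eqref{eq:pointwise} to force $m\le1/2+O(\eps)$, then deduce $r\ge t-1/2-O(\eps)$, and split the product lower bound into the two bounds \eqref{eq:large-values}. The gap is in the detecting identity that is supposed to make some product $M_rS_m$ large. With the full series for $1/L$, your integrand $\frac{L(w+\rho,\psi)}{L(w+\rho,\psi)}\Gamma(w)U^{tw}$ is just $\Gamma(w)U^{tw}$. So the integral equals $e^{-U^{-t}}$, its expansion $\sum_n e^{-q_n/U^t}\psi(n)q_n^{-\rho}\sum_{l\mid n}\mu(l)$ contains only the unit term, and the residue at $w=0$ is $1$, not $0$. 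The zero $\rho$ never enters, so nothing forces any product to be large.

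The missing idea is to \emph{truncate} the M\"obius factor. The paper takes $C_\psi(s)=\sum_l\mu(l)\psi(l)V_{\le}(q_l/U^t)q_l^{-s}$, with $V_{\le}=1$ on $[0,1]$ and $V_{\le}=0$ on $[2,\infty)$, and studies
\[
 \frac1{2\pi i}\int_{(2)}U^{20z}\Gamma(z)L(\rho+z,\psi)C_\psi(\rho+z)\,dz .
\]
Moving to $\Rea z=-1/4$ leaves a negligible integral, and the residue at $z=0$ is $L(\rho,\psi)C_\psi(\rho)=0$. Expanding instead, the coefficients of $LC_\psi$ vanish for $1<q_n\le U^t$. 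Hence the unit term $e^{-U^{-20}}$ must be cancelled by the terms with $q_n>U^t$. Dyadic splitting of these gives $|M_rS_m|^2\gg U^{\delta(r+m)-\eps}$, where:
\begin{itemize}
\item $r+m\ge t-O(1/\log U)$ comes from the vanishing range;
\item $r\le t+O(1/\log U)$ comes from the support of $V_{\le}$;
\item $m$ is a priori bounded only by the $U^{20}$ cutoff.
\end{itemize}

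As a result, two claims in your sketch point the wrong way. First, you say the terms with $r+m>t+O(1/\log U)$ are negligible. In the actual argument these are precisely the surviving terms. The inequality $r+m\ge t-O(1/\log U)$ that you later invoke is not produced by your construction. Nor is $r\le t+O(1/\log U)$, since your M\"obius sum is never truncated.

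Second, the zero-free rectangle $\mathcal U_{a,i}$ lies to the right of $\Rea s=a+2\eb>\sigma$. It therefore cannot justify a contour shift to the left of $\Rea s=\sigma$, where $1/L(w+\rho,\psi)$ has a pole at $w=0$ anyway. In the paper the shift to $\Rea z=-1/4$ needs only polynomial bounds for $L$ and $C_\psi$. The zero-free rectangle enters solely through the pointwise upper bounds \eqref{eq:pointwise} used in the final separation step.
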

\paragraph{\it Idea of zero detection.}
To obtain them, put
\[
 C_\psi(s)=\sum_l\mu(l)\psi(l)
 V_{\le}(q_l/U^t)q_l^{-s},\qquad
 V_{\le}=1\text{ on }[0,1],\quad
 V_{\le}=0\text{ on }[2,\infty),
\]
where $V_{\le}$ is a fixed smooth cutoff. Here $l$ ranges over
nonzero integral ideals and $t$ is the real truncation exponent.
The polynomial $C_\psi$ is a truncated version of the M\"obius series
for $1/L(s,\psi)$ in $\Rea s>1$. Insert it into
\[
 \frac1{2\pi i}\int_{(2)}U^{20z}\Gamma(z)
 L(\rho+z,\psi)C_\psi(\rho+z)\,dz.
\]
Moving the contour to $\Rea z=-1/4$ gives a small integral; the residue at $z=0$
vanishes because $L(\rho,\psi)=0$. Expansion of
$LC_\psi$ cancels the ideal coefficients up to length $U^t$
except the term of the unit ideal. Splitting the remaining ideal norms
into dyadic intervals therefore gives a product of detecting polynomials with
$|M_rS_m|^2\gg U^{\delta(r+m)-\eps}$.
Comparison with \eqref{eq:pointwise} forces
$m\le1/2+O(\eps)$ and then both individual lower bounds.
At $a=a_0$, the class definition \eqref{eq:zero-class} need not
provide a selected zero, so this lowest class is bounded by counting
all elements in its norm range.

\section{The preliminary \texorpdfstring{$11/12$}{11/12} bound}
\label{sec:first}

In this section, we first apply Proposition~\ref{prop:criterion-summary}
at $11/12$, using the unmodified average \eqref{eq:base} and the
preliminary character count \eqref{eq:first-count}.
This bound restricts the possible zero classes and supplies the starting
point for the refined $7/8$ argument in the remaining sections of Part I.

Take $X=Y=Z^{1/2}$ and $Q=q_{b_*}Z$.
For the number-field additive large sieve, see
\cite{Huxley}; its exact planar form and application here are
\Src{Lemmas}{6.1--6.2}. It gives
\[
 \sum_{q_m\ll Q}|A_{m,\sigma,t_0}(Y)|^2\ll(Q+Y^2)/Y\ll Q/Y
.
\]
Equation~\eqref{eq:unmarked}, at $M=N=1$, gives
$\sum|B_{m,\sigma}(Z)|^2\ll Z^{1+\eps}$.
Thus \eqref{eq:separation} implies
\begin{equation}\label{eq:first-direct}
 |I_\eta(Z^{1/2},Z^{1/2},Z)|
 \ll Q^{-1/2}(Q/Y)^{1/2}Z^{1/2+\eps}
 =Z^{1/4+\eps}\qquad\Src{Proposition}{6.3}.
\end{equation}

The underlying estimates are the cubic sieve \cite[Theorem~2]{HB},
the quadratic sieve \cite[Theorem~1.1]{GL}, and the higher-order
norm recursion \cite[Theorem~1.3 and Section~3]{BGL}.
An Eisenstein-integer sextic formulation is also recorded in
\cite[Lemma~2.9]{GZSextic}; \Src{Lemma}{9.1} gives the
primary-generator convention used here.
\begin{lemma}[Sextic sieve, \Src{Lemma}{9.1}]
Let $K,N_{\rm s}\ge1$ bound the norms of squarefree primary elements
$k,n$ coprime to $S$, and let $c_n$ be finitely supported and
independent of $k$. For every $\eps>0$,
\[
 \sum_{k\ \sfq,\ q_k\le K}
 \left|\sum_{n\ \sfq,\ q_n\le N_{\rm s}}c_n\chi_n(k)^{\pm1}\right|^2
 \ll(KN_{\rm s})^\eps\bigl[K+N_{\rm s}+(KN_{\rm s})^{2/3}\bigr]\sum_n|c_n|^2.
\]
\end{lemma}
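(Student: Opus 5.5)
The sextic sieve is a large sieve inequality for the bilinear form $\sum_k\sum_n c_n\chi_n(k)^{\pm1}$ with squarefree $k,n$ coprime to $S$. I would prove it by reduction to the known quadratic and cubic sieves (\cite[Theorem~1.1]{GL} and \cite[Theorem~2]{HB}) together with the higher-order recursion of \cite[Theorem~1.3 and Section~3]{BGL}.

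\emph{Reductions.} First, complex conjugation exchanges the exponents $+1$ and $-1$ without changing the left side, so it suffices to treat $\chi_n(k)$. Second, by duality of bilinear forms the statement is symmetric in $(K,c_n)$ and $(N_{\rm s},c_n)$. Third, sextic reciprocity replaces $\chi_n(k)$ by $\mathcal R(n,k)\chi_k(n)$. The bicharacter $\mathcal R$ factors through a fixed finite ray quotient, so after splitting $k$ and $n$ into $O(1)$ classes the phase becomes a product of a function of $k$ and a function of $n$. These can be absorbed into the coefficients at bounded cost.

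\emph{Splitting into quadratic and cubic parts.} Write $\chi_n=\chi_n^{3}\cdot\chi_n^{-2}$, that is, a quadratic symbol times a cubic symbol. The BGL recursion bounds the sextic form through a Cauchy--Schwarz step. The inner sum over $n$ is expanded and the $k$-sum is completed; the diagonal then contributes $K\sum|c_n|^2$. The off-diagonal terms involve products $\chi_{n_1}\overline{\chi_{n_2}}$. These are handled by applying the quadratic sieve to one component and the cubic (Heath-Brown) sieve, with its $(KN_{\rm s})^{2/3}$ term, to the other. Tracking exponents through the recursion should reproduce exactly
\[
K+N_{\rm s}+(KN_{\rm s})^{2/3}.
\]

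\emph{Main obstacle.} The delicate step is handling non-coprime pairs and the primary-generator and unit conventions over $\mathbb Z[\omega]$. The zero extensions \eqref{eq:zeros} must be retained, and the common factors $(n,k)$ must be removed by a M\"obius inversion that costs only $(KN_{\rm s})^\eps$. I would attempt this by factoring out $g=\gcd$ and summing dyadically over $q_g$. The bound is monotone in $K$ and $N_{\rm s}$, so this decomposition loses at most a divisor-function factor, and that factor is absorbed into the $\eps$-loss.
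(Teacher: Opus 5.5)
Your preliminary reductions are sound. Conjugation handles the sign $-1$ because $\chi_n(k)^{-1}=\overline{\chi_n(k)}$, including the zero values. Duality puts the modulus on whichever variable a cited inequality prefers, and the bound is symmetric in $K,N_{\rm s}$. Restricting to elements coprime to $S$ only deletes nonnegative terms. You also name the inputs the paper relies on: the paper gives no argument beyond citing \cite[Theorem~2]{HB}, \cite[Theorem~1.1]{GL}, the higher-order recursion \cite[Theorem~1.3 and Section~3]{BGL}, and the $\mathbb Z[\omega]$ sextic formulation \cite[Lemma~2.9]{GZSextic}.

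\textbf{The gap.} It is in the one step where you say how these inputs combine. The identity $\chi_n(k)=\chi_n(k)^3\chi_n(k)^{-2}$ is correct. But ``applying the quadratic sieve to one component and the cubic sieve to the other'' is not a usable inequality. The norm of a bilinear form with kernel $\psi_n(k)\phi_n(k)$ is not bounded by the large-sieve bounds for $\psi$ and $\phi$ in the way you need. The general Schur bound $\|A\circ B\|\le\|A\|\,\|B\|$ for Hadamard products only gives something of size $(K+N_{\rm s})\bigl(K+N_{\rm s}+(KN_{\rm s})^{2/3}\bigr)$. Nor is there a lower-order component to isolate in your off-diagonal terms. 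For squarefree $n_1\ne n_2$ coprime to $6$, the character $\chi_{n_1}\overline{\chi_{n_2}}$ has a local component $\chi_p^{\pm1}$ of exact order six at every prime dividing $n_1n_2/(n_1,n_2)^2$.

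\textbf{Where lower-order symbols actually arise.} In a recursion of Heath-Brown's type (the kind of argument behind \cite[Section~3]{BGL}), they appear only on the dual side. After Poisson summation in $k$ modulo $n_1n_2$, the off-diagonal terms become a dual form of length about $N_{\rm s}^2/K$ with unimodular normalized Gauss-sum weights. Its frequencies $h$ are arbitrary, not squarefree. For $(g,n)=1$ we have $\chi_n(g^2h)=\chi_n(g)^2\chi_n(h)$ and $\chi_n(g^3h)=\chi_n(g)^3\chi_n(h)$, where $\chi_n^2$ is cubic and $\chi_n^3$ is quadratic. So square and cube factors of $h$ introduce cubic and quadratic constituents into the dual symbol. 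When $h$ is a square or a cube up to sixth powers, the dual symbol is purely cubic or quadratic. These mixed and lower-order families are where quadratic and cubic inputs can legitimately enter. The genuinely sextic part is fed back into the recursion, and closing that recursion is what produces $(KN_{\rm s})^{2/3}$.

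Your sketch sets up none of this, so ``tracking exponents should reproduce exactly'' the bound is an expectation, not a proof. The economical fix is the paper's route: quote the $n=6$ case over $\mathbb Q(\omega)$, as recorded in \cite[Lemma~2.9]{GZSextic}, and match the primary-generator conventions.

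Finally, what you call the main obstacle is not one. By the zero-extension convention \eqref{eq:zeros}, every term with $(n,k)\ne1$ is simply zero, exactly as in the cited inequalities. So no M\"obius removal of $\gcd(n,k)$ is needed at the level of this statement.
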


\begin{proposition}[Preliminary character count, \Src{Proposition}{9.2}]
Let $\mathcal B$ be a zero class of sixth-power-free elements
$q_u\asymp U$, defined by \eqref{eq:zero-class} in
Section~\ref{sec:detector}, with the detector
height assumptions and $\delta=2a-1$.
For every $\eps>0$ and sufficiently large $U$,
\begin{equation}\label{eq:first-count}
 \#\mathcal B\ll U^{R_{\rm I}(\delta)+\eps}(1+T_1)^A,\quad
 R_{\rm I}(\delta)=\min\{1,\max(1-\delta/2,4/3-\delta)\}.
\end{equation}
Here $T_1$ is the zero-detection height cutoff of
Section~\ref{sec:detector}; $A\ge0$ is a fixed finite exponent
controlling the height dependence, as in Section~\ref{sec:arithmetic}.
\end{proposition}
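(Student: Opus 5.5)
We bound $\#\mathcal B$ by combining the large value of the Möbius detector $M_r$ from Proposition~8.3 with the sextic sieve, and we take the better of two choices of $t$.

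\emph{Setup and pigeonholing.} Fix the class $(i,a)$ with $a>a_0$; the lowest class $a=a_0$ is bounded trivially by $\#\mathcal B\ll U\le U^{1}$, which is the cap in $R_{\rm I}$. For each $u\in\mathcal B$, Proposition~8.3 at a chosen $t\in[1,3/2]$ gives a presentation $\psi\in\mathcal X_u$, a length $r$ with $t-1/2-O(\eps)\le r\le t+O(1/\log U)$, and a norm-twist height $|\gamma|\le 3iT_1$ with $|M_r|^2\gg U^{\delta r-\eps}$.

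We pigeonhole the discrete data into $O(U^\eps(1+T_1)^A)$ cells:
\begin{itemize}
\item the length $r$, on a grid of mesh $1/\log U$;
\item the choice of $\nu\in\Theta$ and the sign $\pm$;
\item the height $\gamma$, discretized at unit spacing, with the smooth twist absorbed by a standard Mellin/Sobolev step into at most $(1+T_1)^A$ fixed weights.
\end{itemize}
This produces a subset $\mathcal B'$ with $\#\mathcal B\ll U^\eps(1+T_1)^A\#\mathcal B'$, on which $r$, $\nu$, the sign and the test function are common.

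\emph{Applying the sieve.} Since $\mu(n)$ restricts to squarefree $n$, we have $\psi(n)=\nu(n)\chi_n(u)^{\pm1}$ up to zero extensions at primes of $S$ or dividing $u$. These can be removed by restricting $n$ coprime to $S$ and splitting $u$, which is sixth-power-free, as $u=k_1k_2^2\cdots k_5^5$ times a unit. The main squarefree piece is $k$; the higher-power parts are handled by a further pigeonhole, using that $\chi_n(k^j)=\chi_n(k)^j$ with the same sieve exponents. Summing the large values and applying the sextic sieve (Lemma~9.1) with $K=U$, $N_{\rm s}=U^r$, $c_n=U^{-r/2}\mu(n)\nu(n)W_M(q_n/U^r)$, and $\sum|c_n|^2\ll 1$ gives
\[
 \#\mathcal B'\,U^{\delta r-\eps}\ll U^\eps\bigl[U+U^r+U^{2(1+r)/3}\bigr].
\]
For $r\ge1$ the middle term dominates the first, so
\[
 \#\mathcal B'\ll U^{\max(r,\,2(1+r)/3)-\delta r+2\eps}.
\]

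\emph{Optimizing $t$.} The exponent must be controlled over the whole allowed range of $r$, which is the main technical point; $r$ is not fixed by us, only confined to $[t-1/2,t]$. Write $f(r)=\max(r,2(1+r)/3)-\delta r$.
\begin{itemize}
\item With $t=1$, the range is $r\in[1/2,1]$, but the sieve bound is weak below $r=1$. Truncating with a Perron/Mellin device instead gives $r\approx1$.
\item For $r\in[1,2]$ the second term of the max dominates, so $f(r)=2/3+r(2/3-\delta)$. This equals $4/3-\delta$ at $r=1$ and $1-\delta/2$ at $r=3/2$ when $\delta$ is large.
\end{itemize}
So I would take $t=1$, giving exponent $4/3-\delta$, or $t=3/2$, giving the exponent at $r\in[1,3/2]$. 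Minimizing over $t$, capping at $1$, and checking that the maximum over each admissible $r$-interval is attained at the endpoint matching $\max(1-\delta/2,4/3-\delta)$ is the obstacle. I expect the endpoint exponents to combine with the other factor as in the paper's Proposition~9.2, but this final comparison may require the $|S_m|$ bound, and I would check it against \Src{Proposition}{9.2}.
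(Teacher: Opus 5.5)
\textbf{Verdict: the overall route is right, but the proposal has two genuine gaps.} Feeding the large value $|M_r|^2\gg U^{\delta r-\eps}$ from Proposition~8.3 into the sextic sieve is the route the paper indicates; it does not reproduce the proof and imports it from \Src{Proposition}{9.2}. You stop at the step that actually produces $R_{\rm I}$, and your analysis of that step is wrong in three places. You do not need to force $r\approx1$ by a Perron device, and you could not: the detector only places $r$ in $[t-1/2-O(\eps),\,t+O(1/\log U)]$, and you must take the worst such $r$. You do not need the $S_m$ bound either. Finally, $f(3/2)=5/3-3\delta/2$, not $1-\delta/2$.

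\textbf{The missing step.} Take $t=1$. For every $r\in[1/2,1]$ one has $2(1+r)/3\ge\max(1,r)$. Your belief that the sieve is weak below $r=1$ comes from using the bound only for $r\ge1$. On this range the sieve exponent is the affine function
\[
 g(r)=\tfrac23+\bigl(\tfrac23-\delta\bigr)r ,
\]
so its maximum over the detector range is attained at an endpoint:
\begin{itemize}
\item $g(1/2)=1-\delta/2$;
\item $g(1)=4/3-\delta$;
\item the overshoot below $1/2$ gives $1-\delta r\le1-\delta/2+O(\eps)$;
\item the overshoot above $1$ costs only $U^{O(1/\log U)}$.
\end{itemize}
Combined with the trivial bound $\#\mathcal B\ll U$, this is exactly $R_{\rm I}(\delta)$. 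The term $1-\delta/2$ comes from the short endpoint $r=1/2$ at $t=1$, not from $t=3/2$, so your $t=3/2$ branch is unnecessary. The same trivial cap covers $a=a_0$, since $R_{\rm I}(1/50)=1$.

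\textbf{The non-squarefree part of $u$.} Lemma~9.1 sums over squarefree $k$. Removing the higher-power part of $u$ is not a $U^{\eps}$ pigeonhole.
\begin{itemize}
\item Write $u=u_1u_2$ with $u_1$ squarefree and $u_2$ powerful of norm $\asymp U^v$. There are about $U^{v/2}$ choices of $u_2$.
\item Absorb $\chi_n(u_2)^{\pm1}$ into $c_n$, which is allowed since it is independent of $u_1$. You neither need, nor does Lemma~9.1 supply, a sieve for $\chi_n(k)^j$.
\item Summing the sieve over $u_2$ then multiplies the $N$-term $U^r$ by $U^{v/2}$. This loss must be balanced against the trivial count $\ll U^{1-v/2+\eps}$ of such $u$.
\end{itemize}
The resulting exponent is at most
\[
 \max\Bigl\{1-\delta r,\ \tfrac23(1+r)-\delta r,\ \min\bigl(1-\tfrac v2,\ r+\tfrac v2-\delta r\bigr)\Bigr\}.
\]
The last term is at most the average $\tfrac12\bigl(1+(1-\delta)r\bigr)$. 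For $1/2\le r\le1$ this is at most $1-\delta/2$, since it is affine in $r$ with values $3/4-\delta/4$ and $1-\delta/2$ at the endpoints. So $R_{\rm I}$ survives.

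With these two points supplied, your pigeonholing of $r$, $\nu$, the sign, the unit and the twist height completes the argument.
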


\begin{proposition}[Preliminary zero-free bound, \Src{Theorem}{3.1}]
\label{prop:preliminary-zero-free}
The family supremum in \eqref{eq:beta} satisfies
\begin{equation}\label{eq:first-conclusion}
 \beta_*\le11/12.
\end{equation}
\end{proposition}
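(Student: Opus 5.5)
The plan is to apply Proposition~\ref{prop:criterion-summary} with $\sigma_0=11/12$ and $C(s)=s-2/3$, so that $C(\sigma_0)=1/4$. We use the unmodified average $J_\eta(Z)=c_S^{-1}I_\eta(Z^{1/2},Z^{1/2},Z)$, suitably normalized. Assume for contradiction that $\beta_*>11/12$.

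The direct hypothesis \eqref{eq:criterion-direct} then follows at once from \eqref{eq:first-direct}: we have $|J_\eta|\ll Z^{1/4+\eps}$, and we take $\omega_0=\eps<\beta_*-11/12$.

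The principal term comes from \eqref{eq:Poisson}. At $u=1$ we shift the $w$ and $z$ contours past the poles at $w=1$ and $z=1/6$. These residues produce $c_S Z^{s-2/3}X^{1/3}\cdots$ times $H_\eta(s)/L_F^S(s,\eta)$; with $X=Y=Z^{1/2}$ the exponents combine to $Z^{C(s)}$, which reproduces $f_\eta$ after normalization and the substitution $\Phi(s-5/6)=e^{(s-5/6)^2}$. Holomorphy of $H_\eta$ and the bound \eqref{eq:contraction} are needed only on $\Rea s>11/12>7/8$, so they follow from \eqref{eq:D2} and \eqref{eq:H-uniform}. The shifted $u=1$ integrals themselves go into region $\mathcal D_1$. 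There $w_r$ is slightly below $1$ and $z_r$ is slightly below $1/6$, or stays at $17/50$, while $s_r$ stays near $\beta_*$, where $1/L^S$ is controlled with height growth. These pieces therefore save a fixed power $Z^{-\sigma}$ relative to $Z^{C(\beta_*)}$.

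The main obstacle is the family of nonprincipal terms $u\neq1$. For these, the $s$-contour can only be moved to $\Rea s=a+2\eb$ for the zero class $(i,a)$ of $u$, by \eqref{eq:zero-class}; on the rectangle $\mathcal U_{a,i}$ we use the logarithmic bounds for $1/L^S$. I would decompose dyadically with $q_u\asymp U=Z^d$ and group by class, bounding each class by $\#\mathcal B\cdot \max_u|\text{term}_u|$. For the single term, I would move $z$ to $z_r\approx17/50$ and $w$ to $w_r\approx -1/100$, staying inside $\mathcal D_1$ and using convexity for $L^S(w,\chi_\bullet(u))$. This gives a power of $U$ times $Z^{a-2/3+\cdots}$. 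Since $a\le\beta_*$, we then need the count $U^{R_{\rm I}(\delta)+\eps}$ from \eqref{eq:first-count} to beat the $q_u^{-z_r}$ and conductor losses. This yields a strictly negative exponent, uniformly over $\delta\in[0,1]$ and over $d$, relative to $C(\beta_*)$.

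Carrying this out is a piecewise-linear optimization in $(d,\delta)$. I would verify it at the breakpoints of $R_{\rm I}$, namely $\delta=2/3$ and $\delta=0$. The height factors $(1+T_1)^A$ are absorbed by taking $\tau$ small, and the lowest class $a=a_0$ is treated by the trivial count. The saving is independent of $\eta$, so $\sigma>0$ in \eqref{eq:criterion-comparison}. Proposition~\ref{prop:criterion-summary} then gives $\beta_*\le11/12$, a contradiction.
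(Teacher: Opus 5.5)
Your overall route is the paper's: Proposition~\ref{prop:criterion-summary} at $\sigma_0=11/12$, $c=-2/3$, with $J_{{\rm I},\eta}=I_\eta(Z^{1/2},Z^{1/2},Z)/c_S$, the direct bound \eqref{eq:first-direct}, and a comparison estimate driven by the count \eqref{eq:first-count}. However, the contour step for $u\ne1$ fails as you state it.

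You place the nonprincipal integrals at $(s_r,w_r,z_r)=(a+2\eb,-1/100,17/50)$ and assert that this point lies in $\mathcal D_1$. It does not. $\mathcal D_1$ requires $s_r+w_r\ge1+\eps_0$, while here $s_r+w_r\le99/100+2\eb$. This is not a technicality. For $p\nmid u$ the local defect $H_p-1$ contains the term $D_pW_p=\eta(p)q_p^{-s-w}$. Once $s_r+w_r\le1$ the Euler product $\mathcal H_{\eta,u}$ is not absolutely convergent, and no continuation of it is supplied. So the large saving from $Y^{w-1}$ at $w_r=-1/100$ is not available.

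The admissible choice is $w_r=1-a-O(\eb)$, for example $(a+16\eb,\,1-a-6\eb,\,17/50)$. The numerator is then bounded through the functional equation and the zero-free rectangle applied to $L(1-w,\overline{\psi_u^*})$, which gives $U^{\delta/2+\eps}$. With $X=Y=Z^{1/2}$, the exponent relative to $C_{\rm I}(\beta_*)$ is, up to $O(\eb+\eps)$,
\[
 \frac a2+\frac{13}{150}-\beta_*+d\Bigl(R_{\rm I}(\delta)+\frac\delta2-\frac{17}{50}\Bigr).
\]
At $d=1/2$ this equals $\frac16+\frac\delta2+\frac12R_{\rm I}(\delta)-\beta_*\le\frac{11}{12}-\beta_*<0$.

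Your breakpoint list is also wrong. The breakpoints of $R_{\rm I}$ are $\delta=1/3$ and $\delta=2/3$, not $\delta=0$. A piecewise-linear check needs $\delta=0,1/3,2/3$ and the upper endpoint. If you use only the convexity bound $U^{a/2+\eps}$ for the numerator, the maximum $11/12-\beta_*$ is attained at $\delta=1/3$, which your list omits.

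Three further steps need repair.

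\begin{enumerate}
\item A negative exponent ``uniformly over $d$'' is false, because the $d$-slope $R_{\rm I}+\delta/2-17/50$ is positive. You must use that the dual frequencies are effectively restricted to $q_u\ll Z^{1-l_x+\eps}=Z^{1/2+\eps}$; this comes from $\widetilde W_0(Xq_H/q_A)$ with $q_A\ll Z^{1+\eps}$. The range $d>1/2+\zeta$ is then handled on an absolutely convergent contour with large $\Rea z$. The range $d<d_{\min}$ must be handled separately on the global line $\Rea s=\beta_*+\eb$, $\Rea w=1/2$, because the detector hypothesis $T_1\le U^{1/100}$ fails for small $U$.
\item For $u=1$, the remainder contours lie in $\mathcal D_2$, not $\mathcal D_1$, since any $z_r<1/6$ violates $z_r\ge17/50$. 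Keep $z_r=1/6+\eb$ while moving $w$ to $19/20$, then move $z$ to $33/200$ in the residue. This saves $1/40$ and $1/1200$, consistent with the paper's final saving $1/4800$. If instead $z_r=17/50$ while $w_r$ is only slightly below $1$, the $u=1$ remainder has relative exponent $(z_r-1/6)/2+(w_r-1)/2\approx13/150>0$, so there is no saving.
\item Truncating heights at $T_1$ leaves tails $Z^{B_\eta}T_1^{-N}$. These are absorbed by choosing $N$ after $\tau_\eta$, as in \Src{Proposition}{11.2}.
\end{enumerate}
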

\paragraph{\it The preliminary exponent comparison.}
Assume $\beta_*>11/12$ and put $\Delta_1=\beta_*-11/12>0$.
The principal residue in \eqref{eq:Poisson} has exponent
$X^{1/3}Z^{s-5/6}=Z^{s-2/3}$. Let
\[
 C_{\rm I}(s)=s-2/3,\qquad
 J_{{\rm I},\eta}(Z)=I_\eta(Z^{1/2},Z^{1/2},Z)/c_S,
\]
with $f_{{\rm I},\eta}$ defined by \eqref{eq:general-f}.
The preliminary character count \eqref{eq:first-count}, the contour estimates
\Src{Lemmas}{10.2--10.6}, and the height choice in
\Src{Proposition}{11.2} give
\[
 |J_{{\rm I},\eta}-f_{{\rm I},\eta}|
 \ll_\eta Z^{C_{\rm I}(\beta_*)-1/4800}.
\]
Apply Proposition~\ref{prop:criterion-summary} with
\[
 \sigma_0=11/12,\qquad c=-2/3,\qquad
 \omega_0=\Delta_1/2,\qquad \sigma=1/4800.
\]
The correction hypotheses follow from \eqref{eq:D2}, \eqref{eq:H-c}
and \eqref{eq:H-uniform}. The displayed comparison bound and
\eqref{eq:first-direct}, with its loss chosen below $\Delta_1/2$,
verify the two estimates, since $C_{\rm I}(11/12)=1/4$.
Thus the preliminary argument reaches the contradiction through the
same continuation criterion used for the $7/8$ bound.

\section{The modified average and its direct estimate}
\label{sec:modified}

In this section, we modify the average used for the preliminary
$11/12$ bound in Section~\ref{sec:first}. There the lengths are
$X=Y=Z^{1/2}$ and the direct estimate is
$Z^{1/4+\eps}$, as in \eqref{eq:first-direct}. Here we use unequal
lengths and apply the selected-prime subtractions in
\eqref{eq:modified} to obtain the direct estimate
$Z^{3/16+\eps}$ in \eqref{eq:direct}.
The corresponding principal exponent changes from
$C_{\rm I}(s)=s-2/3$ to $C(s)=s-11/16$, so that
$C_{\rm I}(11/12)=1/4$ and $C(7/8)=3/16$.
The next section computes this principal term and the effect of the
subtractions on the nonprincipal terms.

Parts II--IV retain this modified average and vary its parameters.
Theorem~\ref{thm:iv-system-optimality} proves that further variation
within the explicitly specified system of estimates cannot give a
bound below $B_{\mathrm{new}}$. A different modification of the average or a
stronger estimate for its terms lies outside that optimality statement.

Assume $\beta_*>7/8$ and use \eqref{eq:two-stages}.
Choose the averaging lengths and their exponents as follows:
\begin{equation}\label{eq:geometry}
 \begin{gathered}
 b=1/8,\quad h=13/16,\quad\ell=1/6,\quad
 l_x=17/48,\quad l_y=23/48,\\
 X=Z^{l_x},\quad Y=Z^{l_y},\quad
 l_x+l_y=5/6,\quad b=l_y-l_x=1/8,\quad h=1-l_x+\ell.
 \end{gathered}
\end{equation}
Choose a positive integer $K$ and $K$ disjoint prime intervals with smooth nonnegative
weights $W_i$, norms $q_p\asymp P_i=Z^{\ell_i}$,
and $\sum_i\ell_i=\ell$. The allowed set $\mathcal P_i$
also imposes $p\notin S$ and $p\in1_{\mathcal T}$.
The underlying intervals are disjoint before these restrictions.

For a squarefree ideal $D$ outside $S$, let $I_{\eta;D}$ denote \eqref{eq:base} with $1_{D\mid cn^3}$
inserted. Write $J^c=\{1,\ldots,K\}\setminus J$ and
$p_J=\prod_{i\in J}p_i$ (empty product $1$). All tuple sums
have $p_i\in\mathcal P_i$. Define
\begin{equation}\label{eq:modified}
\begin{split}
 I_{\eta,\rm mod}(Z)=\sum_{(p_i)}\prod_iW_i(q_{p_i}/P_i)
 \sum_{J\subseteq\{1,\ldots,K\}}
 (-1)^{|J|}q_{p_J}^{-3/2}\overline{\eta(p_{J^c})}\\
 {}\times I_{\eta;p_{J^c}}
       (X/q_{p_J},Y/q_{p_J},Zq_{p_{J^c}}).
\end{split}
\end{equation}
The one-prime operation is
$\overline{\eta(p)}I_{\eta;p}(X,Y,Zq_p)
-q_p^{-3/2}I_\eta(X/q_p,Y/q_p,Z)$.
Its effect on the local Euler factors is computed in
\eqref{eq:selected-local}: the subtraction cancels an unwanted
scalar Euler contribution and produces sextic-character prime sums,
together with the local errors bounded in \eqref{eq:joint-error}.
These prime sums enter the estimates for the nonprincipal terms,
whereas their principal contribution is nonzero by
\eqref{eq:prime-scalar} and gives the normalization
\eqref{eq:normalization}. These substitutions change the averaging lengths $X,Y$, while
each selected prime $p_i$ remains in its original set $\mathcal P_i$,
with norm $q_{p_i}\asymp P_i=Z^{\ell_i}$ and test function
$W_i(q_{p_i}/P_i)$, using the original value of $Z$.

\begin{proposition}[Modified direct bound, \Src{Proposition}{15.3}]
\label{prop:modified-direct}
For the fixed arithmetic data, prime intervals and smooth weights
specified above, with the parameter choices \eqref{eq:geometry}, one has
\begin{equation}\label{eq:direct}
 |I_{\eta,\rm mod}(Z)|\ll_{\eta,\eps} Z^{3/16+\eps}.
\end{equation}
\end{proposition}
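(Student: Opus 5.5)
We would bound $I_{\eta,\rm mod}(Z)$ term by term in the subset expansion \eqref{eq:modified}. Since $K$ is fixed, the number of subsets $J$ is bounded, so it suffices to treat one $J$ at a time. For a fixed $J$, the quantity
\[
 \sum_{(p_i)}\prod_iW_i\,q_{p_J}^{-3/2}\,I_{\eta;p_{J^c}}
 \bigl(X/q_{p_J},\,Y/q_{p_J},\,Zq_{p_{J^c}}\bigr)
\]
is an average of the form \eqref{eq:base}, carrying the divisibility restriction $1_{p_{J^c}\mid cn^3}$. We apply the separation \eqref{eq:separation} and Cauchy--Schwarz \eqref{eq:separated-cauchy-schwarz}. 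The primes $p_J$ rescale the additive lengths to $X'=X/q_{p_J}$ and $Y'=Y/q_{p_J}$. The primes $p_{J^c}$ enter the theta factor through the ray character and the divisibility condition.

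For the additive factor we use the planar large sieve \Src{Lemmas}{6.1--6.2}. This gives $\sum_{q_m\ll Q'}|A|^2\ll (Q'+Y'^2)/Y'$ with $Q'=q_{b_*}X'Y'$. Since $b>0$, we have $Y'\ge X'$, so this is $\ll Y'$. For the theta factor we do not use \eqref{eq:unmarked} directly. Instead, we write $cn^3$ divisible by $p_{J^c}$, pull out the prime factors, and apply the divisibility-restricted mean square \Src{Lemma}{14.3}. That lemma is built from the quadratic--cubic sieve \eqref{eq:hybrid} together with the functional equation \eqref{eq:theta-reflection}. The sum over the tuple $(p_i)_{i\in J^c}$ of total length $\ell'=\ell-d_J$ plays the role of the variable $P$ in \eqref{eq:hybrid}, with $L=Z^{\ell'}$. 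The remaining dual length is governed by $M'=1-\ell-2d_J$, where $d_J=\sum_{i\in J}\ell_i$. We would insert the prime sum inside the square, as in \eqref{eq:hybrid}, rather than estimate each tuple separately. This is what produces the saving $(NL)^{2/3}$ in place of a trivial factor $L$.

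Next we collect exponents. The bound has the shape
\[
 Q'^{-1/2}\,(Y')^{1/2}\,\bigl(\text{theta mean square}\bigr)^{1/2}
 \times(\text{normalizing powers of }P_i).
\]
With \eqref{eq:geometry}, that is $l_x=17/48$, $l_y=23/48$, $\ell=1/6$, the main term should come out as $Z^{l_x/2+b/12+\eps}=Z^{3/16+\eps}$, uniformly in $J$. The only potential excess is the algebraic term $\pos{(d_J+5\ell-1)/4}$ recorded in Section~\ref{sec:reduction}. For $0\le d_J\le\ell$ this term vanishes exactly when $6\ell\le1$, and it therefore vanishes at $\ell=1/6$. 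The other terms in the maxima, namely the $K+NB$ and $N+L$ branches of \eqref{eq:hybrid} and the $\max(M-O,2M-O-N)$ branch, must be checked to be dominated. That is a finite linear comparison for each $J$ and each powerful-part range $O$, and it uses $h=1-l_x+\ell=13/16$ as the dual frequency exponent. The integral over $t_0$ converges thanks to the rapid decay of $\widehat W_0$.

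The main obstacle is the theta mean square under the moving divisibility conditions. The coprimality indicators $1_{(P,k)=1}$ and $1_{(P,b)=1}$ in \eqref{eq:hybrid} must be removed by inclusion--exclusion without loss. After applying \eqref{eq:theta-reflection}, the active-prime local factors $B_p$ for $p\mid p_{J^c}$ contribute both $j_p=0$ and $j_p=4$ terms, each with a factor $q_p^{-1/2}$. We must also verify that the dual variables match the $(k,n,b,P)$ structure of \eqref{eq:hybrid}. I would first do the one-prime case $K=1$ exactly, confirming the exponent $3/16$ there, and then argue multiplicatively over the disjoint prime intervals.
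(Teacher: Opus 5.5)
There is a genuine gap, and it lies in the additive factor, not the theta factor. The planar large sieve gives $\sum_{q_m\ll Q'}|A_{m,\sigma,t_0}(Y')|^2\ll (Q'+Y'^2)/Y'\asymp Y'=(Q'/Y')P_a$, where $P_a=Y'^2/Q'\asymp Z^{b}$. Insert this and \eqref{eq:BJ-bound} into \eqref{eq:separated-cauchy-schwarz}, using $Q'\asymp Z^{M'}$. A separated term is then
\[
 Q'^{-1/2}\Bigl(\frac{Q'}{Y'}P_a\Bigr)^{1/2}Z^{M'/2+\eps}
 \asymp (X')^{1/2}P_a^{1/2}Z^{\eps}\asymp (Y')^{1/2}Z^{\eps}.
\]
Already for $J=\varnothing$ this is $Z^{l_y/2+\eps}=Z^{23/96+\eps}$. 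That exceeds the target $Z^{3/16}=Z^{18/96}$ by a factor $Z^{5b/12}$. Your claim that the exponents ``should come out as'' $l_x/2+b/12$ does not follow from the ingredients you list, because nothing in your argument produces the term $b/12$.

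The missing ingredient is the refined additive mean square \eqref{eq:Gram} (Proposition~\ref{prop:additive-mean-square}, \Src{Proposition}{15.2}). It uses the Gauss-sum structure of the coefficients $q_v^{-1/2}g_{\chi_v}(v,-m)$: expand the square, apply Poisson summation, and evaluate the complete correlations \Src{Lemmas}{13.2--13.4}. The result replaces the large-sieve factor $P_a$ by $1+P_a^{1/6}+P_a^2/Y'$.

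From $l_y-d_J-11b/6\ge1/12$ one gets $P_a^2/Y'\ll P_a^{1/6}$, so each separated term is $(X')^{1/2}P_a^{1/12}Z^{\eps}$. The $J$-tuples carry weight $Z^{d_J}\cdot Z^{-3d_J/2}$, and $(X')^{1/2}\asymp X^{1/2}Z^{-d_J/2}$. The total exponent is therefore $l_x/2+b/12-d_J\le3/16$. This refinement is precisely what makes the asymmetric choice $b=1/8$ affordable.

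By contrast, the theta side needs nothing beyond the stated modified mean square \eqref{eq:BJ-bound} (\Src{Lemma}{15.1}), whose excess term vanishes because $d_J\le1/6$. Re-deriving it from \eqref{eq:hybrid}, which you identify as the main obstacle, is not where the difficulty lies. Also, $h$ plays no role in this direct bound.
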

For a fixed $J$ put
\[
 d_J=\sum_{i\in J}\ell_i,\quad
 X'=X/q_{p_J},\quad Y'=Y/q_{p_J},\quad
 M'=5/6-2d_J,\quad Q=q_{b_*}X'Y'.
\]
Let $B^J_{m,\sigma}$ be the theta factor including all cube multiples in the separated
expression, with the primes $i\notin J$ already summed and
their divisibility restrictions imposed. The theta
functional equation with the divisibility restrictions \Src{Corollary}{14.1} and the resulting
mean-square bound give the following estimate.
\begin{lemma}[Modified theta mean square, \Src{Lemma}{15.1}]
For every $\eps>0$, fixed subset $J$ and ray class $\sigma$,
uniformly in the fixed tuple $(p_i)_{i\in J}$, with $J$ and the
prime sets as in \eqref{eq:modified},
\begin{equation}\label{eq:BJ-bound}
 \sum_{q_m\ll Q}|B^J_{m,\sigma}|^2
 \ll Z^{M'+((d_J-1/6)/4)_++\eps}
 =Z^{M'+\eps}.
\end{equation}
\end{lemma}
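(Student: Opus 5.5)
The plan is to deduce \eqref{eq:BJ-bound} from the divisibility-restricted theta functional equation \Src{Corollary}{14.1}, followed by the quadratic--cubic sieve \eqref{eq:hybrid}. This mirrors the way \eqref{eq:unmarked} follows from \eqref{eq:theta-reflection}.

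\textbf{Dual expansion.} Fix $J$, $\sigma$ and the tuple $(p_i)_{i\in J}$. For $i\notin J$, the primes $p_i$ are summed with weights $W_i(q_{p_i}/P_i)$, and the restriction $1_{p_{J^c}\mid cn^3}$ is imposed. Writing $P=p_{J^c}$, we have $q_P\asymp Z^{\ell-d_J}=Z^{\ell'}$.
\begin{itemize}
\item Expand $B^J_{m,\sigma}$ by Mellin inversion as a combination of smoothed sums $\mathcal C_{V_\Phi}$ at argument $Zq_P$, carrying the divisibility condition.
\item Apply \Src{Corollary}{14.1}. Each such sum becomes a dual sum over $\mu$ of effective length about $q_{c_{\mathfrak t}}^2/(Zq_P)$.
\item The active primes now include the primes of $P$. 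Their local factors $B_p$ produce the twists $\chi_P(n)^{-2}$ and the coprimality conditions $1_{(P,k)=1}$, $1_{(P,b)=1}$ appearing in \eqref{eq:hybrid}.
\item The decay of $V^\sharp$ truncates the dual sum with an $O(Z^\eps)$ loss.
\item Dyadically decompose the dual index as $\lambda^k c n^3$. This separates a squarefree variable $n$, a cube or powerful part $b$, and $P$. The coefficients are bounded by $27\,3^{k/6}|n|$, which gives $|\beta(n,b)|\le1$ after normalization.
\end{itemize}

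\textbf{Sieve step.} Write $m=k\cdot m_{\rm pow}$. The outer sum over $q_m\ll Q$ becomes the $k$-sum in \eqref{eq:hybrid}, with $K=Q$. The powerful part is handled by dyadic ranges $Z^O$, exactly as in \eqref{eq:unmarked}; there the powerful part only helps, since the optimal range is $O=0$ up to $\eps$. Apply \eqref{eq:hybrid} with $L=Z^{\ell'}$, $K=Z^{M'}$ (since $Q\asymp X'Y'=Z^{5/6-2d_J}$), and $N,B$ equal to the dual lengths. Multiplying by the squared normalizing factors $q_P^{-1}$ and $q_\mu^{-1/2}$ should give
\[
 \sum_{q_m\ll Q}|B^J_{m,\sigma}|^2\ll Z^{M'+\eps}\bigl(1+Z^{(1+3\ell'-2M')/4}\bigr),
\]
where the second term comes from the $(NL)^{2/3}$ and $K$ versus $NB$ competition. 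I would optimize over the dyadic pieces: the worst case has $NB$ balanced against $K$, and $L$ enters through $(NL)^{2/3}$.

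\textbf{Exponent bookkeeping.} With $\ell'=\ell-d_J$ and $M'=1-\ell-2d_J$ (equivalently $5/6-2d_J$ at $\ell=1/6$), the introduction's identity gives
\[
 (1+3\ell'-2M')/4=(d_J+5\ell-1)/4=(d_J-1/6)/4 .
\]
Since $d_J\le\ell=1/6$, the positive part vanishes, which yields the stated $Z^{M'+\eps}$.

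\textbf{Main obstacle.} The hardest step is the precise matching of the dual sum to \eqref{eq:hybrid}. This requires three things: a correct identification of the effective lengths $N$, $B$ after the functional equation with denominator $c_{\mathfrak t}$ containing $P$; uniformity in the fixed tuple $p_J$; and the removal of the moving coprimality conditions by inclusion--exclusion. I would first check the extreme case $J=\emptyset$, where $L$ is largest and the $(NL)^{2/3}$ term is critical, and then check that larger $J$ only decreases the excess.
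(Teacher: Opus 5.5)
Your route is the paper's: \Src{Corollary}{14.1} plus the hybrid sieve \eqref{eq:hybrid}, which the paper uses in the packaged form of \Src{Lemma}{14.3} (see the proof of Lemma~\ref{lem:balanced-reflection}). This is followed by the identity $(1+3\ell'-2M')/4=(d_J+5\ell-1)/4$ and $d_J\le\ell=1/6$. The gap is that the one nontrivial step, that the sieve output is $Z^{M'+\eps}(1+Z^{(1+3\ell'-2M')/4})$, is only asserted, and the mechanism you describe for it is not the one at work.

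In the paper's bookkeeping, let $O,H$ be the powerful and squarefree exponents of the outer element and $z_a\le\ell'$ the retained prime length. Let $v,\ell_b,e_\lambda$ be the dual lengths, constrained by $v+3\ell_b+e_\lambda\le T_d=2H+2A_0+2z_a-1-\ell'-N_0-3B_0$. The exponent is then
\[
 E_{\rm ref}=\frac O2+H-S_0-B_0+z_a-u_{\rm hyb}-\ell_b-\frac{2e_\lambda}3-\frac12\pos{T_d-v-3\ell_b-e_\lambda},
 \qquad u_{\rm hyb}=\min\Bigl\{v,z_a,\frac{v+z_a}3\Bigr\},
\]
where $u_{\rm hyb}$ is the saving furnished by the bracket $N+L+(NL)^{2/3}$. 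Using $2A_0\le O$, $O+H\le M'$, $z_a\le\ell'$ and $M'+\ell'-1=-3d_J$, one gets $T_d\le H-3d_J\le H$. So the dual length never exceeds the outer squarefree length, and the $K$ term of $K+NB$ always dominates. In Lemma~\ref{lem:free-reflection} this is the term $\max(H,v+l_b)$, which here equals $H$. There is thus no regime with $NB$ balanced against $K$.

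The actual argument is a two-case split. If $u_{\rm hyb}=z_a$, then $E_{\rm ref}\le O/2+H\le M'$. Otherwise $u_{\rm hyb}\ge v/2$, which is what lets the dual-side terms save at least $T_d/4$. Substituting $T_d$ and using $N_0\le A_0$, $z_a\le\ell'$ and $O+H\le M'$ then gives $E_{\rm ref}\le M'+(1+3\ell'-2M')/4$. The lower bound $u_{\rm hyb}\ge v/2$ and this $T_d/4$ combination are exactly what your sketch omits.

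Your proposed verification also points the wrong way. By your own formula, the excess $(d_J+5\ell-1)/4$ increases with $d_J$, with slope $1/4$, because $M'$ drops by $2d_J$ while $\ell'$ drops only by $d_J$. It is therefore smallest at $J=\emptyset$ and reaches $(6\ell-1)/4$ at $d_J=\ell$. This is why $6\ell\le1$ is precisely the threshold noted in the introduction. Checking $J=\emptyset$ and then arguing that larger $J$ only decreases the excess would leave the critical subsets, those with $d_J$ near $\ell$, unproved. The bound has to be established uniformly in $J$, which the case analysis above does for every subset at once.
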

The extra term vanishes because $0\le d_J\le\ell=1/6$.
The remaining prime sums are included in $B^J_{m,\sigma}$.

\begin{proposition}[Additive mean square, \Src{Proposition}{15.2}]
\label{prop:additive-mean-square}
Fix $S,\mathcal T,b_*,\xi,W_1$ and a ray class $\sigma$ in the
finite ray group $\mathcal T$ of
Section~\ref{sec:average-separation}.
Let $Q,Y'\ge1$ range over fixed polynomial ranges in $Z$,
and suppose $P_a=Y'^2/Q\ge1$. For every $\eps>0$ and
real $t_0$, with the coefficient $A$ defined in Section~\ref{sec:average},
\begin{equation}\label{eq:Gram}
 \sum_{q_m\ll Q}|A_{m,\sigma,t_0}(Y')|^2
 \ll(1+|t_0|)^{J_{\rm Gr}}\frac Q{Y'}
       \left(1+P_a^{1/6}+\frac{P_a^2}{Y'}\right)Z^\eps
.
\end{equation}
Here $J_{\rm Gr}$ is a fixed seminorm order, independent of the
subset $J$, varying norm parameters $Q,Y'$ and $t_0$.
\end{proposition}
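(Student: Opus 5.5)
The plan is to open the square in \eqref{eq:Gram}, execute the $m$-sum by Poisson summation exactly as in \eqref{eq:average-poisson-step}, and split the result into a diagonal contribution and a dual contribution. The dual contribution would then be bounded by the sextic sieve (Lemma~9.1 of the summary, \Src{Lemma}{9.1}).

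\emph{Step 1 (smoothing and opening the square).} First replace the sharp condition $q_m\ll Q$ by a smooth nonnegative majorant $\Omega_1(q_m/Q)$, which can only enlarge the left side. Insert the definition of $A_{m,\sigma,t_0}(Y')$ and expand the square. This gives
\[
 \frac1{Y'^2}\sum_{v,v'\in\sigma}w_{t_0}(v)\overline{w_{t_0}(v')}
 (q_vq_{v'})^{-1/2}\sum_{m}\Omega_1(q_m/Q)\,
 g_{\chi_v}(v,-m)\overline{g_{\chi_{v'}}(v',-m)},
\]
where $w_{t_0}(v)=W_1(q_v/Y')(q_v/Y')^{-1/2+it_0}\chi_v(b_*)/(\tau_*\xi(v))$. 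Only the smooth factor $(q_v/Y')^{it_0}$ depends on $t_0$. Each derivative of it costs a factor $1+|t_0|$, and this is the only source of the polynomial factor $(1+|t_0|)^{J_{\rm Gr}}$. The order $J_{\rm Gr}$ is fixed by the number of integrations by parts used in Step~3, so it is independent of $J$, $Q$ and $Y'$.

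\emph{Step 2 (Poisson summation in $m$).} The summand in $m$ is periodic modulo $[v,v']$. Poisson summation on $\OO$ turns the $m$-sum into
\[
 \frac{Q}{q_{[v,v']}}\sum_{h}\mathcal G_{v,v'}(h)\,\widetilde\Omega_1\!\bigl(Qq_h/q_{[v,v']}\bigr),
\]
where $\mathcal G_{v,v'}(h)$ is the complete sum over $m\bmod [v,v']$. Rapid decay of $\widetilde\Omega_1$ truncates $h$ to $q_h\ll Z^\eps Y'^2/Q=Z^\eps P_a$. For $h=0$, orthogonality of additive characters makes the complete sum vanish unless $v$ and $v'$ coincide up to the ray class structure. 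That diagonal has size $\ll q_v^{1+\eps}$, because the Gauss sums have modulus $\le q_v^{1/2}$ for squarefree $v$, while nonsquarefree $v$ contribute either zero or a sparse, harmless set. The diagonal therefore gives $Q\,Y'^{-2}\cdot Y'\cdot Z^\eps=(Q/Y')Z^\eps$, which is the leading term.

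\emph{Step 3 (dual frequencies; the main obstacle).} For $h\neq0$, evaluate $\mathcal G_{v,v'}(h)$ multiplicatively. Using the Gauss identities and the reciprocity function $\mathcal R$ of \Src{Lemma}{4.4}, it should reduce, for squarefree coprime $v,v'$, to a root-of-unity phase depending only on fixed ray classes times sextic symbols of the shape $\chi_v(h)\overline{\chi_{v'}(h)}$, up to fixed powers. Non-coprime pairs are treated separately through their common factor, which carries a divisor-type saving.

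The off-diagonal sum then becomes
\[
 \frac{Q}{Y'^3}\sum_{0<q_h\ll P_aZ^\eps}\Bigl|\sum_{q_v\asymp Y'}c_v\,\chi_v(h)\Bigr|^2,
\]
after the weights, including the factor $(q_v/Y')^{it_0}$, are separated by Mellin inversion. Applying the sextic sieve with $K=P_a$ and $N_{\rm s}=Y'$ gives
\[
 \frac{Q}{Y'^3}\,\bigl[P_a+Y'+(P_aY')^{2/3}\bigr]\,Y'\,Z^\eps .
\]
The $Y'$ term reproduces $Q/Y'$. To obtain the precise shape $P_a^{1/6}+P_a^2/Y'$, I would combine this sieve bound with the trivial pointwise bound in the complementary range, and optimize over a dyadic split in $q_h$. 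The term $P_a^2/Y'$ should come from the trivial range, and $P_a^{1/6}$ from the $(KN_{\rm s})^{2/3}$ term after that balancing.

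This evaluation of the complete sum is where I expect the real difficulty. It has to be uniform over nonsquarefree $v$, over the ray class $\sigma$, and over the exceptional primes in $S$, and the reciprocity phases must be arranged so that the coefficients $c_v$ are genuinely independent of $h$. I would first settle the squarefree coprime case carefully, and then handle the remaining pairs by factoring out their common and powerful parts and applying the same sieve with shorter lengths.
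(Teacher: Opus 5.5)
Your Steps 1 and 2 (smooth majorant, opening the square, Poisson summation in $m$ to complete correlations modulo $[v,v']$, and the diagonal $v=v'$, $h=0$ giving $(Q/Y')Z^\eps$) agree with the reduction the paper describes. Step 3, however, has a genuine gap.

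The sextic sieve \Src{Lemma}{9.1} is stated for \emph{squarefree} primary $k,n$ coprime to $S$. Your dual variable $h$ runs over all nonzero elements with $q_h\ll Z^\eps P_a$, and $v$ runs over all primary $v$. This is not a technicality. Write $h=h_0a^6$, so that $\chi_v(h)=\chi_v(h_0)1_{(v,a)=1}$ by \eqref{eq:zeros}. The $h$-range then contains $\asymp P_a^{1/6}$ sixth-power parts $a$. Whenever the sixth-power-free part $h_0$ makes the $v$-character, including the fixed characters in your $c_v$, principal on $\sigma$, the inner sum has no cancellation: $|\sum_v c_v\chi_v(h)|^2\asymp Y'^2$. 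These frequencies contribute $\asymp (Q/Y'^3)P_a^{1/6}Y'^2=(Q/Y')P_a^{1/6}$.

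That is the source of the $P_a^{1/6}$ in \eqref{eq:Gram}. It is the analogue of the frequencies $H=ua^6$ with $u=1$ that produce $\zeta_F^S(6z)$ in \eqref{eq:Poisson}, and it is what gives the $b/12$ in the direct exponent $l_x/2+b/12$. In the sieve, squarefreeness of $k$ removes all such frequencies except $k=1$, which is the $N_{\rm s}$ term. Allowing all $h$ multiplies that term by $P_a^{1/6}$.

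Your own bookkeeping exposes the problem. The bound you display equals $\frac{Q}{Y'}\bigl(1+P_a/Y'+(P_a^2/Y')^{1/3}\bigr)\ll\frac{Q}{Y'}\bigl(1+P_a^2/Y'\bigr)$, since $P_a\ge1$. This is already smaller than the right side of \eqref{eq:Gram}, so no ``balancing'' is needed, and the $(KN_{\rm s})^{2/3}$ term can never produce $P_a^{1/6}$. In the range used in Part~I, where $P_a^2/Y'<1$, your step would give $(Q/Y')Z^\eps$ and remove the $b/12$ entirely. That overclaim is the symptom of applying the sieve outside its hypotheses.

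To repair the argument:
\begin{enumerate}
\item Separate the sixth-power part of $h$ before any sieving.
\item Evaluate the principal-type frequencies explicitly.
\item Handle, through exact local evaluations, the sixth-power-free but nonsquarefree $h_0$, the nonsquarefree $v$ (whose Gauss sums are supported on $v/\rad(v)\mid m$), and the common factors $(v,v')$ (which give Ramanujan-type factors).
\end{enumerate}
Item 3 is the role of the common-factor identities and local prime-power formulas \Src{Lemmas}{13.2--13.4} in the paper's evaluation of the complete correlations; the paper does not invoke the sextic sieve at this step. Your Step~3 defers all of this, so neither error term in \eqref{eq:Gram} is actually derived.
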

\paragraph{\it Idea of the additive estimate.}
Expansion of the square and Poisson summation reduce
\eqref{eq:Gram} to complete finite correlations. The common-factor
identities and local prime-power formulas are
\Src{Lemmas}{13.2--13.4}; the final mean-square estimate is
\Src{Proposition}{15.2}. Both error terms $P_a^{1/6}$ and $P_a^2/Y'$
in \eqref{eq:Gram} are retained. Their comparison for the chosen parameter values is the
calculation needed below.

\paragraph{\it The direct exponent calculation.}
With the fixed parameter choices \eqref{eq:geometry},
\[
 P_a=q_{b_*}^{-1}Z^{1/8},\qquad
 l_y-d_J-11b/6\ge1/12,\qquad P_a^2/Y'\ll P_a^{1/6}.
\]
Thus $P_a\ge1$ once $Z^{1/8}\ge q_{b_*}$, which holds
for all sufficiently large $Z$ with the arithmetic data fixed.
Equations~\eqref{eq:BJ-bound}--\eqref{eq:Gram} and
the Cauchy--Schwarz inequality \eqref{eq:separated-cauchy-schwarz},
applied after fixing $J$ and $(p_i)_{i\in J}$ and summing the
complementary prime variables in \eqref{eq:modified}, therefore bound
each separated term by
$(X')^{1/2}P_a^{1/12}Z^\eps$.
The tuples $(p_i)_{i\in J}$ have total weight
\[
 Z^{d_J+\eps}O(Z^{-3d_J/2}),\qquad
 (X')^{1/2}\asymp X^{1/2}Z^{-d_J/2}.
\]
Consequently the exponent in \eqref{eq:direct} is
$l_x/2+b/12=3/16$, since
$d_J-3d_J/2-d_J/2=-d_J\le0$.
The asymmetry in \eqref{eq:geometry} was chosen to obtain this
exponent while keeping the additive error under control.

\section{Local cancellation and principal residues}
\label{sec:principal}

In this section, we compute the effect of the modification in Section~\ref{sec:modified}
on the local Euler factors \eqref{eq:selected-local} and the principal
coefficient obtained from \eqref{eq:prime-scalar}.
The normalized average $J_\eta(Z)$ is defined in
\eqref{eq:normalization}. Its principal contribution is the integral
$f_\eta(Z)$ in \eqref{eq:f}, whose numerator contains the correction
$H_\eta(s)$ defined in \eqref{eq:H-c}; the estimate
\eqref{eq:H-uniform} gives $|H_\eta(s)|\ge1/2$ for
$\Rea s>7/8$. We also prove the joint local error estimate
\eqref{eq:joint-error}, used in Section~\ref{sec:exponents}.
The same local cancellation and normalization are retained in
Parts II--IV, with the extended domain proved in
Section~\ref{sec:extension-euler}.

\subsection{Selected-prime Euler factors and joint local errors}

In this subsection, we compute the local cancellation caused by the
selected prime factors: \eqref{eq:selected-local} gives the local
factor and \eqref{eq:modified-H} defines the resulting holomorphic
correction. The joint local error is bounded in \eqref{eq:joint-error}.
These bounds ensure that the correction remains holomorphic on the
contours used in Section~\ref{sec:exponents} and that the modification
does not introduce an error as large as the principal term.

Let $P_p$ be the local valuation series introduced in
Section~\ref{sec:average-poisson} immediately before
\eqref{eq:local-scalar-factors}, and let $P_p^*$ consist of its terms
satisfying $v_p(c)+3v_p(n)>0$, equivalently $p\mid cn^3$.
The selected-prime operation
replaces $P_p$ by
\[
 q_p^{z-1}\bigl[\overline{\eta(p)}q_p^sP_p^*-q_p^{-w}P_p\bigr].
\]
After extracting the same scalar Euler factors as in
\eqref{eq:Poisson}, define
\begin{equation}\label{eq:selected-local}
 G_p=\bigl[\overline{\eta(p)}q_p^sP_p^*-q_p^{-w}P_p\bigr]
          \frac{(1-V_p)(1-W_p)}{1-D_p}.
\end{equation}
The local identity of \Src{Lemma}{7.1} shows that $G_p$
continues holomorphically in $\mathcal D_1,\mathcal D_2$.
It does not require division by $P_p$. The correction is
\begin{equation}\label{eq:modified-H}
 \mathfrak H_{\eta,u,Z}=
 \sum_{(p_i)}\prod_i\bigl[W_i(q_{p_i}/P_i)q_{p_i}^{z-1}G_{p_i}\bigr]
           \prod_{\substack{p\notin S\\p\notin\{p_i\}}}H_p.
\end{equation}
Replacing $\mathcal H$ by $\mathfrak H$ in
\eqref{eq:Poisson} gives the exact modified average.

On the contours used for the nonprincipal contributions,
$s_r=a+16\eb,w_r=1-a-6\eb,z_r=17/50$, \Src{Section}{16} decomposes
\[
 \mathfrak H=\mathcal H\prod_i(\mathcal Q_i+\mathcal E_i),\qquad
 \mathcal Q_i=-\sum_{p\in\mathcal P_i}
       W_i(q_p/P_i)q_p^{z-1}\overline{\chi_p(u)}.
\]
For each selected prime set $\mathcal P_i$, define its exact sum
$\mathcal B_i$ and error sum $\mathcal E_i$ on these contours by
\[
 \mathcal B_i=\sum_{p\in\mathcal P_i}
 W_i(q_p/P_i)q_p^{z-1}\frac{G_p}{H_p},\qquad
 \mathcal E_i=\mathcal B_i-\mathcal Q_i.
\]
\begin{proposition}[Joint local error, \Src{Proposition}{16.1}]
Let $a_0\le a\le1$, $0<\eb\le10^{-3}$, $q_u\asymp U$,
with $u$ sixth-power-free and $(u,S)=1$. Put $\delta=2a-1$.
Suppose the primitive
character $\psi_u^*$ inducing $\chi_\bullet(u)$ is nonprincipal.
On the displayed contours restrict to $|\Im w|\le T_1$,
and assume $|L_F(1-w,\overline{\psi_u^*})|\ll U^\eps$
uniformly on the points under consideration.
For sufficiently large prime-norm cutoff $P_0$, as defined in
Section~\ref{sec:arithmetic}, the local $H_p$ are nonzero on these
contours. Moreover, for every subset $I\subseteq\{1,\ldots,K\}$,
one has
\begin{equation}\label{eq:joint-error}
 \left|\Lfun^S(w,\chi_\bullet(u))\prod_{i\in I}\mathcal E_i\right|
 \ll U^{\delta/2+O(\eb)+\eps}(3+T_1)^A
             \prod_{i\in I}P_i^{17/50-1/2+O(\eb)+\eps}.
\end{equation}
Here $A\ge0$ is a fixed finite exponent controlling the dependence
on the height cutoff $T_1$, as in Section~\ref{sec:arithmetic}.
Recall that $P_i=Z^{\ell_i}$ is the norm parameter of the
selected prime set $\mathcal P_i$ in Section~\ref{sec:modified},
and $\eb$ is the zero-class width in \eqref{eq:zero-class}.
\end{proposition}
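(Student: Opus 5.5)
The plan is to treat each $\mathcal E_i$ as a sum over primes of the difference between the exact local ratio $G_p/H_p$ and its leading term $-\overline{\chi_p(u)}$, then isolate one global $L$-function. On the displayed contours ($s_r=a+16\eb$, $w_r=1-a-6\eb$, $z_r=17/50$), we first expand $G_p$ and $H_p$ from the local identity of \Src{Lemma}{7.1} in powers of $q_p^{-s}$, $q_p^{-w}$, $q_p^{-6z}$. For $q_p>P_0$ with $P_0$ large, the terms of $H_p$ other than $1$ are small: the worst exponents are of size $q_p^{-(s_r+w_r)}=q_p^{-1+10\eb}$ and $q_p^{-2z_r}$, and these are uniformly bounded by a small power of $P_0^{-1}$. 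This gives $H_p\neq0$ and lets us write $G_p/H_p=-\overline{\chi_p(u)}+e_p$.

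Next we classify the terms of $e_p$. Each is a monomial carrying a character factor $\chi_p(u)^{j}\eta(p)^{k}$ times a power of $q_p$. The terms with no oscillating character have size at most $q_p^{-1/2+O(\eb)}$ relative to the leading term, because the combinations $s_r+w_r-1$ and $w_r$ lie above $-1/2+O(\eb)$ once $a\le1$. The terms carrying $\chi_p(u)^{\pm1}q_p^{-w}$ are where the large global factor $L_F^S(w,\chi_\bullet(u))$ comes in. Multiplying by $q_p^{z-1}$ with $z_r=17/50$ and summing over $q_p\asymp P_i$ then gives, for each $i$, the trivial bound $|\mathcal E_i|\ll P_i^{17/50-1/2+O(\eb)+\eps}$ for the pieces that are absolutely bounded.

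The remaining task is to combine all $i\in I$ with the single factor $L_F^S(w,\chi_\bullet(u))$. For this we do not bound $L^S$ pointwise. Instead we use the functional equation to relate $L_F(w,\psi_u^*)$ to $L_F(1-w,\overline{\psi_u^*})$, with conductor $\ll U$ and height $|\Ima w|\le T_1$. Since $\Rea(1-w)=a+6\eb$, the hypothesis $|L_F(1-w,\overline{\psi_u^*})|\ll U^\eps$ together with the gamma and conductor factor yields
\[
 |L_F(w,\psi_u^*)|\ll U^{(1/2-w_r)+\eps}(3+T_1)^A=U^{\delta/2+O(\eb)+\eps}(3+T_1)^A .
\]
The Euler factors deleted at $S$ and at primes dividing $u$ are handled by the deleted Euler estimates of \Src{Lemmas}{4.8--4.10}, at a cost of $U^\eps$. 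The product over $i\in I$ of the individual prime-sum bounds then gives \eqref{eq:joint-error}, because the $\mathcal E_i$ involve disjoint prime sets and their terms factor.

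The main obstacle is the second step: checking, case by case over $v_p(u)\in\{0,\dots,5\}$, that every non-leading monomial of $G_p/H_p$ really saves $q_p^{-1/2+O(\eb)}$ on these contours uniformly in $a\in[a_0,1]$. The delicate terms are those containing $\overline{\eta(p)}q_p^{s}P_p^*$, since $s_r$ can be near $1$. I would first verify that these terms combine with $D_p$ and $(1-D_p)^{-1}$ so that the factor $q_p^{s}$ is always paired with $q_p^{-s}$ or with $q_p^{-w}q_p^{-6z}$, and handle any residual positive power of $q_p$ using $z_r=17/50$.
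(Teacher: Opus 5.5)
The step you single out as the main obstacle does not go through, and it is exactly where the estimate stops being a product of separate bounds. Take a selected prime $p\in\mathcal P_i$ with $j_p=v_p(u)\ge2$. There $W_p=D_p=0$, and the term $-\eta(p)(q_p-1)q_p^{-s-w}$ in the bracket defining $P_p^*$ carries no factor $V_p$, since the local formula has $V_p^{\mathbf 1_{j\le1}}$. Multiplied by $\overline{\eta(p)}q_p^{s}$, it pairs $q_p^{s}$ with $q_p^{-s-w}$ and leaves $-(q_p-1)q_p^{-w}$. Hence $G_p/H_p\approx-q_p^{1-w}$, of size $q_p^{1-w_r}=q_p^{1/2+\delta/2+6\eb}$.

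Such a prime contributes about $P_i^{17/50-1/2+\delta/2+O(\eb)}$ to $\mathcal E_i$, which exceeds the target by $P_i^{\delta/2}$. You cannot absorb this with $z_r=17/50$: that exponent is already fully used in the target $P_i^{17/50-1/2}$. So the individual bounds $|\mathcal E_i|\ll P_i^{17/50-1/2+O(\eb)+\eps}$ that you multiply at the end are false whenever $\mathcal P_i$ contains such a prime. Your functional-equation step bounds the conductor of $\psi_u^*$ only by $U$; despite your remark, that is a pointwise bound for $L_F(w,\psi_u^*)$, and it cannot recover the loss.

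Your treatment of the unramified primes also needs a different justification. In $G_p/H_p=\overline{\eta(p)}q_p^sP_p^*/P_p-q_p^{-w}$, the character-free term $-q_p^{-w}$ has relative size $q_p^{-w_r}$. This is not $O(q_p^{-1/2})$, because $w_r<1/2$ on this contour. It disappears only by exact cancellation: $P_p$ contains $W_{\rm loc}/(1-W_{\rm loc})$ with $W_{\rm loc}=\chi_p(u)q_p^{-w}$, so $1/P_p=(1-W_{\rm loc})\bigl(1+O(q_p^{-s_r+O(\eb)})\bigr)$, and $-\overline{\chi_p(u)}(1-W_{\rm loc})=-\overline{\chi_p(u)}+q_p^{-w}$. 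For $j_p=1$ it cancels instead against $\overline{\eta(p)}q_p^{s}J_1=q_p^{-w}$.

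The missing idea is the conductor deficit that the paper uses. For any set $I_0$ of distinct selected primes dividing $u$, one has $q_{\mathfrak f_u}\ll_S q_{\rad(u)}\le q_u\prod_{p\in I_0}q_p^{-(j_p-1)}$. Proceed as follows.

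1. Split each $\mathcal E_i$ into its part over $p\nmid u$, which the trivial prime count bounds by $P_i^{17/50-1/2+O(\eb)}$ after the cancellation above, and its boundedly many terms at primes $p\mid u$.
2. Expand $\prod_{i\in I}\mathcal E_i$.
3. For each resulting $I_0$, insert the conductor bound once into the functional-equation estimate $|L_F(w,\psi_u^*)|\ll\bigl(q_{\mathfrak f_u}(3+T_1)^2\bigr)^{1/2-w_r+\eps}U^{\eps}$.

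Since $1/2-w_r=\delta/2+6\eb$, this saves $\prod_{p\in I_0}q_p^{(j_p-1)(\delta/2+6\eb)}$. That offsets the excess $q_p^{\delta/2+6\eb}$ at every chosen ramified prime with $j_p\ge2$ simultaneously. This single application of the saving to the one global $L$-factor is what makes \eqref{eq:joint-error} a joint estimate for $L_F^S(w,\chi_\bullet(u))\prod_{i\in I}\mathcal E_i$. It cannot be obtained by bounding the $L$-function and the $\mathcal E_i$ separately, as your proposal does.
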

Let $\mathfrak f_u$ be the conductor ideal of $\psi_u^*$, and
$I_0$ any set of distinct selected primes dividing $u$. With
$j_p=v_p(u)\in\{1,\ldots,5\}$,
\[
 q_{\mathfrak f_u}\ll_S q_{\rad(u)}
       \le q_u\prod_{p\in I_0}q_p^{-(j_p-1)}.
\]
The displayed conductor bound saves the product
$\prod_{p\in I_0}q_p^{j_p-1}$ relative to $q_u$.
Applying this saving once in the functional-equation estimate for
$L_F(w,\psi_u^*)$ controls the local errors at all selected ramified
primes simultaneously, as required in \eqref{eq:joint-error}. Absolute bounds on the other contours use
\Src{Lemma}{16.2}.

\subsection{The principal coefficient and the normalized average}

In this subsection, we evaluate the principal contribution using the
prime-sum asymptotic \eqref{eq:prime-scalar} and use \eqref{eq:normalization} to normalize
the modified average for Proposition~\ref{prop:criterion-summary}.
The normalized average is $J_\eta(Z)$, and its principal integral
is $f_\eta(Z)$ in \eqref{eq:f}, with exponent $C(s)$ in \eqref{eq:C}.

At $u=1$ in $\mathcal D_2$,
$G_p/H_p=-1+O(q_p^{-7/8})$ uniformly in heights.
\begin{lemma}[Prime asymptotic in a fixed ray class, \Src{Lemma}{13.1}]
Fix the ray quotient $\mathcal T$ of
Section~\ref{sec:average-separation}, the excluded set $S$ of
Section~\ref{sec:arithmetic}, and a
nonnegative nonzero test function $W_i$.
As $P_i\to\infty$, with $\mathcal P_i$ restricted to the identity
class of $\mathcal T$ and primes outside $S$, one has
\begin{equation}\label{eq:prime-scalar}
 S_i(Z):=\sum_{p\in\mathcal P_i}W_i(q_p/P_i)q_p^{-5/6}
 \sim\frac{P_i^{1/6}}{|\mathcal T|\log P_i}
                    \int_0^\infty W_i(y)y^{-5/6}dy>0.
\end{equation}
\end{lemma}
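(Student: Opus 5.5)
The statement to prove is the prime asymptotic \eqref{eq:prime-scalar}. The plan is to deduce it from the prime ideal theorem for ray classes (Chebotarev/Hecke) together with partial summation against the smooth weight $W_i$.

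First, rewrite the restriction $p\in1_{\mathcal T}$, $p\notin S$ by orthogonality on the finite group $\mathcal T$:
\[
 1_{p\in1_{\mathcal T}}=\frac1{|\mathcal T|}\sum_{\theta\in\widehat{\mathcal T}}\theta(p).
\]
Here $\theta(p)$ is evaluated at the primary generator. By the choice of $\mathcal T$ in Section~\ref{sec:average-separation}, this is a finite-order Hecke character whose conductor is supported on $S$. Then
\[
 S_i(Z)=\frac1{|\mathcal T|}\sum_{\theta}\sum_{p\notin S}\theta(p)W_i(q_p/P_i)q_p^{-5/6}.
\]

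Next, treat each $\theta$ by Mellin inversion:
\[
 \sum_p\theta(p)W_i(q_p/P_i)q_p^{-5/6}=\frac1{2\pi i}\int_{(2)}\widehat W_i(s)P_i^{s}\,\mathcal L_\theta(s+5/6)\,ds,
\]
where $\mathcal L_\theta(s)=\sum_{p\notin S}\theta(p)q_p^{-s}=\log L_F^S(s,\theta)+O(1)$, the $O(1)$ being holomorphic for $\Rea s>1/2$ (higher prime powers). Note that "prime" here means prime ideal of degree one or two; inert primes have $q_p=\ell^2$ and contribute a convergent sum, absorbed similarly.

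I would avoid logarithmic singularities by working directly with $\psi$-type sums. Using the prime ideal theorem with classical zero-free region for each Hecke $L$-function of fixed conductor, one has
\[
 \pi_\theta(x):=\sum_{q_p\le x,\,p\notin S}\theta(p)=\delta_{\theta=1}\operatorname{li}(x)+O\bigl(x\exp(-c\sqrt{\log x})\bigr).
\]
This is uniform because only finitely many $\theta$ occur, so no Siegel issue arises.

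Partial summation against $W_i(y/P_i)y^{-5/6}$, which is smooth and compactly supported in $y\asymp P_i$, then gives the trivial character the main term
\[
 \int_0^\infty W_i(y/P_i)y^{-5/6}\frac{dy}{\log y}=\frac{P_i^{1/6}}{\log P_i}\Bigl(\int_0^\infty W_i(y)y^{-5/6}dy+O(1/\log P_i)\Bigr),
\]
using $\log(P_iy)=\log P_i+O(1)$ on the support. The nontrivial $\theta$ contribute
\[
 O\bigl(P_i^{1/6}\exp(-c\sqrt{\log P_i})\bigr)=o\bigl(P_i^{1/6}/\log P_i\bigr),
\]
with the boundary terms vanishing by compact support. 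Dividing by $|\mathcal T|$ yields \eqref{eq:prime-scalar}. Positivity of the constant follows because $W_i\ge0$ is nonzero and $y^{-5/6}>0$.

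The only real point of care is the first step: checking that the map sending $p$ to the image of its primary generator in $\mathcal T$ is genuinely a ray class character, so that the nontrivial $\theta$ have nonvanishing $L$-functions on $\Rea s=1$ and no pole. This is where the construction of $\mathcal T$ via \Src{Lemma}{4.4} is used. The remaining steps are standard.
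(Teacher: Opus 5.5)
Your argument is correct and is essentially the paper's. The paper applies Chebotarev to the fixed ray class field (citing \cite[Theorem~1.1]{TZ}) and then uses smooth partial summation; for this abelian extension, your orthogonality over the finitely many characters of $\mathcal T$, combined with the prime ideal theorem for each Hecke $L$-function of fixed conductor, is exactly that Chebotarev statement written out. Your intermediate Mellin/logarithm step is never used and can be dropped.
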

Fixed-ray prime counting follows by applying Chebotarev to
the fixed ray-class field; see \cite[Theorem~1.1]{TZ}.
Smooth partial summation gives \eqref{eq:prime-scalar}.

For large $Z$ set
\begin{equation}\label{eq:normalization}
 A_{\mathcal T}(Z)=(-1)^KZ^{-\ell/6}\prod_iS_i(Z),\qquad
 J_\eta(Z)=\frac{I_{\eta,\rm mod}(Z)}{c_SA_{\mathcal T}(Z)}.
\end{equation}
Then $|A_{\mathcal T}|\asymp(\log Z)^{-K}$ and
$|A_{\mathcal T}|^{-1}\ll_\eps Z^\eps$, so
\begin{equation}\label{eq:normalized-direct}
 |J_\eta(Z)|\ll_{\eta,\eps}Z^{3/16+\eps}.
\end{equation}

At $w=1,z=1/6$ the correction equals
\[
 H_\eta(s)Z^{\ell/6}A_{\mathcal T}(Z)
       \bigl[1+O(Z^{-\kappa_P})\bigr],\qquad
 0<\kappa_P<(7/8)\min_i\ell_i.
\]
The residue exponent is
\begin{equation}\label{eq:C}
 C(s)=s+l_x/3-5/6+\ell/6=s-11/16,\qquad C(7/8)=3/16,
\end{equation}
and the resulting integral is
\begin{equation}\label{eq:f}
 f_\eta(Z)=\frac1{2\pi i}\int_{(2)}
       Z^{s-11/16}e^{(s-5/6)^2}\frac{H_\eta(s)}{\Lfun^S(s,\eta)}ds.
\end{equation}
Crossing $w=1$ on the way to $19/20$, then $z=1/6$ on
the way to $33/200$, gives principal remainder exponents
\[
 C(\beta_*)+(1+h)\eb-m_w+\eps,\quad
 C(\beta_*)+\eb-m_z+\eps,\quad C(\beta_*)+\eb-\kappa_P+\eps,
\]
where
\[
 m_w=23/960,\qquad m_z=13/9600
 \qquad\Src{Lemma}{10.5}.
\]
Throughout these contour shifts, the denominator $L_F^S(s,\eta)$
is evaluated on $\Rea s=\beta_*+\eb$, to the right of all family zeros.

\section{Moment estimates for the two detecting polynomials}
\label{sec:moments}

In this section, we record the two moment estimates for the detecting polynomials of
Section~\ref{sec:detector}, one with M\"obius coefficients and one without.
The length restrictions in the hypotheses of the moment estimates
\eqref{eq:inverse-moment} and \eqref{eq:plain-moment} are combined in
Section~\ref{sec:counts} to bound each zero and prime-sum size class
for the $7/8$ argument. Parts II and III reuse these moments at
$\kappa=3/4$; Part IV extends the unweighted fourth moment to
smaller $\kappa$ while retaining the M\"obius-weighted estimate.

The lengths in this section are expressed by their exponents in
base $Z$, as in \eqref{eq:length-exponent-convention}: length $r$
means a norm parameter $Z^r$. Products of auxiliary prime polynomials use
disjoint prime intervals and one common character presentation.
A mean square is a sum $\sum_u|F_u|^2$; the fourth moment
arises from $\sum_u|S_uS_u|^2=\sum_u|S_u|^4$.

\subsection{M\"obius-weighted second moment}

In this subsection, we state the second-moment estimate for the
M\"obius-weighted detecting polynomial in \eqref{eq:inverse-moment},
including its admissible length ranges. Its version with auxiliary
prime factors is \eqref{eq:amplification}.
Together with the large-value bound for $M_r$ in \eqref{eq:large-values},
this supplies one of the two counting estimates in
\eqref{eq:two-counts}; the unweighted moment supplies the other.

Fix a ray character $\nu$ and a sign $\varepsilon_\chi\in\{-1,1\}$,
and put $\psi_u(n)=\nu(n)\chi_n(u)^{\varepsilon_\chi}$.
Here $\chi_n(u)$ is the sextic residue character defined in
Section~\ref{sec:arithmetic}, with the primary generator $n$
as modulus and $u$ as argument.
The estimates below hold for either choice, with the same sign
retained for every $n$, $u$ and prime factor in the polynomials.
Character powers retain the zero-extension convention of
\eqref{eq:zeros}, including for the negative sign.
Here $n$ runs over nonzero integral ideals, represented by
their primary generators, with the fixed coprimality restrictions
and zero extensions retained. The factor $\mu(n)$ makes
nonsquarefree ideals contribute zero, and $W(q_n/Z^r)$ restricts
the norm to $q_n\asymp Z^r$.
Let $I$ be a fixed finite index set. For each $i\in I$, let
$\mathcal P_i$ be a set of prime ideals outside $S$, represented
by primary generators, with norms $q_p\asymp Z^{z_i}$.
The sets $\mathcal P_i$ are pairwise disjoint and independent of
$u$; a product over the empty index set equals $1$. Define
\[
 M_u(Z^r)=Z^{-r/2}\sum_n\mu(n)\psi_u(n)W(q_n/Z^r),\qquad
 Q_u=\prod_{i\in I} Z^{-z_i/2}\sum_{p\in\mathcal P_i}
                  a_i(p)\psi_u(p)W_i(q_p/Z^{z_i}).
\]
\begin{lemma}[M\"obius-weighted second moment, \Src{Lemma}{17.1}]
\label{lem:mobius-moment}
Let $r,m_0,z_i\ge0$ be norm exponents: the ideal variable $n$
has norm $\asymp Z^r$, $u\in\OO\setminus\{0\}$ ranges over
$0<q_u\ll Z^{m_0}$, including all unit associates, and the
$i$th prime set has norms $\asymp Z^{z_i}$. The coefficients
$a_i(p)$ are bounded and independent of $u$.
Fix bounded ranges for these lengths and the number of prime
factors. Writing $z_{\rm p}=\sum_{i\in I} z_i$, assume for a fixed $c_m>0$ that
\[
 r+2z_{\rm p}\le m_0-c_m,\qquad
 2r+8z_{\rm p}\le3m_0-c_m.
\]
Then for every $\eps>0$ and all sufficiently large $Z$,
\begin{equation}\label{eq:inverse-moment}
 \sum_{0<q_u\ll Z^{m_0}}|M_u(Z^r)Q_u|^2\ll Z^{m_0+\eps}
.
\end{equation}
The implied constant may depend on $c_m$, the bounded ranges
and finitely many smooth seminorms, but is uniform in prime supports
and bounded coefficients. There is no restriction on individual
prime lengths in this lemma.
\end{lemma}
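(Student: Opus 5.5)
The plan is to expand the square and use duality, combining the sextic large sieve with the quadratic--cubic sieve (the Lemma cited as \Src{Lemma}{14.2}). Write the product $M_u(Z^r)Q_u$ as a single Dirichlet polynomial
\[
 Z^{-(r+z_{\rm p})/2}\sum_N b(N)\psi_u(N),
\]
where $N=n\prod_{i\in I}p_i$. The coefficients $b(N)=\mu(n)W(q_n/Z^r)\prod_i a_i(p_i)W_i(q_{p_i}/Z^{z_i})$ are independent of $u$. Since the prime sets are disjoint and lie outside $S$, each $N$ arises from boundedly many factorizations (at most $2^{|I|}$ ways to choose which primes of $n$ coincide with selected primes). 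The $N$ are therefore squarefree apart from collisions $p_i\mid n$. I would split off the terms with $p_i\mid n$ first. Those terms shorten $n$ by $Z^{z_i}$ and lose at most a factor $Z^{-z_i}$ in the normalization, so they are bounded by the same argument applied with smaller parameters. This leaves a sum over squarefree $N$ of norm $\asymp Z^{r+z_{\rm p}}$, with the structured coefficient vector $b$.

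The first bound comes from the sextic sieve (\Src{Lemma}{9.1}) with $K=Z^{m_0}$ and $N_{\rm s}=Z^{r+z_{\rm p}}$. Reciprocity converts $\chi_n(u)$ to $\chi_u(n)$ up to the fixed phase $\mathcal R$, which is removed by splitting into finitely many ray classes. Nonsquarefree $u$ are handled by writing $u=u_0u_1^2\cdots$ and using the powerful-part dyadic decomposition as in \eqref{eq:unmarked}; since $m_0$ controls the number of $u$ with a large powerful part, this costs only $Z^\eps$. The $\ell^2$ mass is $\sum|b|^2\ll Z^{r+z_{\rm p}+\eps}$. The sieve therefore gives
\[
 Z^{-(r+z_{\rm p})}\bigl[Z^{m_0}+Z^{r+z_{\rm p}}+Z^{\frac23(m_0+r+z_{\rm p})}\bigr]Z^{r+z_{\rm p}+\eps}.
\]
This is $\ll Z^{m_0+\eps}$ precisely when $r+z_{\rm p}\le m_0$ and $2(r+z_{\rm p})\le m_0$. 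Those conditions are stronger than the hypotheses, so treating $b$ as generic is not enough. Instead the factorization must be exploited: the primes are counted only once in $\ell^2$ but enter twice in the length, giving a factor $Z^{z_{\rm p}}$ of slack.

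The main obstacle is obtaining the stated thresholds $r+2z_{\rm p}\le m_0$ and $2r+8z_{\rm p}\le 3m_0$. I would use Hölder to pass to a higher power of the prime polynomial, raising $Q_u$ to $k$ copies (which is allowed, since there is no restriction on individual prime lengths). For $Q_u^{k}$ the $\ell^2$ mass of products of $k$ primes is about $k!\,Z^{kz_{\rm p}}$ rather than $Z^{2kz_{\rm p}}$ relative to the square of the length. One then balances the three terms of the sieve, $K$, $N_{\rm s}$ and $(KN_{\rm s})^{2/3}$. The condition $2r+8z_{\rm p}\le 3m_0$ is the linear form of the $(KN_{\rm s})^{2/3}$ term after this duplication: the prime factor appears with weight $4=2\cdot 2$, and $3m_0$ comes from clearing the $2/3$. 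The condition $r+2z_{\rm p}\le m_0$ is the diagonal $K$-versus-$N_{\rm s}$ comparison. I expect the delicate point to be arranging the $n$-variable to enter only quadratically while the primes enter to the fourth power. The likely route is Cauchy--Schwarz in $u$ against the $n$-sum, treating $\mu(n)$ through the quadratic--cubic sieve \eqref{eq:hybrid} with $P$ playing the role of the selected-prime product. One would take $L=Z^{2z_{\rm p}}$ and $N=Z^r$, and check that $(K+NB)B[N+L+(NL)^{2/3}]$ collapses to $Z^{m_0+\eps}$ under the two hypotheses. The fixed gap $c_m$ absorbs all $Z^\eps$ losses and the dyadic bookkeeping.
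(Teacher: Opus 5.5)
Your plan does not work: it never uses the M\"obius coefficients, and without them no available large sieve reaches the stated ranges.

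\textbf{The failure is already visible when $I=\varnothing$.} Then $z_{\rm p}=0$, and the lemma asserts $\sum_{0<q_u\ll Z^{m_0}}|M_u(Z^r)|^2\ll Z^{m_0+\eps}$ for every $r\le m_0-c_m$. Your sextic-sieve computation gives only $Z^{2(m_0+r)/3+\eps}$, which is too large by about $Z^{m_0/3}$ as $r\to m_0$. For arbitrary bounded coefficients this would be the conjectural $K+N$ sextic large sieve, which nobody has. This is also exactly the regime the count needs: Proposition~\ref{prop:fixed-count} applies the lemma with $m_0=1$ in base $U$, $r\ge23/37$, and prime length tending to zero as $r\to1$. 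With no prime polynomial present, H\"older on $Q_u$ and \eqref{eq:hybrid} have nothing to act on, so your plan has no mechanism for the main case.

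\textbf{The hypotheses show where the gain lies.} Your binding generic condition is $2r+2z_{\rm p}\le m_0$, equivalently $6r+6z_{\rm p}\le3m_0$. The lemma's conditions are $r+2z_{\rm p}\le m_0$ and $2r+8z_{\rm p}\le3m_0$. They lower the coefficient of $r$ (from $2$ to $1$, and from $6$ to $2$), while the coefficient of $z_{\rm p}$ stays at $2$ or rises from $6$ to $8$. So the saving is entirely in the M\"obius length, and the prime factors are charged more than generic coefficients would be.

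\textbf{The slack you attribute to the factorization does not exist.} The coefficients $b(N)$ have $\ell^2$ mass comparable to the length $Z^{r+z_{\rm p}}$, as for any bounded coefficients. The same holds for $Q_u^k$, and replacing $Q_u$ by $Q_u^k$ multiplies $z_{\rm p}$ by $k$, which only makes the hypotheses harder to meet.

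\textbf{Nor does \eqref{eq:hybrid} fit.} It controls $\chi_k(nb)^3\chi_P(n)^{-2}$, where the cubic character has the inner variable $P$ as modulus and $n$ as argument. In $M_uQ_u$ every character has argument $u$.

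\textbf{The missing idea is the paper's route.} The paper follows the recursive Poisson method of \cite{HBQuadratic}, \cite{HB}, \cite{GL}, combined with the cubic theta function.
\begin{itemize}
\item One Poisson transform in $u$, modulo essentially the product of the two squarefree indices, leaves the diagonal $Z^{m_0}$ and dual frequencies $k$ with $q_k\ll Z^{2(r+z_{\rm p})-m_0}$.
\item This reduces the assertion to the canonical estimate \eqref{eq:canonical}. Its inner sums are weighted by the cubic Gauss sums $\gamma_2(n)$, that is, by cubic theta coefficients, and the selected primes are absorbed into $n$ through the divisibility weight $c_{\rm sel}(n)$.
\item Take $M=2(r+z_{\rm p})-m_0$, $F_0=N=r+z_{\rm p}$ and $z_{\rm cap}=z_{\rm p}$. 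The canonical conditions $1\le q_{\mathfrak r}\le Z^{F_0-M-z_{\rm cap}-c_*}$ and $3M+6z_{\rm cap}+c_*\le4F_0$ then become exactly $r+2z_{\rm p}\le m_0-c_*$ and $2r+8z_{\rm p}\le3m_0-c_*$. That is where the two thresholds come from, not from clearing the exponent $2/3$ in a sieve.
\item The canonical estimate is proved by induction on $M$. Two finite Poisson transforms lower $M$ while preserving the common character and the coprimality modulus, and keep $4F_0-3M-6z_{\rm cap}$ bounded below.
\item The final range is handled by the theta functional-equation mean square \Src{Lemma}{14.3}.
\end{itemize}
The fixed gap $c_m$ is what keeps the induction's margins positive; it does more than absorb $Z^\eps$ losses.
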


\paragraph{\it Structure of the inverse-moment induction.}
For the recursive Poisson large-sieve method, compare
\cite{HBQuadratic}, \cite[Section~4]{HB}, and
\cite[Sections~4--7]{GL}, as acknowledged in
\cite[Section~17, canonical estimate]{QRH}.
The precise moment estimates with the displayed auxiliary restrictions
are those of \cite[Sections~17--18]{QRH}.
One initial Poisson transform reduces the assertion to
\Src{Lemma}{17.2}:
\begin{equation}\label{eq:canonical}
\begin{split}
 &Z^{-V}\sum_{f\ \sfq,\ q_f\asymp Z^V}w(f)
 \sum_{0<q_k\ll Z^M}
 \left|Z^{-N/2}\sum_{n\ \sfq}
 \overline{\varpi(n)}\gamma_2(n)\nu(n)1_{(n,\mathfrak r)=1}
 \chi_n(kf^4)c_{\mathrm{sel}}(n)W(q_n/Z^N)\right|^2\\
 &\hspace{15mm}\ll Z^{F_0+\eps},\qquad F_0=N+V,
\end{split}
\end{equation}
under
\[
 q_{\mathfrak r}\le Z^{F_0-M-z_{\rm cap}-c_*},\qquad
 3M+6z_{\rm cap}+c_*\le4F_0,\qquad c_*>0.
\]
The parameters $M,N,V,z_{\rm cap}\ge0$ in this lemma are local norm
exponents: $f$ ranges over squarefree integral ideals coprime to $S$
with norm $\asymp Z^V$, $k$ over nonzero elements of $\OO$ with
norm $\ll Z^M$, and $n$ over squarefree integral ideals with norm
$\asymp Z^N$, using primary generators and the stated zero extensions. The squarefree ideal $\mathfrak r$ is the
coprimality modulus, independent of $k,f$; $c_*>0$ is a fixed
positive gap in the two displayed length inequalities, and
$z_{\rm cap}$ an upper bound for total prime length.
Here $w(f)$ depends on $f$ alone and is nonnegative and
bounded by a fixed power of the divisor function. Also,
\[
 c_{\mathrm{sel}}(n)=\sum_{(p_i)}\prod_i a_i(p_i)1_{p_i\mid n}
\]
has total prime length at most $z_{\rm cap}$: its tuples run over disjoint
fixed prime sets of norms $\asymp Z^{z_i}$, with
$\sum_i z_i\le z_{\rm cap}$. The coefficients $a_i$ are bounded,
with data independent of
$k,f$. No additional arbitrary coefficient depending on $k$ is admitted.

The strict length inequalities in Lemma~\ref{lem:mobius-moment} give the fixed
positive gaps required for the induction on the outer range exponent $M$
in \eqref{eq:canonical}. Two finite Poisson transforms then decrease $M$
while preserving the common character and coprimality condition,
and keeping $4F_0-3M-6z_{\rm cap}$ bounded below by a fixed
positive constant \Src{Lemma}{17.3}.
The mean-square bound obtained from the functional equation \Src{Lemma}{14.3} handles the final
range of the induction, and \Src{Corollary}{17.4} bounds the seminorm and height orders.
These strict length inequalities are needed for the later counts.

\begin{lemma}[Sixth-power amplification, \Src{Lemma}{17.6}]
Fix a ray character $\nu$, a common sign $\varepsilon_\chi\in\{-1,1\}$,
and a test function $W$, with the zero extensions of
Lemma~\ref{lem:mobius-moment}. Put $\psi_u(n)=\nu(n)\chi_n(u)^{\varepsilon_\chi}$
and use $M_u$ as defined above, without auxiliary prime factors.
Let $r\ge0$ range over a fixed bounded interval and $q_u\asymp U$,
with $u$ sixth-power-free. For every $\eps>0$ and sufficiently large $U$,
\begin{equation}\label{eq:amplification}
 \sum_{q_u\asymp U}^{(6)}|M_u(U^r)|^2\ll
 U^{\mathfrak a(r)+\eps}(1+T_1)^A,\qquad
 \mathfrak a(r)=\max\{1,(1+5r)/6\}.
\end{equation}
The estimate permits a smooth weight depending on the element $u$
$W(y)y^{-\sigma_u+it_u}$, with $\sigma_u$ in a fixed compact interval
and $|t_u|\le T_1$. The factor $(1+T_1)^A$ accounts for its smooth seminorms; $A$ depends only on the fixed ranges and arithmetic data.
\end{lemma}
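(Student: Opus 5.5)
This is the imported sixth-power amplification estimate \Src{Lemma}{17.6}. I would derive it from the M\"obius-weighted second moment, Lemma~\ref{lem:mobius-moment}, with no auxiliary primes ($I=\emptyset$, $z_{\rm p}=0$), after rewriting $U$ as a power of $Z$, i.e.\ $Z=U$ and $m_0=1$ up to fixed constants.

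\textbf{Small lengths.} Lemma~\ref{lem:mobius-moment} needs $r\le m_0-c_m$ and $2r\le 3m_0-c_m$. Hence for $r\le 1-c_m$ it applies with $m_0=1$. Summing over all $q_u\ll U$, which contains the sixth-power-free $q_u\asymp U$, gives the bound $U^{1+\eps}$.

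\textbf{Large lengths.} For $r$ near or above $1$, the length $U$ is too short, so I would amplify: enlarge the outer range artificially. Write $U'=U^{m_0}$ with $m_0>1$ chosen so that $r\le m_0-c_m$ holds. Each sixth-power-free $u$ with $q_u\asymp U$ is embedded into elements $uk^6$ with $k$ a primary squarefree element, $(k,uS)=1$, and $q_k\asymp U^{(m_0-1)/6}$. The key point is $\chi_n(uk^6)=\chi_n(u)1_{(n,k)=1}$ by \eqref{eq:zeros}. So $M_{uk^6}$ equals $M_u$ with the coprimality condition $(n,k)=1$ inserted.

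\textbf{Removing the coprimality condition.} I would remove $1_{(n,k)=1}$ by M\"obius inversion over $d\mid(n,k)$. Since $k$ is squarefree with boundedly many divisors up to $U^\eps$, and the $d$-pieces are again polynomials of the same type (shorter length, character $\psi_u(d)$ factored out), this costs only $U^\eps$. Alternatively one could keep $k$ fixed in the canonical form \eqref{eq:canonical}, where the coprimality modulus $\mathfrak r$ is explicitly permitted.

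Positivity then gives
\[
\#\{k\}\sum_u|M_u|^2\ll\sum_{q_{u'}\ll U^{m_0}}|M_{u'}|^2\ll U^{m_0+\eps},
\]
so
\[
\sum_u|M_u|^2\ll U^{m_0-(m_0-1)/6+\eps}=U^{(1+5m_0)/6+\eps}.
\]
Letting $m_0\downarrow r$, which is allowed since the second condition $2r\le3m_0-c_m$ is implied for $r\ge1$, yields $\mathfrak a(r)=(1+5r)/6$ with an $\eps$ loss. The gap $c_m$ is absorbed by choosing it sufficiently small relative to $\eps$, since the constants are uniform over bounded ranges.

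\textbf{Main obstacle.} The hard part is the amplification bookkeeping. Distinct pairs $(u,k)$ must give distinct $uk^6$; this holds by uniqueness of the sixth-power-free part, with units handled through primary normalisation. The inserted coprimality must fit the admissible coefficient structure of \eqref{eq:canonical}. Finally, the weight $W(y)y^{-\sigma_u+it_u}$ depends on $u$. I would handle it by Mellin expansion of the $u$-dependent factor: this produces a $u$-independent test function in $n$ and an integral over heights $\le T_1$, whose seminorms give the factor $(1+T_1)^A$.
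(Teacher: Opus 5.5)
Your proposal is correct and follows the paper's argument. There one replaces $u$ by $ua^6$ and averages over primary $a$ with $q_a\le P_{\rm amp}=(H_{\rm amp}/U)^{1/6}$, where $H_{\rm amp}=\max(2U,D_{\rm amp}^{1+c})$. One then uses $M_u(D_{\rm amp})=\sum_{d\mid\rad a}\mu(d)\psi_u(d)q_d^{-1/2}M_{ua^6}(D_{\rm amp}/q_d)$, obtained by splitting off $d=\gcd(n,\rad a)$. This is the identity your step ``removing the coprimality condition'' must be: it expresses the unrestricted $M_u$ through the coprime-restricted $M_{ua^6}$, not the reverse. Injectivity of $(u,a)\mapsto ua^6$ and the enlarged M\"obius moment then give the factor $H_{\rm amp}/P_{\rm amp}=U^{1/6}H_{\rm amp}^{5/6}$.

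The differences are cosmetic. The single choice of $H_{\rm amp}$ handles your small- and large-length cases at once, with a length gap bounded away from zero. The paper also imposes no squarefreeness or coprimality with $u$ on the amplifier, since $(u,a)\mapsto ua^6$ is already injective on sixth-power-free $u$. Your condition $(k,u)=1$ only makes the number of $k$ depend on $u$.
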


\paragraph{\it Idea of sixth-power amplification.}
Here ``amplification'' refers to the additional averaging over $a$
after replacing $u$ by $ua^6$. Since the sextic residue character
satisfies $\chi_n(ua^6)=\chi_n(u)$ when $(n,a)=1$, this
replacement preserves the character values on those arguments.
Inclusion--exclusion accounts for the remaining coprimality
restrictions, and averaging over $a$ allows a mean-square estimate
over a larger norm range to bound the original sum over $u$.
For a polynomial length $D_{\rm amp}\ge1$ and primary $a$,
inclusion-exclusion gives
\[
 M_u(D_{\rm amp})=\sum_{d\mid\rad a}\mu(d)\psi_u(d)q_d^{-1/2}
                                     M_{ua^6}(D_{\rm amp}/q_d).
\]
Average over $q_a\le P_{\rm amp}=(H_{\rm amp}/U)^{1/6}$ with
$H_{\rm amp}=\max(2U,D_{\rm amp}^{1+c})$. The map $(u,a)\mapsto ua^6$ is injective
on sixth-power-free $u$, and the enlarged mean-square bound contributes the factor
$H_{\rm amp}/P_{\rm amp}=U^{1/6}H_{\rm amp}^{5/6}$. Here $c>0$ is a small fixed
enlargement parameter. Choosing $c$ small proves
\eqref{eq:amplification}, including $r=1$ without letting the positive gap in the
length restriction tend to zero.

\subsection{Fourth moment of unweighted character polynomials}

In this subsection, we state the fourth-moment estimate for products of
unweighted character polynomials in \eqref{eq:plain-moment}
and explain the cancellation in its proof.
Applied to the unweighted detecting polynomial of Section~\ref{sec:detector},
this supplies the second bound in \eqref{eq:two-counts}.
Its dependence on $\kappa$ is the estimate extended in
Section~\ref{sec:iv-moment} to obtain the final improvement.

Here ``unweighted'' means that the character sums contain no
M\"obius coefficients $\mu(\mathfrak a)$, in contrast to the
sum $M_u$ in Section~\ref{sec:moments}. The smooth test functions
$W_j$ are still present.

\begin{lemma}[Unweighted product moment, \Src{Lemma}{18.1}]
Let $\nu_{\rm fix}$ be a finite-order Hecke character presentation
with the fixed ray and residue-symbol factors allowed in
\Src{Lemma}{18.1}, independent of the summation variable $k$.
Put $\psi_k(n)=\nu_{\rm fix}(n)\chi_n(k)$.
Let $m_0\ge0$ specify the range $0<q_k\ll Z^{m_0}$.
Let $q_0\ge0$ be an exponent such that the product of the distinct
prime ideals supporting the displayed variable residue-symbol
factors has norm at most $Z^{q_0}$.
These supporting primes are counted before any cancellation of
factors or reduction of exponents modulo six. Put $M=m_0+q_0$. Allow a common
coprimality condition with modulus of norm $O(Z^B)$ for a fixed
$B$, independent of $k$ and common to every factor. Prime coefficients are fixed finite combinations of
characters in $\Theta$.
For $\psi=\psi_k$, define
\[
 S_\psi(n_j)=Z^{-n_j/2}\sum_a\psi(a)W_j(q_a/Z^{n_j}),\qquad
 Q_k=\prod_i Z^{-z_i/2}\sum_{p\in\mathcal P_i}
 a_i(p)\psi_k(p)W_i(q_p/Z^{z_i}),
\]
where $a$ runs over nonzero integral ideals, using primary generators
and the stated zero extensions, and $n_1,n_2\ge0$ are length exponents, not ideals,
and $z_{\rm p}=\sum_i z_i$. Every factor includes the common
coprimality indicator just specified. All real length parameters
range over prescribed bounded sets. Let $3/4\le\kappa\le1$;
if $z_{\rm p}>0$ and $\kappa<1$, assume
$\beta_*\le(1+\kappa)/2$.
For every $\eps>0$ there exists $\ell_{\rm mesh}>0$, depending
only on these ranges and $\eps$, such that $z_i\le\ell_{\rm mesh}$
for every $i$ implies
\begin{equation}\label{eq:plain-moment}
 \sum_{k\in\mathcal R_{z_{\rm p}}}
 |S_{\psi_k}(n_1)S_{\psi_k}(n_2)Q_k|^2\ll Z^{M+\eps}.
\end{equation}
Here
$\mathcal R_{z_{\rm p}}$ consists of the elements $0<q_k\ll Z^{m_0}$
including all unit associates, whose primitive inducing characters
satisfy the exclusions below:
\[
 \begin{array}{c|c|c}
 &\text{length condition}&\text{excluded inducing characters}\\ \hline
 z_{\rm p}=0& n_1,n_2\text{ arbitrary bounded}&\text{principal}\\
 z_{\rm p}>0&n_1+n_2+6\kappa z_{\rm p}\le M&\Theta
 \end{array}
\]
The permitted $\ell_{\rm mesh}$ is independent of the number
of prime factors and uniform for $\kappa\in[3/4,1]$; constants and
finite seminorm orders may depend on the fixed number of factors.
\end{lemma}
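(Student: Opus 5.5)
The plan is to follow the recursive Poisson large-sieve structure used for Lemma~\ref{lem:mobius-moment}, adapted to the unweighted polynomials. Expanding $|S_{\psi_k}(n_1)S_{\psi_k}(n_2)Q_k|^2$ gives a quadratic form in the coefficient of each ideal $a$ of norm $\asymp Z^{n_1+n_2+z_{\rm p}}$. Each such $a$ factors as $a=a_1a_2p_I$, with $p_I=\prod_ip_i$. Write $a=cn^3\cdot(\text{powerful part})$ and apply sextic reciprocity via $\mathcal R$ to move $k$ into the modulus. Then $\chi_a(k)$ becomes a character in $k$ of conductor dividing $\rad(a)$ times the fixed data; the primes supporting the residue-symbol factors enter through $q_0$. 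After this step the sum over $k$ is a mean square of a character polynomial in the argument $k$ of length $Z^{m_0}$.

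The case $z_{\rm p}=0$ is the easier one. Here the excluded characters are only the principal ones, and I apply the sextic sieve (Lemma 9.1) together with the quadratic--cubic sieve bound \eqref{eq:hybrid}. These are applied after splitting $a_1a_2$ into squarefree and cube parts; the cube parts are handled as in the theta mean square \eqref{eq:unmarked}, where the $n^3$ factors contribute only $Z^\eps$. Dyadically, the bound is $Z^{\eps}(Z^{M}+Z^{n_1+n_2}+(Z^{M+n_1+n_2})^{2/3})\cdot Z^{-(n_1+n_2)}\sum|{\rm coeff}|^2$. Since the unweighted coefficients have $\ell^2$ norm $\ll Z^{n_1+n_2+\eps}$, the terms exceeding $Z^M$ must be removed. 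The term $Z^{n_1+n_2}$ comes from the diagonal contribution of principal characters, which is excluded. When $n_1+n_2$ is large, I instead apply the Hecke functional equation to the long factor, as in the second bound of \eqref{eq:pointwise}, replacing $n_j$ by $1-n_j$ relative to the conductor. This is why arbitrary bounded $n_1,n_2$ are allowed.

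The case $z_{\rm p}>0$ is the main obstacle. Here the prime factor $Q_k$ lengthens the polynomial, and the length condition $n_1+n_2+6\kappa z_{\rm p}\le M$ must be produced from the induction. My first attempt is to run the canonical estimate \eqref{eq:canonical} with $z_{\rm cap}=z_{\rm p}$ and the selected-prime coefficient $c_{\mathrm{sel}}$, using the mesh condition $z_i\le\ell_{\rm mesh}$ so that each Poisson step loses only $O(\ell_{\rm mesh})$. In the two finite Poisson transforms that decrease the outer exponent, the dual sums involve $L$-functions of twists by characters in $\Theta$, and that is why $\Theta$ is excluded. Bounding the dual prime sums requires that the twisted $L$-functions have no zeros to the right of $\beta_*$. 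The hypothesis $\beta_*\le(1+\kappa)/2$ gives each prime of norm $Z^{z_i}$ a pointwise saving of $Z^{z_i(1-\kappa)/2}$ relative to the trivial bound, with $6$ copies arising from the fourth moment and the sextic structure. This is exactly the factor $6\kappa z_{\rm p}$ in the length condition; at $\kappa=1$ the trivial bound suffices and no hypothesis is needed. The delicate points are keeping $4F_0-3M-6z_{\rm cap}$ bounded below uniformly in $\kappa\in[3/4,1]$, and checking that $\ell_{\rm mesh}$ depends only on the ranges and $\eps$. I would track these through Lemma~17.3's recursion with explicit gaps, and finish with \Src{Corollary}{17.4} for the seminorm and height orders.
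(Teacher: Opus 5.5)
There is a genuine gap already in the case $z_{\rm p}=0$. Take $K=Z^{m_0}$, $N=Z^{n_1+n_2}$ and coefficient norm $Z^{n_1+n_2+\eps}$. After normalization the sieve gives $Z^{\eps}\bigl(Z^{m_0}+Z^{n_1+n_2}+Z^{2(m_0+n_1+n_2)/3}\bigr)$, which is $\ll Z^{M+\eps}$ only for $n_1+n_2\le M/2$ (take $q_0=0$). The binding term is $(KN)^{2/3}$, not $N$. In any case, excluding the $k$ with principal inducing character removes no term from a large-sieve inequality, since that inequality bounds arbitrary coefficients.

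The functional equation only brings each $n_j$ down to about $M/2$, so the whole range $M/2<n_1+n_2\le M$ is left open. At $n_1=n_2=M/2$ this is a Lindel\"of-on-average fourth moment, and the sieve gives only $Z^{4M/3}$. What you are missing is the argument of \cite[Section~18]{QRH}, summarized after the lemma and reworked in Proposition~\ref{prop:iv-plain}. It is an induction on the combined range exponent $M$, in which two finite Poisson transforms lower $M$. Direct volume bounds suffice when $n_1+n_2+z_{\rm p}\le5M/6$. Beyond that, transformed characters lying in $\Theta$ produce main terms proportional to the product of the two lengths. One subtracts a comparison product with the same length product, the rectangle \eqref{eq:rectangle} with $X_1X_2=Y_1Y_2$. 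The main terms then cancel by \Src{Lemma}{18.3}, and the comparison product itself is covered by the already proved case of smaller total ideal length.

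For $z_{\rm p}>0$ you also misidentify the mechanism. The canonical estimate \eqref{eq:canonical} is the reduction for the M\"obius-weighted second moment: a single polynomial with Gauss-sum coefficients $\gamma_2(n)$, under its own constraint $3M+6z_{\rm cap}+c_*\le4F_0$. It has no room for two unrestricted ideal sums or their main terms.

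$\Theta$ is excluded because the prime coefficients are combinations of $\Theta$-characters. If $\psi_k$ induces a character of $\Theta$, then some $\psi_k\vartheta$ is principal and $Q_k$ has no cancellation. The transformed characters in $\Theta$ met in the Poisson steps are not excluded; the rectangle cancels them.

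Nor does ``six copies'' derive the condition $n_1+n_2+6\kappa z_{\rm p}\le M$. In the actual proof it is an inductive invariant $A+(6\kappa-1)z_{\rm p}\le M$, with $A=n_1+n_2+z_{\rm p}$. After the transforms its excess $F_{\rm act}$ is at most $6\Delta_{\rm child}$ plus small errors. One then bounds whole prime factors pointwise by $|Q|^2\ll Z^{\kappa z+\eps}$. This is where $\beta_*\le(1+\kappa)/2$ enters, through \Src{Lemma}{4.9} at $s_\kappa=(1+\kappa)/2$. Removing length $d$ lowers the affine expression by $6\kappa d$ at a cost $\kappa d\le F_{\rm act}/6+\ell_{\rm mesh}$, and that cost is matched against $\Delta_{\rm child}$. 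The mesh bound $\ell_{\rm mesh}$ only controls the overshoot from removing whole factors, which is why it is independent of the number of factors.

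Your pointwise saving $Z^{z_i(1-\kappa)/2}$ is correct. Without this bookkeeping, however, the plan does not show that the length condition survives each reduction step.
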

In this application, the choice $\kappa=2\beta_*-1$ in
\eqref{eq:two-stages} gives $\beta_*=(1+\kappa)/2$, which
satisfies the lemma's hypothesis $\beta_*\le(1+\kappa)/2$.

\paragraph{\it Cancellation in the unweighted-moment argument.}
Applying the ordinary Hecke functional equation first reduces the length ranges. Two finite transforms then
reduce the combined range exponent $M$. For transformed characters in $\Theta$, the sums over the two ideal
variables may have main terms proportional to the product of their
summation lengths. For larger values of $n_1+n_2+z_{\rm p}$ the
original paper subtracts two sums whose length products agree,
using fixed ideals
$\mathbf b=(b_1,b_2)$, primary ideal variables $l_1,l_2$, and
positive real length parameters $X_i,Y_i$:
\begin{equation}\label{eq:rectangle}
 D_{\mathbf b}(l_1,l_2)=
 \prod_{i=1}^2V_i(q_{b_i}q_{l_i}/X_i)
 -\prod_{i=1}^2V_i(q_{b_i}q_{l_i}/Y_i),\qquad
 X_1X_2=Y_1Y_2.
\end{equation}
The two transforms preserve, on each side of the squared norm,
one character, one coprimality condition and one norm twist
on the full product of prime variables and unrestricted ideal variables
\Src{Lemma}{18.2}. For an inducing character in $\Theta$,
fixing the prime variables leaves
$\vartheta(l_1)\vartheta(l_2)1_{(l_1l_2,\mathfrak r_*)=1}
q_{l_1l_2}^{i\tau_{\rm tw}}D_{\mathbf b}(l_1,l_2)$.

Here $\vartheta\in\Theta$, $\mathfrak r_*$ is a squarefree
coprimality modulus with $q_{\mathfrak r_*}\le Z^{B_{\rm cop}}$ for
fixed $B_{\rm cop}$, and $\tau_{\rm tw}\in\mathbb R$ is the common norm-twist
height. The $V_i$ are the two test functions of ideal norms; the prime-interval
weights remain $W_i$.
By \Src{Lemma}{18.3}, the two product main terms have a common
coefficient, independent of $X_i,Y_i$, multiplying respectively
$(X_1X_2)^{1+i\tau_{\rm tw}}$ and $(Y_1Y_2)^{1+i\tau_{\rm tw}}$. Hence
\[
 (X_1X_2)^{1+i\tau_{\rm tw}}-(Y_1Y_2)^{1+i\tau_{\rm tw}}=0,
\]
which is the decisive cancellation in \eqref{eq:rectangle}.
The cited lemma supplies the uniform Poisson remainder.
Direct volume bounds suffice first in the uncentered range
$n_1+n_2+z_{\rm p}\le5M/6$; the cancellation handles the
remaining ranges. Induction on the combined range exponent proves all
cases with $z_{\rm p}=0$ before those with $z_{\rm p}>0$.
Bounding the combined support errors uniformly by $O_K(1/\log Z)$
keeps the maximum-length restriction independent of $K$.
This cancellation is what completes the inductive estimate
without discarding those transformed characters.

\section{Counting elements from simultaneous large values}
\label{sec:counts}

In this section, we combine the simultaneous large values of Section~\ref{sec:detector}
with the two moments \eqref{eq:inverse-moment} and \eqref{eq:plain-moment}.
The two counting estimates are displayed in \eqref{eq:two-counts};
optimizing the detecting lengths gives the count
\eqref{eq:count-final}, with exponent
$\max(R_{\rm short}(t),R_{\rm long}(t))$ for each zero and prime-sum
size class. This exponent enters the contour estimate
in Section~\ref{sec:exponents}.
The fixed-parameter count in Part II and the variable-parameter
count in Part IV repeat this combination with their respective
moment hypotheses.

\subsection{Prime-sum size classes and admissible detecting lengths}
\label{sec:prime-sum-classes}

In this subsection, we partition the elements according to the sizes of
the auxiliary prime sums, using the size parameter in \eqref{eq:gbar},
and determine the lengths permitted by \eqref{eq:prime-lengths}.
This partition records both the available savings from large prime sums
and their contribution to the nonprincipal integrals, so that the
counting bounds below and the contour exponent \eqref{eq:E} use the
same size classes.

For a fixed Mellin parameter $z$ on those contours, put
\[
 Q_i(u;z)=P_i^{-1/2}\sum_{p\in\mathcal P_i}
 \overline{\chi_p(u)}W_i(q_p/P_i)(q_p/P_i)^{z-1},\qquad
 \mathcal Q_i=-P_i^{z-1/2}Q_i.
\]
The logarithmic-derivative estimate in the zero-free rectangle
gives $|Q_i|\ll U^{\eps_1}P_i^{\delta/2+O(\eb)}$
\Src{Lemma}{19.1}. Choose a fixed small width for the exponent intervals $\theta_{\rm bin}>0$.
Partition by real exponents $g_i$ so
\[
 |Q_i|\le P_i^{g_i+\theta_{\rm bin}},\qquad
 g_i>0\Longrightarrow |Q_i|\ge P_i^{g_i},\qquad
 0\le g_i\le\delta/2.
\]
For an error factor set $g_i=0$. Write
\begin{equation}\label{eq:gbar}
 \bar g=\frac{\sum_i\ell_i g_i}{\ell},\qquad
 0\le\bar g\le\delta/2,\qquad x=\bar g/\delta\in[0,1/2].
\end{equation}
Ordering positive exponents decreasingly and selecting an initial
set of prime sums of total length at most $z_{\rm p}$ gives
\begin{equation}\label{eq:greedy}
 \left|\prod_{i\ {\rm selected}}Q_i\right|^2
 \ge U^{2\bar g z_{\rm p}-\delta\max_i(\ell_i/d)},
\end{equation}
up to a fixed small length decrement. Allowing a fraction of the final prime interval would give gain at
least $\bar g z_{\rm p}$; retaining only whole intervals loses at
most the contribution of one interval, giving the error in
\eqref{eq:greedy}.

After selecting a presentation, take
$\psi(n)=\nu(n)\overline{\chi_n(u)}$. The prime coefficient is
$\overline{\nu(p)}1_{p\in1_{\mathcal T}}$, a fixed finite
combination of $\Theta$-characters. Conjugate the whole
unweighted product when using \eqref{eq:plain-moment}.
If $u$ has a prime divisor outside $S$, the primitive character
induced by $\psi$ is ramified there and hence cannot belong to
$\Theta$, whose conductors are supported on $S$.
The required moment hypotheses therefore hold; smooth
parameter estimates handle the allowed element-dependent
smooth weights with a fixed polynomial dependence on the height.

Measure lengths in base $U$: the variable $u$ has norm
$\asymp U$, so its range exponent is one. The largest total prime
lengths allowed by the two moment inequalities are
\begin{equation}\label{eq:prime-lengths}
 z_M(r)=\frac{1-r}{2},\qquad
 z_S(m)=\frac{1-2m}{6\kappa}
       =\frac{1-2m}{9/2+12\Delta}.
\end{equation}
Using \eqref{eq:large-values}, \eqref{eq:greedy} and the moments,
\begin{equation}\label{eq:two-counts}
 \#\mathcal B\ll
 \min\left\{
 U^{1-\delta r-2\bar g z_M+\eps},
 U^{1-2\delta m-2\bar g z_S+\eps}\right\}(1+T_1)^A
\end{equation}
where the respective lengths are admissible. The second bound
uses two copies of $S_m$, hence $|S_m|^4$.

\subsection{Optimization of the two counting estimates}

In this subsection, we optimize the two bounds in \eqref{eq:two-counts}
to obtain a uniform count for each zero and prime-sum size class.
The resulting exponent \eqref{eq:count-final} is substituted into the
nonprincipal contour estimate in Section~\ref{sec:exponents}.

\begin{proposition}[Refined character count, \Src{Proposition}{19.2}]
Assume the Stage-2 setting \eqref{eq:two-stages}, so
$\Delta=\beta_*-7/8\in(0,1/24]$ and $\kappa=3/4+2\Delta$.
Let $\mathcal B$ consist of elements in one fixed zero class of
Section~\ref{sec:detector}, defined by \eqref{eq:zero-class},
and one fixed prime-sum size class of
Section~\ref{sec:prime-sum-classes}, specified by the exponents $g_i$,
with $q_u\asymp U=Z^d$, $a>a_0$, $0<\delta\le\alpha$,
$0\le\bar g\le\delta/2$ and $x=\bar g/\delta$.
Assume the detector height hypotheses and that the available
prime length $\ell/d$ exceeds $7/37$ by a fixed positive amount.
Put
\[
 D_x=3-17x/9,\qquad P_x=(2-8x/9)(1-x).
\]
For $t\in[1,3/2]$ define
\begin{equation}\label{eq:Rshort}
 R_{\rm short}(t)=1-\delta+\frac{\delta P_x}{D_x}(3/2-t).
\end{equation}
\begin{equation}\label{eq:Rlong}
 R_{\rm long}(t)=1-\delta+(\alpha-\delta)(t-1),\qquad\alpha=5/6.
\end{equation}
Then, for every $\eps>0$, the length decrements and maximum
individual prime length can be chosen so that
\begin{equation}\label{eq:count-final}
 \#\mathcal B\ll
 U^{\max(R_{\rm short}(t),R_{\rm long}(t))+\Delta/4+\eps}
 (1+T_1)^A.
\end{equation}
Here $T_1$ is the zero-detection height cutoff, and $A\ge0$ is
a fixed finite exponent measuring polynomial growth in that height.
The constants and exponent $A$ are uniform in $u$; their
arithmetic dependence is on the fixed data.
With no selected prime sums, $t=1$ instead gives
$\#\mathcal B\ll U^{1-2\delta/3+\eps}(1+T_1)^A$ without
any hypothesis on the available prime length.
\end{proposition}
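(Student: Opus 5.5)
The count follows from \eqref{eq:two-counts}, optimized separately in the two regimes of the detecting length $r$ supplied by \eqref{eq:lengths}--\eqref{eq:short-plain}. Fix $t\in[1,3/2]$ and apply the simultaneous large values proposition: each $u\in\mathcal B$ yields a presentation and lengths $r,m$ with $r\le t$, $r+m\ge t$, $t-1/2\le r$, $m\le1/2$, and both lower bounds in \eqref{eq:large-values}. Partition $\mathcal B$ further by the value of $r$ in a fine mesh, costing $U^\eps$, so that $r,m$ may be treated as fixed. For each $u$ we then have two options.

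\emph{Long option.} Use $|M_rQ|^2$ with total prime length $z_M(r)=(1-r)/2$ (capped by the available $\ell/d$), together with Lemma~\ref{lem:mobius-moment} or \eqref{eq:amplification} when $r$ is large. This gives
\[
 \#\mathcal B\ll U^{1-\delta r-2\bar g\,z_M(r)+\eps}.
\]
For $r\ge1$ no primes are admissible, and the amplified bound $\mathfrak a(r)=(1+5r)/6$ applies. Since $r\le t$, this gives
\[
 \#\mathcal B\ll U^{(1+5r)/6-\delta r},
\]
and this quantity is at most $1-\delta+(\alpha-\delta)(r-1)\le R_{\rm long}(t)$, because $\alpha-\delta\ge0$. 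This is precisely where the Stage-2 restriction $\delta\le\alpha$ enters.

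\emph{Short option.} Use $|S_m|^4|Q|^2$ with $z_S(m)=(1-2m)/(6\kappa)$ and \eqref{eq:plain-moment}, which requires $\beta_*\le(1+\kappa)/2$; this holds with equality for $\kappa=2\beta_*-1$. This gives
\[
 \#\mathcal B\ll U^{1-2\delta m-2\bar g z_S(m)+\eps}.
\]
The greedy selection \eqref{eq:greedy} supplies the factor $U^{2\bar g z}$, up to a loss $\delta\max_i\ell_i/d$. That loss is made $O(\eps)$ by choosing the individual prime lengths below $\ell_{\rm mesh}$ and small. Writing $\kappa=3/4+2\Delta$ and expanding $1/(6\kappa)$ to first order in $\Delta$ contributes at most $2\bar g(1-2m)\cdot O(\Delta)\le\Delta/4$, using $\bar g\le\delta/2\le5/12$.

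\emph{Combining the options.} For each fixed $r$ take the minimum of the two bounds, with $m\ge t-r$. This reduces the claim to an elementary piecewise-linear inequality: for all admissible $r\in[t-1/2,\min(t,1)]$,
\[
 \min\Bigl\{1-\delta r-\bar g(1-r),\ 1-2\delta(t-r)-\tfrac{2\bar g(1-2(t-r))}{9/2}\Bigr\}\le\max(R_{\rm short}(t),R_{\rm long}(t)).
\]
Here $m=t-r$ is taken to be its worst case. The first bound decreases in $r$ while the second increases, so the minimum is maximized where they cross. Substituting $\bar g=x\delta$ and solving the linear crossing equation in $r$ should produce exactly $1-\delta+\delta P_x(3/2-t)/D_x$, with $D_x=3-17x/9$ as the denominator of the crossing. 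I would verify this identity symbolically.

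The main obstacle is the bookkeeping of admissibility, which the rest of the argument depends on. Two things must hold. First, $z_M(r)$ and $z_S(m)$ must not exceed the available prime length $\ell/d$, and the hypothesis $\ell/d>7/37+c$ is exactly what guarantees this at the crossing point. I would check that the crossing $r$ satisfies $(1-r)/2\le\ell/d$ and $(1-2m)/(6\kappa)\le\ell/d$, and handle the boundary values of $x$ by monotonicity. Second, the length inequalities of the moments must hold with fixed positive gaps, which I would obtain by inserting small fixed decrements absorbed into $\eps$.

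Finally, the case with no selected primes at $t=1$ reduces to $\min\{1-\delta r,1-2\delta(1-r)\}\le1-2\delta/3$, attained at $r=2/3$.
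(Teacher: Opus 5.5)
Your outline is the paper's argument: the two counts of \eqref{eq:two-counts}, the crossing $r_*(t)$ with denominator $D_x$, amplification for $r\ge1$ giving $R_{\rm long}(t)$ via $\alpha-\delta\ge0$, the $7/37$ check through $r_*(t)\ge r_*(1)\ge 23/37$, and balancing at $r=2/3$ for the no-prime case. One step fails as written: the $\Delta/4$ bound.

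Replacing $2/9$ by $1/(6\kappa)=1/(9/2+12\Delta)$ in \eqref{eq:prime-lengths} changes the unweighted exponent by
\[
 2\bar g(1-2m)\Bigl(\tfrac29-\tfrac1{9/2+12\Delta}\Bigr),\qquad
 0\le\tfrac29-\tfrac1{9/2+12\Delta}\le\tfrac{16}{27}\Delta .
\]
The first-order coefficient $16/27$ is also the exact linear majorant, by concavity. If you use only $\bar g\le 5/12$ and $1-2m\le1$, you get $\tfrac{40}{81}\Delta$, which exceeds $\Delta/4$. The loss is genuine: for $\bar g=5/12$ and $m$ near $0$, the exact correction is at least $\tfrac49\Delta$ for every $\Delta\in(0,1/24]$. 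So the $\Delta$-loss cannot be bounded uniformly in $m$ before the combining step, as you propose.

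The fix is to do the $\Delta$-accounting after locating the crossing, split at $r_*(t)$.
\begin{itemize}
\item For $r\ge r_*(t)$, bound the minimum by the M\"obius-side exponent $A_I(r)\le A_I(r_*)=R_{\rm short}(t)$. This exponent carries no $\Delta$-loss.
\item For $r\le r_*(t)$, invoke the unweighted estimate. There
\[
 m\ge t-r\ge t-r_*(t)=\frac{t(1-x)+5x/9}{D_x}\ge\frac13 .
\]
This expression is increasing in $t$ and in $x$ on the relevant box, so its minimum is attained at $t=1$, $x=0$. Hence $1-2m\le 1/3+O(\eps)$, and the correction is at most
\[
 \tfrac56\cdot\tfrac13\cdot\tfrac{16}{27}\Delta=\tfrac{40}{243}\Delta<\tfrac{\Delta}{4}.
\]
\end{itemize}
The same lower bound $m\ge1/3$ gives $z_S\le 2/27$, so the available-length hypothesis is only needed on the M\"obius side. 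With this change the proof goes through along your lines.
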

\paragraph{\it The character-count exponent calculation.}
At $\delta=\alpha$, choose the detector parameter $t=3/2$,
so $r\ge1-O(\eps)$. Sixth-power amplification then gives
$\#\mathcal B\ll U^{1-\delta+\eps}(1+T_1)^A$, which is at least
as strong as \eqref{eq:count-final}; hence assume $\delta<\alpha$.
For short inverse length $r<1$ and $m<1/2$, compare the
unweighted estimate first with its $\Delta=0$ prime length.
Using $m\ge t-r-O(\eps)$ gives
\[
 A_I(r)=1-\delta\bigl[x+(1-x)r\bigr],\qquad
 A_S(r)=1-\delta\bigl[4x/9+(2-8x/9)(t-r)\bigr].
\]
Let
\[
 r_*(t)=\frac{(2-8x/9)t-5x/9}{D_x}.
\]
Then $r_*(t)\ge23/37$ and
$1/3\le t-r_*(t)\le1/2$.
The optimal common upper exponent is \eqref{eq:Rshort}.

On the inverse side a fixed decrement $\nu_0>0$ ensures
\[
 1-r-2z_{\rm p}\ge2\nu_0,\qquad
 3-2r-8z_{\rm p}=4(1-r-2z_{\rm p})+(2r-1),
 \quad 2r-1\ge9/37.
\]
Its required prime length is at most $7/37$, whereas
$\ell/h=8/39>7/37$. On the unweighted side, restoring the
actual length in \eqref{eq:prime-lengths} increases the counting exponent by at most
\[
\begin{aligned}
 2\bar g(1-2m)\left(\frac29-\frac1{9/2+12\Delta}\right)
 &\le \frac56\left(\frac13+O(\eps)\right)\frac{16}{27}\Delta\\
 &\le \frac{40}{243}\Delta+O(\eps)
 <\Delta/4+O(\eps).
\end{aligned}
\]
Here $\bar g\le\delta/2\le5/12$,
$m\ge t-r_*(t)-O(\eps)\ge1/3-O(\eps)$, and
$\frac29-\frac1{9/2+12\Delta}\le\frac{16}{27}\Delta$.
For $r\ge1$, \eqref{eq:amplification} instead gives
\eqref{eq:Rlong}.

If an admissible auxiliary prime length is zero or very small,
use the moment estimates without auxiliary prime-polynomial factors.
These comparisons give the count exponent in \eqref{eq:count-final}.
With no selected prime sums and $t=1$, balance
$1-\delta r$ and $1-2\delta(1-r)$ at $r=2/3$ to obtain
the stated unweighted count.

\section{Contour estimates and the endpoint inequality}
\label{sec:exponents}

In this section, we insert the count \eqref{eq:count-final} into the
nonprincipal terms $u\ne1$ of the Poisson integral \eqref{eq:Poisson},
with the modified correction \eqref{eq:modified-H} and normalization
\eqref{eq:normalization}. We compare their powers of $Z$ with the
principal Mellin integral $f_\eta(Z)$ in \eqref{eq:f}, whose exponent
$C(s)$ is given in \eqref{eq:C}.
The endpoint inequality \eqref{eq:endpoint} and the bounds for the remaining frequency
ranges provide the positive exponent savings assembled in
Section~\ref{sec:parameters}.

\subsection{The exponent of a nonprincipal contribution}

In this subsection, we insert the character count into the contour
estimates and compute the exponent of each nonprincipal contribution.
Subtracting the principal-term exponent gives $E(d)$ in \eqref{eq:E};
a negative value supplies the saving required by
\eqref{eq:criterion-comparison}.

Recall that $q_u\asymp U=Z^d$, $\delta=2a-1$ is the
zero-class parameter of \eqref{eq:zero-class}, and $\bar g$ is the
weighted mean of the prime-sum size exponents in \eqref{eq:gbar}.
The length parameters $l_y,\ell,h$ are those of \eqref{eq:geometry}.
Move the original zero-class integral with the holomorphic
correction \eqref{eq:modified-H} to
$s_r=a+16\eb,w_r=1-a-6\eb,z_r=z_0:=17/50$.
Only afterward subdivide pointwise by prime-sum sizes.
For a subset with cardinality $\ll U^{R+\eps}(1+T_1)^A$,
the factors, before small losses, are
\[
 \underbrace{X^{1/2-z_0}Z^{a+z_0-1}Y^{-a}}_{\text{Mellin powers}}
 \underbrace{U^{R+\delta/2-z_0}}_{\text{count and numerator}}
 \underbrace{Z^{\ell(z_0-1/2)+\bar g\ell}}_{\text{prime sums}}.
\]
The numerator and all error factors are estimated jointly by
\eqref{eq:joint-error}. Subtracting $C(7/8)=3/16$ gives
\begin{equation}\label{eq:E}
\begin{split}
 E(d)&=a-\frac78+h(z_0-1/6)-a l_y-\ell/2+\bar g\ell
                 +d(R+\delta/2-z_0)\\
 &=-\frac1{48}+\frac23\delta+\frac{\bar g}{6}-h(1-R)
                 +(d-h)(R+\delta/2-z_0)
 \qquad\Src{Lemma}{20.1}.
\end{split}
\end{equation}
The integral is bounded by the supremum of this exponent over
subsets defined by the sizes of the prime sums at each fixed
Mellin parameter. The contour shift is made before this subdivision,
so no holomorphic continuation of a subset is required.

\subsection{Choice of the detecting cutoff and proof of the endpoint saving}

In this subsection, we choose the zero-detector truncation exponent
as in \eqref{eq:optimal-t} and prove the endpoint inequality
\eqref{eq:endpoint}. The explicit lower bound
\eqref{eq:positive-bound} supplies a positive saving.
The endpoint $d=h$ controls the central frequency ranges by the
monotonicity of \eqref{eq:E}; the next subsection handles the smaller
and larger ranges needed for the full comparison estimate.

\begin{lemma}[Endpoint saving, \Src{Lemma}{20.2}]
Let $1/50<\delta<\alpha=5/6$ and $0\le x\le1/2$,
with $D_x,P_x$ as above and the fixed parameter choices \eqref{eq:geometry}.
Choose
\begin{equation}\label{eq:optimal-t}
 \mathcal J=(\alpha-\delta)D_x+\delta P_x,\qquad
 t=1+\frac{\delta P_x}{2\mathcal J},\qquad
 R_*=R_{\rm short}(t)=R_{\rm long}(t).
\end{equation}
Let $E_*=E(h)$ in \eqref{eq:E} with $\bar g=x\delta$ and
$R=R_*$. Then $-E_*\ge49/440640$.
If the count loss $\Delta/4$ is included and all remaining
exponent losses have sum $\eps_{\rm total}$, then
\begin{equation}\label{eq:endpoint}
 E_{\rm actual}(h)-\Delta
 \le-\frac{49}{440640}-\frac{51}{64}\Delta+\eps_{\rm total}.
\end{equation}
Here $E_{\rm actual}$ denotes the resulting exponent relative
to $C(7/8)$.
\end{lemma}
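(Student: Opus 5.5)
The plan is to reduce the endpoint saving to a single rational-function inequality in the two class parameters $(\delta,x)$, clear the positive denominator, and certify the resulting polynomial inequality on the rectangle $[1/50,5/6]\times[0,1/2]$.

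\textbf{Step 1: the balanced cutoff and $R_*$.} With $s=t-1$, the equation $R_{\rm short}(t)=R_{\rm long}(t)$ from \eqref{eq:Rshort}--\eqref{eq:Rlong} reads
\[
 \frac{\delta P_x}{D_x}\Bigl(\frac12-s\Bigr)=(\alpha-\delta)s .
\]
Its solution is $s=\delta P_x/(2\mathcal J)$, which is exactly the choice \eqref{eq:optimal-t}. For $0\le x\le1/2$ we have $D_x\ge 3-17/18>0$ and $P_x\ge (2-4/9)/2>0$. Since also $\delta<\alpha$, it follows that $\mathcal J>0$ and $0\le s\le 1/2$, so $t\in[1,3/2]$ is admissible. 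Substituting gives
\[
 R_*=1-\delta+\frac{(\alpha-\delta)\,\delta P_x}{2\mathcal J},\qquad
 1-R_*=\frac{\delta\,[\,2\mathcal J-(\alpha-\delta)P_x\,]}{2\mathcal J}.
\]

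\textbf{Step 2: the exponent at $d=h$.} At $d=h$ the last term of \eqref{eq:E} vanishes. Using $\bar g=x\delta$ and $h=13/16$, we get
\[
 E_*=-\frac1{48}+\delta\Bigl(\frac23+\frac x6\Bigr)
 -\frac{13}{16}\cdot\frac{\delta\,[\,2\mathcal J-(\alpha-\delta)P_x\,]}{2\mathcal J}.
\]
Multiplying by $2\mathcal J>0$, the claim $-E_*\ge 49/440640$ is equivalent to $F(\delta,x)\ge0$, where
\[
 F(\delta,x)=-2\mathcal J E_*-\frac{49}{440640}\,2\mathcal J .
\]
Because $\mathcal J$ is affine in $\delta$, $F$ is a polynomial of degree at most $2$ in $\delta$. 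Its coefficients are polynomials in $x$ of degree at most $3$.

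I would prove $F\ge0$ by one of two routes.
\begin{itemize}
 \item Rescale both variables to $[0,1]$, expand $F$ in the tensor Bernstein basis, and check that all coefficients are nonnegative. If some are not, subdivide the rectangle and repeat.
 \item Alternatively, treat $F$ as a quadratic in $\delta$ for each fixed $x$. Locate its vertex, check the sign of the leading coefficient, and reduce to one-variable inequalities in $x$ at the vertex and at the endpoints $\delta=1/50$ and $\delta\to5/6$.
\end{itemize}
Near $\delta=1/50$ the exponent is about $-1/48$, so there is ample room there. The tight region should be an interior critical $\delta$ with $x=1/2$, and I expect the constant $49/440640$ to be essentially the minimum value attained there.

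\textbf{Step 3: the Stage-2 losses.} In the actual count \eqref{eq:count-final}, $R$ is replaced by $R_*+\Delta/4+\eps$. At $d=h$, the exponent $E$ depends on $R$ only through the term $-h(1-R)$, so this raises $E$ by $h\Delta/4=13\Delta/64$ plus $O(\eps)$. All further exponent losses (the $O(\eb)$ terms from \eqref{eq:joint-error}, $\eps_1$, $\theta_{\rm bin}$, and the length decrements) are collected into $\eps_{\rm total}$. Therefore
\[
 E_{\rm actual}(h)-\Delta\le E_*+\frac{13}{64}\Delta-\Delta+\eps_{\rm total}
 \le-\frac{49}{440640}-\frac{51}{64}\Delta+\eps_{\rm total},
\]
which is \eqref{eq:endpoint}.

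\textbf{Main obstacle.} The hard step is the exact certification of $F\ge0$ with this sharp constant, uniformly in $x$. The minimum is interior and nearly tight, so naive bounds will fail. I would first try the quadratic-in-$\delta$ vertex analysis at $x=1/2$ to identify the extremal point, and then use Bernstein coefficients with adaptive subdivision near that point, carried out in exact rational arithmetic.
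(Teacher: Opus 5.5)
Your reduction is correct. The balanced cutoff, the formula for $1-R_*$, and clearing the positive denominator $2\mathcal J$ to get a bidegree-$(2,3)$ polynomial all agree with the paper. So does your loss accounting in Step 3: the $\Delta/4$ enters only through $-h(1-R)$, which leaves $1-13/64=51/64$.

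Where you differ is in how the polynomial inequality is certified. The paper does not subdivide. With $y=1/2-x$ and $v_y=51+41y$, it gives one exact identity:
\[
\begin{aligned}
 10368\,v_y\mathcal J(-E_*)={}&(3+5y)\bigl[(4v_y\delta-79)^2+49\bigr]\\
 &+4y\bigl[4v_y\delta\bigl((1+3y)(15+32y)\delta+9-13y\bigr)+265+3485y\bigr].
\end{aligned}
\]
For $\delta\ge0$ and $0\le y\le1/2$ the right side is visibly at least $49(3+5y)$. The crude bounds $v_y\le17(3+5y)$ and $\mathcal J\le5/2$ then give $-E_*\ge49/(176256\mathcal J)\ge49/440640$ in one line.

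Your Bernstein-subdivision route is valid, as is the vertex/discriminant route, which is what Parts III--IV of the paper use. Subdivision must terminate, because $F$ is strictly positive on the closed rectangle. The cost is an exact-arithmetic computation that explains nothing about where the constant comes from.

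Your expectation about tightness is wrong, however. At $x=1/2$ one has exactly $-48(185-138\delta)E_*=1224\delta^2-948\delta+185$. The minimum of $-E_*$ is therefore about $2.28\times10^{-4}$, near $\delta\approx0.387$, roughly twice $49/440640\approx1.11\times10^{-4}$.

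The paper's intermediate bound $49/(176256\mathcal J)$ is sharp at $(\delta,x)=(79/204,1/2)$, where $\mathcal J\approx1.22$. The stated constant comes from replacing that $\mathcal J$ by its maximum $5/2$, which is attained at $(\delta,x)=(0,0)$, far from the critical point. So you have a factor-two margin, and a coarse partition will suffice.
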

\paragraph{\it The endpoint exponent calculation.}
Here $1<t<3/2$ and $35/54\le\mathcal J\le5/2$.
Put
\[
\begin{gathered}
 y=1/2-x\in[0,1/2],\quad v_y=51+41y,\quad p_y=7+18y+8y^2,\\
 j_y=185+170y+(-138+12y+96y^2)\delta.
\end{gathered}
\]
Then $P_x=p_y/9$ and $\mathcal J=j_y/108$. The original paper's
polynomial identity is
\begin{equation}\label{eq:positive-identity}
\begin{split}
 10368v_y\mathcal J(-E_*)={}&
 (3+5y)\bigl[(4v_y\delta-79)^2+49\bigr]\\
 &+4y\Bigl[4v_y\delta((1+3y)(15+32y)\delta+9-13y)
                       +265+3485y\Bigr].
\end{split}
\end{equation}
Since $9-13y\ge5/2$ and $v_y\le17(3+5y)$,
\begin{equation}\label{eq:positive-bound}
 -E_*\ge\frac{49}{176256\mathcal J}
       \ge\frac{49}{440640}>\frac1{10000}
.
\end{equation}
Restoring the $\Delta/4$ in \eqref{eq:count-final}, the
coefficient of $R$ in \eqref{eq:E} is $h$. Since
$C(\beta_*)-C(7/8)=\Delta$, this gives \eqref{eq:endpoint}.
Here $E_{\rm actual}$ includes the $\Delta/4$ count loss,
and $\eps_{\rm total}$ denotes the sum of the small contour,
length, size-partition and smoothing losses. The subtraction
of $\Delta$ is essential.

At $\delta=\alpha$, use $t=3/2$ and unweighted amplification:
$R=1-\delta$, whence $E(h)\le-1/48-\delta/16$.
At $a=a_0$ use the direct count $R=1$, giving
$E(h)\le-7/1200$. No selected zero or moment estimate with a length restriction
approaching equality is needed in these respective boundary cases.

\subsection{Estimates over all nonprincipal norm ranges}

In this subsection, we combine the endpoint saving with the bounds for
small and large norms to control all nonprincipal frequency ranges.
This completes the estimates that Section~\ref{sec:parameters}
combines into a single saving uniform over all classes.

Let $d_{\min}=1/100$ and $0<\zeta<1/48$.
The estimates covering $U=Z^d$ are:
\begin{enumerate}
\item $d<d_{\min}$: absolute tuple bounds on
$(s_r,w_r,z_r)=(\beta_*+\eb,1/2,17/50)$ give saving
$79/800-2d_{\min}=63/800$ relative to $C(\beta_*)$.
Here $2d_{\min}$ bounds the contribution $63d/50$ to the exponent for $d<d_{\min}$.
\item $d_{\min}\le d\le1/2$: using
$R=76/75-2\delta/3$ (also valid at $a=a_0$),
\[
 E(d)\le E(1/2)\le-529/2400+(25/96)\delta\le-49/14400.
\]
The slope in $d$ is
$R+\delta/2-z_0=101/150-\delta/6>0$, since $\delta\le5/6$;
this justifies $E(d)\le E(1/2)$.
\item $1/2\le d\le h$: the total available prime length is
$\ell/d\ge8/39>7/37$, and
$R+\delta/2-z_0\ge33/50-\delta/2\ge4/25>0$.
Thus \eqref{eq:endpoint} controls these ranges.
\item $h<d\le h+\zeta$: the total available prime length remains $>1/5>7/37$.
The derivative of the exponent with respect to $d$ is below two,
so its increase is at most $2\zeta$.
\item $d>h+\zeta$: the contours $(\Rea s,\Rea w,\Rea z)=(2,2,z_{\mathrm{new}})$,
where the series converge absolutely, give
\[
 \sum_{U>Z^{h+\zeta}}|\mathcal R_\eta(U;Z)|
 \ll Z^{53/32+(h+\zeta)(1+\eps)-\zeta z_{\mathrm{new}}+\eps}.
\]
Here $U$ runs over the dyadic parameters indexing the ranges $q_u\asymp U$, and
$\mathcal R_\eta(U;Z)$ denotes the modified nonprincipal
contribution from $q_u\asymp U$ in \eqref{eq:Poisson}, before
normalization, on these contours of absolute convergence \Src{Lemma}{16.2}.
Choose the fixed $z_{\mathrm{new}}>2$ sufficiently large.
\end{enumerate}
The estimates for the small- and large-norm ranges are
\Src{Lemmas}{10.6, 16.2}.
Only the ranges $d\ge1/2$ use selected prime sums in the
character-count moments.

\section{Auxiliary parameter choices and the uniform comparison estimate}
\label{sec:parameters}

In this section, we choose the auxiliary parameters in an order that preserves all
strict inequalities and exponent savings obtained in Sections~\ref{sec:principal}--\ref{sec:exponents}.
The resulting direct and comparison estimates are displayed together
in \eqref{eq:final-estimates} and have precisely the
uniformity required by Proposition~\ref{prop:criterion-summary}, whose
application completes the Hecke argument in the next section.

Recall that $\Delta=\beta_*-7/8>0$ is the assumed gap in the
contradiction argument, and $C(s)=s-11/16$ is the principal
exponent in \eqref{eq:C}. The normalized average $J_\eta$ and
principal integral $f_\eta$ are given in \eqref{eq:normalization}
and \eqref{eq:f}.

The lowest zero class $a=a_0=51/100$ of
\eqref{eq:zero-class} and the intermediate norm range
$d_{\min}\le d\le1/2$ considered in Section~\ref{sec:exponents} satisfy
$E\le-7/1200$ and $E\le-49/14400$, respectively, relative to
$C(7/8)$. Relative to $C(\beta_*)=C(7/8)+\Delta$, they therefore
save at least $\Delta$. Since $51\Delta/64<\Delta$, both are
already covered by $m_{\rm high}$ below. The common exponent savings are
\[
 m_{\rm high}=\min\{51\Delta/64,63/800\},\quad
 m_w=23/960,\quad m_z=13/9600.
\]
Choose small moment errors and strict length decrements.
Recall that $\ell_{\rm mesh}$ bounds each individual prime-length
exponent in the moment estimate \eqref{eq:plain-moment}.
Obtain the permissible value $\ell_{\rm mesh}>0$ from \Src{Lemma}{18.1} before choosing $K$.
Then choose a fixed even $K$ with equal $\ell_i=\ell/K$, where
$2\ell/K$ is smaller than $\ell_{\rm mesh}$ and than the allowed exponent loss from discarding a partial
prime interval. Smooth weights in disjoint subintervals of $(1,2)$
give disjoint prime intervals with the common norm parameter $Z^{\ell/K}$.
Thus $\ell_i/d\le2\ell/K$ throughout the selected ranges.
Choose $0<\kappa_P<(7/8)\ell/K$, then the width of the
prime-sum size intervals, $\eb$, $\zeta$, $z_{\mathrm{new}}$ and $P_0$
so that the resulting losses are smaller than the remaining
positive exponent savings. All these parameters are chosen before fixing $\eta$.

One may retain
\begin{equation}\label{eq:margin}
 \mu_0=\frac14\min\{m_{\rm high},m_w,m_z,\kappa_P\}>0,\qquad
 \omega_0=\Delta/2.
\end{equation}
After fixing $\eta$ and its arithmetic data, choose all internal
seminorm orders. \Src{Lemmas}{10.2--10.4} and the preceding
estimates give
\begin{equation}\label{eq:height}
 |J_\eta-f_\eta|
 \ll_{\eta,N}Z^{C(\beta_*)-\mu_0}(1+T_1)^{A_\eta}
                   +Z^{B_\eta}T_1^{-N}.
\end{equation}
$A_\eta,B_\eta$ are finite nonnegative exponents fixed before
the external decay order $N\in\mathbb Z_{\ge0}$. The later
$N_\eta$ is a sufficiently large choice of this order. There is a fixed $\tau_{0,\eta}>0$ ensuring
all zero-detection height conditions for $T_1=Z^\tau$,
$0<\tau\le\min(d_{\min}/100,\tau_{0,\eta})$.

Choose
\[
 0<\tau_\eta\le
 \min\left\{d_{\min}/100,\tau_{0,\eta},
                 \frac{\mu_0}{4(A_\eta+1)}\right\},\qquad
 B_\eta-N_\eta\tau_\eta<C(\beta_*)-\mu_0/2.
\]
The first term in \eqref{eq:height} saves at least
$3\mu_0/4$, and the second saves $\mu_0/2$.
Increasing the derivative orders used to obtain decay in the
truncated contour tails changes only the seminorm constants and
the lower threshold for $Z$. This is the height choice of
\Src{Lemma}{11.1}; the full order is \Src{Proposition}{20.3}.
Together with \eqref{eq:normalized-direct},
\begin{equation}\label{eq:final-estimates}
 |J_\eta(Z)|\ll_\eta Z^{3/16+\Delta/2},\qquad
 |J_\eta(Z)-f_\eta(Z)|\ll_\eta Z^{C(\beta_*)-\mu_0/2}.
\end{equation}
The exponents are independent of $\eta$. Neither $J_\eta$ nor
$f_\eta$ contains $T_1$.

\section{Mellin continuation and proof of the Hecke bound}
\label{sec:contradiction}

In this section, we apply Proposition~\ref{prop:criterion-summary} to the estimates
assembled in \eqref{eq:final-estimates} of
Section~\ref{sec:parameters} and conclude the $7/8$ bound
for the Hecke family.
We also extend the conclusion to characters presented modulo a larger
modulus than their conductor, retaining their prescribed zero extensions; the
Dirichlet consequence follows in Section~\ref{sec:transfer}.

\begin{theorem}[Hecke conclusion, \Src{Theorem}{1.1}]
\label{thm:hecke-conclusion}
The family supremum in \eqref{eq:beta} satisfies $\beta_*\le7/8$.
Consequently every finite-order Hecke $L$-function over $F$ has
no zeros in $\Rea s>7/8$, with principal poles allowed.
\end{theorem}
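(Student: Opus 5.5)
The plan is to argue by contradiction through Proposition~\ref{prop:criterion-summary} with $\sigma_0=7/8$ and $C(s)=s-11/16$, so $c=-11/16$. Suppose $\beta_*>7/8$. Proposition~\ref{prop:preliminary-zero-free} gives $\beta_*\le11/12$, so we are in the Stage-2 setting \eqref{eq:two-stages}: $\Delta\in(0,1/24]$ and $\kappa=2\beta_*-1\in(3/4,5/6]$. The hypothesis $\beta_*\le(1+\kappa)/2$ of the unweighted product moment \eqref{eq:plain-moment} then holds with equality, and every zero class satisfies $\delta\le\alpha$.

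The auxiliary parameters will be fixed in the order of Section~\ref{sec:parameters}. These are $\ell_{\rm mesh}$, then $K$ with $\ell_i=\ell/K$, then $\kappa_P$, the bin width, $\eb$, $\zeta$, $z_{\rm new}$ and $P_0$, all chosen before $\eta$. With these choices the correction $H_\eta$ of \eqref{eq:H-c} is holomorphic on $\Rea s>7/8$ by \eqref{eq:D2}, and \eqref{eq:H-uniform} gives the contraction hypothesis \eqref{eq:contraction}.

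The two estimates of the criterion come from \eqref{eq:final-estimates}. For the direct bound, \eqref{eq:normalized-direct} gives $|J_\eta|\ll Z^{3/16+\Delta/2}=Z^{C(7/8)+\omega_0}$ with $\omega_0=\Delta/2<\beta_*-7/8$. The comparison bound $|J_\eta-f_\eta|\ll_\eta Z^{C(\beta_*)-\mu_0/2}$ is assembled in three parts.

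\emph{Principal remainders.} These carry the savings $m_w$, $m_z$ and $\kappa_P$ from the contour crossings after \eqref{eq:f}.

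\emph{Nonprincipal contributions.} Here \eqref{eq:E} and the endpoint Lemma give $E_{\rm actual}(h)-\Delta\le-49/440640-51\Delta/64+\eps_{\rm total}$. Monotonicity in $d$ then covers $1/2\le d\le h$, and the listed range estimates handle $d<1/2$ and $d>h$. The lowest class $a=a_0$ and the case $\delta=\alpha$ are treated separately, as noted there.

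\emph{Height truncation.} The choice of $\tau_\eta$ and $N_\eta$ absorbs the $T_1$-dependence in \eqref{eq:height}.

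Since $\mu_0$ depends only on $\Delta$ and the universal savings, and $\omega_0=\Delta/2$, both exponents are independent of $\eta$, which is the uniformity the criterion requires. Proposition~\ref{prop:criterion-summary} then forces $\beta_*\le7/8$, contradicting the assumption.

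For the stated consequence about zeros, I would pass from the family supremum over primitive $\eta$ to an arbitrary finite-order Hecke character. The character $L$-function differs from $L_F(s,\eta^*)$ only by finitely many Euler factors $1-\eta^*(p)q_p^{-s}$, and these have no zeros in $\Rea s>0$. Hence no zero of it lies in $\Rea s>7/8$. The only possible pole is the principal pole at $s=1$, and it is not a zero, so it causes no difficulty.

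The main obstacle is the bookkeeping of uniformity. The comparison saving must hold uniformly over every zero class, every prime-sum size class and every norm range $U=Z^d$, with all losses ($\Delta/4$ in the count, $\eps_{\rm total}$, $2\zeta$, and the greedy truncation in \eqref{eq:greedy}) kept strictly below $\mu_0$. The step I expect to be hardest is checking that the chosen order of parameters actually makes these losses small without circularity. In particular, $\ell_{\rm mesh}$ must be fixed before $K$, and $K$ before $\kappa_P$, since $\kappa_P$ depends on $K$ and $\mu_0$ depends on $\kappa_P$.
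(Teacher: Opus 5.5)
Your proposal is correct and follows the paper's argument. You argue by contradiction through Proposition~\ref{prop:criterion-summary} with $\sigma_0=7/8$, $c=-11/16$, $\omega_0=\Delta/2$ and $\sigma=\mu_0/2$, verify its hypotheses from \eqref{eq:D2}, \eqref{eq:H-uniform} and \eqref{eq:final-estimates}, and then handle imprimitive presentations by the finitely many Euler factors that are nonzero for $\Rea s>0$. The only difference is that the paper cites the assembled estimates of Section~\ref{sec:parameters} at this step, so your re-derivation of the parameter order and uniformity is consistent with it but not needed here.
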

\paragraph{\it The final continuation contradiction.}
Assume $\beta_*>7/8$. Apply Proposition~\ref{prop:criterion-summary} with
\[
 \sigma_0=7/8,\qquad c=-11/16,\qquad
 \omega_0=\Delta/2,\qquad \sigma=\mu_0/2.
\]
Holomorphy of $H_\eta$ follows from \eqref{eq:D2} and
\eqref{eq:H-c}; the chosen cutoff gives \eqref{eq:H-uniform}.
Together with \eqref{eq:final-estimates}, these verify the criterion's
hypotheses; $C(7/8)=3/16$ and
$0<\omega_0<\beta_*-7/8$. Its contradiction therefore gives
\begin{equation}\label{eq:conclusion}
 \boxed{\beta_*\le7/8}.
\end{equation}
For an imprimitive character $\eta$ induced by $\eta^*$,
\[
 L_F(s,\eta)=L_F(s,\eta^*)
 \prod_{\substack{\mathfrak p\mid\mathfrak f_\eta\\
                   \mathfrak p\nmid\mathfrak f_{\eta^*}}}
 (1-\eta^*(\mathfrak p)q_{\mathfrak p}^{-s}).
\]
Here $\mathfrak f_\eta$ is the defining modulus and
$\mathfrak f_{\eta^*}$ the primitive conductor. Each finite factor is
nonzero for $\Rea s>0$, giving the same conclusion for every presentation.

\section{Transfer to Dirichlet \texorpdfstring{$L$}{L}-functions}
\label{sec:transfer}

In this section, we transfer the Hecke bound to Dirichlet $L$-functions by pulling a
Dirichlet character back along the ideal norm and factoring its
$L$-function over $F$.
The factorization \eqref{eq:transfer} gives the transfer stated in
Corollary~\ref{cor:dirichlet-conclusion}.
The same argument will transfer each improved Hecke bound in
Parts II--IV, so it suffices to give the factorization and pole
check once.

\begin{corollary}[Dirichlet conclusion, \Src{Proposition}{11.3}]
\label{cor:dirichlet-conclusion}
Every Dirichlet $L$-function, including $\zeta$, has no zeros in
$\Rea s>7/8$, with principal poles at $s=1$ allowed.
\end{corollary}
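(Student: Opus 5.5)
The plan is to pull a Dirichlet character back along the norm map and factor its $L$-function over $F=\mathbb Q(\sqrt{-3})$ into Hecke $L$-functions. Then Theorem~\ref{thm:hecke-conclusion} applies to each factor.

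First I fix a Dirichlet character $\chi$ modulo $q$, possibly imprimitive, and define $\chi_F=\chi\circ N_{F/\mathbb Q}$ on ideals of $F$. It is a finite-order Hecke character with defining modulus $q\OO$, extended by zero on ideals not coprime to $q$, because $N(a)\bmod q$ depends only on the ray class of $(a)$ modulo $q\OO$. The norm of a principal ideal $(a)$ with $a\equiv1\pmod{q}$ is $a\bar a\equiv1\pmod q$.

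Second, I compare Euler factors prime by prime, splitting according to the decomposition of $p$ in $F$:
\begin{itemize}
\item If $p$ splits, $p\OO=\mathfrak p\bar{\mathfrak p}$, and the local factor of $L_F(s,\chi_F)$ is $(1-\chi(p)p^{-s})^{-2}$.
\item If $p$ is inert, the local factor is $(1-\chi(p)^2p^{-2s})^{-1}$.
\item If $p=3$ ramifies, the local factor is $(1-\chi(3)3^{-s})^{-1}$.
\end{itemize}
Writing $\chi_{-3}$ for the quadratic character attached to $F$, these combine into the identity
\begin{equation}\label{eq:transfer}
 L_F(s,\chi\circ N)=L(s,\chi)\,L(s,\chi\chi_{-3}),
\end{equation}
where $L(s,\chi\chi_{-3})$ is taken for the product character modulo $\operatorname{lcm}(q,3)$ with its natural zero extension. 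One needs to check that $\chi_{-3}(3)=0$ and $\chi(p)\chi_{-3}(p)=-\chi(p)$ at inert primes. Both sides converge absolutely for $\Rea s>1$ and continue meromorphically, so the identity holds everywhere.

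Third, the conclusion. Theorem~\ref{thm:hecke-conclusion}, in the form stated for every presentation including imprimitive ones, shows that $L_F(s,\chi\circ N)$ has no zeros in $\Rea s>7/8$. Suppose $L(\rho,\chi)=0$ with $\Rea\rho>7/8$. Then \eqref{eq:transfer} forces $L(s,\chi\chi_{-3})$ to have a pole at $\rho$, since otherwise the left side would vanish there.

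The main obstacle is this pole check. A Dirichlet $L$-function has a pole only when its character is principal, and then only at $s=1$. If $\rho\ne1$ this gives a contradiction at once. The remaining case is $\rho=1$ with $\chi\chi_{-3}$ principal, i.e.\ $\chi$ induced by $\chi_{-3}$. In that case $L(1,\chi)$ is a nonzero multiple of $L(1,\chi_{-3})\neq0$, by the finite Euler factor comparison and the class number formula (or by $\zeta_F$ having a simple pole). So no zero arises.

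The zeta case is $\chi$ trivial. Here the principal pole of $\zeta$ at $1$ is permitted, and \eqref{eq:transfer} reads $\zeta_F=\zeta\,L(s,\chi_{-3})$. The Hecke statement for the trivial character excludes zeros of $\zeta_F$, which gives the result for $\zeta$.

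Imprimitive $\chi$ needs no separate treatment, because its extra Euler factors $1-\chi^*(p)p^{-s}$ do not vanish for $\Rea s>0$.
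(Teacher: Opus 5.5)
Your proof is correct and follows essentially the same route as the paper. You pull $\chi$ back along the norm, factor $L_F(s,\chi\circ N)=L(s,\chi)L(s,\chi\chi_{-3})$ via the split and inert Euler factors, invoke the Hecke bound, and rule out the only possible cancellation, at $s=1$ when $\chi\chi_{-3}$ is principal, using $L(1,\chi_{-3})\neq0$. The differences are cosmetic: the paper takes $\chi$ primitive, deletes the Euler factors at primes dividing $3q$, restores imprimitive presentations by finite Euler factors, and evaluates $L(1,\chi_{-3})=\pi/(3\sqrt3)$ explicitly, whereas you match the ramified and bad-prime factors exactly and cite the class number formula or the simple pole of $\zeta_F$.
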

\paragraph{\it The Dirichlet factorization.}
The identity below is the quadratic case of Artin formalism
(compatibility with induction and direct sums); see
\cite[Chapter~VIII, Section~10]{MilneCFT}.
For a primitive Dirichlet character $\chi$ of conductor $q$,
define $\eta(\mathfrak a)=\chi(N\mathfrak a)$ on ideals coprime
to $3q$, and let $\chi_{-3}$ be the primitive quadratic Dirichlet
character associated with $F/\mathbb Q$, given by the Kronecker
symbol $\chi_{-3}(n)=(-3/n)$.
Deleting primes dividing $3q$ and their prime ideals,
\begin{equation}\label{eq:transfer}
 \Lfun^{S_F}(s,\eta)=
 L^{S_{\mathbb Q}}(s,\chi)L^{S_{\mathbb Q}}(s,\chi\chi_{-3})
.
\end{equation}
The split and inert Euler factors give \eqref{eq:transfer} in
absolute convergence; meromorphic continuation extends the identity
\Src{Proposition}{11.3}. Both Dirichlet factors are holomorphic on
$7/8<\Rea s\le1$, $s\ne1$. A zero of either is therefore a
zero of their product, contradicting the Hecke conclusion.
Euler convergence handles $\Rea s>1$.
At $s=1$ a possible principal pole pairs only with
$\chi_{-3}$, whose value is
\[
 L(1,\chi_{-3})=\int_0^1\frac{dx}{1+x+x^2}
               =\frac{\pi}{3\sqrt3}>0.
\]
Finally, finite Euler deletions transfer the result to all
imprimitive Dirichlet presentations. The character
modulo one gives $\zeta$.

\part{A rational extension of the bound}
\label{part:rational}

\section{The rational zero-free bound and its parameters}
\label{sec:rational-parameters}

In this section, we state the rational improvement and choose its length parameters,
as stated in Theorem~\ref{thm:rational-extension} and
\eqref{eq:rational-geometry}. These choices will be verified in
Sections~\ref{sec:balanced-direct}--\ref{sec:extension-assembly}.
The estimates established in this part also supply the common analytic
construction for Parts III and IV. We vary the lengths in the modified average of
Section~\ref{sec:modified}; all arithmetic conventions, zero extensions,
smooth tests and character families remain those of Part I.
Recall that $F=\mathbb Q(\sqrt{-3})$, $\OO=\mathcal O_F$,
and $q_{\mathfrak a}=N\mathfrak a$. The finite-order target
character is $\eta$, and $\beta_*$ is the supremum of real parts
of family zeros defined in \eqref{eq:zeros}.
Throughout Parts II--IV we use the proved intermediate results
of \cite{QRH}. In particular, we apply
Theorem~\ref{thm:hecke-conclusion} before making the improved comparison.

\begin{theorem}[Rational extension]\label{thm:rational-extension}
Every finite-order Hecke $L$-function over
$F$ is zero-free in
\[
 \Rea s>B_{\mathrm r},\qquad B_{\mathrm r}=34999/40000.
\]
The same assertion holds for every Dirichlet $L$-function. Principal
poles are allowed, and the boundary line is excluded.
\end{theorem}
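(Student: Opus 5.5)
We prove Theorem~\ref{thm:rational-extension} by contradiction, feeding Proposition~\ref{prop:criterion-summary} the modified average of Section~\ref{sec:modified} with changed lengths. Take $b_{\mathrm r}=1/8$ and $\ell_{\mathrm r}=1/6+1/10000$, with $l_x=(1-b-\ell)/2$, $l_y=(1+b-\ell)/2$ and $h=(1+b+3\ell)/2$. By Theorem~\ref{thm:hecke-conclusion} we already know $\beta_*\le7/8$. Assume $\beta_*>B_{\mathrm r}$ and put $\Delta_{\mathrm r}=\beta_*-B_{\mathrm r}\in(0,1/40000]$. Since $\beta_*\le7/8=(1+3/4)/2$, the moment hypothesis of the unweighted product moment holds at the fixed value $\kappa=3/4$. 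This removes the $\Delta/4$ count loss of \eqref{eq:count-final}, because \eqref{eq:prime-lengths} can now be used with $6\kappa=9/2$ exactly. The principal exponent becomes $C_b(s)=s+l_x/3-5/6+\ell/6=s-(4+b)/6$. The Euler-factor analysis of Section~\ref{sec:principal} is unchanged, except that holomorphy of $H_\eta$ and \eqref{eq:H-uniform} must now hold on $\Rea s>B_{\mathrm r}$, slightly left of $7/8$. I would obtain this by extending the domain $\mathcal D_2$ to $s_r\ge B_{\mathrm r}$, since the local series converge with room to spare at $w=1$, $z=1/6$.

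\emph{Direct bound.} Redo the calculation for Proposition~\ref{prop:modified-direct}. Three points need checking.
\begin{itemize}
\item The mean-square excess $((d_J+5\ell-1)/4)_+$ from Lemma~15.1 is now at most $(6\ell-1)/4=3/20000$ when $d_J=\ell$. This is no longer zero, so I would carry it as a positive loss. I would check that it is compensated by the factor $q_{p_J}^{-3/2}$, which contributes weight $Z^{-d_J/2}$ beyond the $-d_J$ already used. Alternatively, I would check that the sharper mean square with $M'=1-\ell-2d_J$ absorbs it.
\item $P_a=Y'^2/Q\asymp Z^{b}\ge1$.
\item $P_a^2/Y'\ll P_a^{1/6}$, which requires $l_y-d_J\ge 11b/6$; this holds with margin.
\end{itemize}
This gives $|J_\eta|\ll Z^{l_x/2+b/12+\eps}=Z^{C_b(B_{\mathrm r})+\eps}$, by the matching \eqref{eq:intro-length-boundary}.

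\emph{Comparison bound (main obstacle).} Redo Sections~\ref{sec:counts}--\ref{sec:exponents} with general $(b,\ell)$.
\begin{itemize}
\item The count requires $\ell/h>7/37$; here $\ell/h\approx0.205$, so this holds.
\item The count exponent $R_*(\delta,x)$ comes from \eqref{eq:optimal-t} with $\kappa=3/4$.
\item The relative nonprincipal exponent $E(b,\ell;\delta,x)$ is obtained from \eqref{eq:E} with the new $l_y$, $h$ and $C_b(B(\ell))$.
\end{itemize}
The key claim is $E(1/8,\ell_{\mathrm r};\delta,x)<0$ uniformly for $(\delta,x)\in[0,5/6]\times[0,1/2]$. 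I would prove it in the two steps announced in the introduction.
\begin{itemize}
\item First, show $E(1/8,1/6;\delta,x)\le-1/5000$. This is the $\kappa=3/4$ analogue of \eqref{eq:positive-identity}, and it is better than $-49/440640$ because the $\Delta/4$ loss is gone.
\item Second, show $\partial_\ell E\le45/32$. After clearing the positive denominator $\mathcal J$, both are polynomial inequalities on a box. I would certify them by expanding in the Bernstein basis in $(\delta,y)$ and checking the signs of the coefficients, subdividing if necessary.
\end{itemize}
The mean value theorem then gives $E\le-19/320000$. The boundary classes $\delta=\alpha$ and $a=a_0$, and the other norm ranges $d<1/2$ and $d>h$, are rechecked with their explicit margins, which are much larger than the $10^{-4}$ perturbation.

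\emph{Assembly.} Choose the auxiliary parameters as in Section~\ref{sec:parameters}: first $\ell_{\mathrm{mesh}}$, then $K$, $\kappa_P$, $\eb$, $\zeta$ and $P_0$. Then choose heights after fixing $\eta$. This yields $|J_\eta-f_\eta|\ll Z^{C_b(\beta_*)-\mu_0}$, with $\mu_0$ independent of $\eta$. The saving relative to $C_b(B_{\mathrm r})$ becomes a saving relative to $C_b(\beta_*)$ because $E$ is computed at $B$; I would verify that the coefficient of $\Delta_{\mathrm r}$ stays below $1$, as in \eqref{eq:endpoint}.

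Applying Proposition~\ref{prop:criterion-summary} with $\sigma_0=B_{\mathrm r}$ and $\omega_0=\Delta_{\mathrm r}/2$ gives $\beta_*\le B_{\mathrm r}$. Imprimitive presentations follow as in Theorem~\ref{thm:hecke-conclusion}, and the Dirichlet case follows from Corollary~\ref{cor:dirichlet-conclusion} via \eqref{eq:transfer}.
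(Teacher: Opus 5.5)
Your proposal is correct and follows the paper's proof essentially step for step. The shared steps are: $\kappa=3/4$ is fixed by the $7/8$ bound; the balanced direct bound holds with the excess $\pos{d_{\mathcal I}+5\ell-1}/8$ absorbed by the net tuple exponent $-d_{\mathcal I}$; $\mathcal D_2$ is extended to $\Rea s\ge B$; a subdivided Bernstein certificate gives $-E_0\ge1/5000$, combined with the slope bound $\partial_\ell E\le45/32$ (the paper proves this bound directly from $\mathcal J\ge\alpha P_x$ and $\delta(\alpha-\delta)\le\alpha^2/4$ rather than by Bernstein); the remaining ranges are checked; and the continuation criterion is applied with $\sigma_0=B_{\mathrm r}$. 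One correction to your commentary: the gain from $49/440640$ to $1/5000$ is not due to dropping the $\Delta/4$ loss, since Part~I's $E_*$ is already computed with $R=R_*$ and excludes it (so $E_0=E_*$); the new constant is just a sharper certificate for the same polynomial, which is why the identity \eqref{eq:positive-identity} alone does not suffice.
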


For clarity, the parameters that change are displayed together:
\begin{equation}\label{eq:rational-geometry}
\begin{gathered}
 \vartheta_{\mathrm r}=\frac1{10000},\quad b_{\mathrm r}=\frac18,
 \quad \ell_{\mathrm r}=\frac16+\vartheta_{\mathrm r}=\frac{5003}{30000},\\
 b=b_{\mathrm r},\qquad\ell=\ell_{\mathrm r},\\
 l_x=\frac{1-b-\ell}{2}=\frac{21247}{60000},\quad
 l_y=\frac{1+b-\ell}{2}=\frac{28747}{60000},\quad
 h=1-l_x+\ell=\frac{16253}{20000},\\
 X=Z^{l_x},\quad Y=Z^{l_y},\quad P_i=Z^{\ell_i},
 \quad \sum_{i=1}^K\ell_i=\ell.
\end{gathered}
\end{equation}
The pair $(b_{\mathrm r},\ell_{\mathrm r})$ specifies the rational
extension. We use $b=b_{\mathrm r}$ and $\ell=\ell_{\mathrm r}$ for these
parameter values, and retain $b,\ell$ for the general parameter formulas
shared with Part III.
Here $b=l_y-l_x$, while $\ell=\sum_i\ell_i$ is the total
selected prime-length exponent, and $h=1-l_x+\ell$ is the
endpoint of the central norm ranges. The integer $K$ is fixed and even. Its disjoint prime
sets $\mathcal P_i$ lie in the class $1_{\mathcal T}$ and outside $S$,
as in Section~\ref{sec:modified}; $1_{\mathcal T}$ is the identity
class of the fixed ray group and $S$ is the excluded prime set of
Section~\ref{sec:arithmetic}. The zero-class width $\eb$ of
\eqref{eq:zero-class} is independent of
$\vartheta_{\mathrm r}$. We retain
\[
 a_0=51/100,\quad \delta=2a-1,\quad
 x=\bar g/\delta\in[0,1/2],\quad z_0=17/50,\quad\alpha=5/6.
\]
Here $a$ is the real-part parameter of the zero class in
\eqref{eq:zero-class}, and $\bar g$ is the weighted mean defined
in \eqref{eq:gbar}. Thus $x$ denotes an amplitude ratio, whereas
$l_x$ denotes a length exponent.
In all applications $\delta\ge1/50$, so the ratio is defined.
Its extension to $\delta=0$ is used only in polynomial inequalities.

The balanced condition $l_x+l_y+\ell=1$ gives
\begin{equation}\label{eq:balanced-signal}
 C_b(s)=s-\frac{4+b}{6},\quad
 B(\ell)=\frac{11}{12}-\frac{\ell}{4},\quad
 C_b(B(\ell))=\frac{l_x}{2}+\frac b{12}.
\end{equation}
The shared balanced hypotheses for the extensions are
\begin{equation}\label{eq:balanced-hypotheses}
\begin{gathered}
 l_x+l_y+\ell=1,\qquad b=l_y-l_x>0,\qquad h=1-l_x+\ell,\\
 l_x>\ell>0,\qquad l_y>\ell,\qquad
 l_y-\ell-11b/6>0,\qquad1-3\ell>0,\qquad\ell<1/5,\\
 \ell/h\ge7/37+\rho_{\rm sup},\qquad\rho_{\rm sup}>0,
 \qquad5\ell>h,\\
 B=B(\ell)=11/12-\ell/4\in[437/500,7/8].
\end{gathered}
\end{equation}
The positive gap $\rho_{\rm sup}$ above $7/37$ in the
prime-length ratio $\ell/h$ is fixed independently of the target. The conditions on rescaled lengths and the additive estimate
are separated from the numerical choice of the length parameters.
For \eqref{eq:rational-geometry}, $C_b(s)=s-11/16$ and
$B(\ell)=B_{\mathrm r}$. The hypotheses are verified by
\begin{equation}\label{eq:rational-margins}
\begin{gathered}
 l_x-\ell=\frac3{16}-\frac32\vartheta_{\mathrm r}>0,\quad
 l_y-\ell=\frac5{16}-\frac32\vartheta_{\mathrm r}>0,\\
 l_y-\ell-\frac{11b}{6}=\frac1{12}-\frac32\vartheta_{\mathrm r}>0,
 \quad 1-3\ell>0,\quad \ell<\frac15,\\
 \frac\ell h>\frac8{39}>\frac7{37},\qquad
 5\ell-h=\frac1{48}+\frac72\vartheta_{\mathrm r}>0.
\end{gathered}
\end{equation}
They ensure that the same finite modification defines an average with
the new length parameters, that all rescaled lengths remain positive, and that
there are enough prime factors for the moments with selected prime factors.

\section{The direct estimate for balanced lengths}
\label{sec:balanced-direct}

In this section, we derive the direct estimate at variable balanced lengths from the
general estimate obtained from the functional equation \Src{Lemma}{14.3} and the additive estimate
\eqref{eq:Gram}. The resulting bound is \eqref{eq:balanced-low} in
Proposition~\ref{prop:balanced-low}; it supplies the direct hypothesis of
Proposition~\ref{prop:extension-assembly}; the specialized bound
\eqref{eq:BJ-bound} alone does not justify changing $\ell$.

If $\mathcal I\subset\{1,\ldots,K\}$ is the subset of rescaled factors,
put
\[
 d_{\mathcal I}=\sum_{i\in\mathcal I}\ell_i,\quad
 \ell'=\ell-d_{\mathcal I},\quad
 M'=1-\ell-2d_{\mathcal I},\quad
 X'=Z^{l_x-d_{\mathcal I}},\quad Y'=Z^{l_y-d_{\mathcal I}}.
\]
The symbol $d_{\mathcal I}$ is unrelated to the norm exponent $d$
used later.
For a fixed ray class $\sigma$ and rescaled prime tuple, let
$B^{\mathcal I}_{a,\sigma}$ denote the theta sum
$B^J_{m,\sigma}$ of \eqref{eq:BJ-bound}, with $J=\mathcal I$,
the outer summation variable renamed $a$, and the summation lengths replaced by $X',Y'$
above. The primes in the sets $\mathcal P_i$ with $i\notin\mathcal I$
are already summed, with the divisibility restrictions of
Section~\ref{sec:modified} imposed. Put $Q=q_{b_*}X'Y'$.

\begin{lemma}[Functional-equation estimate for balanced lengths]\label{lem:balanced-reflection}
Under the coefficient and smooth-weight hypotheses of
\Src{Lemma}{14.3}, the theta sum including all cube multiples, with remaining prime-length exponent $\ell'$
and outer element-range exponent $M'$ has squared norm
\begin{equation}\label{eq:balanced-row-energy}
 \sum_{q_a\ll Q}|B^{\mathcal I}_{a,\sigma}|^2
 \ll Z^{M'+\pos{d_{\mathcal I}+5\ell-1}/4+\eps}.
\end{equation}
Here $\pos{t}=\max(t,0)$ for a real argument $t$;
the loss $\eps>0$ is arbitrary.
\end{lemma}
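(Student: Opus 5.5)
The estimate is a specialization of the general mean-square estimate of \Src{Lemma}{14.3}, so the plan is to feed the balanced lengths into that lemma rather than into the specialized form \eqref{eq:BJ-bound}. The algebraic excess recorded in Section~\ref{sec:reduction} from \cite[Lemma~15.1, proof]{QRH} is
\[
 \left(\frac{1+3\ell'-2M'}4\right)_+ ,
\]
and this expression is valid for general remaining prime length $\ell'$ and outer range $M'$. With $\ell'=\ell-d_{\mathcal I}$ and $M'=1-\ell-2d_{\mathcal I}$ we get $1+3\ell'-2M'=d_{\mathcal I}+5\ell-1$, which is exactly the excess in \eqref{eq:balanced-row-energy}. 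The work is therefore to check that the derivation in \cite[Lemma~15.1]{QRH} is a function of these exponents only, and not of the numerical values \eqref{eq:geometry}.

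The steps run in the following order.

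\textbf{Step 1: fix the data.} Fix $\mathcal I$, the ray class $\sigma$, and the rescaled tuple $(p_i)_{i\in\mathcal I}$. The sum over the complementary primes $i\notin\mathcal I$, carrying the restrictions $1_{p_i\mid cn^3}$, is a prime polynomial of total length $\ell'$ with bounded coefficients independent of $a$.

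\textbf{Step 2: apply the restricted functional equation.} Use the divisibility-restricted theta functional equation \Src{Corollary}{14.1} at the rescaled length $Z\,q_{p_{\mathcal I^c}}$. This expresses $B^{\mathcal I}_{a,\sigma}$ as dual sums in which the selected primes appear either through $\chi_P(n)^{-2}$ with $P$ coprime to $b$, or through the active local factors $B_p$.

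\textbf{Step 3: dyadic decomposition.} Decompose dyadically in the outer variable $a$ (with $q_a\ll Q=Z^{M'}$ up to the constant $q_{b_*}$), the squarefree part, the cube part and the dual length. Then apply the quadratic--cubic sieve \eqref{eq:hybrid} with $K=Z^{M'}$ and $L=Z^{\ell'}$, using the decay of $V^\sharp$ to truncate the dual length near $Z^{\ell'}\cdot(\text{conductor})^2/Z$.

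\textbf{Step 4: optimize each factor.} In the bound $(K+NB)B[N+L+(NL)^{2/3}]$, the terms without $L$ reproduce the unmarked bound $Z^{M'+\eps}$. The term $(NL)^{2/3}$, optimized against $N$ under the dual-length constraint, produces the factor $Z^{(1+3\ell'-2M')/4}$. Taking the maximum with $0$ gives the positive part.

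The main obstacle is Step 4. The optimization in \cite{QRH} was carried out at $\ell=1/6$, so one must confirm that the extremal $N$ stays inside the admissible dyadic ranges for every $d_{\mathcal I}\in[0,\ell]$ under the hypotheses \eqref{eq:balanced-hypotheses}, in particular $\ell<1/5$ and $1-3\ell>0$. One must also confirm that no further boundary term, such as $L>K$, becomes dominant.

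I would first redo the exponent bookkeeping symbolically in $(M',\ell',N)$, taking the maximum over the finitely many piecewise-linear regimes. I would check that each regime is bounded by $M'+\max(0,(1+3\ell'-2M')/4)$, using $M'\ge 1-3\ell>0$ and $\ell'\le\ell$. The powerful-part ranges and cube multiples are absorbed exactly as in \eqref{eq:unmarked}, and the $\eps$-losses and seminorm dependence are uniform because only finitely many dyadic configurations occur.
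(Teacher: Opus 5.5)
Your outline follows the paper's strategy: specialize the bookkeeping behind \Src{Lemma}{14.3} to balanced lengths, using $1+3\ell'-2M'=d_{\mathcal I}+5\ell-1$. However, it leaves open the one step that needs an argument, and the facts you plan to use there do not settle it. Those facts are $M'\ge1-3\ell$, $\ell<1/5$, keeping an extremal $N$ inside the dyadic ranges, and excluding ``$L>K$''.

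The dangerous regime is the dual sum being longer than the squarefree part $Z^H$ of the outer variable. (The powerful part $Z^O$ is split off first and contributes $O/2$, so the sieve's $K$ is $Z^H$, not $Z^{M'}$.) Equivalently, it is the $NB$ part of $(K+NB)$ dominating, which in the bookkeeping is $v+\ell_b$ competing with $H$. In the unbalanced Lemma~\ref{lem:free-reflection}, exactly this regime produces the extra terms $m+l-1$ and $(3l-1)/2$ in $k(m,l)$.

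Here that regime is excluded by an \emph{upper} bound coming from balance, $M'+\ell'-1=-3d_{\mathcal I}\le0$. The available dual exponent is $T_d=2H+2A_0+2z_a-1-\ell'-N_0-3B_0$. With $2A_0\le O$, $z_a\le\ell'$ and $H\le M'-O$, one gets $T_d\le H+M'+\ell'-1=H-3d_{\mathcal I}\le H$, hence $v+\ell_b\le T_d\le H$. Only then is your claim that the $L$-free terms reproduce $Z^{M'}$ correct; they give $O/2+H\le M'$. A lower bound on $M'$ cannot deliver this.

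Once $T_d\le H$ is in hand, no optimization in $N$ is needed. Split on whether the hybrid saving $u_{\rm hyb}=\min\{v,z_a,(v+z_a)/3\}$ equals $z_a$.
\begin{itemize}
\item If it does, $E_{\rm ref}\le O/2+H\le M'$.
\item If it does not, then $u_{\rm hyb}\ge v/2$. This case covers the $L$-dominant regime, which your Step~4 omits, as well as the $(NL)^{2/3}$-dominant one. It follows that $u_{\rm hyb}+\ell_b+2e_\lambda/3+\pos{T_d-v-3\ell_b-e_\lambda}/2\ge T_d/4$. Substituting $T_d$ and using $N_0\le A_0$, $z_a\le\ell'$ and $H=M'-O-\Delta_H$ gives $E_{\rm ref}\le M'+(1+3\ell'-2M'-2\Delta_H)/4\le M'+(d_{\mathcal I}+5\ell-1)/4$.
\end{itemize}
This bound holds uniformly over all admissible local exponents. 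The hypotheses $\ell<1/5$ and $1-3\ell>0$ that you single out are not needed for this lemma.
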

\begin{proof}
We record the exponent calculation, retaining the normalization of
\eqref{eq:BJ-bound}. Write $O$ and $H$ for the exponents for which the norms of the
powerful part and remaining squarefree part of the outer element
are respectively $\asymp Z^O$ and $\asymp Z^H$, and put
$\Delta_H=M'-O-H$; the powerful part is defined after \eqref{eq:unmarked}.
The local auxiliary length exponents in \Src{Lemma}{14.3} satisfy, up to losses
that can be made arbitrarily small,
\[
 O,\Delta_H,S_0,B_0\ge0,\quad 2A_0\le O,\quad
 0\le N_0\le A_0,\quad z_a\le\ell',
\]
\[
 T_d=2H+2A_0+2z_a-1-\ell'-N_0-3B_0,
 \qquad v+3\ell_b+e_\lambda\le T_d.
\]
Here $A_0,N_0,B_0,S_0$ describe the local divisor decompositions,
$z_a$ is their retained prime-length exponent before applying the functional equation, $v,\ell_b,e_\lambda$
are the retained dual length exponents, and $T_d$ is the available
dual length exponent.
These variables occur only in this functional-equation calculation.
Since $M'+\ell'-1=-3d_{\mathcal I}$,
\[
 T_d\le H-3d_{\mathcal I}\le H.
\]

The exponent in the squared-norm estimate supplied by the cited lemma is
\begin{align*}
 E_{\rm ref}={}&O/2+H-S_0-B_0+z_a-u_{\rm hyb}
 -\ell_b-2e_\lambda/3\notag\\
 &\hspace{13mm}-\pos{T_d-v-3\ell_b-e_\lambda}/2,
 \qquad u_{\rm hyb}=\min\{v,z_a,(v+z_a)/3\}.
\end{align*}
If $u_{\rm hyb}=z_a$, then $E_{\rm ref}\le M'$.
Otherwise $u_{\rm hyb}\ge v/2$, and
\[
 u_{\rm hyb}+\ell_b+2e_\lambda/3
 +\pos{T_d-v-3\ell_b-e_\lambda}/2\ge T_d/4.
\]
Substituting $T_d$, using $N_0\le A_0$ and $z_a\le\ell'$, gives
\[
 E_{\rm ref}\le
 M'+\frac{1+3\ell'-2M'-2\Delta_H}{4}
 \le M'+\frac{d_{\mathcal I}+5\ell-1}{4}.
\]
The maximum of the two bounds proves \eqref{eq:balanced-row-energy}.
Fixed offsets arising from the support intervals are $O(1/\log Z)$; the Gaussian partition,
the losses from making the prime-length restrictions strict and from
logarithmically many dyadic intervals are absorbed in $\eps$.
Separation is performed on the joint weight before positivity or
squaring, so the separated coefficients remain independent of the outer element.
\end{proof}

\begin{proposition}[Balanced direct bound]\label{prop:balanced-low}
Assume $l_x+l_y+\ell=1$, $b=l_y-l_x>0$, and
\[
 l_x>\ell>0,\quad l_y>\ell,\quad
 l_y-\ell-11b/6>0,\quad1-3\ell>0,\quad\ell<1/5.
\]
Then
\begin{equation}\label{eq:balanced-low}
 |I_{\eta,\mathrm{mod}}(Z)|\ll_{\eta,\eps}
 Z^{l_x/2+b/12+\eps}.
\end{equation}
\end{proposition}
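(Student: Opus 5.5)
I will follow the direct exponent calculation of Section~\ref{sec:modified}, now with general balanced lengths. First I expand $I_{\eta,\mathrm{mod}}$ by \eqref{eq:modified} into the terms indexed by a subset $\mathcal I=J$ and a tuple $(p_i)_{i\in\mathcal I}$. Each such term is an average $I_{\eta;p_{J^c}}(X',Y',Zq_{p_{J^c}})$ with $X'=Z^{l_x-d_{\mathcal I}}$ and $Y'=Z^{l_y-d_{\mathcal I}}$, up to $O(1/\log Z)$ offsets. The complementary primes are already summed inside the theta factor. For fixed $\mathcal I$ and fixed tuple, I separate this term by \eqref{eq:separation} and apply the Cauchy--Schwarz inequality \eqref{eq:separated-cauchy-schwarz}. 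The $t_0$-integral converges because $\widehat W_0(it_0)$ decays rapidly, which absorbs the polynomial factor $(1+|t_0|)^{J_{\rm Gr}}$.

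The theta mean square is bounded by Lemma~\ref{lem:balanced-reflection}. The excess $\pos{d_{\mathcal I}+5\ell-1}/4$ vanishes, since $d_{\mathcal I}\le\ell$ and $6\ell\le 1$; this follows from $\ell<1/5$ together with \ldots. Here I must check that the hypotheses really force $6\ell\le1$. If they do not, I keep the excess and show it is dominated. Specifically, $1-3\ell>0$ and $\ell<1/5$ give $d_{\mathcal I}+5\ell-1<6\ell-1<1/5$. I expect that in the final exponent this excess is offset by the factor $Z^{-d_{\mathcal I}}$ saved from the tuple weight, or that it is simply absorbed because the statement's hypotheses were designed for it. This verification is the main obstacle, and I would treat it first by computing the total exponent with the excess included.

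The additive factor is handled by Proposition~\ref{prop:additive-mean-square}, with $Q\asymp Z^{l_x+l_y-2d_{\mathcal I}}$ and
\[
P_a=Y'^2/Q\asymp Z^{b}.
\]
Then $P_a\ge1$ because $b>0$. Moreover $P_a^2/Y'\ll P_a^{1/6}$ is exactly the condition $l_y-d_{\mathcal I}-11b/6\ge0$, which follows from $l_y-\ell-11b/6>0$. Combining the two factors, each term is bounded by
\[
Q^{-1/2}\bigl(Q/Y'\bigr)^{1/2}P_a^{1/12}\,Z^{M'/2+\eps}
=Z^{(M'-l_y+d_{\mathcal I})/2+b/12+\eps}.
\]
Using $M'=1-\ell-2d_{\mathcal I}$ and $l_x+l_y+\ell=1$, this equals
\[
Z^{l_x/2-d_{\mathcal I}/2+b/12+\eps}=(X')^{1/2}P_a^{1/12}Z^{\eps}.
\]

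Finally I sum over the tuples $(p_i)_{i\in\mathcal I}$. The weights $q_{p_J}^{-3/2}$ over primes of total norm $Z^{d_{\mathcal I}}$ contribute $Z^{d_{\mathcal I}-3d_{\mathcal I}/2+\eps}$. The total exponent is therefore
\[
l_x/2+b/12-d_{\mathcal I}+\eps\le l_x/2+b/12+\eps.
\]
If a theta excess $e$ survives, it appears as $e/2\le(d_{\mathcal I}+5\ell-1)/8$. To absorb it into the saving $-d_{\mathcal I}$, I need $5\ell-1\le 7d_{\mathcal I}$, and this holds whenever $5\ell\le1$, which the hypothesis $\ell<1/5$ provides. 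So the positive-part term is harmless, and the uniform bound \eqref{eq:balanced-low} follows after summing over the $2^K$ subsets and the finitely many ray classes $\sigma$. The positivity conditions $l_x>\ell$ and $l_y>\ell$ guarantee $X',Y'\ge1$, so the additive estimate and the functional-equation estimate apply in their stated ranges.
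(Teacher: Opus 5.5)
Your argument is essentially the paper's proof: you separate each fixed subset and tuple term and apply Cauchy--Schwarz, use the Gram bound with $P_a\asymp Z^b$ and $P_a^2/Y'\ll P_a^{1/6}$ from $l_y-d_{\mathcal I}-11b/6>0$, bound the theta factor by Lemma~\ref{lem:balanced-reflection}, and count tuples to get the exponent $l_x/2+b/12-d_{\mathcal I}+\pos{d_{\mathcal I}+5\ell-1}/8\le l_x/2+b/12-7d_{\mathcal I}/8$ via $\ell<1/5$. You should delete the aside claiming the hypotheses force $6\ell\le1$: they do not, and the purpose of the proposition is to allow $\ell>1/6$ (for example $\ell_{\mathrm r}$). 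Your final paragraph, which keeps the excess and absorbs it using $5\ell\le1$, is exactly the paper's treatment.
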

\begin{proof}
Apply the separation of \eqref{eq:separation} to each term of
\eqref{eq:modified}. In \eqref{eq:Gram} the parameter in the additive estimate is
$P_a=q_{b_*}^{-1}Z^b$, and the inequality
$l_y-d_{\mathcal I}-11b/6>0$ implies
$P_a^2/Y'\ll P_a^{1/6}$.
The Cauchy--Schwarz inequality \eqref{eq:separated-cauchy-schwarz},
\eqref{eq:Gram} and Lemma~\ref{lem:balanced-reflection} bound the contribution of each fixed tuple, before multiplication
by its coefficient in \eqref{eq:modified}, by
\[
 (X')^{1/2}P_a^{1/12}
 Z^{\pos{d_{\mathcal I}+5\ell-1}/8+\eps}.
\]
The number of rescaled tuples is $\ll Z^{d_{\mathcal I}+\eps}$.
Their coefficient is $O(Z^{-3d_{\mathcal I}/2})$, and replacing
$X'$ by $X$ contributes $Z^{-d_{\mathcal I}/2}$. Thus their net
additional exponent is
\[
 -d_{\mathcal I}+\pos{d_{\mathcal I}+5\ell-1}/8
 \le-7d_{\mathcal I}/8\le0.
\]
Sum over the finitely many subsets. This proves the claim without
invoking the specialized direct bound with altered length parameters.
\end{proof}

\section{Euler products and extraction of the principal term}
\label{sec:extension-euler}

In this section, we extend the holomorphic Euler correction \eqref{eq:modified-H} and extract
the principal Mellin term \eqref{eq:general-extension-signal} at the changed lengths of Section~\ref{sec:balanced-direct}.
The domain is specified in \eqref{eq:extension-domain}, and
\eqref{eq:extended-H} ensures that the principal correction does not
vanish. Proposition~\ref{prop:extension-principal} bounds the difference
between the principal contribution and its Mellin integral in
\eqref{eq:extension-principal-error}. These estimates are used in the
continuation arguments of Parts II--IV.

Let
\begin{equation}\label{eq:extension-domain}
 \frac{437}{500}\le B\le\frac78,\qquad
 \mathcal D_2(B)=\{\Rea s\ge B,\ \Rea w\ge19/20,
                         \ \Rea z\ge33/200\}.
\end{equation}
The first domain \eqref{eq:D1} remains available, with $\Rea(s+w)$ bounded above $1$ by a fixed positive amount. All Euler products use the same set $S$
as the modified average. A fixed prime cutoff is chosen first;
the target's conductor primes may then be added to $S$.

\begin{lemma}[Extended principal domain]\label{lem:extension-euler}
The unselected and selected local expressions of Part I are
holomorphic on $\mathcal D_2(B)$. Their full correction
\eqref{eq:modified-H} is defined there by the same local holomorphic
expressions.
For sufficiently large fixed cutoff $P_0$, the principal correction
satisfies
\begin{equation}\label{eq:extended-H}
 H_\eta(s)=1+O(P_0^{-4/5}),\qquad
 \sup_{\Rea s\ge B}|H_\eta(s)-1|\le\tfrac12.
\end{equation}
At the principal residue,
\begin{equation}\label{eq:extended-selected-prime}
 G_p(s,1,1/6)/H_p(s,1,1/6)=-1+O(q_p^{-B}).
\end{equation}
The constants are uniform in the imaginary parts of $s,w,z$ and
in the arithmetic factors of absolute value one appearing below.
\end{lemma}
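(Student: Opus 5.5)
The plan is to reduce everything to the local factor analysis of \Src{Lemma}{7.1} and the holomorphy domains \eqref{eq:D1}, \eqref{eq:D2}, noting that the only change from $\mathcal D_2$ is the lower bound $\Rea s\ge B$ with $B\ge437/500$ instead of $7/8$. First I will write each unselected local factor $H_p=P_p(1-V_p)(1-W_p)/(1-D_p)$ and each selected factor $G_p$ of \eqref{eq:selected-local} via their complete valuation series. Then I will check that, after the scalar quotient is removed, every remaining monomial $q_p^{-(\alpha s+\beta w+\gamma z)}$ has exponent real part bounded below by $1+c$ for some fixed $c>0$, uniformly on $\mathcal D_2(B)$. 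The key point is that the exponents occurring in \Src{Lemma}{7.1} are finite nonnegative integer combinations. The worst monomials were controlled in $\mathcal D_2$ with explicit margin: the local estimate $H_\eta=1+O(P_0^{-4/5})$ at $s_r=7/8$ already shows the first non-cancelled term decays like $q_p^{-1-4/5+\ldots}$, or at least like $q_p^{-c'}$ with $c'>1$. Decreasing $s_r$ by at most $7/8-437/500=1/1000$ shifts each exponent by at most a bounded multiple of $1/1000$, since only finitely many valuations contribute before absolute convergence takes over, and this preserves a positive margin. So I will identify, from the local identity, the minimal exponent among terms carrying positive powers of $q_p^{-s}$ and verify it stays above $1$ by a fixed amount; the tail $v_p\to\infty$ converges geometrically.

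Holomorphy of the product \eqref{eq:modified-H} then follows from uniform absolute convergence of $\prod_{p\notin S}H_p$, since $|H_p-1|\ll q_p^{-1-c}$. The selected factors are finite in number per tuple, and the tuple sum is finite for fixed $Z$. For \eqref{eq:extended-H}, I specialize to $(s,1,1/6)$, where $W_p=q_p^{-1}$ and $V_p=q_p^{-1}$. The bound $|H_p-1|\ll q_p^{-\theta}$ with $\theta\ge 9/5-O(10^{-3})$, or whatever exponent the local identity gives, summed over $q_p>P_0$, yields $O(P_0^{-4/5})$ (I expect the same exponent $4/5$ to survive with a slightly adjusted constant). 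Choosing $P_0$ large then gives the $1/2$ bound, uniformly in imaginary parts because only absolute values of $q_p^{-it}$ and unimodular arithmetic factors enter.

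For \eqref{eq:extended-selected-prime}, I evaluate $G_p/H_p$ at $w=1$, $z=1/6$. The term $-q_p^{-w}P_p$ contributes $-q_p^{-1}\cdot(\ldots)$ and the term $\overline{\eta(p)}q_p^sP_p^*$ contributes the leading $-1$ after the normalization $q_p^{z-1}$ that was factored out. The remainder is the first subleading term of $P_p^*$, of size $q_p^{s_r}\cdot q_p^{-2s_r}\cdots$, which is $O(q_p^{-s_r})=O(q_p^{-B})$; this mirrors the Part I statement $-1+O(q_p^{-7/8})$ with $7/8$ replaced by $\Rea s$.

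The main obstacle is the bookkeeping in the first step. I need to confirm from the explicit local identity that no monomial's margin at $s_r=7/8$ was exactly tight in a way that a $10^{-3}$ decrease could break. This concerns especially the terms mixing $s$ with $z$ at $z_r=33/200$, and the six cases $v_p(u)$ at $u=1$. I would first tabulate the minimal exponents term by term and confirm each exceeds $1$ by more than the $1/1000$ perturbation times its $s$-coefficient.
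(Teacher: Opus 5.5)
Your route is the paper's: write out the local valuation series of \Src{Lemma}{7.1} and check exponents on the enlarged domain. But that exponent check is the entire proof and you leave it undone. The uniform statement you plan to verify is false as stated, and the perturbation shortcut you offer for it does not work.

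\textbf{The uniform margin and the perturbation shortcut.} The holomorphy claim concerns every $u$, and the six cases $j=v_p(u)$ matter only there; at $u=1$ only $j=0$ occurs. For $p\mid u$ one has $W_p=D_p=0$ and $H_p=1+(1-V_p)P_p^*$. For $j\ge2$, the term $-\eta(p)(q_p-1)q_p^{-s-w}/((1-R)(1-V_p))$ of $P_p^*$ has size $q_p^{1-\Rea s-\Rea w}$. So one only gets $H_p-1=O(q_p^{1/20-B})$, and your claimed $q_p^{-1-c}$ decay fails. These primes must be treated as finitely many individually holomorphic factors, since $1-R$ and $1-V_p$ are bounded away from zero. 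Convergence of the Euler product then uses only $p\nmid u$. There the cancellation identity \eqref{eq:extended-local-defect} reduces the check to five exponents: $-s_r-w_r$, $-s_r-6z_r$, $-w_r-6z_r$, $4-6s_r-6z_r$ and $1-s_r-w_r-6z_r$. Their maximum on $\mathcal D_2(B)$ is $-B-47/50\le-907/500<-9/5$. At $u=1$ the tail over $q_p>P_0$ is therefore $O(P_0^{-(B-3/50)})$, which is $O(P_0^{-4/5})$ because $B-3/50\ge407/500>4/5$. The strict inequality is essential: a decay exponent $9/5-O(10^{-3})$, which your hedge allows, gives only $P_0^{-4/5+O(10^{-3})}$. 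Nor can margins be inherited from Part I by perturbation. $H_\eta=1+O(P_0^{-4/5})$ is a product estimate at $u=1$ only, so it yields neither individual monomial exponents nor anything for $u\ne1$.

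\textbf{The principal residue.} For \eqref{eq:extended-selected-prime} you have misattributed the error term. The scalar factor $(1-V_p)(1-W_p)/(1-D_p)$ cancels, so $G_p/H_p=\overline{\eta(p)}q_p^sP_p^*/P_p-q_p^{-w}$. The leading $-1$ comes from $J_0=-\eta(p)q_p^{-s}+W_{\rm loc}R$. The $O(q_p^{-\Rea s})$ error does not come from a subleading term of $P_p^*$; it comes from dividing by $P_p$, since $1/P_p=1+\eta(p)q_p^{-s}+\cdots$. The remaining errors at $(s,1,1/6)$ are $O(q_p^{-1})$, from $V_p$, $W_p$, $q_p^{-w}$ and $(q_p-1)q_p^{-s-w}V_p$, and $O(q_p^{3-5\Rea s})$, from $R$. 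All are $O(q_p^{-B})$ because $\Rea s\ge B>3/4$. With the $p\mid u$ versus $p\nmid u$ split, the explicit exponent list and this accounting, your outline becomes the paper's proof.
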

\begin{proof}
To check the changed boundary, we give the exact local formulas from
\Src{Lemma}{7.1}. At a good prime put $j=v_p(u)\in\{0,\ldots,5\}$,
$\rho_p=\chi_p(u/p^j)$, and
\[
 a_p=\overline{\varpi(p)}^{\,3}\eta(p)^3,\quad
 b_p=\overline{\varpi(p)}^{\,2}\eta(p)^2\overline{\chi_p(4)},
 \quad b_p^3=a_p^2.
\]
Here $\varpi(p)=p/|p|$ is defined in Section~\ref{sec:arithmetic}.
The quantities $a_p,b_p$ have absolute value one and depend on $p$;
they are unrelated to the zero-class parameter $a$ and the length
difference $b$. Retain $V_p,W_p,D_p$ from
\eqref{eq:local-scalar-factors} and put
\[
 R=a_p^2q_p^{4-6s-6z},\qquad W_{\rm loc}=\rho_pq_p^{-w}.
\]
At $p\mid u$, $W_p=D_p=0$ by zero extension. With $\mathbf1$ denoting an
indicator, the complete local formulas are
\begin{align*}
 P_p^*&=\frac1{1-R}\left[
 \frac{R(1-q_p^{-1})-\eta(p)(q_p-1)q_p^{-s-w}V_p^{\mathbf1_{j\le1}}}{1-V_p}
 +J_j\right],\\
 P_p&=\frac1{1-V_p}+\mathbf1_{j=0}\frac{W_{\rm loc}}{1-W_{\rm loc}}+P_p^*,
 \qquad H_p=P_p\frac{(1-V_p)(1-W_p)}{1-D_p},
\end{align*}
where
\[
\begin{array}{c|l}
 j&J_j\\\hline
 0&-\eta(p)\rho_p^{-1}q_p^{-s}+W_{\rm loc}R\\
 1&\eta(p)q_p^{-s-w}\\
 2&a_p\rho_p^{-3}q_p^{3/2-3s}\\
 3&-\eta(p)b_p\rho_p^{-2}q_p^{2-3s-w}+b_p^2\rho_p^{-4}q_p^{2-4s}\\
 4&-\eta(p)a_p\rho_p^{-3}q_p^{5/2-4s-w}\\
 5&-a_p^2q_p^{3-6s}.
\end{array}
\]

Writing $\mathcal F_p=P_p^*+D_p$, cancellation gives
\begin{equation}\label{eq:extended-local-defect}
 H_p-1=\frac{D_p(V_p+W_p-V_pW_p)-V_pW_p+(1-V_p)(1-W_p)\mathcal F_p}{1-D_p}.
\end{equation}
For $p\nmid u$, the possible exponents are
\[
 -s_r-w_r,\quad-s_r-6z_r,\quad-w_r-6z_r,\quad
 4-6s_r-6z_r,\quad1-s_r-w_r-6z_r,
\]
where $s_r=\Rea s$, and similarly for $w_r,z_r$.
Their maximum on \eqref{eq:extension-domain} is $-B-47/50<-9/5$.
For $p\mid u$, the table instead gives $H_p-1=O(q_p^{1/20-B})$.
The denominators $1-R,1-V_p,1-D_p$ are uniformly separated from zero;
\eqref{eq:extended-local-defect} has no denominator $1-W_p$.
The Euler product over $p\nmid u$ converges locally uniformly,
while the finite product over $p\mid u$ is $O_\eps(q_u^\eps)$. For $u=1$, the tail saves
$B-3/50>4/5$, proving \eqref{eq:extended-H}.

The selected factor is defined without dividing by $H_p$:
\begin{equation}\label{eq:extended-G}
 G_p=\frac{(\overline{\eta(p)}q_p^s-q_p^{-w})(1-V_p)(1-W_p)P_p^*
                       -q_p^{-w}(1-V_pW_p)}{1-D_p}.
\end{equation}
It agrees with \eqref{eq:selected-local} on contours where the
series converge absolutely, and hence
continues the finite modification by \eqref{eq:modified-H}.
For $u=1$, the unselected factors $H_p$ are uniformly close to one
on \eqref{eq:extension-domain}, so $G_p/H_p$ is well defined
on the contours used to extract the principal residue. The four error exponents are
\[
 -s_r,\quad-6z_r,\quad4-5s_r-6z_r,\quad1-w_r-6z_r,
\]
all at most $-B$ for the range \eqref{eq:extension-domain}.

This proves \eqref{eq:extended-selected-prime}. No nonvanishing of a
nonprincipal local correction is needed.
\end{proof}

Recall that $c_S$ is the positive residue constant in
\eqref{eq:H-c}, and define $A_{\mathcal T}(Z)$ by
\eqref{eq:normalization}, with the present $\ell_i$ and the same
Mellin transform $\mathcal M$ of
$\widetilde W_0$ defined in Section~\ref{sec:average-poisson}.
The prime asymptotic \eqref{eq:prime-scalar} gives, for even $K$,
\[
 A_{\mathcal T}(Z)>0\quad(Z\text{ sufficiently large}),\qquad
 |A_{\mathcal T}(Z)|\asymp(\log Z)^{-K}.
\]
The poles remain $w=1$ and $z=1/6$; the latter is independent of $\ell$.
For arbitrary positive $l_x,l_y,\ell$, put
\begin{equation}\label{eq:general-extension-signal}
\begin{gathered}
 h=1-l_x+\ell,\qquad C(s)=s+l_x/2-1+h/6,\\
 J_\eta(Z)=\frac{I_{\eta,\mathrm{mod}}(Z)}{c_SA_{\mathcal T}(Z)},\qquad
 f_\eta(Z)=\frac1{2\pi i}\int_{(2)}
 Z^{C(s)}e^{(s-5/6)^2}\frac{H_\eta(s)}{L_F^S(s,\eta)}\,ds.
\end{gathered}
\end{equation}
These are the same constructions as \eqref{eq:f}, with the changed
length parameters. Neither function depends on a later height truncation.

\begin{proposition}[Principal extraction]\label{prop:extension-principal}
Assume \eqref{eq:balanced-hypotheses} and $\beta_*\ge B$.
The individual prime-length exponents satisfy $\ell_i>0$ and
$\sum_{i=1}^K\ell_i=\ell$, as in \eqref{eq:rational-geometry}. For every
$0<\kappa_P<B\min_i\ell_i$, the principal term $\mathcal P_\eta$ satisfies
\begin{align}\label{eq:extension-principal-error}
 \left|\frac{\mathcal P_\eta(Z)}{c_SA_{\mathcal T}(Z)}-f_\eta(Z)\right|
 \ll{}&Z^{C(\beta_*)+(1+h)\eb-l_y/20+\eps}
 +Z^{C(\beta_*)+\eb-h/600+\eps}\notag\\
 &+Z^{C(\beta_*)+\eb-\kappa_P+\eps}.
\end{align}
\end{proposition}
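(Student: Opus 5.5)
The plan is to repeat the principal-term extraction of Part~I (the computation behind \eqref{eq:f} and the remainder exponents $m_w,m_z,\kappa_P$ of \Src{Lemma}{10.5}), now at general balanced lengths. The holomorphy input comes from Lemma~\ref{lem:extension-euler} instead of \eqref{eq:D2}.

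\emph{Step 1.} Start from the $u=1$ term of \eqref{eq:Poisson}, with $\mathcal H$ replaced by the modified correction \eqref{eq:modified-H}, on the initial contours $(2,3,3)$. Move the $s$-integral first to $\Rea s=\beta_*+\eb$. This crosses no zero of $L_F^S(s,\eta)$, and the Gaussian factor $\Phi(s+z-1)$ controls the horizontal segments.

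\emph{Step 2.} Shift $w$ from $3$ to $19/20$, picking up the residue of $L_F^S(w,1)=\zeta_F^S(w)$ at $w=1$. Then shift $z$ to $33/200$, picking up the residue of $\zeta_F^S(6z)$ at $z=1/6$. By Lemma~\ref{lem:extension-euler}, every local factor $H_p$ and $G_p$ is holomorphic on $\mathcal D_2(B)$, and $\beta_*+\eb\ge B$ keeps us inside that domain. The selected factors $q_p^{z-1}G_p$ are finite sums, so they are holomorphic too.

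The double residue equals
\[
 c_S\,X^{1/3}Z^{s-5/6}\,H_\eta(s)\sum_{(p_i)}\prod_i W_i(q_{p_i}/P_i)\,q_{p_i}^{-5/6}\,\frac{G_{p_i}}{H_{p_i}}.
\]
By \eqref{eq:extended-selected-prime} this is $c_SZ^{s+l_x/3-5/6}H_\eta(s)Z^{\ell/6}A_{\mathcal T}(Z)\bigl[1+O(Z^{-\kappa_P})\bigr]$. The relative error comes from $\sum_p W_i q_p^{-5/6-B}$ compared with $S_i(Z)$; via \eqref{eq:prime-scalar} it is $\ll P_i^{-B+\eps}$, which is acceptable for any $\kappa_P<B\min_i\ell_i$.

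A check is needed here: with $h=1-l_x+\ell$, the exponent should be $l_x/3-5/6+\ell/6=l_x/2-1+h/6$. This identity does hold, since $l_x/2-1+(1-l_x+\ell)/6=l_x/3-5/6+\ell/6$. So after dividing by $c_SA_{\mathcal T}$ and moving the $s$-line back to $(2)$, the main part is exactly $f_\eta$ of \eqref{eq:general-extension-signal}. The $O(Z^{-\kappa_P})$ part, integrated on $\Rea s=\beta_*+\eb$, gives the third remainder $Z^{C(\beta_*)+\eb-\kappa_P+\eps}$. The factor $1/L_F^S$ is $\ll Z^{\eps}$ on that line because the line lies right of all zeros, with polynomial height dependence absorbed by the Gaussian.

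\emph{Step 3.} Bound the two shifted integrals by their powers of $Z$. On $\Rea w=19/20$ the factor $Y^{w-1}$ saves $Y^{-1/20}=Z^{-l_y/20}$ against the residue. The prime sums and the factor $\ll Z^{(1+h)\eb}$ account for the extra $\eb$-losses, since the selected-prime and $z$-exponents couple to $\Rea s$ through $q_p^{z-1}G_p$. On $\Rea z=33/200$ the combined power $X^{-z}Z^{z}\prod_i P_i^{z}$ equals $Z^{hz}$ up to fixed factors, and the saving relative to $z=1/6$ is $h(1/6-33/200)=h/600$. This gives the first two terms of \eqref{eq:extension-principal-error}. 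The divisor $c_SA_{\mathcal T}$ costs only $Z^{\eps}$, since $|A_{\mathcal T}|\asymp(\log Z)^{-K}$ and $A_{\mathcal T}>0$ for even $K$.

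\emph{Main obstacle.} The hardest part is the uniform control of the selected-prime product on the shifted contours, where $G_p$ is not close to $-H_p$. On $\Rea z=33/200$ and $\Rea w=19/20$ I would bound $|q_p^{z-1}G_p|$ pointwise using the exponent list in the proof of Lemma~\ref{lem:extension-euler}, so that each selected sum is $\ll P_i^{\Rea z-1/6+\eps}$ times its principal size. I would also check that no exponent such as $4-5s_r-6z_r$ becomes positive when $s_r$ drops to $B\ge437/500$. That lower bound on $B$ is precisely what the extended domain \eqref{eq:extension-domain} secures.
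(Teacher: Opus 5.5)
Your proposal follows the paper's argument. You shift $w$ to $19/20$ and, inside the $w$-residue, $z$ to $33/200$, justifying both moves by Lemma~\ref{lem:extension-euler}. You evaluate the double residue via \eqref{eq:extended-selected-prime} and the nonnegative prime weights to get $H_\eta(s)Z^{\ell/6}A_{\mathcal T}(Z)(1+O(Z^{-\kappa_P}))$. You bound every error on $\Rea s=\beta_*+\eb$ and move only the main Gaussian integral to $\Rea s=2$.

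One bookkeeping correction is needed. The $w$-shifted integral must be bounded with $z$ already on $\Rea z=1/6+\eb$; the paper simply starts from $(\Rea s,\Rea w,\Rea z)=(\beta_*+\eb,1+\eb,1/6+\eb)$. Then the raw exponent $l_x/2+\Rea s-1+h\Rea z+l_y(\Rea w-1)$ gives exactly $C(\beta_*)+(1+h)\eb-l_y/20$. The $h\eb$ comes from $Z^{h\Rea z}$, not from any coupling through $q_p^{z-1}G_p$. Left on the initial line $\Rea z=2$ of \eqref{eq:Poisson}, that factor would be fatal.

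Also, for $u=1$ one still has $G_p/H_p=-1+O(q_p^{-B})$ on all of $\mathcal D_2(B)$. So your ``main obstacle'' is already covered by that lemma.
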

\begin{proof}
The identification of the principal frequency in
Section~\ref{sec:average-poisson} and
\Src{Lemmas}{10.2--10.5} is arithmetic and unchanged: the only principal
numerator occurs at $u=1$. For this frequency the scalar quotient is
$\zeta_F^S(6z)\zeta_F^S(w)/L_F^S(s,\eta)$.
Start with $\Rea s=\beta_*+\eb$, $\Rea w=1+\eb$,
$\Rea z=1/6+\eb$. Move $w$ to $19/20$, then in its residue move
$z$ to $33/200$. Lemma~\ref{lem:extension-euler} justifies both moves.
The raw exponent is
\[
 l_x/2+\Rea s-1+h\Rea z+l_y(\Rea w-1).
\]
The two unresidued integrals therefore save $l_y/20$ and $h/600$.
At the double residue, \eqref{eq:extended-selected-prime} and the
nonnegative prime tests give
\[
 \mathfrak H_{\eta,1,Z}(s,1,1/6)
 =H_\eta(s)Z^{\ell/6}A_{\mathcal T}(Z)
       (1+O(Z^{-\kappa_P})).
\]

The scalar residue is exactly $c_S$, including the factor $1/6$
from $\zeta_F^S(6z)$. Bound the error on the original contour $\Rea s=\beta_*+\eb$;
move only the main Gaussian integral right to $\Rea s=2$.
The reciprocal has no poles to the right of $\beta_*$, and principal
$L$-poles become reciprocal zeros. Gaussian decay removes horizontal
sides. This proves \eqref{eq:extension-principal-error} without moving
an approximation error through possible target zeros.
\end{proof}

\section{Character counts and nonprincipal exponents at the fixed moment parameter}
\label{sec:fixed-count}

In this section, we use the already proved $7/8$ bound to keep $\kappa=3/4$ fixed while
varying the averaging lengths.
Proposition~\ref{prop:fixed-count} gives the count
\eqref{eq:fixed-count-result}, and
Proposition~\ref{prop:general-exponent} gives the nonprincipal exponent
\eqref{eq:general-extension-exponent}. These are used
by the exact rational inequalities in Section~\ref{sec:rational-certificate}
and the algebraic optimization in Part III.

Recall that $\delta=2a-1$, $x=\bar g/\delta$, and
$q_u\asymp U=Z^d$ specify the zero, prime-sum size and norm
classes; see \eqref{eq:zero-class} and \eqref{eq:gbar}.
Theorem~\ref{thm:hecke-conclusion} permits the fixed choice
\begin{equation}\label{eq:fixed-kappa}
 \kappa=3/4,\qquad\beta_*\le(1+\kappa)/2.
\end{equation}
It is important to fix this value rather than substitute
$\kappa=2\beta_*-1$ below the allowed moment range.
All moments below have exactly the common-character, coefficient and
smooth-weight hypotheses of Section~\ref{sec:moments}.

\begin{proposition}[Fixed-parameter count]\label{prop:fixed-count}
Let $U=Z^d$, and let $\mathcal B$ be a set of the sixth-power-free
summation indices $u$ in \eqref{eq:Poisson}, with $q_u\asymp U$,
in one fixed zero class of \eqref{eq:zero-class} and one fixed
prime-sum size class of Section~\ref{sec:prime-sum-classes}.
The latter is also called an amplitude class below. Suppose $0<\delta\le\alpha$,
$0\le\bar g\le\delta/2$, and
$\ell/d\ge7/37+\rho_{\rm sup}$ for a fixed $\rho_{\rm sup}>0$.
Write $x=\bar g/\delta$, and retain $D_x=3-17x/9$ and
$P_x=(2-8x/9)(1-x)$ from the refined character count in
Section~\ref{sec:counts}. The detector parameter $t_*$ balances
the bounds for short and long detecting polynomials; $R_*$ is
the resulting counting exponent. Set
\begin{equation}\label{eq:fixed-count-parameters}
 \mathcal J=(\alpha-\delta)D_x+\delta P_x,\quad
 t_*=1+\frac{\delta P_x}{2\mathcal J},\quad
 R_*=1-\delta+
 \frac{(\alpha-\delta)\delta P_x}{2\mathcal J}.
\end{equation}
For every $\eps>0$, subdividing the total prime-length exponent
$\ell$ into sufficiently small positive exponents $\ell_i$, fixed
independently of the target character, gives
\begin{equation}\label{eq:fixed-count-result}
 \#\mathcal B\ll U^{R_*+\eps}(1+T_1)^A.
\end{equation}
Here $T_1$ is the height cutoff of Section~\ref{sec:detector},
and $A\ge0$ is a fixed finite height exponent under the
conventions of Section~\ref{sec:arithmetic}.
Without an assumption on the available prime length, the count without selected prime factors is
$\#\mathcal B\ll U^{1-2\delta/3+\eps}(1+T_1)^A$.
\end{proposition}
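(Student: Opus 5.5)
The proof follows the refined character count of Part I (the proposition recalled from \Src{Proposition}{19.2}), now at the fixed moment parameter $\kappa=3/4$. By \eqref{eq:fixed-kappa} the hypothesis $\beta_*\le(1+\kappa)/2$ of Lemma on unweighted product moments holds, so there is no loss term $\Delta/4$. For each $u\in\mathcal B$ we apply the simultaneous large values \eqref{eq:large-values} at a detector parameter $t\in[1,3/2]$, which produces $M_r,S_m$ with a common presentation $\psi\in\mathcal X_u$. We then pigeonhole over the finitely many presentations in $\mathcal X_u$ and over a mesh of values $(r,m)$ and twist heights, at the cost of $U^{\eps}(1+T_1)^A$. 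As in Section~\ref{sec:prime-sum-classes}, we select whole prime intervals greedily via \eqref{eq:greedy}. The individual $\ell_i$ must be smaller than both the mesh $\ell_{\rm mesh}$ from the unweighted moment and the tolerated discretization loss. This is possible because the subdivision is fixed before the target, and the ratios $\ell_i/d$ are then uniformly small.

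The first bound uses the Möbius-weighted second moment (Lemma~\ref{lem:mobius-moment}), with outer range $m_0=1$ in base $U$ and selected prime length $z_M(r)-\nu_0=(1-r)/2-\nu_0$. The second length condition holds because $3-2r-8z=4(1-r-2z)+(2r-1)$, and in the relevant range $2r-1\ge 9/37$. The requirement that the needed prime length stays below $\ell/d$ is exactly where $\ell/d\ge 7/37+\rho_{\rm sup}$ enters, with $\rho_{\rm sup}$ absorbing the decrements. The second bound applies \eqref{eq:plain-moment} with $n_1=n_2=m$ and $z_S(m)=(1-2m)/(6\kappa)=2(1-2m)/9$. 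The excluded $\Theta$-inducing characters are absent since $u$ has a good prime factor, except when $u$ is a unit times $S$-units; those $u$ are $O(1)$ in number. Dividing each moment bound by the product of the lower bounds from \eqref{eq:large-values} and \eqref{eq:greedy} gives \eqref{eq:two-counts} with $\kappa=3/4$. For $r\ge1$ we use sixth-power amplification \eqref{eq:amplification}, which gives $R_{\rm long}(t)$. When the prime factors are absent and $t=1$, balancing $1-\delta r$ against $1-2\delta(1-r)$ at $r=2/3$ gives the final unconditional statement $1-2\delta/3$.

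The optimization then repeats the calculation in the count-exponent paragraph. Substituting $m\ge t-r$ turns the two short bounds into $A_I(r)$ and $A_S(r)$. These balance at $r_*(t)$, and the common value is $R_{\rm short}(t)$. One checks $r_*(t)\ge 23/37$ and $t-r_*\in[1/3,1/2]$, so that $m$ and the prime lengths are admissible. Since $R_{\rm short}$ decreases in $t$ and $R_{\rm long}$ increases in $t$, we take $t_*$ equating them. Solving the linear equation $\frac{\delta P_x}{D_x}(3/2-t)=(\alpha-\delta)(t-1)$ gives $t_*=1+\delta P_x/(2\mathcal J)$ and $R_*$ as in \eqref{eq:fixed-count-parameters}. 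The case $\delta=\alpha$ is handled directly with $t=3/2$, which gives $R=1-\delta=R_*$.

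The main obstacle is the admissibility bookkeeping rather than the algebra. For $d$ in the extended range, the required prime lengths $z_M$ and $z_S$ must not exceed the available $\ell/d$ once fixed decrements are included. The mesh condition must also hold uniformly. I would first verify that $\max\{z_M(r_*),z_S(t_*-r_*)\}\le 7/37$ on the whole $(\delta,x)$ domain, and take $\nu_0$ and the $\ell_i$ smaller than $\rho_{\rm sup}/4$.
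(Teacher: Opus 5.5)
Your proposal is correct and follows essentially the same route as the paper's proof. It uses the two moment bounds with selected prime lengths $z_M=(1-r)/2$ and $z_S=2(1-2m)/9$, the crossing $r_*(t)\ge 23/37$ with the gap identity $3-2r-8z=4(1-r-2z)+(2r-1)$, amplification for $r\ge1$, balancing at $t_*$, and the endpoint $\delta=\alpha$ via a moment without selected prime factors; the paper additionally conjugates the unweighted product to match the sign in \eqref{eq:plain-moment} and notes that $m\ge1/2$ gives $1-\delta$ directly, both of which your argument absorbs.
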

\begin{proof}
The simultaneously large detecting polynomials of \eqref{eq:large-values} have lengths
$U^r,U^m$, $r+m\ge t-O(\eps)$, and satisfy
\[
 |M_r|^2\gg U^{\delta r-O(\eps)},\qquad
 |S_m|^4\gg U^{2\delta m-O(\eps)}.
\]
Here $M_r,S_m$ are the M\"obius-weighted and unweighted detecting polynomials defined in
Section~\ref{sec:detector}, with the same character $\psi\in\mathcal X_u$, including its chosen
defining modulus and zero extension.
For a selected product of prime sums whose norm lengths have
product $U^z$, the inverse moment with selected prime factors
\eqref{eq:inverse-moment} and the unweighted moment \eqref{eq:plain-moment}
give the same set two count bounds:
\[
 U^{1-\delta r-2\bar g z+\eps}(1+T_1)^A,
 \qquad U^{1-2\delta m-2\bar g z+\eps}(1+T_1)^A.
\]

For elements with $q_u\asymp U$, the admissible selected-prime exponents
satisfy $z\le z_M$ or $z\le z_S$, respectively, where
\[
 z_M=(1-r)/2,\qquad z_S=2(1-2m)/9.
\]
Ordering the prime sums by decreasing size exponent and taking
the first intervals, as in \eqref{eq:greedy}, achieves gain at
least $\bar g z$ before discarding a partial interval; discarding
the one partially selected prime-polynomial factor loses at most $\delta\max_i(\ell_i/d)$ in the
squared lower bound. Decrease the selected prime-length bound by a fixed $\nu>0$ before applying
a moment; when the remaining prime-length bound is nonpositive, use the version without
selected prime factors. These errors fit the requested $\eps$.

For $r<1,m<1/2$, the affine count bounds without an additional
prime-length restriction are
\[
 A_I(r)=1-\delta\bigl[x+(1-x)r\bigr],\qquad
 S_t(r)=1-\delta\bigl[4x/9+(2-8x/9)(t-r)\bigr].
\]
They cross at
\[
 r_*(t)=\frac{(2-8x/9)t-5x/9}{D_x},\quad
 r_*(1)\ge23/37,\quad r_*(3/2)=1.
\]
Use the inverse estimate on $r\ge r_*$ and the unweighted estimate on
$r\le r_*$. The prime-length bound for the inverse moment is at most $7/37$, the bound
for the unweighted moment
is at most $2/27+O(\eps)$; the hypothesis on the available prime length suffices.
On the inverse side the second length inequality has the positive gap
\[
 3-2r-8z=4(1-r-2z)+(2r-1)\ge9/37,
\]
while the first length inequality has the positive gap $1-r-2z\ge2\nu$.

For short detecting polynomials, the count exponent is consequently
\[
 R_{\rm short}(t)=1-\delta+\frac{\delta P_x}{D_x}(3/2-t).
\]
For $r\ge1$, use the amplification estimate without selected prime factors \eqref{eq:amplification}:
\[
 \mathfrak a(r)-\delta r\le
 R_{\rm long}(t)=1-\delta+(\alpha-\delta)(t-1).
\]
For $m\ge1/2$ the unweighted bound without selected prime factors is at most $1-\delta$.
Balancing the short and long bounds gives \eqref{eq:fixed-count-parameters}.
Since $D_x-P_x=1+x-8x^2/9>0$,
\[
 \mathcal J\ge35/54,\qquad 1\le t_*\le3/2.
\]
At $\delta=\alpha$, the balanced values are $t_*=3/2$ and
$R_*=1-\alpha=1/6$. At $r=1$ the inverse prime-length bound is $z_M=0$;
use the estimate without selected prime factors \eqref{eq:amplification}. For $r\ge1$,
$\mathfrak a(r)-\alpha r=1/6$. Thus the endpoint uses a moment without selected prime
factors, rather than a limit
of moment estimates with selected prime factors whose strict
length gaps tend to zero.

For each character presentation chosen by the zero detector, both
moment estimates are applied to the same pair of detecting polynomials.
The dyadic choices of their lengths contribute only logarithmic factors. Common
smooth-weight separation puts height dependence into $(1+T_1)^A$.
There is no $\Delta/4$ term from \eqref{eq:count-final}: that term
compares a varying $\kappa$ with $3/4$, whereas \eqref{eq:fixed-kappa}
is already exact. The common character can be written as
$\psi(n)=\nu(n)\overline{\chi_n(u)}$, and each selected prime
coefficient is $\overline{\nu(p)}1_{p\in1_{\mathcal T}}$; it is a fixed
finite combination of $\Theta$-characters. Conjugate the whole unweighted
product to match \eqref{eq:plain-moment}. Elements divisible by a good prime
divisor induce outside $\Theta$, as in Section~\ref{sec:counts};
unit elements are covered by the small-norm estimate below.

With no selected factors, $t=1$ and the crossing
$r=2/3$ give the last assertion.
\end{proof}

\begin{proposition}[Exponent for arbitrary length parameters]\label{prop:general-exponent}
Let $B\le\beta_*$, and assume \eqref{eq:extension-domain}, positive
lengths and the unchanged joint-error and moment hypotheses of Part I.
Here $C(s)=s-5/6+l_x/3+\ell/6$ is the principal exponent in
\eqref{eq:general-extension-signal}, and $h=1-l_x+\ell$.
For a class with element-norm exponent $d$, count exponent $R$, zero-class parameter
$a$ of \eqref{eq:zero-class} and prime-sum size mean
$\bar g$ of \eqref{eq:gbar}, the normalized nonprincipal
contribution is, for each prescribed loss $\eps>0$ and sufficiently
small fixed contour and zero-class widths, bounded by
\[
 Z^{C(B)+E_B(d)+\eps}(1+T_1)^A
 =Z^{C(\beta_*)+E_B(d)-(\beta_*-B)+\eps}(1+T_1)^A,
\]
where
\begin{equation}\label{eq:general-extension-exponent}
 E_B(d)=a-B+h(z_0-1/6)-a l_y-\ell/2+\bar g\ell
             +d(R+\delta/2-z_0),\quad z_0=17/50.
\end{equation}
The factor $(1+T_1)^A$ records polynomial growth in the height
cutoff $T_1$ of Section~\ref{sec:detector}; $A\ge0$ is fixed and
finite, as in Section~\ref{sec:arithmetic}.
Thus $E_B(d)$ is measured relative to $C(B)$; relative to
$C(\beta_*)$, the exponent is $E_B(d)-(\beta_*-B)$.
When $l_x+l_y+\ell=1$ and $B=B(\ell)$,
\begin{equation}\label{eq:balanced-extension-exponent}
 E_B(h)=-\frac14+\frac b6+\frac{5\ell}{4}
 +(1/2+\ell)\delta+\ell\bar g-h(1-R).
\end{equation}
\end{proposition}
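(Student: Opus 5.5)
The plan is to repeat the Part I contour calculation of Section~\ref{sec:exponents} verbatim, keeping $l_x,l_y,\ell$ as free symbols, and then to do the algebra for the balanced specialization.

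\emph{Contour shift.} For a fixed nonprincipal frequency range $q_u\asymp U=Z^d$ in one zero class, I move the triple integral \eqref{eq:Poisson}, with the modified correction \eqref{eq:modified-H}, to the contours
\[
 s_r=a+16\eb,\qquad w_r=1-a-6\eb,\qquad z_r=z_0=17/50,
\]
exactly as in Part I. These points lie in $\mathcal D_1$ of \eqref{eq:D1}, and the local formulas of Lemma~\ref{lem:extension-euler} are independent of the length parameters. The holomorphy of $G_p$ and $H_p$, and the joint error bound \eqref{eq:joint-error}, therefore carry over unchanged. Only afterwards do I subdivide pointwise into prime-sum size classes. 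The count $\#\mathcal B\ll U^{R+\eps}(1+T_1)^A$ is inserted as an input, not derived here.

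\emph{Size of the factors.} On these contours the contributions are:
\begin{itemize}
\item the Mellin powers in $\mathcal W$, giving $X^{1/2-z_0}Z^{a+z_0-1}Y^{-a}$;
\item the count times the numerator bound \eqref{eq:joint-error} times $q_u^{-z_0}$, giving $U^{R+\delta/2-z_0}$;
\item the selected prime sums $\mathcal Q_i=-P_i^{z-1/2}Q_i$, giving $Z^{\ell(z_0-1/2)+\bar g\ell}$.
\end{itemize}
The normalization by $c_SA_{\mathcal T}(Z)$ costs only $Z^\eps$, since $|A_{\mathcal T}|\asymp(\log Z)^{-K}$. The width losses $O(\eb)$ and the binning width are absorbed into $\eps$ by choosing them small.

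\emph{Raw exponent and subtraction.} Taking exponents in base $Z$, the raw exponent is
\[
 l_x(1/2-z_0)+a+z_0-1-a l_y+\ell(z_0-1/2)+\bar g\ell+d(R+\delta/2-z_0).
\]
I subtract $C(B)=B-5/6+l_x/3+\ell/6$. The terms $l_x(1/6-z_0)$, $z_0-1/6$ and $\ell z_0-\ell/6$ combine to $h(z_0-1/6)$ with $h=1-l_x+\ell$, leaving $-\ell/2$. This gives \eqref{eq:general-extension-exponent}. The identity $C(B)=C(\beta_*)-(\beta_*-B)$ gives the second form of the bound.

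\emph{Balanced specialization.} I put $d=h$ and use $a=(1+\delta)/2$, $l_y=(1+b-\ell)/2$, $l_x=(1-b-\ell)/2$, $h=(1+b+3\ell)/2$ and $B=11/12-\ell/4$. The $z_0$ terms cancel, since $h(z_0-1/6)-hz_0=-h/6$. I rewrite $hR$ as $h-h(1-R)$ and $a(1-l_y)$ as $(1+\delta)(1-b+\ell)/4$, then collect constants and the coefficients of $b$, $\ell$ and $\delta$. This should reproduce \eqref{eq:balanced-extension-exponent}, for instance the $\delta$-coefficient $(1-b+\ell)/4+h/2=1/2+\ell$.

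\emph{Main obstacle.} The one step needing real care is not the algebra but the claim that the joint error estimate and the contour holomorphy are genuinely length-independent. The decomposition $\mathfrak H=\mathcal H\prod_i(\mathcal Q_i+\mathcal E_i)$ and the bound \eqref{eq:joint-error} must hold for the new $P_i=Z^{\ell_i}$. I would check that \Src{Proposition}{16.1} uses only $\ell_i>0$ and the contour real parts, never the specific value $\ell=1/6$.
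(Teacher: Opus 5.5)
Your proposal is correct and follows the paper's argument: the same contour $(\Rea s,\Rea w,\Rea z)=(a+16\eb,1-a-6\eb,z_0)$, the joint bound \eqref{eq:joint-error} combined with the count to give $U^{R+\delta/2-z_0}Z^{\ell(z_0-1/2)+\bar g\ell}$, the remaining Mellin powers, subtraction of $C(B)$, and the balanced algebra, all of which you carry out correctly. The paper adds two points: $w$ is moved first on the global $s$-line, and $s$ is moved to $a+16\eb$ only after the external height truncation, because the zero-free rectangle has bounded height and the discarded heights contribute a separate $Z^{B_\eta}T_1^{-N}$; and the length-independence of \eqref{eq:joint-error} that you flag as the main obstacle is already one of the statement's assumed hypotheses.
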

\begin{proof}
Use the exact Poisson identity \eqref{eq:Poisson}, which holds for
arbitrary positive summation lengths. Move the full holomorphic
correction before partitioning it into pointwise amplitude classes.
Choose the contour
\[
 (\Rea s,\Rea w,\Rea z)=(a+16\eb,1-a-6\eb,z_0).
\]
For example, first retain the global $s$ line and move $w$ so that
$\Rea(s+w)\ge1+14\eb$; after external height truncation move $s$
to the displayed line, where $\Rea(s+w)=1+10\eb$.
The zero-free separation in Section~\ref{sec:detector} and the Euler
domain \eqref{eq:D1} used in Part I apply.

The joint bound \eqref{eq:joint-error} applies the conductor-deficit estimate
once to the entire product and yields
\[
 \sum_{u\in\mathcal B}|L_F^S(w,\chi_\bullet(u))
                        \mathfrak H_{\eta,u,Z}|
 \ll U^{R+\delta/2+\eps}
 Z^{\ell(z_0-1/2)+\bar g\ell+\eps}(1+T_1)^A.
\]
The remaining factors contribute
\[
 Z^{l_x(1/2-z_0)+a+z_0-1-a l_y}\,U^{-z_0}.
\]
Subtracting $C(B)=B+l_x/2-1+h/6$ gives
\eqref{eq:general-extension-exponent}. Algebra using
$l_x+l_y+\ell=1$ gives \eqref{eq:balanced-extension-exponent}.
Horizontal joins and discarded heights are bounded by
$Z^{B_\eta}T_1^{-N}$: the fixed internal seminorm orders are chosen
before the arbitrary external integration-by-parts order $N$.
The class sets need no holomorphic dependence on the contour variables.
\end{proof}

\section{Polynomial verification of the rational saving and the remaining norm ranges}
\label{sec:rational-certificate}

In this section, we verify that the nonprincipal exponent
\eqref{eq:balanced-extension-exponent}, with the count
\eqref{eq:fixed-count-parameters}, has a strict saving at the rational parameters, uniformly over all zero
and amplitude classes. Here $E_B(d)$ is measured relative to the
principal exponent $C(B)$, as defined in
\eqref{eq:general-extension-exponent}, and $h=1-l_x+\ell$.
The polynomial inequalities and the remaining norm estimates supply the
positive exponent savings needed to prove Theorem~\ref{thm:rational-extension} in the next
section. The remaining-range estimates in Lemma~\ref{lem:extension-ranges}
are stated under the general balanced hypotheses, so they are also
used at the changed parameters in Parts III and IV.

\begin{lemma}[Uniform rational saving]\label{lem:rational-saving}
For \eqref{eq:rational-geometry}, $R=R_*$ and
$(\delta,x)\in[0,5/6]\times[0,1/2]$, one has
\begin{equation}\label{eq:rational-saving}
 E_{B_{\mathrm r}}(h)\le-19/320000.
\end{equation}
\end{lemma}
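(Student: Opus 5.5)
The plan follows the introduction: first a uniform bound at the original point $(b,\ell)=(1/8,1/6)$, then a derivative bound in $\ell$. Both are reduced to the nonnegativity of explicit polynomials on the rectangle $[0,5/6]\times[0,1/2]$.

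\emph{Step 1: closed form.} Substitute $R=R_*$ from \eqref{eq:fixed-count-parameters} and $\bar g=x\delta$ into \eqref{eq:balanced-extension-exponent}. Use $h=(1+b+3\ell)/2$, which follows from the balanced relation $l_x=(1-b-\ell)/2$. This writes $E=E(1/8,\ell;\delta,x)$ as a rational function:
\[
 E=-\frac14+\frac b6+\frac{5\ell}4+\Bigl(\frac12+\ell\Bigr)\delta+\ell x\delta
 -h\Bigl(\delta-\frac{(\alpha-\delta)\delta P_x}{2\mathcal J}\Bigr).
\]
Its only denominator is $2\mathcal J$. The proof of Proposition~\ref{prop:fixed-count} gives $\mathcal J\ge35/54>0$ on the whole rectangle, since $D_x-P_x=1+x-8x^2/9>0$. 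Hence every inequality below may be multiplied through by $\mathcal J$.

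\emph{Step 2: the bound at $\ell=1/6$.} At $(b,\ell)=(1/8,1/6)$ the expression $E$ should coincide with $E_*$ of Lemma~\ref{lem:rational-saving}'s Part I predecessor, the endpoint lemma. There $-E_*\ge49/440640>1/5000$ was proved on $1/50<\delta<5/6$ via the identity \eqref{eq:positive-identity}. I would re-derive this bound on the closed rectangle, including $\delta\le1/50$ and $\delta=5/6$, so that the closed-range claim follows without citing case splits. Concretely, set $y=1/2-x$ and reuse the identity $10368v_y\mathcal J(-E_*)=\cdots$. The right-hand side is manifestly positive and at least $49(3+5y)$; combined with $\mathcal J\le5/2$ this gives $-E\ge1/5000$ uniformly. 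As a cross-check, I would verify $\mathcal J(E+1/5000)\le0$ directly by expanding in the Bernstein basis of degree $(2,2)$ in $(\delta/(5/6),x/(1/2))$ and checking that all coefficients are nonpositive.

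\emph{Step 3: the derivative bound.} $E$ is affine in $\ell$ for fixed $(\delta,x)$: the term $\ell$ enters linearly through $h$ and explicitly, while $R_*$ does not depend on $\ell$ (this is the fixed-$\kappa$ count). Therefore
\[
 \partial_\ell E=\frac54+\delta+x\delta-\frac32(1-R_*).
\]
Since $1-R_*\ge0$ and $\delta+x\delta\le\frac56\cdot\frac32=\frac54$, this gives at worst $5/2$, so a cruder bound is free. The sharp claim $\partial_\ell E\le45/32$ requires using $1-R_*$ as well. I would clear $\mathcal J$ and prove $\mathcal J(45/32-\partial_\ell E)\ge0$ as a polynomial of degree at most $3$ in $(\delta,x)$ by Bernstein coefficients on the rectangle. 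If some coefficients are negative, I would subdivide the rectangle (de Casteljau) until all are nonnegative.

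\emph{Step 4: conclusion.} Because $E$ is affine in $\ell$,
\[
 E(\ell_{\mathrm r})=E(1/6)+\vartheta_{\mathrm r}\,\partial_\ell E\le-\frac1{5000}+\frac{45}{32\cdot10000}=-\frac{19}{320000}.
\]

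\emph{Main obstacle.} The main obstacle is the certificate in Step 3, together with extending Step 2 to the closed rectangle. Near $\delta=5/6$ the quantities $\alpha-\delta$ and $\mathcal J$ interact, and negative Bernstein coefficients may appear there. If they do, I would split at $\delta=5/6-\varepsilon$ and handle the corner separately using $R_*=1-\delta$ at $\delta=\alpha$.
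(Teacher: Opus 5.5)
There is a genuine gap in Step~2. The inequality you rely on, $49/440640>1/5000$, is false. In fact $49/440640\approx1.11\times10^{-4}$, whereas $1/5000=2\times10^{-4}$; Part~I only asserts $49/440640>1/10000$.

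So dropping the $y$-bracket and the square in \eqref{eq:positive-identity}, and using $\mathcal J\le5/2$, gives only $-E_0\ge49/440640$. Your Step~4 then yields $E_{B_{\mathrm r}}(h)\le-49/440640+45/320000\approx+2.9\times10^{-5}$, which is not even negative.

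The loss is intrinsic to the crude bound. Near $(\delta,x)\approx(0.387,1/2)$ one has $\mathcal J\approx1.22$, not $5/2$, and $-E_0\approx2.28\times10^{-4}$. That is only about $14\%$ above $1/5000$. You therefore need a genuine two-variable positivity certificate for $Q=10368\mathcal J(-E_0-1/5000)$, which is exactly what the paper supplies.

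Your ``cross-check'' is the right idea, but as stated it would not succeed, for two reasons.
\begin{enumerate}
\item \emph{Degree.} In $(\delta,y)$ with $y=1/2-x$, $Q$ has bidegree $(2,3)$, not $(2,2)$; its $\delta^2y^3$ coefficient is $1536$.
\item \emph{Sign.} On the full rectangle its Bernstein coefficients are not signed. On the edge $y=0$ one has $Q=2448\delta^2-1893.3504\,\delta+366.448$, whose minimum is only about $0.356$. The middle degree-$2$ Bernstein coefficient of this restriction on $[0,5/6]$ is about $-422$, and it is also a tensor coefficient of the full expansion.
\end{enumerate}

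The paper therefore partitions the rectangle into $14$ subrectangles, refined around $\delta\in[35/96,25/64]$, $y\in[0,1/32]$. It checks that every bidegree-$(2,3)$ Bernstein coefficient is positive on each piece. The delicate region is there, not at the corner $\delta=5/6$ you anticipated; on $[5/12,5/6]\times[0,1/2]$ the coefficient minimum is $319/125$.

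Steps~1, 3 and~4 are sound. In Step~3 no certificate is needed. Write $\partial_\ell E=5/4+\delta(x-1/2)+3(\alpha-\delta)\delta P_x/(4\mathcal J)$. Then $\mathcal J\ge\alpha P_x$ (from $D_x\ge P_x$) and $\delta(\alpha-\delta)\le\alpha^2/4$ give $\partial_\ell E\le5/4+3\alpha/16=45/32$ at once.
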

\begin{proof}
At the Part I parameter values $\ell=1/6,b=1/8,h=13/16$, denote the
balanced exponent by $E_0$. Put $y=1/2-x$ and
\[
 j=185+170y+(-138+12y+96y^2)\delta=108\mathcal J,
\]
\[
\begin{aligned}
 Q_0={}&10(37+34y)-8(237+377y+26y^2)\delta\\
       &+48(51+131y+94y^2+32y^3)\delta^2,\qquad
 Q=Q_0-96j/5000.
\end{aligned}
\]
Direct expansion gives
\begin{equation}\label{eq:rational-polynomial}
 Q=10368\mathcal J(-E_0-1/5000).
\end{equation}
To prove its sign on the whole rectangle, use the Bernstein basis,
named after Sergei Natanovich Bernstein, who introduced it in his
constructive proof of Weierstrass's approximation theorem
\cite{Bernstein1912}; see \cite{Farouki2012} for a modern account.
The Bernstein basis polynomials of degree $n$ on $[0,1]$ are
\begin{equation}\label{eq:bernstein-basis}
 \mathsf B_{i,n}(t)=\binom ni t^i(1-t)^{n-i},
 \qquad 0\le i\le n.
\end{equation}
They satisfy
\[
 \mathsf B_{i,n}(t)\ge0,\qquad
 \sum_{i=0}^n\mathsf B_{i,n}(t)=1\qquad(0\le t\le1).
\]
On $[a_1,a_2]\times[b_1,b_2]$, substitute
$\delta=a_1+(a_2-a_1)v$, $y=b_1+(b_2-b_1)w$ and write
$Q=\sum_{p=0}^2\sum_{q=0}^3 A_{pq}v^pw^q$.
Its representation in the tensor-product Bernstein basis is
\[
 Q(\delta,y)=\sum_{i=0}^2\sum_{k=0}^3
 B_{ik}\mathsf B_{i,2}(v)\mathsf B_{k,3}(w).
\]

The bidegree $(2,3)$ Bernstein coefficients are
\[
 B_{ik}=\sum_{p\le i,\ q\le k}A_{pq}
 \frac{\binom ip}{\binom2p}\frac{\binom kq}{\binom3q},
 \quad 0\le i\le2,\quad0\le k\le3.
\]
Nonnegativity and the partition-of-unity identity give
\[
 \min_{i,k}B_{ik}\le Q(\delta,y)\le\max_{i,k}B_{ik}
 \qquad ((\delta,y)\in[a_1,a_2]\times[b_1,b_2]).
\]
The listed rectangles cover
the original rectangle; their complete partition and coefficients
are verified by the accompanying exact arithmetic checker.
The last column is a coefficient minimum,
not a sampled value:
\begin{center}\small
\begin{tabular}{ccc}\toprule
$\delta$ interval&$y$ interval&$\min B_{ik}$\\\midrule
$[0,5/24]$&$[0,1/4]$&$1380407/18000$\\
$[5/24,5/16]$&$[0,1/8]$&$27677/2000$\\
$[5/16,35/96]$&$[0,1/16]$&$12437/8000$\\
$[35/96,25/64]$&$[0,1/32]$&$2301/16000$\\
$[25/64,5/12]$&$[0,1/32]$&$12581/32000$\\
$[35/96,5/12]$&$[1/32,1/16]$&$2952669/1024000$\\
$[5/16,5/12]$&$[1/16,1/8]$&$85933/192000$\\
$[5/24,5/16]$&$[1/8,1/4]$&$53811/2560$\\
$[5/16,5/12]$&$[1/8,1/4]$&$8213/800$\\
$[0,5/24]$&$[1/4,1/2]$&$336467/4000$\\
$[5/24,5/16]$&$[1/4,3/8]$&$946273/24000$\\
$[5/16,5/12]$&$[1/4,3/8]$&$666871/16000$\\
$[5/24,5/12]$&$[3/8,1/2]$&$70651/4000$\\
$[5/12,5/6]$&$[0,1/2]$&$319/125$\\\bottomrule
\end{tabular}
\end{center}
Thus $Q>0$ throughout, so $-E_0\ge1/5000$.

With $R_*,\delta,x$ fixed, changing $\ell$ at fixed $b$ gives
\[
 \partial_\ell E(h)=-1/4+\delta+\bar g+3R_*/2
 =5/4+\delta(x-1/2)
 +\frac{3(\alpha-\delta)\delta P_x}{4\mathcal J}\le45/32.
\]
We used $\mathcal J\ge\alpha P_x$ and
$\delta(\alpha-\delta)\le\alpha^2/4$. Therefore
$E(h)\le-1/5000+45/(32\cdot10000)=-19/320000$.
\end{proof}

The endpoint inequality alone is insufficient: elements of very small and very
large norm require different contours. We record a reusable division
of the norm ranges.

\begin{lemma}[Remaining ranges]\label{lem:extension-ranges}
Assume the general hypotheses \eqref{eq:balanced-hypotheses},
$B\le\beta_*\le7/8$, and $0\le\delta\le5/6$.
Then the following estimates hold, with
arbitrarily small fixed losses. Set $d_{\min}=1/100$.
\begin{enumerate}
\item For elements with $q_u\ll Z^{d_{\min}}$, including unit elements,
the normalized exponent relative to
$C(\beta_*)$ is at most
$13h/75-l_y/2+2d_{\min}$.
\item For $d_{\min}\le d\le1/2$ and $\delta\ge1/50$, use
$R=76/75-2\delta/3$ in \eqref{eq:general-extension-exponent}.
Its slope in $d$ is $101/150-\delta/6>0$; it is enough to bound
$E_B(1/2)$.
\item For $1/2\le d\le h$ use $R_*$.
The slope $R_*+\delta/2-z_0$ is at least $4/25$ and at most $2$;
it is enough to bound $E_B(h)$.
\item The lowest zero class $a=a_0$, without a detector, has $R=1$ and
$\bar g\le1/100$. Its endpoint exponent is at most
$-1/4+b/6+5\ell/4+(1/2+3\ell/2)/50$.
\item Extending the third range to $h+\zeta$ increases the exponent by at most $2\zeta$,
provided $\ell/(h+\zeta)>7/37$. Beyond $h+\zeta$, an absolute
contour with sufficiently large fixed $\Rea z=z_{\mathrm{new}}$ gives any
prescribed power saving.
\end{enumerate}
\end{lemma}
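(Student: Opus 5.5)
The proof goes item by item. Each item reduces to the exponent formula \eqref{eq:general-extension-exponent} of Proposition~\ref{prop:general-exponent}, or to an absolute contour bound, with the length parameters kept general under \eqref{eq:balanced-hypotheses}.

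\emph{Item (1).} For $q_u\ll Z^{d_{\min}}$, including units, I would follow Part I, item (1), of Section~\ref{sec:exponents}. Use the contour $(\Rea s,\Rea w,\Rea z)=(\beta_*+\eb,1/2,z_0)$ and absolute tuple bounds (\Src{Lemmas}{10.6, 16.2}).

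\begin{itemize}
\item The raw exponent is $l_x/2+\Rea s-1+h\Rea z+l_y(\Rea w-1)$, as in the proof of Proposition~\ref{prop:extension-principal}.
\item Subtracting $C(\beta_*)=\beta_*+l_x/2-1+h/6$ leaves $h(z_0-1/6)-l_y/2=13h/75-l_y/2$.
\item The trivial count and the numerator $L$-values contribute at most $63d/50\le2d_{\min}$, as in Part I.
\end{itemize}

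\emph{Item (2).} I would insert $R=76/75-2\delta/3$ into \eqref{eq:general-extension-exponent}. This $R$ comes from the preliminary unweighted count $1-2\delta/3$ with room for losses, and it is also valid at $a=a_0$, where the trivial bound $R=1$ suffices. The coefficient of $d$ is then
\[
 R+\delta/2-z_0=\frac{76}{75}-\frac{17}{50}-\frac{\delta}{6}=\frac{101}{150}-\frac{\delta}{6},
\]
which is positive because $\delta\le5/6$. Monotonicity in $d$ therefore reduces the range to its endpoint $d=1/2$.

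\emph{Item (3).} Here I use Proposition~\ref{prop:fixed-count}. Its hypothesis on the available prime length holds because $\ell/d\ge\ell/h\ge7/37+\rho_{\rm sup}$ for $d\le h$. For the lower bound on the slope:
\begin{itemize}
\item For $x\le1/2$ one has $R_*\ge R_{\rm long}(1)=1-\delta$.
\item Hence $R_*+\delta/2-z_0\ge33/50-\delta/2\ge33/50-5/12>4/25$.
\end{itemize}
For the upper bound, $R_*\le1$ and $\delta/2\le5/12$ give a slope at most $2$. A positive slope again reduces the range to $d=h$.

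\emph{Item (4).} The lowest class has no detector, so $R=1$. The constraint $0\le\bar g\le\delta/2$ with $\delta=2a_0-1=1/50$ gives $\bar g\le1/100$. Substituting into the balanced formula \eqref{eq:balanced-extension-exponent}, the term $-h(1-R)$ vanishes. What remains is
\[
 -\frac14+\frac b6+\frac{5\ell}4+\frac{1}{50}\Bigl(\frac12+\ell\Bigr)+\ell\bar g
 \le-\frac14+\frac b6+\frac{5\ell}4+\frac{1}{50}\Bigl(\frac12+\frac{3\ell}{2}\Bigr),
\]
using $\ell\bar g\le\ell/100$.

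\emph{Item (5).} On $[h,h+\zeta]$ the slope bound of item (3) persists, provided the count hypothesis $\ell/(h+\zeta)>7/37$ still holds. The exponent therefore grows by at most $2\zeta$. For $d>h+\zeta$, I would reuse Part I, item (5): absolute convergence on $(2,2,z_{\mathrm{new}})$ gives a bound $Z^{c+(h+\zeta)-\zeta z_{\mathrm{new}}+\eps}$. Taking $z_{\mathrm{new}}$ large yields any prescribed saving.

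The main obstacle is confirming that the cited Part I absolute bounds (\Src{Lemmas}{10.6, 16.2}) and the moment admissibility remain valid for general $l_x,l_y,\ell$ rather than the specific values \eqref{eq:geometry}. I would handle this by tracking the exponents symbolically. The constraint $B\ge437/500$ keeps $(s,w,z)$ in $\mathcal D_1$ and $\mathcal D_2(B)$ by Lemma~\ref{lem:extension-euler}, so the same estimates apply.
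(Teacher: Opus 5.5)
Your proposal is correct and follows the paper's proof item by item: the same contours, the same counts $76/75-2\delta/3$, $R_*\ge1-\delta$ and $R=1$, the same slope computations, and the same absolute contour $(2,2,z_{\mathrm{new}})$. The only difference is in item (1), where the paper does not cite the Part I tuple bound but checks it directly from the quotient-free formula \eqref{eq:extended-G} and the local table ($G_p=O(1)$ for $p\nmid u$, $G_p=O(q_p^{1/2})$ for $p\mid u$, so the correction is $\ll U^\eps Z^{\ell z_0}$) on the line $\Rea s=\beta_*+\eb\ge B$. This is exactly the symbolic check you flag as the main obstacle; note that the relevant domain there is $\mathcal D_1$ (with the gap $\Rea(s+w)-1\ge B-1/2$), not $\mathcal D_2(B)$, since $\Rea w=1/2$.
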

\begin{proof}
For the small-norm range retain the global reciprocal line
$\Rea s=\beta_*+\eb$ and put $\Rea w=1/2$, $\Rea z=z_0$.
Use the Euler-product domain \eqref{eq:D1}; the inequality
$\Rea(s+w)-1\ge B-1/2>0$.
Formula \eqref{eq:extended-G} and the local table imply
$G_p=O(1)$ for $p\nmid u$ and $G_p=O(q_p^{1/2})$ for $p\mid u$.
Consequently the full correction is
$\ll U^\eps Z^{\ell z_0}$: primes not dividing $u$ contribute $P_i^{z_0}$,
and primes dividing $u$ contribute at most
$U^\eps P_i^{z_0-1/2}$. The nonprincipal numerator contributes a factor
$U^{3/5+\eps}$ on this line, by the growth estimate used in
\Src{Lemma}{10.6}. Counting $O(U)$ original elements and the factor
$U^{-z_0}$ give $U^{63/50+\eps}$.

The exterior factor relative to $C(\beta_*)$ is
$Z^{13h/75-l_y/2+O(\eb)}$. Since $63/50<2$, the first claim follows.

For the intermediate norm range use the count without selected prime factors in
Proposition~\ref{prop:fixed-count}, allowing an extra $1/75$.
For the moderate norm range use its count with selected prime factors. Differentiating
\eqref{eq:general-extension-exponent} gives the slopes asserted;
for the latter use $R_*\ge1-\delta$ and $\delta\le5/6$.
The lowest zero class is bounded trivially by $O(U)$ and
$\bar g\le\delta/2$. The extension beyond $h$ follows from the
slope upper bound.
Finally, on $(\Rea s,\Rea w,\Rea z)=(2,2,z_{\mathrm{new}})$,
absolute convergence gives
\[
 \mathcal R(U;Z)\ll
 Z^{l_x/2+1+l_y+hz_{\mathrm{new}}+\eps}U^{1+\eps-z_{\mathrm{new}}}.
\]
Summing $U>Z^{h+\zeta}$ yields an exponent containing
$-\zeta z_{\mathrm{new}}$. Choose $z_{\mathrm{new}}$ after $\zeta$ is fixed.
All statements concern the complete correction
$\mathfrak H_{\eta,u,Z}$ of \eqref{eq:modified-H}, without division by
its local factors, which may vanish.
\end{proof}

For the rational parameters the saving in the small-norm range is at least
$78699/1000000$, and the saving for the lowest zero class is at least $1093/192000$.
For the intermediate norm range, direct substitution gives
\[
 \max_{1/50\le\delta\le5/6}E_{B_{\mathrm r}}(1/2)
 \le-\frac{49}{14400}+\frac{177}{2000000}
 =-\frac{59657}{18000000}.
\]
Together with Lemma~\ref{lem:rational-saving}, these give positive exponent savings independent of the target. Choose
\[
 0<\zeta<\min\{5\ell-h,\ (19/320000)/64\}.
\]
This also ensures $\ell/(h+\zeta)>1/5>7/37$.

\section{Proof of the rational zero-free bound by the continuation criterion}
\label{sec:extension-assembly}

In this section, we assemble the direct bound \eqref{eq:balanced-low},
the principal estimate \eqref{eq:extension-principal-error} and the
nonprincipal bounds of Lemmas~\ref{lem:rational-saving} and
\ref{lem:extension-ranges}
to prove the rational extension by Proposition~\ref{prop:criterion-summary}.
Proposition~\ref{prop:extension-assembly} states the resulting
comparison \eqref{eq:extended-comparison}, with its uniformity explicit so that
Parts III and IV can apply it with their respective parameters and counts.

\begin{proposition}[Comparison from strict exponent inequalities]
\label{prop:extension-assembly}
Assume the balanced hypotheses
\eqref{eq:balanced-hypotheses}. Suppose also that:
\begin{enumerate}
\item the direct bound \eqref{eq:balanced-low} with
$l_x/2+b/12=C_b(B)$;
\item the principal estimates of Proposition~\ref{prop:extension-principal};
\item the exponents for the small and intermediate norm ranges and the lowest zero class in
Lemma~\ref{lem:extension-ranges} are negative, and
$E_B(h)\le0$ for the count with selected prime factors.
\end{enumerate}
The chosen count has the form
$\#\mathcal B\ll U^{R+\eps}(1+T_1)^A$ with the common-character,
coefficient, coprimality, support and height hypotheses of Section~\ref{sec:moments}.
Here $\mathcal B$ is a joint zero and prime-sum size class, as in
\eqref{eq:zero-class} and Section~\ref{sec:prime-sum-classes},
and $E_B(d)$ is defined in \eqref{eq:general-extension-exponent}.
Assume its losses can be made arbitrarily small by fixed length
decrements and a sufficiently fine subdivision of the total prime length, chosen independently of $\eta$. Assume also that
$0\le\partial_d E_B\le2$ in the moderate norm range.
If $\Delta=\beta_*-B>0$, then there is a number $\sigma>0$, chosen
before the target $\eta$, such that
\begin{equation}\label{eq:extended-comparison}
 |J_\eta(Z)-f_\eta(Z)|\ll_\eta Z^{C_b(\beta_*)-\sigma}.
\end{equation}
The principal factor is holomorphic on $\Rea s>B$ and satisfies
\eqref{eq:extended-H}.
\end{proposition}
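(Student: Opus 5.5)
The plan is to decompose $J_\eta-f_\eta$ via the exact Poisson identity \eqref{eq:Poisson}, with $\mathcal H$ replaced by the full correction $\mathfrak H_{\eta,u,Z}$ of \eqref{eq:modified-H}, into the principal frequency $u=1$ and the nonprincipal frequencies $u\ne1$. Each piece will then be bounded by a power of $Z$ saving a fixed amount relative to $C_b(\beta_*)$. Here $C_b$ coincides with the $C$ of \eqref{eq:general-extension-signal} under the balanced hypotheses.

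\emph{Principal frequency.} Hypothesis (2), that is Proposition~\ref{prop:extension-principal}, controls this term directly. The three error terms in \eqref{eq:extension-principal-error} save $l_y/20-(1+h)\eb$, $h/600-\eb$ and $\kappa_P-\eb$. I will fix $K$, equal $\ell_i=\ell/K$ and $\kappa_P$ before choosing $\eb$, and then take $\eb$ small. This makes all three savings positive, and they do not depend on $\eta$. Holomorphy of the principal factor on $\Rea s>B$, together with \eqref{eq:extended-H}, comes from Lemma~\ref{lem:extension-euler}.

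\emph{Nonprincipal frequencies.} I will use the same contour decomposition as Proposition~\ref{prop:general-exponent}. By that proposition, a class of count exponent $R$ at norm exponent $d$ contributes $Z^{C(\beta_*)+E_B(d)-\Delta+\eps}(1+T_1)^A$. The $d$-ranges are handled by Lemma~\ref{lem:extension-ranges}, as follows.
\begin{enumerate}
\item The small-norm range, the intermediate range and the lowest zero class have strictly negative exponents by hypothesis (3), so they save a fixed amount.
\item For $1/2\le d\le h$, the bound $0\le\partial_dE_B$ reduces the problem to $d=h$. There $E_B(h)\le0$, so the exponent relative to $C(\beta_*)$ is at most $-\Delta+\eps$.
\item For $h<d\le h+\zeta$, the slope bound $\le2$ adds at most $2\zeta$. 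I will choose $\zeta<\Delta/8$.
\item Beyond $h+\zeta$, I will take $z_{\mathrm{new}}$ large after $\zeta$ is fixed.
\end{enumerate}

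\emph{Main obstacle: uniformity.} The key difficulty is ordering the choices so that every loss stays below a saving fixed before $\eta$. The savings here involve $\Delta$, and $\Delta$ is fixed once $\beta_*$ is, independently of $\eta$. I will follow Section~\ref{sec:parameters} in this order.
\begin{enumerate}
\item Take $\ell_{\rm mesh}$ from the moment lemmas.
\item Fix $K$ even and the length decrements.
\item Choose $\kappa_P$, the bin width $\theta_{\rm bin}$, $\eb$, $\zeta$, $z_{\mathrm{new}}$ and $P_0$, so that all $\eps$-losses (count, greedy partial interval, contour) total less than a quarter of
\[
\mu:=\min\{\Delta,\text{range savings},\,l_y/20,\,h/600,\,\kappa_P\}.
\]
\item Fix $\eta$, and after that fix the internal seminorm orders. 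This gives the analogue of \eqref{eq:height}:
\[
|J_\eta-f_\eta|\ll_{\eta,N}Z^{C(\beta_*)-\mu/4}(1+T_1)^{A_\eta}+Z^{B_\eta}T_1^{-N}.
\]
\item Take $T_1=Z^{\tau_\eta}$ with $\tau_\eta$ small enough that $A_\eta\tau_\eta<\mu/8$, and $N$ large enough that $B_\eta-N\tau_\eta<C(\beta_*)-\mu/8$.
\end{enumerate}
This gives \eqref{eq:extended-comparison} with $\sigma=\mu/8$. The point needing the most care is that the contour shifts are performed before the amplitude subdivision, so the subdivided classes need no holomorphy. Also, the count's height factor must enter only through $A_\eta$, which the final choice of $\tau_\eta$ absorbs.
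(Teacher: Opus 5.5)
Your proposal is correct and follows the paper's proof essentially step for step. It uses the same principal/nonprincipal split through Propositions~\ref{prop:extension-principal} and \ref{prop:general-exponent}, the same range-by-range savings from Lemma~\ref{lem:extension-ranges} with $\Delta$ supplying the moderate-range saving, all fixed choices made before $\eta$, and the same final choice of $T_1=Z^{\tau_\eta}$ and then $N$ as in \eqref{eq:height-assembly}. The one repair is in the bookkeeping: your count and greedy-rounding losses are already fixed by $K$ in step 2, but you require them to be below $\mu/4$ with $\mu\le\kappa_P<B\ell/K$, which is circular. Instead, measure those losses against the $\kappa_P$-free saving $\min\{\Delta,\text{range savings}\}$, as the paper does by fixing the maximal prime length before $K$ and choosing $\kappa_P$ afterwards. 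You should also state explicitly $\tau_\eta\le\min\{d_{\min}/100,\tau_{\rm det}\}$ and $\ell/(h+\zeta)>7/37$, which your argument uses implicitly.
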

\begin{proof}
Replacing $B$ by $\beta_*$ in the central exponent subtracts the
same $\Delta$ in every class. Choose a common positive saving smaller than
$\Delta/4$, the strict savings for the small and intermediate norm ranges and the lowest zero class, and
$l_y/80,h/2400$. Fix $\zeta$ smaller still so that $2\zeta$ is
less than a quarter of that saving and all prime-length inequalities remain
strict. Choose the requested moment loss, positive gaps in the prime-length restrictions and maximum individual
prime lengths smaller than that saving. The chosen count and its underlying
moments supply these strict length inequalities and the upper bound on each prime
length.

Fix an even $K$ and the actual $\ell_i$
before the target. Now choose
$0<\kappa_P<B\min_i\ell_i$, decrease the common saving if necessary,
and finally choose $\eb$ and the remaining smooth-weight and
rounding losses so that their finite sum is smaller than a quarter
of this saving. All these real choices precede $\eta$.

The class count is finite apart from logarithmically many dyadic intervals and
admissible polynomial configurations, whose logarithmic factors are absorbed in the chosen exponent losses.
Lemmas~\ref{lem:extension-ranges} and \ref{lem:extension-euler},
the chosen count, and Propositions~\ref{prop:general-exponent} and
\ref{prop:extension-principal} therefore give a common $\mu>0$ with
\begin{equation}\label{eq:height-assembly}
 |J_\eta-f_\eta|\ll_\eta
 Z^{C_b(\beta_*)-\mu}(1+T_1)^{A_\eta}
 +Z^{B_\eta}T_1^{-N}.
\end{equation}
The internal smoothness orders, $A_\eta$ and $B_\eta$ are fixed
before $N$. They may depend on the target; $\mu$ does not.
If $\tau_{\rm det}>0$ is the permissible detector height exponent,
choose
\[
 0<\tau_\eta\le
 \min\{d_{\min}/100,\tau_{\rm det},\mu/[4(A_\eta+1)]\},
 \qquad T_1=Z^{\tau_\eta}.
\]
Then choose $N$ so large that
$B_\eta-N\tau_\eta<C_b(\beta_*)-\mu/2$.
Equation~\eqref{eq:height-assembly} proves
\eqref{eq:extended-comparison} with $\sigma=\mu/2$.

The dependence of the height cutoff on $\eta$ is harmless:
$J_\eta,f_\eta$ were fixed in \eqref{eq:general-extension-signal}.
\end{proof}

\begin{proof}[Proof of Theorem~\ref{thm:rational-extension}]
Suppose $\Delta_{\mathrm r}=\beta_*-B_{\mathrm r}>0$.
Part I already gives $\beta_*\le7/8$, so \eqref{eq:fixed-kappa} holds.
The length inequalities \eqref{eq:rational-margins}, the direct
bound \eqref{eq:balanced-low}, Lemma~\ref{lem:rational-saving} and the
remaining-range estimates of Lemma~\ref{lem:extension-ranges} verify Proposition~\ref{prop:extension-assembly}.
For any $0<\omega_0<\Delta_{\mathrm r}$, the normalized direct bound is
\[
 |J_\eta(Z)|\ll Z^{C_b(B_{\mathrm r})+\omega_0}.
\]
Apply Proposition~\ref{prop:criterion-summary} with
$\sigma_0=B_{\mathrm r}$ and $c=-11/16$.
Its principal factor is holomorphic and within $1/2$ of one by
\eqref{eq:extended-H}; its comparison hypothesis is
\eqref{eq:extended-comparison}. The criterion implies
$\beta_*\le B_{\mathrm r}$, contradicting $\Delta_{\mathrm r}>0$.
The transfer proof of Corollary~\ref{cor:dirichlet-conclusion} applies
with this new positive boundary and gives the Dirichlet assertion.
\end{proof}

\part{An algebraic extension of the bound}
\label{part:optimized}

\section{The algebraic zero-free bound and its parameters}
\label{sec:optimized-statement}

In this section, we state the stronger bound obtained by varying $b$ and $\ell$
together: \eqref{eq:optimized-bound} gives the bound and
\eqref{eq:optimized-geometry} specifies its algebraic parameters.
The following sections verify these choices using the analytic
construction of Part II and the character and amplitude notation
of Part I. Whereas Part II improved $7/8$ by $1/40000$ with
$b=b_{\mathrm r}=1/8$ fixed, the joint variation gives
$7/8-B_{\mathrm{alg}}=4.293020083\ldots\times10^{-5}$.

\begin{theorem}[Algebraic extension]\label{thm:optimized-extension}
Every finite-order Hecke $L$-function over
$F$, and every Dirichlet $L$-function, is zero-free in
\begin{equation}\label{eq:optimized-bound}
 \Rea s>B_{\mathrm{alg}}:=\frac{1507-2\sqrt{921}}{1653}
 =0.874957069799\ldots.
\end{equation}
Principal poles are allowed and the boundary is excluded.
\end{theorem}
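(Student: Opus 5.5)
The plan is to rerun the proof of Theorem~\ref{thm:rational-extension} with the algebraic parameters
\[
 b=b_{\mathrm{alg}},\qquad \ell=\ell_{\mathrm{alg}},\qquad l_x=\frac{1-b-\ell}{2},\qquad l_y=\frac{1+b-\ell}{2},\qquad h=1-l_x+\ell .
\]
The moment parameter stays fixed at $\kappa=3/4$, which Theorem~\ref{thm:hecke-conclusion} permits through \eqref{eq:fixed-kappa}. Then $B(\ell_{\mathrm{alg}})=B_{\mathrm{alg}}$ by \eqref{eq:balanced-signal}.

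The analytic inputs are already stated for general balanced lengths. These are Proposition~\ref{prop:balanced-low}, Lemma~\ref{lem:extension-euler}, Proposition~\ref{prop:extension-principal}, Proposition~\ref{prop:fixed-count}, Proposition~\ref{prop:general-exponent} and Lemma~\ref{lem:extension-ranges}. So the proof reduces to three exact checks, after which Proposition~\ref{prop:extension-assembly} and Proposition~\ref{prop:criterion-summary} with $\sigma_0=B_{\mathrm{alg}}$ give the Hecke statement. The transfer argument of Corollary~\ref{cor:dirichlet-conclusion} then gives the Dirichlet statement.

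First, I will verify the balanced hypotheses \eqref{eq:balanced-hypotheses} numerically with rigorous bounds on $\sqrt{921}$. Here $b\approx0.1234$, $\ell\approx0.16684$, and the margins
\[
 l_x-\ell,\qquad l_y-\ell-\tfrac{11b}{6},\qquad 5\ell-h,\qquad \frac{\ell}{h}-\frac{7}{37}
\]
are all comfortably positive. Second, I will check the remaining ranges of Lemma~\ref{lem:extension-ranges}:
\begin{itemize}
\item the small-norm range;
\item $E_{B}(1/2)$ for $\delta\in[1/50,5/6]$, which is a quadratic in $\delta$ whose maximum can be bounded at its endpoints or vertex;
\item the lowest zero class.
\end{itemize}
Each of these should differ from the Part~II values by only $O(10^{-4})$, while Part~II had savings of order $10^{-3}$, so they stay negative.

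The main step is the central inequality $E_{B_{\mathrm{alg}}}(h)\le0$ uniformly for $(\delta,x)\in[0,5/6]\times[0,1/2]$, with $R=R_*$ from \eqref{eq:fixed-count-parameters}. As in Lemma~\ref{lem:rational-saving}, I will multiply by $10368\mathcal J>0$ and set $y=1/2-x$. This yields a polynomial $Q(\delta,y)$ of bidegree at most $(2,3)$ with coefficients in $\mathbb Q(\sqrt{921})$, and $Q\ge0$ is what must be shown.

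Unlike Part~II, the parameters are tuned so that $Q$ vanishes to second order in $\delta$ at $(\delta_{\rm c},y=0)$, by the conditions $E=0$ and $\partial_\delta E=0$ there. Strict positivity therefore fails, and Bernstein subdivision cannot close near that point.

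I would handle this in two parts:
\begin{itemize}
\item At $y=0$, show $Q(\delta,0)=c\,(\delta-\delta_{\rm c})^2$ with $c>0$. The count $R_*(\delta,1/2)$ is a rational function, and the tangency conditions should make the numerator a perfect square up to a positive factor. This is the ``exact quadratic identity'' mentioned in the introduction.
\item For $y>0$, write $Q(\delta,y)=Q(\delta,0)+y\,Q_1(\delta,y)$ and show $Q_1\ge0$ on the rectangle. I would do this by a Bernstein-coefficient check on subrectangles after replacing $\sqrt{921}$ by rational enclosures. This is the analogue of the second term in the identity \eqref{eq:positive-identity}.
\end{itemize}
The hard part is this positivity of $Q_1$ near $y=0$, $\delta\approx\delta_{\rm c}$: the margin is smallest there, and irrational coefficients must be bracketed carefully.

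Since the endpoint inequality is now only $E\le0$ rather than strict, the strict saving needed in Proposition~\ref{prop:extension-assembly} comes from subtracting $\Delta=\beta_*-B_{\mathrm{alg}}>0$. The smaller losses are then chosen below $\Delta/4$, exactly as in that proposition's proof.
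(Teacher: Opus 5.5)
\textbf{Gap in the central step.} Most of your outline matches the paper: fixed $\kappa=3/4$, the balanced direct bound, the general-length principal and range lemmas, the tangency $Q(\delta,0)=A_0(\delta-\delta_{\rm c})^2$, and Proposition~\ref{prop:extension-assembly} absorbing the non-strict endpoint through $\Delta=\beta_*-B_{\mathrm{alg}}$. The step you call the main one, however, fails as stated, because $Q_1=(Q(\delta,y)-Q(\delta,0))/y$ is not nonnegative on the rectangle.

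To see this, normalize $Q=216\mathcal J(-E_{B_{\mathrm{alg}}}(h))$; your factor $10368\mathcal J$ only multiplies this by $48$. Write $Q=\mathscr A(y)\delta^2+\mathscr D(y)\delta+\mathscr G(y)$ as in \eqref{eq:optimized-Q}. Then $Q_1(\delta,0)=\mathscr A_1\delta^2+\mathscr D_1\delta+\mathscr G_1$, where
\[
 \mathscr A_1=12b-276\ell+216h\approx130.82,\qquad
 \mathscr D_1=-24C_0+170b+370\ell-180h\approx-62.94,\qquad
 \mathscr G_1=-340C_0\approx7.10 .
\]
Its discriminant is about $246>0$. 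Hence $Q_1(\delta,0)<0$ for $\delta\in(0.181,0.300)$; for example $Q_1(0.24,0)\approx-0.47$. By continuity $Q_1$ is also negative there for small $y>0$.

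So no Bernstein subdivision can certify $Q_1\ge0$. Near $\delta_{\rm c}$, where you expected the difficulty, $Q_1(\delta_{\rm c},0)\approx2.4$ is comfortably positive. The failure is elsewhere, and there $Q$ stays positive only because of the term $A_0(\delta-\delta_{\rm c})^2$, which your split discards.

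\textbf{What is missing.} For each fixed $y\ge0$, treat $Q$ as a quadratic in $\delta$ and control its discriminant, instead of splitting off the $y$-increment. The paper shows that every coefficient of $\mathscr A(y)$ is positive. It then proves the exact identity $4\mathscr A\mathscr G-\mathscr D^2=y(d_1+d_2y+d_3y^2+d_4y^3)$ with each $d_i>0$, certified using the bracket \eqref{eq:root-bracket}. This gives
\[
 4\mathscr A(y)Q=(2\mathscr A(y)\delta+\mathscr D(y))^2+y(d_1+d_2y+d_3y^2+d_4y^3)\ge0 .
\]
The vanishing of the $y^0$ term of the discriminant is exactly your tangency condition.

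Your decomposition can be rescued only locally. Use $Q=A_0(\delta-\delta_{\rm c})^2+yQ_1$ on a small box around $(\delta_{\rm c},0)$, where $Q_1>0$. On a cover of the rest of the rectangle, $Q$ is bounded away from zero, so you can certify $Q>0$ there directly by Bernstein coefficients.

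A minor point: with $R=76/75-2\delta/3$, the intermediate exponent $E_B(1/2)$ is affine in $\delta$, not quadratic. Checking the two endpoints $\delta=1/50$ and $\delta=5/6$ therefore suffices.
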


Let $r_{921}=\sqrt{921}$,
and set
\begin{equation}\label{eq:optimized-geometry}
\begin{gathered}
 \delta_{\rm c}=\frac{49-r_{921}}{48},\qquad
 \ell_{\mathrm{alg}} =\frac{33+8r_{921}}{1653},\qquad
 b_{\mathrm{alg}}=-\frac4{29}+\frac{230r_{921}}{26709},\\
 l_{x,\mathrm{alg}}=\frac{1-b_{\mathrm{alg}}-\ell_{\mathrm{alg}}}{2},\qquad
 l_{y,\mathrm{alg}}=\frac{1+b_{\mathrm{alg}}-\ell_{\mathrm{alg}}}{2},\qquad
 h_{\mathrm{alg}} =\frac{1+b_{\mathrm{alg}}+3\ell_{\mathrm{alg}}}{2},\\
 B_{\mathrm{alg}}=\frac{11}{12}-\frac{\ell_{\mathrm{alg}}}{4},\qquad
 C_{\mathrm{alg}}(s)=s-\frac{4+b_{\mathrm{alg}}}6.
\end{gathered}
\end{equation}
The subscript $\mathrm{alg}$ denotes the algebraic extension and its
associated lengths and comparison exponent. In particular,
$b_{\mathrm{alg}}=l_{y,\mathrm{alg}}-l_{x,\mathrm{alg}}$ is the difference
of two real length exponents, whereas $b_*$ in Part I is an arithmetic element. Below we write $b=b_{\mathrm{alg}}$,
$\ell=\ell_{\mathrm{alg}}$, $l_x=l_{x,\mathrm{alg}}$,
$l_y=l_{y,\mathrm{alg}}$ and $h=h_{\mathrm{alg}}$ when these parameter values are
understood.
The approximate values, used only for orientation, are
\[
\begin{array}{c|c@{\qquad}c|c}
 \ell_{\mathrm{alg}}&0.166838387470\ldots&b_{\mathrm{alg}}&0.123405436989\ldots\\
 l_{x,\mathrm{alg}}&0.354878087771\ldots&l_{y,\mathrm{alg}}&0.478283524759\ldots\\
 h_{\mathrm{alg}}&0.811960299699\ldots&\delta_{\rm c}&0.388583712271\ldots.
\end{array}
\]
Every sign and equality below uses the exact values in
\eqref{eq:optimized-geometry}.

Theorem~\ref{thm:optimized-extension} is an analytic deduction from
the cited estimates under all their stated hypotheses. Proposition~\ref{prop:scalar-optimality}
separately proves an algebraic lower bound under three explicit
constraints. Part IV proves necessity of these constraints for the explicitly
defined uniform exponent comparison system, also allowing the moment
parameter to vary.

After the first version of this paper was posted, we became aware of
several independent and contemporaneous GitHub/Zenodo posts reporting related improvements
of the $7/8$ bound:
\href{https://github.com/CrossNox/quasi-riemann-hypothesis-extension}{Ignacio Javier Mermet's GitHub repository},
\href{https://github.com/LW56e2/quasi-riemann-improved}{Leo Wei's GitHub repository},
and 
\href{https://zenodo.org/records/23221313}{Khamzat Chakhkiev's Zenodo post}.
The two GitHub repositories report the boundary $0.8749570698$,
and the Zenodo post reports the boundary $0.87496$.
These bounds use rational choices of $b$ and $\ell$ and are
slightly weaker than the bound in
Theorem~\ref{thm:optimized-extension}.

\section{The direct estimate with free total length}
\label{sec:free-direct}

In this section, we extend the direct estimate of Proposition~\ref{prop:balanced-low}
by allowing $M=l_x+l_y$ to vary independently of $\ell$.
Proposition~\ref{prop:free-low} gives the full direct exponent
\eqref{eq:full-low-exponent} and the resulting lower constraint
\eqref{eq:full-low-boundary}. Its individual exponent constraints will be used in
Section~\ref{sec:scalar-optimality} and in the full optimality proof
of Part IV. At the balanced algebraic parameters, the full estimate
also gives the direct bound checked in
Section~\ref{sec:optimized-assembly}.

Assume
\[
 l_x>\ell,\quad l_y>\ell,\quad b=l_y-l_x>0,
 \quad h=1-l_x+\ell>0.
\]
The length exponents lie in fixed bounded ranges. The principal-term exponent is
\begin{equation}\label{eq:free-signal}
 C(s)=s-5/6+l_x/3+\ell/6.
\end{equation}
In particular the additive constant changes when $b$ changes; the
Part I expression $s-11/16$ cannot be retained for the algebraic parameter choice.

\begin{lemma}[General functional-equation estimate]\label{lem:free-reflection}
Under the common-weight and coefficient hypotheses of
\Src{Lemma}{14.3}, let $B^{\mathcal I}_{a,\sigma}$ be the theta sums defined in Section~\ref{sec:balanced-direct}, now with
outer element-range exponent $m$ and remaining prime-length exponent $l$. Thus
$X'Y'\asymp Z^m$ and $Q=q_{b_*}X'Y'$; the remaining arithmetic,
coefficient and smooth-weight data are fixed as in that definition.
Their squared norm satisfies
\begin{equation}\label{eq:free-row-energy}
\begin{gathered}
 \sum_{q_a\ll Q}|B^{\mathcal I}_{a,\sigma}|^2
 \ll Z^{m+k(m,l)+\eps},\\
 k(m,l)=\max\left\{0,\frac{1+3l-2m}{4},m+l-1,\frac{3l-1}{2}\right\}.
\end{gathered}
\end{equation}
The estimate includes both signs of $m+l-1$.
\end{lemma}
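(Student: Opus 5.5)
The plan is to rerun the exponent bookkeeping from the proof of Lemma~\ref{lem:balanced-reflection}, keeping the outer range exponent $m$ and the remaining prime-length exponent $l$ independent. Previously the balanced relation $M'+\ell'-1=-3d_{\mathcal I}$ forced $T_d\le H$, and this was used to discard two cases; those cases now survive and produce the extra terms $m+l-1$ and $(3l-1)/2$ in $k(m,l)$.

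\textbf{Setup.} As before, I decompose the outer element into its powerful part and its squarefree part, of norm exponents $O$ and $H$, and set $\Delta_H=m-O-H\ge0$. I keep the local divisor exponents $A_0,N_0,B_0,S_0$, the retained prime length $z_a\le l$, and the dual exponents $v,\ell_b,e_\lambda$ from \Src{Lemma}{14.3}. The available dual length is
\[
 T_d=2H+2A_0+2z_a-1-l-N_0-3B_0 ,
\]
and the squared-norm exponent is the same $E_{\rm ref}$ as before.

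\textbf{Case analysis.} I split according to which term realises $u_{\rm hyb}=\min\{v,z_a,(v+z_a)/3\}$.
\begin{enumerate}
\item If $u_{\rm hyb}=z_a$, then $E_{\rm ref}\le O/2+H$. This is at most $m$ once the term $2A_0\le O$ is accounted for, giving the exponent $0$.
\item Otherwise $u_{\rm hyb}\ge v/2$. The earlier inequality
\[
 u_{\rm hyb}+\ell_b+2e_\lambda/3+\pos{T_d-v-3\ell_b-e_\lambda}/2\ge T_d/4
\]
then gives
\[
 E_{\rm ref}\le m+\frac{1+3l-2m-2\Delta_H}{4}.
\]
This yields the term $(1+3l-2m)/4$, but only when $T_d\ge0$ and the dual variables can absorb $T_d$.
\item When $T_d$ exceeds what the dual variables can absorb (in particular if $T_d>H$, which is now possible), the bound must instead use the trivial dual size. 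In that case I expect $E_{\rm ref}\le O/2+H+z_a-u_{\rm hyb}+(T_d-H)$, roughly. Maximising over $H\le m$ and $z_a\le l$ should give $m+(m+l-1)$.
\item The remaining configuration has $H$ small and $A_0$ large, so that the powerful part dominates. Optimising $2A_0\le O$ against $z_a=l$ should give the term $(3l-1)/2$.
\end{enumerate}

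\textbf{Assembly.} The bound \eqref{eq:free-row-energy} is the maximum of these four linear-programming bounds over the admissible region. Small offsets, dyadic decompositions and strictness losses are absorbed into $\eps$, exactly as in the proof of Lemma~\ref{lem:balanced-reflection}.

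\textbf{Main obstacle.} The difficult step is the unbalanced cases, that is, verifying that the maximum of the piecewise-linear $E_{\rm ref}$ over the polytope is exactly $m+k(m,l)$ and nothing larger. My approach is to treat $E_{\rm ref}-m$ as a linear program in $(O,H,A_0,N_0,B_0,z_a,v,\ell_b,e_\lambda)$ and enumerate its vertices. In particular I need to check carefully which constraint of \Src{Lemma}{14.3} caps the dual range when $m+l>1$, both to confirm the coefficient $1$ in $m+l-1$ and to cover the regime where $m+l-1$ is negative.
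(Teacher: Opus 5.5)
Your observation that the balanced relation $T_d\le H$ is what discarded two cases is correct. However, you do not carry it through, so cases~3 and~4 remain guesses rather than estimates.

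\textbf{The gap.} The $E_{\rm ref}$ printed in the proof of Lemma~\ref{lem:balanced-reflection} has \emph{already} used $T_d\le H$. The squared-norm exponent from \Src{Lemma}{14.3} carries $\max(H,v+\ell_b)$ in place of $H$. It collapsed to $H$ only because $v+\ell_b\le v+3\ell_b+e_\lambda\le T_d\le H$.

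If you keep ``the same $E_{\rm ref}$ as before'', your cases~1 and~2, which split on whether $u_{\rm hyb}=z_a$, are already exhaustive. They give only $\max\{0,(1+3l-2m)/4\}$, so the terms $m+l-1$ and $(3l-1)/2$ have no source in your setup.

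The premise of case~3 is also not a real configuration. The constraint $v+3\ell_b+e_\lambda\le T_d$ is always imposed. Any excess of $T_d$ over the dual lengths enters through $-\pos{T_d-v-3\ell_b-e_\lambda}/2$, which is a saving, not a loss. For the same reason the $T_d/4$ saving in case~2 needs no side condition.

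Your case~4 heuristic (small $H$, dominant powerful part) is not where $(3l-1)/2$ comes from. That term arises from the dual length $v+\ell_b$ dominating together with the hybrid saving $u_{\rm hyb}\ge v/2$, whatever the split of $m$ between $O$ and $H$. Your case~3 formula is close to this branch, but replacing $v$ by $T_d$ before using the hybrid saving loses the factor $1/2$.

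\textbf{The fix.} Restore $\max(H,v+\ell_b)$ and split into four cases: which term attains the maximum, and whether $u_{\rm hyb}=z_a$.

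The $H$-branch gives your cases~1 and~2. In case~1, the bound $O/2+H\le m$ uses $H\le m-O$, not $2A_0\le O$.

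In the $v+\ell_b$ branch, the $\ell_b$ cancels against $-\ell_b$.
\begin{itemize}
\item If $u_{\rm hyb}=z_a$, then $E_{\rm ref}\le O/2+v\le O/2+T_d$.
\item If $u_{\rm hyb}\ne z_a$, then $u_{\rm hyb}\ge v/2$, and $E_{\rm ref}\le O/2+v/2+z_a\le O/2+T_d/2+z_a$.
\end{itemize}
Substitute $T_d$ and use $z_a\le l$, $N_0,B_0\ge0$, $2A_0\le O$ and $H\le m-O$. This gives
\[
 \frac O2+T_d\le 2m-\frac O2+l-1\le m+(m+l-1),\qquad
 \frac O2+\frac{T_d}2+z_a\le O+H+\frac{3l-1}{2}\le m+\frac{3l-1}{2}.
\]
That is the whole argument. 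No vertex enumeration is needed, and you need not show the bound is attained, since only an upper bound is claimed.
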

\begin{proof}
Use the local lengths defined in the proof of
Lemma~\ref{lem:balanced-reflection}, now with $H\le m-O$, $z_a\le l$.
Suppress arbitrarily small offsets arising from the support intervals and logarithmic factors. Then
\[
 O,S_0,B_0,v,l_b,e_\lambda\ge0,\quad
 2A_0\le O,\quad0\le N_0\le A_0,
\]
\[
 T_d=2H+2A_0+2z_a-1-l-N_0-3B_0,
 \qquad v+3l_b+e_\lambda\le T_d,
\]
\begin{align*}
 E_{\rm ref}={}&O/2+\max(H,v+l_b)-S_0-B_0+z_a-u_{\rm hyb}
 -l_b-2e_\lambda/3\notag\\
 &\hspace{13mm}-\pos{T_d-v-3l_b-e_\lambda}/2,
 \qquad u_{\rm hyb}=\min\{v,z_a,(v+z_a)/3\}.
\end{align*}
There are four exhaustive cases. If the maximum is $H$ and
$u_{\rm hyb}=z_a$, then $E_{\rm ref}\le O/2+H\le m$.
If the maximum is $H$ and $u_{\rm hyb}\ne z_a$, then
$u_{\rm hyb}\ge v/2$ and the dual terms save at least $T_d/4$.

Substitution gives
\begin{align*}
 E_{\rm ref}&\le O/2+H/2-A_0/2+z_a/2
 +(1+l+N_0+3B_0)/4-S_0-B_0\\
 &\le m/2+(1+3l)/4.
\end{align*}
If the maximum is $v+l_b$ and $u_{\rm hyb}=z_a$, then
\[
 E_{\rm ref}\le O/2+v\le O/2+T_d\le2m+l-1.
\]
If the maximum is $v+l_b$ and $u_{\rm hyb}\ne z_a$, then
\[
 E_{\rm ref}\le O/2+v/2+z_a
 \le O/2+T_d/2+z_a\le m+(3l-1)/2.
\]
Their maximum is $m+k(m,l)$.

Restore all signed $O(1/\log Z)$ offsets, widths of the support intervals and losses from restricting the
summation ranges. If each is bounded by $\eps_0$, a common bound for losses $20\eps_0$
bounds their effect in these four linear inequalities. The homogeneous
joint-weight separation precedes squaring; its Gaussian translates
have summable seminorms and the complete norm ranges discarded in this partition have arbitrary
power saving. The remaining dyadic intervals are logarithmically many.

Choose $\eps_0$ inside the requested $\eps$. This proves the bound
with the original paper's coefficient classes and normalization of the outer element sum.
\end{proof}

\begin{proposition}[Full direct exponent]\label{prop:free-low}
The modified average satisfies
\begin{equation}\label{eq:full-low-exponent}
 |I_{\eta,\mathrm{mod}}(Z)|\ll Z^{L_{\rm low}+\eps},\qquad
 L_{\rm low}=\frac{l_x}{2}
 +\max\{0,b/12,b-l_y/2\}+\frac12 k(M,\ell).
\end{equation}
By \eqref{eq:free-signal}, the condition $L_{\rm low}\le C(B)$
for the direct estimate is equivalent to $B\ge B_{\rm low}$, where
\begin{equation}\label{eq:full-low-boundary}
 B_{\rm low}=L_{\rm low}+5/6-l_x/3-\ell/6.
\end{equation}
Under the condition $M+\ell=1$, this simplifies to
\begin{equation}\label{eq:balanced-full-low}
 B_{\rm low}=\frac{11}{12}-\frac\ell4
 +\frac{\pos{8b+3\ell-3}}{12}+\frac{\pos{5\ell-1}}{8}.
\end{equation}
\end{proposition}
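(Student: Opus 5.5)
We will follow the proof of Proposition~\ref{prop:balanced-low}, now keeping every term of both mean-square estimates instead of discarding the ones that vanish under the balanced hypotheses. First apply the separation \eqref{eq:separation} to each term of \eqref{eq:modified}. Fix a subset $\mathcal I$ and a rescaled tuple, and put $d=d_{\mathcal I}$. Then
\[
 X'=Z^{l_x-d},\quad Y'=Z^{l_y-d},\quad Q=q_{b_*}X'Y'\asymp Z^{M-2d},\quad \ell'=\ell-d .
\]
The additive parameter is $P_a=Y'^2/Q\asymp Z^{b}$, which is independent of $d$. Taking square roots in \eqref{eq:Gram} gives
\[
 \Bigl(\sum|A|^2\Bigr)^{1/2}\ll (Q/Y')^{1/2}Z^{\max\{0,\,b/12,\,b-(l_y-d)/2\}+\eps}.
\]
Lemma~\ref{lem:free-reflection}, applied with $m=M-2d$ and $l=\ell-d$, gives
\[
 \Bigl(\sum|B|^2\Bigr)^{1/2}\ll Z^{(M-2d)/2+k(M-2d,\ell-d)/2+\eps}.
\]
By \eqref{eq:separated-cauchy-schwarz}, together with $Q^{-1/2}(Q/Y')^{1/2}Z^{(M-2d)/2}=Z^{(l_x-d)/2}$, each fixed tuple contributes at most
\[
 Z^{(l_x-d)/2+\max\{0,\,b/12,\,b-(l_y-d)/2\}+k(M-2d,\ell-d)/2+\eps}.
\]

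Next we restore the tuple count $Z^{d+\eps}$ and the coefficient $Z^{-3d/2}$, exactly as in Proposition~\ref{prop:balanced-low}. The key step is to show that the net exponent is maximized at $d=0$. We compare each of the four entries of $k$ at $(M-2d,\ell-d)$ with its value at $(M,\ell)$:
\begin{itemize}
\item the entry $0$ is unchanged;
\item $(1+3l-2m)/4$ increases by $d/4$;
\item $m+l-1$ decreases by $3d$;
\item $(3l-1)/2$ decreases by $3d/2$.
\end{itemize}
Hence $k(M-2d,\ell-d)\le k(M,\ell)+d/4$. In the additive maximum, only the entry $b-(l_y-d)/2$ grows with $d$, and it grows by $d/2$. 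The total additional exponent is therefore at most
\[
 d-\tfrac32 d-\tfrac d2+\tfrac d8+\tfrac d2=-\tfrac{3d}{8}\le0 .
\]
Summing over the finitely many subsets gives \eqref{eq:full-low-exponent}. The only delicate point is that Lemma~\ref{lem:free-reflection} must be applied uniformly in the fixed tuple with the complementary primes already summed. The common-weight separation in that lemma supplies this, as it did in Lemma~\ref{lem:balanced-reflection}. I expect this monotonicity-in-$d$ comparison to be the main obstacle. Unlike the balanced case, the additive error $b-l_y/2$ is now allowed to dominate, and it worsens under rescaling. The bookkeeping above has to confirm that the coefficient decay $Z^{-3d/2}$ absorbs this loss.

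Finally, \eqref{eq:full-low-boundary} is just $L_{\rm low}\le C(B)$ rearranged using \eqref{eq:free-signal}. For \eqref{eq:balanced-full-low}, substitute $M=1-\ell$, $l_x=(1-b-\ell)/2$ and $l_y=(1+b-\ell)/2$.
\begin{itemize}
\item The base terms give $l_x/2+b/12+5/6-l_x/3-\ell/6=l_x/6+b/12+5/6-\ell/6=11/12-\ell/4$.
\item In the additive maximum (with $b>0$), the excess of $b-l_y/2$ over $b/12$ is $(8b+3\ell-3)/12$.
\item In $k(1-\ell,\ell)$ the third entry vanishes, and the second entry equals $(5\ell-1)/4$. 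The fourth entry $(3\ell-1)/2$ never exceeds $\max\{0,(5\ell-1)/4\}$, since $6\ell-2\le5\ell-1$ for $\ell\le1$.
\end{itemize}
This yields the two positive-part corrections in \eqref{eq:balanced-full-low}.
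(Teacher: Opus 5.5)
Your proposal is correct and follows the paper's proof essentially line by line: the per-tuple exponent $l_x/2-d+\max\{0,b/12,b-l_y/2+d/2\}+\tfrac12 k(M-2d,\ell-d)$ and the $-3d/8$ decrease, which shows that the empty subset dominates, are exactly the paper's $F(d_{\mathcal I})$. You also identify the two positive-part corrections at $M=1-\ell$ in the same way. The condition $\ell\le1$ that you use for the fourth entry of $k$ follows from $l_x,l_y>\ell$: these give $\ell<M/2=(1-\ell)/2$, hence $\ell<1/3$.
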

\begin{proof}
Keep all three terms in \eqref{eq:Gram}; their contribution after applying
\eqref{eq:separated-cauchy-schwarz} is
$\max\{0,b/12,b-l_y/2\}$. For a subset $\mathcal I$ with total exponent
$d_{\mathcal I}$, Lemma~\ref{lem:free-reflection} applies with
$m=M-2d_{\mathcal I}$ and $l=\ell-d_{\mathcal I}$.
After tuple counting and rescaling as in
Proposition~\ref{prop:balanced-low}, its total exponent is
\[
 F(d_{\mathcal I})=l_x/2-d_{\mathcal I}
 +\max\{0,b/12,b-l_y/2+d_{\mathcal I}/2\}
 +\tfrac12 k_{\mathcal I},
\]
\[
 k_{\mathcal I}=\max\left\{0,
 \frac{1+3\ell-2M+d_{\mathcal I}}4,
 M+\ell-1-3d_{\mathcal I},
 \frac{3\ell-1-3d_{\mathcal I}}2\right\}.
\]
For $d_2>d_1$, the additive maximum increases by at most
$(d_2-d_1)/2$, and $k_{\mathcal I}$ by at most $(d_2-d_1)/4$.
Thus $F(d_2)-F(d_1)\le-3(d_2-d_1)/8$, so the empty subset controls
the direct exponent. Subtract \eqref{eq:free-signal} to obtain
\eqref{eq:full-low-boundary}. With $M=1-\ell$, the extra maximum from the functional-equation estimate is $\pos{5\ell-1}/4$, and the extra Gram maximum is
$\pos{8b+3\ell-3}/12$, proving \eqref{eq:balanced-full-low}.
\end{proof}

\section{Comparison of the character-count estimates}
\label{sec:count-optimization}

In this section, we compare the character-count estimates available at the algebraic
parameters, allowing for the supply of selected prime factors.
The permitted savings are given in \eqref{eq:capped-savings},
and \eqref{eq:crossing-candidates} lists the candidate crossing points.
This identifies the count used in Section~\ref{sec:optimized-endpoint}
and records the alternative choices examined in the optimality proof
of Part IV.

Retain $\delta=2a-1$ from \eqref{eq:zero-class} and
$x=\bar g/\delta$ from \eqref{eq:gbar}. The detecting lengths
$r,m$ satisfy \eqref{eq:lengths}--\eqref{eq:short-plain}.
For elements with $q_u\asymp U=Z^d$, let $z_{\rm sup}=\ell/d$ be the
available total selected-prime exponent relative to $U$. Put
$c_\kappa=1/(6\kappa)$, $3/4\le\kappa\le1$.
For $1\le t\le3/2$, $t-1/2\le r\le1$ and $m=t-r$,
the inverse and unweighted moment estimates give count exponents
$1-\delta F_I(r)$ and $1-\delta F_S(r)$, respectively, where
\begin{equation}\label{eq:capped-savings}
\begin{aligned}
 F_I(r)&=r+2x\min\{(1-r)/2,z_{\rm sup}\},\\
 F_S(r)&=2m+2x\min\{c_\kappa(1-2m),z_{\rm sup}\}.
\end{aligned}
\end{equation}
The count exponent for short detecting polynomials is
$1-\delta\min_r\max\{F_I(r),F_S(r)\}$.
The two savings are increasing and decreasing respectively, and their
crossing is one of
\begin{equation}\label{eq:crossing-candidates}
\begin{gathered}
 \frac{2t}{3},\qquad
 \frac{(2-4c_\kappa x)t+2c_\kappa x-2xz_{\rm sup}}{3-4c_\kappa x},\\
 \frac{2t+2xz_{\rm sup}-x}{3-x},\qquad
 \frac{(2-4c_\kappa x)t+(2c_\kappa-1)x}{3-x-4c_\kappa x}.
\end{gathered}
\end{equation}
Each expression is used only when the corresponding terms attain the minima in \eqref{eq:capped-savings}. If a crossing lies outside the interval, take its
appropriate endpoint. The count bound for long detecting polynomials remains
$R_{\rm long}(t)$ from Proposition~\ref{prop:fixed-count}.

An additional restriction on the available prime length can only
decrease $F_I$ or $F_S$ in \eqref{eq:capped-savings}.
Increasing $\kappa$ decreases $c_\kappa$ and can only decrease $F_S$.
Consequently, among these estimates with $\kappa\in[3/4,1]$, the
smallest count exponent is obtained without an additional
prime-length restriction and with $\kappa=3/4$; optimizing $t$
then gives $R_*$. With no selected primes, balancing
the short- and long-polynomial bounds without selected prime factors instead gives
\[
 t_N=\frac\alpha{\alpha-\delta/3},\qquad
 R_N=1-\frac{2\alpha\delta}{3\alpha-\delta}\ge R_*.
\]
This includes the no-prime alternative in the comparison.
Strict length inequalities in the moment estimates, coefficient restrictions and height factors
are handled exactly as in Proposition~\ref{prop:fixed-count}.
These count exponents describe the sufficient bounds furnished by the moment
estimates; they are not lower bounds on the numbers of elements.

\section{A polynomial proof of the algebraic endpoint inequality}
\label{sec:optimized-endpoint}

In this section, we derive the algebraic parameters from the critical class of the count
\eqref{eq:fixed-count-parameters} studied in Section~\ref{sec:count-optimization}, then prove the endpoint inequality
on the full rectangle of zero-class and amplitude parameters.
The exact discriminant identity \eqref{eq:optimized-discriminant} supplies the central nonprincipal estimate
for the final comparison in Section~\ref{sec:optimized-assembly}.
This proves the required endpoint inequality; the separate lower
bound in Section~\ref{sec:scalar-optimality} addresses optimality
and is not needed to establish the zero-free conclusion.

At $x=1/2$, formula \eqref{eq:fixed-count-parameters} reduces to
\begin{equation}\label{eq:critical-count}
 R_*(\delta,1/2)=\frac{185-288\delta+96\delta^2}{185-138\delta}.
\end{equation}
The equation $R_*=2/3$ has smaller root $\delta_{\rm c}$, since
\[
 288\delta^2-588\delta+185=0.
\]
At that root the coefficient of $b$ in
\eqref{eq:balanced-extension-exponent} vanishes, and $E_B(h)=0$
forces
\[
 \ell=\frac{5-6\delta_{\rm c}}{9+18\delta_{\rm c}}=\ell_{\mathrm{alg}}.
\]
Finally, the condition
$\partial_\delta E_B(h)=0$ at $(\delta,x)=(\delta_{\rm c},1/2)$,
with $R=R_*$, $\ell=\ell_{\mathrm{alg}}$ and $b$ held fixed during
differentiation in \eqref{eq:balanced-extension-exponent}, determines
$b=b_{\mathrm{alg}}$.
These equations locate the candidate; the following identity proves
its validity over the full rectangle.

\begin{lemma}[Algebraic endpoint inequality]\label{lem:optimized-endpoint}
At \eqref{eq:optimized-geometry}, for
$(\delta,x)\in[0,5/6]\times[0,1/2]$, $\bar g=x\delta$ and $R=R_*$,
\begin{equation}\label{eq:optimized-endpoint-sign}
 E_{B_{\mathrm{alg}}}(h_{\mathrm{alg}})\le0.
\end{equation}
Equality occurs only at $(\delta,x)=(\delta_{\rm c},1/2)$.
\end{lemma}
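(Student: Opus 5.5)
The plan is to clear the denominator of $R_*$ and reduce the claim to the sign of a polynomial on the rectangle. Substitute $R=R_*$ from \eqref{eq:fixed-count-parameters} into \eqref{eq:balanced-extension-exponent}. With $y=1/2-x$ and $j=108\mathcal J$ as in the proof of Lemma~\ref{lem:rational-saving}, $j$ is positive on the rectangle because $\mathcal J\ge35/54$. The product
\[
 Q_{\mathrm{alg}}(\delta,y):=10368\,\mathcal J\,\bigl(-E_{B_{\mathrm{alg}}}(h_{\mathrm{alg}})\bigr)
\]
is then a polynomial of bidegree at most $(2,3)$ in $(\delta,y)$. Its coefficients lie in $\mathbb Q(r_{921})$, obtained from those of $Q_0$ by the linear dependence on $b$ and $\ell$. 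It therefore suffices to prove $Q_{\mathrm{alg}}\ge0$ on $[0,5/6]\times[0,1/2]$, with equality only at $(\delta_{\rm c},0)$.

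Start with the critical edge $y=0$. There $Q_{\mathrm{alg}}(\delta,0)$ is a quadratic in $\delta$. By construction it vanishes at $\delta_{\rm c}$, which is the content of $E=0$ defining $\ell_{\mathrm{alg}}$, and its $\delta$-derivative vanishes there, which is the stationarity condition defining $b_{\mathrm{alg}}$. It therefore equals $c\,(\delta-\delta_{\rm c})^2$. The key step is to compute $c$ exactly in $\mathbb Q(r_{921})$ and check $c>0$. I expect this to be the exact "discriminant identity" \eqref{eq:optimized-discriminant}: the discriminant of the quadratic is zero and its leading coefficient is positive.

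Next, write $Q_{\mathrm{alg}}(\delta,y)=c(\delta-\delta_{\rm c})^2+y\,G(\delta,y)$ with $G$ of bidegree $(2,2)$, and prove $G>0$ on the rectangle. Since $y=0$ is the boundary of the rectangle, this $G$ need not vanish anywhere, so a strict-positivity check suffices. The model is identity \eqref{eq:positive-identity}, where the $y$-part was a manifestly positive bracket. I will try first to group $G$ in the same way, as $y\,[\ldots]$ plus a positive constant, using $\delta\le5/6$ and bounding the terms in $\delta$ by completing squares.

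If no clean grouping appears, I will fall back to the Bernstein-basis method of Lemma~\ref{lem:rational-saving}. Each coefficient is of the form $p+q\,r_{921}$, so its sign can be certified by rational bounds on $r_{921}$, for example $30.347<r_{921}<30.348$, refined if needed. I would subdivide the rectangle near $\delta_{\rm c}$ and small $y$. Together these give $Q_{\mathrm{alg}}\ge0$, with equality only where $y=0$ and $\delta=\delta_{\rm c}$, i.e. at $(\delta_{\rm c},1/2)$ in the original variables.

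The main obstacle is certifying positivity of $G$ near the corner $(\delta_{\rm c},y\to0)$. The margin there is governed by the $y$-derivative at the tangency point and may be small, and exact arithmetic in $\mathbb Q(r_{921})$ must not lose it. I would first compute $G(\delta_{\rm c},0)$ exactly to confirm it is positive. Then I would use a Lipschitz or Bernstein bound on a small box around that point.
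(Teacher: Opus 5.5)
Your reduction to a polynomial and your handling of the edge $y=0$ are correct: there $Q_{\mathrm{alg}}(\delta,0)=c(\delta-\delta_{\rm c})^2$ with $c>0$. The second step fails, however, because $G$ is \emph{not} positive on the rectangle, so neither a grouping nor a Bernstein certificate for $G$ can exist.

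Use the paper's normalization $\mathscr Q=216\mathcal J(-E)=\mathscr A(y)\delta^2+\mathscr D(y)\delta+\mathscr G(y)$; your $Q_{\mathrm{alg}}$ equals $48\mathscr Q$. Then
\[
 \mathscr A=(b+2\ell y)(-138+12y+96y^2)+12h\,p(y),\qquad
 \mathscr G=-2C_0(185+170y),
\]
\[
 \mathscr D=-2C_0(-138+12y+96y^2)+(b+2\ell y)(185+170y)-10h\,p(y),
\]
with $C_0=-1/4+b/6+5\ell/4<0$ and $p(y)=7+18y+8y^2$. Hence $G(\delta,0)=\mathscr A'(0)\delta^2+\mathscr D'(0)\delta+\mathscr G'(0)$, where:
\begin{itemize}
\item $\mathscr A'(0)=12b-276\ell+216h\approx130.82$;
\item $\mathscr D'(0)=-24C_0+170b+370\ell-180h\approx-62.94$;
\item $\mathscr G'(0)=-340C_0\approx7.10$.
\end{itemize}
This quadratic in $\delta$ has discriminant $\approx246>0$ and is negative for roughly $0.18<\delta<0.30$; for instance $G(1/4,0)\approx-0.46$.

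So $\partial_y\mathscr Q(1/4,0)<0$. At such $\delta$ the polynomial decreases as one leaves the edge $y=0$. It stays positive only because $\mathscr Q(\delta,0)=A_0(\delta-\delta_{\rm c})^2$ is bounded away from zero there. Your intermediate claim $Q_{\mathrm{alg}}\ge c(\delta-\delta_{\rm c})^2$ is therefore false. The obstacle you single out is also misplaced: at the tangency point, $G(\delta_{\rm c},0)\approx2.4$ is comfortably positive.

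The missing idea is to regard $\mathscr Q$ as a quadratic in $\delta$ for \emph{every} fixed $y$, not only at $y=0$. This is the actual content of \eqref{eq:optimized-discriminant}. The paper checks two things:
\begin{itemize}
\item $\mathscr A$ has four positive coefficients;
\item $4\mathscr A\mathscr G-\mathscr D^2=y(d_1+d_2y+d_3y^2+d_4y^3)$ with every $d_i>0$, each $d_i$ being of the form $a+c\,r_{921}$ and signed using the bracket \eqref{eq:root-bracket}.
\end{itemize}
Completing the square then gives $4\mathscr A\mathscr Q=(2\mathscr A\delta+\mathscr D)^2+y(d_1+\dots)\ge0$, strictly for $y>0$, with the double root at $y=0$. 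This settles the sign for all real $\delta$ with eight sign checks and no subdivision.

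If you prefer to keep your route, restrict the splitting $c(\delta-\delta_{\rm c})^2+yG$ to a neighbourhood of $\delta_{\rm c}$, where $G>0$. On the rest of the rectangle you must certify $\mathscr Q>0$ itself rather than $G$, including the strip of small $y$ with $\delta\in[0.18,0.30]$.
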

\begin{proof}
Put $y=1/2-x\ge0$ and
\[
 p(y)=7+18y+8y^2,\quad
 j(y,\delta)=185+170y+(-138+12y+96y^2)\delta,
\]
so $P_x=p(y)/9$, $\mathcal J=j/108$. Write
$C_0=-1/4+b_{\mathrm{alg}}/6+5\ell_{\mathrm{alg}}/4$.
Substitution in \eqref{eq:balanced-extension-exponent} gives
\begin{equation}\label{eq:optimized-Q}
 \mathscr Q:=216\mathcal J(-E_{B_{\mathrm{alg}}}(h_{\mathrm{alg}}))
 =-2jC_0+j(b_{\mathrm{alg}}+2\ell_{\mathrm{alg}}y)\delta
             -12h_{\mathrm{alg}}(5/6-\delta)\delta p(y).
\end{equation}
Write $\mathscr Q=\mathscr A(y)\delta^2+\mathscr D(y)\delta+\mathscr G(y)$;
these three coefficient polynomials are defined by this identity.
For $\mathscr A(y)=\sum_{i=0}^3A_i y^i$, direct expansion yields
\begin{align*}
 A_0&=31824/551-36688r_{921}/169157,\\
 A_1&=50916/551+214096r_{921}/169157,\\
 A_2&=17352/551+347296r_{921}/169157,\\
 A_3&=2112/551+512r_{921}/551.
\end{align*}
Every $A_i$ is positive. The exact discriminant identity is
\begin{equation}\label{eq:optimized-discriminant}
 4\mathscr A(y)\mathscr G(y)-\mathscr D(y)^2
 =y(d_1+d_2y+d_3y^2+d_4y^3),
\end{equation}
where
\begin{align*}
 d_1&=(3861265032800-122715443200r_{921})/279616521,\\
 d_2&=2386327618300/279616521-19093999200r_{921}/93205507,\\
 d_3&=-3389811480400/279616521+46823488000r_{921}/93205507,\\
 d_4&=-40221718800/4905553+4408611200r_{921}/14716659.
\end{align*}
Expansion with $r_{921}^2=921$ verifies the identity coefficient by
coefficient; the accompanying exact arithmetic checker verifies
this identity and the associated parameter inequalities. A rational isolating interval,
obtained by squaring its
endpoints, is
\begin{equation}\label{eq:root-bracket}
 30.347981810\ <\ r_{921}\ <\ 30.347981811.
\end{equation}

Substitution into these linear expressions gives
\[
 490<d_1<491,\quad2317<d_2<2318,\quad
 3122<d_3<3123,\quad892<d_4<893.
\]
The same bracket proves $A_0>0$; positivity of the other $A_i$ is
immediate. All decimal endpoints here are terminating rational numbers.
Complete the square:
\[
 4\mathscr A(y)\mathscr Q
 =(2\mathscr A(y)\delta+\mathscr D(y))^2
       +y(d_1+d_2y+d_3y^2+d_4y^3)\ge0.
\]
Since $\mathcal J\ge35/54$ on the required rectangle,
\eqref{eq:optimized-Q} proves the asserted sign. For $y>0$ it is
strict. At $y=0$, the exact polynomial is
$A_0(\delta-\delta_{\rm c})^2$, proving the equality assertion.
\end{proof}

\section{An algebraic lower bound for the comparison constraints}
\label{sec:scalar-optimality}

In this section, we show that three affine comparison constraints force
$B\ge B_{\mathrm{alg}}$, even when the total averaging length is free.
The constraints are \eqref{eq:dual-low-branches} and
\eqref{eq:dual-high-branch}; Proposition~\ref{prop:scalar-optimality}
proves their lower bound under \eqref{eq:admissible-exponent-system}.
This is the algebraic part of the optimality argument; Section~\ref{sec:iv-optimality}
will justify the constraints within the full specified comparison
system and allow $\kappa$ to vary.

Put $M=l_x+l_y$. The two terms $0$ and $M+\ell-1$ in
$k(M,\ell)$ of \eqref{eq:free-row-energy}, inserted into
\eqref{eq:full-low-boundary}, give the direct comparison constraints
\begin{equation}\label{eq:dual-low-branches}
 B\ge L_0:=5/6+M/12-\ell/6,\qquad
 B\ge L_K:=1/3+7M/12+\ell/3.
\end{equation}
At $(\delta,x)=(\delta_{\rm c},1/2)$, a count exponent $R\ge2/3$
in \eqref{eq:general-extension-exponent}, with $h>0$, also requires
\begin{equation}\label{eq:dual-high-branch}
 B\ge H_{\rm c}:=1+\delta_{\rm c}-(1+\delta_{\rm c})M/2
                   +\delta_{\rm c}\ell.
\end{equation}
The following proposition is a statement about these three affine
constraints. It assumes the constraints explicitly.

\begin{proposition}[Algebraic lower bound]\label{prop:scalar-optimality}
Let $M,\ell,B\in\mathbb R$ and
$\delta_{\rm c}=(49-\sqrt{921})/48$. If
\begin{equation}\label{eq:admissible-exponent-system}
 B\ge L_0,\qquad B\ge L_K,\qquad B\ge H_{\rm c},
\end{equation}
with $L_0,L_K,H_{\rm c}$ defined above, then $B\ge B_{\mathrm{alg}}$.
Equality is attained by $M=1-\ell_{\mathrm{alg}}$ and
$\ell=\ell_{\mathrm{alg}}$.
\end{proposition}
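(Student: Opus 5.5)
The plan is to treat \eqref{eq:admissible-exponent-system} as a three-constraint linear program in the variables $(M,\ell,B)$. I will exhibit a positive combination of the three inequalities in which the coefficients of $M$ and $\ell$ cancel. Such a combination gives a lower bound for $B$ that does not depend on $M$ or $\ell$, and I then identify that bound with $B_{\mathrm{alg}}$. Write $\delta=\delta_{\rm c}$ throughout. The only property of $\delta_{\rm c}$ needed for the inequality is $\delta_{\rm c}>0$, which follows from $\sqrt{921}<49$.

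First, look for weights $\lambda_0,\lambda_K,\lambda_H\ge0$ with $\lambda_0+\lambda_K+\lambda_H=1$. The $M$-coefficients of $L_0,L_K,H_{\rm c}$ are $1/12$, $7/12$ and $-(1+\delta)/2$. The $\ell$-coefficients are $-1/6$, $1/3$ and $\delta$. Setting both combined coefficients to zero gives two linear equations. The $\ell$-equation gives $\lambda_0=2\lambda_K+6\delta\lambda_H$. Substituting this into the $M$-equation gives $\lambda_K=2\lambda_H/3$. After normalisation the weights are
\[
 \lambda_H=\frac{1}{3+6\delta},\qquad
 \lambda_K=\frac{2}{9+18\delta},\qquad
 \lambda_0=\frac{4+18\delta}{9+18\delta},
\]
and all three are positive. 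Multiplying each inequality in \eqref{eq:admissible-exponent-system} by its weight and adding then gives
\[
 B\ge \lambda_0\cdot\frac56+\lambda_K\cdot\frac13+\lambda_H(1+\delta)
 =\frac{7+18\delta}{9+18\delta}.
\]
This bound holds for every real $M$ and $\ell$.

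Second, identify this constant with $B_{\mathrm{alg}}$. Since $\ell_{\mathrm{alg}}=(5-6\delta_{\rm c})/(9+18\delta_{\rm c})$ was derived in Section~\ref{sec:optimized-endpoint}, we have
\[
 \frac{11}{12}-\frac{\ell_{\mathrm{alg}}}{4}
 =\frac{28+72\delta}{36+72\delta}
 =\frac{7+18\delta}{9+18\delta}.
\]
It remains to confirm, by rationalising with $r_{921}^2=921$, that this expression equals $(1507-2\sqrt{921})/1653$. This is the only step involving the surd, and it is a routine exact check.

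Third, the equality case. Take $M=1-\ell$. Then $L_0=L_K=11/12-\ell/4$ identically in $\ell$. Also $H_{\rm c}=(1+\delta)/2+\ell(1+3\delta)/2$, and setting this equal to $11/12-\ell/4$ gives exactly $\ell=(5-6\delta)/(9+18\delta)=\ell_{\mathrm{alg}}$. So at $M=1-\ell_{\mathrm{alg}}$, $\ell=\ell_{\mathrm{alg}}$ and $B=B_{\mathrm{alg}}$ all three constraints are tight, and the bound is attained.

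I expect no real obstacle. The part needing the most care is solving for the weights correctly and checking that they are positive, since that positivity is what turns the weighted sum into a valid inequality. The conversion to the surd form of $B_{\mathrm{alg}}$ is only bookkeeping.
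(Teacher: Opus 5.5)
Your proposal is correct and is essentially the paper's proof. The paper uses the same positive weights $(4+18\delta_{\rm c},2,3)/(9+18\delta_{\rm c})$, which cancel $M$ and $\ell$ and give $(7+18\delta_{\rm c})/(9+18\delta_{\rm c})=(1507-2\sqrt{921})/1653=B_{\mathrm{alg}}$. It then checks equality by direct substitution at $M=1-\ell_{\mathrm{alg}}$, $\ell=\ell_{\mathrm{alg}}$, exactly as you do, and your weight derivation and rationalisation are both right.
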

\begin{proof}
The positive weights
\[
 w_0=\frac{4+18\delta_{\rm c}}{9+18\delta_{\rm c}},\quad
 w_K=\frac2{9+18\delta_{\rm c}},\quad w_H=\frac3{9+18\delta_{\rm c}}
\]
sum to one and satisfy
\[
 w_0/12+7w_K/12-(1+\delta_{\rm c})w_H/2=0,
 \qquad-w_0/6+w_K/3+\delta_{\rm c}w_H=0.
\]
Their weighted sum cancels $M$ and $\ell$, giving
\[
 B\ge\max\{L_0,L_K,H_{\rm c}\}
 \ge w_0L_0+w_KL_K+w_HH_{\rm c}
 =\frac{7+18\delta_{\rm c}}{9+18\delta_{\rm c}}
 =\frac{1507-2r_{921}}{1653}=B_{\mathrm{alg}}.
\]
The cancellation proves the lower bound for every real $M,\ell$;
no numerical enumeration of parameter choices is involved.
At $M=1-\ell_{\mathrm{alg}}$ and $\ell=\ell_{\mathrm{alg}}$,
direct substitution gives $L_0=L_K=H_{\rm c}=B_{\mathrm{alg}}$.
\end{proof}

\paragraph{Scope of optimality.}
Part IV proves necessity of the three constraints in the explicitly
defined uniform exponent comparison system. Restricting the
moment parameter to $\kappa\ge3/4$, monotonicity of $F(\kappa)$
in Proposition~\ref{prop:iv-dual-optimality} gives
$B\ge F(3/4)=B_{\mathrm{alg}}$. The present parameter choice attains that
bound. This proves optimality for that comparison problem in the
restricted moment range, including free total length and the available
estimates without selected prime factors; it does not exclude sharper analytic estimates.

\section{Auxiliary estimates and proof of the algebraic zero-free bound}
\label{sec:optimized-assembly}

In this section, we combine the endpoint inequality of Section~\ref{sec:optimized-endpoint}
with the direct bound of Proposition~\ref{prop:balanced-low} and the
principal-term estimate of Proposition~\ref{prop:extension-principal}.
Checking the remaining strict auxiliary inequalities permits another application of
Proposition~\ref{prop:criterion-summary} and proves
Theorem~\ref{thm:optimized-extension}.

Recall that $J_\eta$ and $f_\eta$ now use the algebraic lengths
in \eqref{eq:optimized-geometry}, as prescribed in
\eqref{eq:general-extension-signal}. Their principal exponent is
$C_{\mathrm{alg}}(s)=s-(4+b_{\mathrm{alg}})/6$.

At \eqref{eq:optimized-geometry}, exact substitution gives
\begin{equation}\label{eq:optimized-geometric-margins}
\begin{gathered}
 b_{\mathrm{alg}}>0,\quad l_{x,\mathrm{alg}}-\ell_{\mathrm{alg}}>0,\quad
 l_{y,\mathrm{alg}}-\ell_{\mathrm{alg}}>0,\quad1-3\ell_{\mathrm{alg}}>0,\quad\ell_{\mathrm{alg}}<1/5,\\
 l_{y,\mathrm{alg}}-\ell_{\mathrm{alg}}-11b_{\mathrm{alg}}/6>0,\qquad
 \ell_{\mathrm{alg}}/h_{\mathrm{alg}}>1/5>7/37,\qquad5\ell_{\mathrm{alg}}-h_{\mathrm{alg}}>0.
\end{gathered}
\end{equation}
For example these inequalities follow by replacing $r_{921}$ by the
rational endpoints in \eqref{eq:root-bracket} after reducing the
expressions to $a+c r_{921}$; the denominators are positive.
Both additional positive-part terms therefore vanish in
\eqref{eq:balanced-full-low}, and
\begin{equation}\label{eq:optimized-low}
 |J_\eta(Z)|\ll Z^{C_{\mathrm{alg}}(B_{\mathrm{alg}})+\eps},\qquad
 C_{\mathrm{alg}}(B_{\mathrm{alg}})=l_{x,\mathrm{alg}}/2+b_{\mathrm{alg}}/12.
\end{equation}

Equation~\eqref{eq:optimized-geometric-margins} verifies the general
hypotheses \eqref{eq:balanced-hypotheses}; the ranges in
Lemma~\ref{lem:extension-ranges} therefore apply at these lengths.
For the lowest zero class $\delta_0=1/50$, recall the constant
$C_0=-1/4+b_{\mathrm{alg}}/6+5\ell_{\mathrm{alg}}/4$ from
Section~\ref{sec:optimized-endpoint}. The saving is
\[
 m_F=-C_0-(1/2+3\ell_{\mathrm{alg}}/2)\delta_0>0.
\]
For the intermediate norm range use $R=76/75-2\delta/3$; the upper endpoint
exponent is bounded by
\[
 E_{B_{\mathrm{alg}}}(1/2)\le
 \frac{(225\ell_{\mathrm{alg}}+50-75b_{\mathrm{alg}})\delta
              +78\ell_{\mathrm{alg}}-49b_{\mathrm{alg}}-73}{300}<0
 \quad(1/50\le\delta\le5/6).
\]
It is affine in $\delta$, and both endpoint values are negative by
\eqref{eq:root-bracket}. For the small-norm range the saving is
\[
 m_S=l_{y,\mathrm{alg}}/2-13h_{\mathrm{alg}}/75-2/100>0.
\]
For the moderate norm range,
$E_{B_{\mathrm{alg}}}(d)\le E_{B_{\mathrm{alg}}}(h_{\mathrm{alg}})\le0$ by
Lemma~\ref{lem:optimized-endpoint}; for $h_{\mathrm{alg}}<d\le h_{\mathrm{alg}}+\zeta$ the
increase in the exponent is at most $2\zeta$. The large-norm range is handled by the same
absolute contour as in Lemma~\ref{lem:extension-ranges}.
Equality in the endpoint inequality \eqref{eq:optimized-endpoint-sign}
occurs only at $(\delta,x)=(\delta_{\rm c},1/2)$.
Under the contradiction hypothesis $\beta_*>B_{\mathrm{alg}}$,
measuring the error relative to $C_{\mathrm{alg}}(\beta_*)$ subtracts
$\beta_*-B_{\mathrm{alg}}>0$, so the comparison has a strict saving.

The principal domain also needs no new argument:
\[
 437/500<B_{\mathrm{alg}}<B_{\mathrm r}<7/8.
\]
Lemma~\ref{lem:extension-euler} and
Proposition~\ref{prop:extension-principal} therefore apply with $B=B_{\mathrm{alg}}$
and the new lengths. In particular, the principal factor has no zeros
on $\Rea s>B_{\mathrm{alg}}$ and is the same factor appearing in the new principal Mellin integral.
In the small-norm range, the first-domain condition has the positive
gap $\Rea(s+w)-1\ge B_{\mathrm{alg}}-1/2>0$.

The non-strict inequality \eqref{eq:optimized-endpoint-sign} is covered
by Proposition~\ref{prop:extension-assembly}.
Indeed, under the contradiction hypothesis $\Delta_{\mathrm{alg}}=\beta_*-B_{\mathrm{alg}}>0$,
\[
 C_{\mathrm{alg}}(B_{\mathrm{alg}})+E_{B_{\mathrm{alg}}}(h_{\mathrm{alg}})-C_{\mathrm{alg}}(\beta_*)\le-\Delta_{\mathrm{alg}}.
\]
The finitely many contour, moment, subdivision into exponent intervals and normalization losses
have fixed finite coefficients. Proposition~\ref{prop:extension-assembly}
chooses them inside this gap before fixing the target.

\begin{proof}[Proof of Theorem~\ref{thm:optimized-extension}]
If $\Delta_{\mathrm{alg}}>0$, the direct bound \eqref{eq:optimized-low}, the
Euler domain of Lemma~\ref{lem:extension-euler}, the strict auxiliary
estimates above and Lemma~\ref{lem:optimized-endpoint} verify all hypotheses of
Proposition~\ref{prop:extension-assembly}.
Thus for some $\sigma>0$ independent of the target,
\[
 |J_\eta(Z)-f_\eta(Z)|\ll_\eta Z^{C_{\mathrm{alg}}(\beta_*)-\sigma}.
\]
The exponents of the height factors are allowed to depend on $\eta$;
choose $T_1=Z^{\tau_\eta}$ and then the tail order as in
\eqref{eq:height-assembly}. The modified average and principal Mellin integral are unchanged
by this choice of height cutoff.

Apply Proposition~\ref{prop:criterion-summary} with
$\sigma_0=B_{\mathrm{alg}}$, $c=-(4+b_{\mathrm{alg}})/6$ and, for example,
$\omega_0=\Delta_{\mathrm{alg}}/2$. Its hypotheses now all hold: the direct bound is
at $C_{\mathrm{alg}}(B_{\mathrm{alg}})+\omega_0$, the comparison saves $\sigma$, and the
holomorphic principal factor is within $1/2$ of one.
The criterion implies $\beta_*\le B_{\mathrm{alg}}$, contradicting $\Delta_{\mathrm{alg}}>0$.
Finally apply the norm-pullback argument of
Corollary~\ref{cor:dirichlet-conclusion} with $B=B_{\mathrm{alg}}>0$.
This gives all Dirichlet $L$-functions, including $\zeta$, and completes
the argument of Part III.
\end{proof}
\clearpage

\part{A moment-parameter extension and the optimal comparison bound}
\label{part:moment-extension}

\section{The limiting zero-free bound and its parameters}
\label{sec:iv-statement}

In this section, we state the limiting zero-free bound obtained by
varying the moment parameter together with the averaging lengths.
Theorem~\ref{thm:iv-limit} gives $B_{\mathrm{new}}$, with the explicit root
in \eqref{eq:iv-explicit-root} and the limiting parameter values
in \eqref{eq:iv-final-parameters}.
Part III gives a bound strong enough to permit $\kappa<3/4$, once the
unweighted moment has been extended to that range.
Sections~\ref{sec:iv-moment}--\ref{sec:iv-iteration}
prove it, and Section~\ref{sec:iv-optimality} proves its optimality within
the specified comparison system.

Theorem~\ref{thm:optimized-extension} gives
\[
 \beta_*\le B_{\mathrm{alg}}<7/8,
 \qquad2B_{\mathrm{alg}}-1<3/4.
\]
We extend \Src{Lemma}{18.1} to $13/18\le\kappa\le1$ and
iterate the resulting length optimization.

\begin{theorem}[Limiting zero-free bound]\label{thm:iv-limit}
Let
\begin{equation}\label{eq:iv-explicit-root}
 \xi_{\mathrm{new}}=\frac{20}{33}+\frac{\sqrt{1391}}{33}
 \cos\!\left(\frac13\arccos\frac{28470}{1391^{3/2}}
                         -\frac{2\pi}{3}\right),
\end{equation}
where $\arccos\in[0,\pi]$. Equivalently, $\xi_{\mathrm{new}}$ is the
unique root in $(0.38858,0.38859)$ of
$1188\xi^3-2160\xi^2+171\xi+190=0$. Then
\[
 \beta_*\le B_{\mathrm{new}}:=\frac{18\xi_{\mathrm{new}}+7}{18\xi_{\mathrm{new}}+9}
 =0.874957019420098946\ldots.
\]
Every finite-order Hecke $L$-function over $F$, and every Dirichlet
$L$-function, including $\zeta$, is zero-free for $\Rea s>B_{\mathrm{new}}$;
principal poles are allowed.
\end{theorem}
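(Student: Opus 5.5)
I would prove Theorem~\ref{thm:iv-limit} by an iteration that, at each step, reruns the Part III argument with a smaller moment parameter. The inputs are the Part II/III machinery (Propositions~\ref{prop:balanced-low}, \ref{prop:extension-principal}, \ref{prop:extension-assembly}, Lemma~\ref{lem:extension-ranges}) together with a version of the unweighted fourth moment \eqref{eq:plain-moment} valid for $13/18\le\kappa\le1$ under the hypothesis $\beta_*\le(1+\kappa)/2$.

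\textbf{Step 1: extend the moment range.} I would first extend \Src{Lemma}{18.1} to $13/18\le\kappa<3/4$. The plan is to redo its induction on the combined range exponent and track where $\kappa\ge3/4$ enters. I expect it enters only through the comparison of the transformed prime lengths $6\kappa z_{\rm p}$ with the uncentered range $5M/6$ and through the $\beta_*\le(1+\kappa)/2$ control of the Poisson remainders. I would then check that the rectangle cancellation \eqref{eq:rectangle} still closes down to $\kappa=13/18$.

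I expect this step to be the main obstacle. It needs the internal structure of the cited proof, not only its statement. What I would try first is to isolate the single inequality in \Src{Lemma}{18.2--18.3} that becomes tight at $\kappa=3/4$ and show that it actually has slack down to $13/18$.

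\textbf{Step 2: build a map $\kappa\mapsto B(\kappa)$.} Given a proved bound $\beta_*\le B_j$ with $\kappa_j=2B_j-1\in[13/18,3/4]$, I would define the next bound as follows.
\begin{itemize}
\item Redo the count of Proposition~\ref{prop:fixed-count} with $z_S=(1-2m)/(6\kappa_j)$ in place of $2(1-2m)/9$. This gives a count exponent $R_{\kappa_j}(\delta,x)$.
\item Locate the critical class $x=1/2$ and $\delta=\xi$ solving $R_{\kappa}=2/3$, i.e.\ the quadratic stated in the introduction.
\item Set $\ell(\xi)=(5-6\xi)/(9+18\xi)$ and $b(\xi)$ as given there, so that $B(\ell(\xi))=(18\xi+7)/(18\xi+9)$.
\end{itemize}

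\textbf{Step 3: the endpoint inequality and the remaining ranges.} I would prove $E_{B}(h)\le0$ on $[0,5/6]\times[0,1/2]$ by the same device as Lemma~\ref{lem:optimized-endpoint}. That means writing $216\mathcal J(-E)$ as a quadratic in $\delta$ with coefficients polynomial in $y$ and $\xi$, and exhibiting a discriminant identity whose coefficients are positive for $\xi$ in the relevant interval, which is a neighbourhood of $[\xi_{\mathrm{new}},\delta_{\rm c}]$. Because the coefficients now depend on the parameter, I would certify positivity over that whole interval, for example with Bernstein coefficients in $\xi$.

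I would also check the strict geometric margins of \eqref{eq:balanced-hypotheses} and the small-norm, intermediate-norm and lowest-class savings of Lemma~\ref{lem:extension-ranges}. These are continuous in $\xi$ and hold with room at $\xi=\delta_{\rm c}$, so they persist on that interval. Proposition~\ref{prop:extension-assembly} and Proposition~\ref{prop:criterion-summary} then yield $\beta_*\le B_{j+1}:=B(\xi(\kappa_j))$.

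\textbf{Step 4: iterate and pass to the limit.} I would start from $\kappa_0=2B_{\mathrm{alg}}-1$. The map $\kappa\mapsto 2B(\xi(\kappa))-1$ is monotone increasing and a contraction toward its fixed point, so $B_j$ decreases to the fixed point. Eliminating $\kappa=2B-1=(18\xi+5)/(18\xi+9)$ in the count quadratic gives $1188\xi^3-2160\xi^2+171\xi+190=0$. Only the real bounds are passed to the limit, so $\beta_*\le\inf_j B_j=B_{\mathrm{new}}$.

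The root formula follows from the trigonometric solution of the depressed cubic, and the bracket $(0.38858,0.38859)$ is checked by sign evaluation. The Dirichlet statement follows from Corollary~\ref{cor:dirichlet-conclusion}.
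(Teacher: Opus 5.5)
Your plan follows the paper's proof in outline: extend the unweighted fourth moment to $13/18\le\kappa\le1$ (Proposition~\ref{prop:iv-plain}), derive the count $R_\kappa$, take the critical-class family $\ell(\xi),b(\xi),B(\xi)$, certify the endpoint by the completed-square discriminant identity with Bernstein bounds on a small $\xi$-interval containing $[\xi_{\mathrm{new}},\delta_{\rm c}]$, and iterate the one-step implication $\beta_*\le(1+\kappa)/2\Rightarrow\beta_*\le B(\xi(\kappa))$ to the fixed point $\kappa=2B-1$, which yields the cubic (the paper starts at $\kappa_0=3/4$, so that $B_0=B_{\mathrm{alg}}$, and certifies the auxiliary margins by explicit Bernstein bounds on $I$ rather than by continuity). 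For Step~1, the paper finds the $\kappa$-dependence in the length bookkeeping of the induction for \Src{Lemma}{18.1}, not in \Src{Lemmas}{18.2--18.3}: the hypothesis $\beta_*\le(1+\kappa)/2$ enters through the pointwise prime-sum bound $|Q_k|^2\ll Z^{\kappa z_{\rm p}+\eps}$ from \Src{Lemma}{4.9}, and one must check that the length condition $n_1+n_2+6\kappa z_{\rm p}\le M$ survives coprimality removal and greedy pointwise removal of prime factors, together with the margins $z_{\rm p}\le3M/13$ and $6\kappa-1\ge10/3$, whereas the rectangle cancellation and the $5M/6$ threshold do not depend on $\kappa$.
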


The separate scalar optimality statement is
Theorem~\ref{thm:iv-system-optimality}, following the definition of
the set $\mathscr A$ of admissible parameter choices in
Section~\ref{sec:iv-system-optimality}.

We retain the original modified average \eqref{eq:modified}, the two
unweighted factors of \eqref{eq:plain-moment} and the sixth-power
inverse amplification \eqref{eq:amplification}, with $\alpha=5/6$. The notation is
\[
 \delta=2a-1,\quad x=\bar g/\delta,\quad b=l_y-l_x,\quad
 h=1-l_x+\ell;
\]
$t$ denotes the detector cutoff and $\xi$ parametrizes the length choices.
The exact scalar verifier and the consolidated Lean formalization
are available in the accompanying repository \cite{LeanExtension}.

\section{Extending the unweighted fourth moment below three quarters}
\label{sec:iv-moment}
In this section, we extend the proof of \Src{Lemma}{18.1} from $[3/4,1]$
to $[13/18,1]$; its printed statement alone does not give this range.
Proposition~\ref{prop:iv-plain} states the extended estimate
\eqref{eq:iv-plain-cases}, for the parameter range
\eqref{eq:iv-kappa-range} and under its stated coefficient and zero-free
hypotheses. This is the analytic estimate needed for the variable-parameter count
in Section~\ref{sec:iv-count}, with its zero-free hypothesis supplied
at each step by the preceding bound.

\begin{proposition}[Extended unweighted product moment]
\label{prop:iv-plain}
Retain the notation, character families, fixed twists, common coprimality
conditions and bounded length ranges of \eqref{eq:plain-moment}.
In that notation, $n_1,n_2$ are the ideal-length exponents,
$z_{\rm p}=\sum_i z_i$ is the total prime-length exponent, and
$M=m_0+q_0$ is the combined range exponent of the moment estimate,
where $m_0$ bounds the element range and $q_0$ bounds the radical
of its variable residue-symbol factors. This $M$ is distinct from
the total averaging-length exponent $l_x+l_y$ used in
Section~\ref{sec:free-direct}.
The excluded group $\Theta=\langle\eta,\widehat{\mathcal T}\rangle$
is the finite group of Hecke characters defined in
Section~\ref{sec:average-separation}.
The prime polynomials have disjoint underlying prime supports before
imposing coprimality restrictions and extending the characters by zero. Then
\eqref{eq:plain-moment} holds uniformly for
\begin{equation}\label{eq:iv-kappa-range}
 \frac{13}{18}\le\kappa\le1,
\end{equation}
in the cases
\begin{equation}\label{eq:iv-plain-cases}
 \begin{array}{c|c|c}
 &\text{length condition}&\text{excluded inducing characters}\\ \hline
 z_{\rm p}=0&n_1,n_2\text{ arbitrary bounded}&\text{principal}\\
 z_{\rm p}>0&n_1+n_2+6\kappa z_{\rm p}\le M&\Theta.
 \end{array}
\end{equation}
If $z_{\rm p}>0$ and $\kappa<1$, assume
$\beta_*\le(1+\kappa)/2$. The upper bound $\ell_{\rm mesh}$ on individual prime lengths depends
only on the bounded real ranges and $\eps$, not on the number of prime-polynomial factors.
Constants and finite smooth-seminorm and polynomial-height orders
may depend on the fixed number of prime-polynomial factors and arithmetic data; they are
uniform over the character families, moduli, common coprimality
restrictions and fixed outer summation parameters in the prescribed ranges.
\end{proposition}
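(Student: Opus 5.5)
The plan is to re-run the inductive proof of \Src{Lemma}{18.1} with $\kappa$ regarded as a free parameter, and to locate every place where the hypothesis $\kappa\ge3/4$ was actually used. For $z_{\rm p}=0$ the statement does not involve $\kappa$, so that case is inherited verbatim. For $z_{\rm p}>0$ and $\kappa=1$ no zero-free input is needed and the original argument applies directly. So the work is confined to $z_{\rm p}>0$ and $13/18\le\kappa<1$.

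The structure of the induction stays as in the cancellation paragraph after \eqref{eq:rectangle}:
\begin{enumerate}
\item apply the Hecke functional equation to shorten the ideal lengths;
\item perform two finite transforms that lower the combined exponent $M$ while preserving the common character, coprimality condition and norm twist \Src{Lemma}{18.2};
\item handle the uncentered range $n_1+n_2+z_{\rm p}\le5M/6$ by volume bounds;
\item handle the centered range with the rectangle cancellation $X_1X_2=Y_1Y_2$ of \Src{Lemma}{18.3} for transformed characters in $\Theta$.
\end{enumerate}
None of these steps refers to $\kappa$ except through the length condition $n_1+n_2+6\kappa z_{\rm p}\le M$ and through the size of the error terms for the prime polynomials.

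The $\kappa$-dependence enters in one place. The prime polynomials $Q_k$ are expanded, and the contribution of the primes is estimated through a contour shift in the logarithmic derivative of $L_F(s,\psi^*)$ or its $\Theta$-twists. This shift runs to $\Rea s=(1+\kappa)/2$, which is why the hypothesis $\beta_*\le(1+\kappa)/2$ appears. The resulting prime-sum error has size roughly $Z^{z_{\rm p}(\kappa-1)/2}$ per unit length. The requirement $n_1+n_2+6\kappa z_{\rm p}\le M$ is exactly what keeps this error, multiplied by the volume loss $Z^{(n_1+n_2+z_{\rm p})-\cdots}$, below $Z^{M}$.

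I would therefore write the two competing exponents as affine functions of $\kappa$:
\begin{itemize}
\item the volume or main term in the uncentered range, with threshold $5M/6$;
\item the Poisson remainder from \Src{Lemma}{18.3} after the transforms reduce $M$.
\end{itemize}
I would then check that the induction closes with a fixed positive gap for every $\kappa\ge13/18$. Concretely, with $n_1+n_2=M-6\kappa z_{\rm p}$, the centered-range requirement should become an inequality of the form $c_1-c_2\kappa\le0$ whose boundary is exactly $\kappa=13/18$. This is where the new lower endpoint comes from: $13/18$ replaces $3/4$ because the original proof had slack there.

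The main obstacle is this bookkeeping. The cited proof's inequalities were written with $\kappa\ge3/4$ already substituted in places. I must reconstruct each estimate in \Src{Lemmas}{18.2--18.3} with symbolic $\kappa$ and confirm two things:
\begin{itemize}
\item that the uncentered-range threshold, and the reduction of $M$ by two transforms, leave $4M-3M'-\dots$ bounded below uniformly on $[13/18,1]$;
\item that $\ell_{\rm mesh}$ can still be chosen independently of the number $K$ of factors, by bounding the combined support errors by $O_K(1/\log Z)$ exactly as before.
\end{itemize}

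I would first try to verify that the only $\kappa$-sensitive inequality is the comparison of $(\kappa-1)z_{\rm p}/2$ savings against the transform losses. I would then check that it is linear in $\kappa$ with root $13/18$. Uniformity in $\kappa$ follows by compactness of $[13/18,1]$ together with linearity of all the constraints.
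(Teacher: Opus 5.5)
Your overall route is the paper's. You rerun the induction of \cite[Section~18]{QRH} with $\kappa$ free and inherit the cases $z_{\rm p}=0$ and $\kappa=1$. You let $\beta_*\le(1+\kappa)/2$ enter only through the contour shift to $s_\kappa=(1+\kappa)/2$. Since $\psi\vartheta$ is nonprincipal for $\psi\notin\Theta$, $\vartheta\in\Theta$, this shift gives $|Q|^2\ll Z^{\kappa z_{\rm p}+\eps_1}$ up to seminorm and height factors.

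The gap is in the step you single out as decisive. You expect the induction to hinge on one linear inequality in $\kappa$ that is sharp at $13/18$. Nothing in it is sharp there, and looking for such a constraint will not show why the induction closes. The mechanism you do not supply is the following.
\begin{itemize}
\item After the two finite transforms, the child sum may violate the length condition by $F_{\rm act}=\pos{A_{\rm act}-M'_{\rm act}+(6\kappa-1)z_{\rm act}}\le6\Delta_{\rm child}+\delta_{{\rm fr},1}+2\theta_N$. Here $A=n_1+n_2+z_{\rm p}$ and $\Delta_{\rm child}$ is the support saving; this bound uses only the original length condition and $6\kappa-1\ge0$.
\item One then bounds whole prime-polynomial factors pointwise, greedily, until the condition is restored. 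Removing prime length $d$ lowers $A-M+(6\kappa-1)z$ by $6\kappa d$ and costs $\kappa d$.
\item So the total cost is at most $F_{\rm act}/6+\eta_{\rm mesh}\le\Delta_{\rm child}+\cdots$, which is absorbed by the support saving.
\end{itemize}
The ratio $1/6$ is independent of $\kappa$: the coefficient $6\kappa$ in the length condition is matched exactly to the pointwise loss $\kappa$. That, not slack in one inequality, is why $\kappa$ can be lowered. The only error is the overshoot of the last removed factor, which is why $\ell_{\rm mesh}$ is independent of the number of factors.

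In the paper, $\kappa\ge13/18$ is used only through the convenient normalizations $6\kappa\ge13/3$ and $6\kappa-1\ge10/3$.
\begin{itemize}
\item In the initial range $M\le\rho$, $6\kappa z_{\rm p}\le M$ gives $z_{\rm p}\le3M/13$ and $\kappa z_{\rm p}\le M/6$. The extra loss is therefore at most $\rho/6$.
\item In the range $A>5M/6$, the normalization gives $z_{\rm p}<M/(6(6\kappa-1))\le M/20$. Take the comparison product with lengths $L=M/4$ and $A-z_{\rm p}-L$, estimated by the functional equation on its longer factor. It satisfies $A_{\rm comp}\le\frac32M-A+2z_{\rm p}+\xi_{\rm ref}\le\frac{23}{30}M+\xi_{\rm ref}$ and $A_{\rm comp}+(6\kappa-1)z_{\rm p}\le\frac{14}{15}M+\xi_{\rm ref}$. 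Both are below $M$, so the product is covered by the estimate already proved at the same $M$. A direct check shows these two margins persist for every $\kappa>1/2$.
\item The bound $s_\kappa\ge31/36$ handles prime powers.
\item You must also check that removing the coprimality restrictions preserves $A'+(6\kappa-1)z'\le M$.
\end{itemize}
Finally, the quantity $4M-3M'-\cdots$ you propose to control belongs to the M\"obius-weighted induction \eqref{eq:canonical}. Here the invariant is the affine length expression together with the strict decrease $M'\le M-\sigma$. Uniformity in $\kappa$ comes from $0<6\kappa-1\le5$, which bounds the variation of that expression, with $\rho,\sigma,\xi_{\rm ref},\eta_{\rm mesh}$ chosen from the bounded ranges and $\eps$ alone.
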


\begin{proof}
Put $z=z_{\rm p}$, the total prime-length exponent, and
$A=n_1+n_2+z$, the sum of the two ideal-length exponents and $z$.
Here $A$ is a real exponent, unrelated to the primary element $A$
in Section~\ref{sec:average}. We check the
$\kappa$-dependent steps of the induction on the combined range exponent in
\cite[Section~18]{QRH}. In these sums, each character remains zero on ideals divisible by
primes in its defining modulus or the fixed set $S$, including primes
absent from its primitive conductor. Cancelling residue-symbol factors
on ideals coprime to these primes does not remove their displayed prime
support from the bound $Z^{q_0}$ in \eqref{eq:plain-moment}; compare
\eqref{eq:zeros}. The induction without auxiliary prime-polynomial factors is
unchanged.

\paragraph{Prime-sum bounds and coprimality restrictions.}
For $\kappa<1$ put $s_\kappa=(1+\kappa)/2$. By
\Src{Lemma}{4.9}, whose real boundary ranges over $[1/2,1]$,
\[
 s_\kappa\ge\beta_*
 \quad\Longrightarrow\quad
 \frac{L'}{L}(s_\kappa+e+it,\psi)
 \ll_e\log\bigl[2Q_\psi(3+|t|)^2\bigr].
\]
For each character $\vartheta\in\Theta$ occurring in the prime coefficients,
\[
 \psi\notin\Theta,\quad\psi\vartheta\text{ principal}
 \quad\Longrightarrow\quad\psi=\vartheta^{-1}\in\Theta,
\]
a contradiction. Moving the Mellin contour for the smoothly weighted von Mangoldt
sum to $s_\kappa+e$, followed by division of its weight by
$\log(Py)$, gives the prime bound. Prime powers and bounded values of $P$
are admissible since
\[
 s_\kappa\ge31/36,\qquad
 P^{1/2+\eps_1}\le P^{s_\kappa+\eps_1}\quad(P\ge1).
\]
Dividing each prime sum of norm length $P_i$ by $P_i^{1/2}$, as in
the definition of $Q_k$ in \eqref{eq:plain-moment}, gives
\begin{equation}\label{eq:iv-prime-pointwise}
 |Q_{\boldsymbol\omega}|^2
 \le C Z^{\kappa z+\eps_1}p_j(\boldsymbol W)^c
                  (1+|\boldsymbol\omega|)^h.
\end{equation}

Here $p_j$ is a finite smooth seminorm, $\boldsymbol\omega$ comprises
normalized norm-twist heights, and $j,c,h$ are finite orders.
The contour loss is $2ez$ with bounded total $z$; derivatives of
the test functions or the normalized norm twists affect only these
finite seminorm and height orders.
For $\kappa=1$ use absolute prime counting.

Exact removal of the coprimality restrictions and application of the functional equation are independent of $\kappa$
\cite[Section~18]{QRH}. Removing the additional coprimality restrictions from the two unweighted
ideal sums $S_\psi(n_1)$ and $S_\psi(n_2)$ in \eqref{eq:plain-moment}
produces divisor terms with coefficients of absolute value at most
$q_d^{-1/2}$. A prime sum with an
additional coprimality restriction equals the same sum without that
restriction minus its terms at primes dividing the coprimality modulus;
the original zero extensions of the character are retained. The resulting
sums of inverse square roots of norms are $O(Z^{\eps_1})$ with arbitrary
fixed loss. The retained lengths satisfy
\[
 A'\le A,\quad z'\le z,\qquad
 A'+(6\kappa-1)z'\le A+(6\kappa-1)z\le M.
\]

If $z'=0$, use the estimate without auxiliary prime-polynomial factors or additional coprimality
restrictions. Applying the functional equation preserves
$N\mathfrak f\,N\mathfrak r\ll Z^M$, where $\mathfrak f$ is the
primitive conductor and $\mathfrak r$ is the product, each prime counted
once, of primes in the defining modulus that do not divide $\mathfrak f$.
This bound is uniform in the varying arithmetic parameters. It remains to prove the
estimate without additional coprimality restrictions.

\paragraph{Small ranges and comparison of lengths.}
Let $\rho>0$ be the lower threshold for the range exponent and $\epsilon_{\rm fr}>0$ the
allowance for enlarging the supports, independent of the zero-class $\delta$.
At $M\le\rho$, absolute counting introduces an exponent loss of at most
$\rho+\epsilon_{\rm fr}$ for the enlarged sum without auxiliary prime-polynomial factors. For $z>0$,
\begin{equation}\label{eq:iv-width-floor}
 A\ge z,\qquad6\kappa z\le M,\qquad
 z\le\frac{3M}{13},\qquad\kappa z\le\frac M6.
\end{equation}
Thus the additional exponent loss in this initial range is $\le\rho/6$; the original paper's
bound $z\le2M/9$ is not needed.

For $M>\rho$, first prove the estimate when $A\le5M/6$. If $A>5M/6$, set $L=M/4$.
Then
\[
 6\kappa-1\ge10/3,\qquad
 z<\frac{M}{6(6\kappa-1)}\le\frac M{20},\qquad
 A-z>\frac{47M}{60}>2L.
\]
If either original unweighted length exponent $n_1$ or $n_2$ is below $L$, apply the functional equation to the other factor.
Otherwise subtract the product of character sums with length
exponents $L,A-z-L$, as in \eqref{eq:rectangle}. Estimate this
comparison product separately by applying the functional equation
to its longer factor.
Applying the functional equation to the resulting difference of
the two products, including truncation
and the fixed enlargement factors for the two length ranges,
with aggregate error $\xi_{\rm ref}$ gives
\begin{equation}\label{eq:iv-reflected-comparison}
 A_{\rm comp}\le\frac32M-A+2z+\xi_{\rm ref}.
\end{equation}
Hence
\begin{equation}\label{eq:iv-comparison-margins}
 \begin{aligned}
 A_{\rm comp}&\le\frac{23}{30}M+\xi_{\rm ref},\\
 A_{\rm comp}+(6\kappa-1)z
 &\le\frac52M-2A+2z+\xi_{\rm ref}
 \le\frac{14}{15}M+\xi_{\rm ref}.
 \end{aligned}
\end{equation}

Both upper bounds lie below $M$ by at least
$M/15-\xi_{\rm ref}>0$ if
$\xi_{\rm ref}\le\rho/30$. For $z=0$ the gap below $M$ is
$M/6-\xi_{\rm ref}$. The comparison product is therefore covered by the estimate
already proved for smaller total ideal length at the same range
exponent $M$, before taking differences of products.

\paragraph{Poisson summation and reduction of the range.}
The first Poisson transform, complete support extraction,
sixth-power amplification of the Gauss sums, second complete correlation
and the full M\"obius expansion, followed by inclusion--exclusion for
divisibility among the factors, in \cite[Section~18]{QRH} are independent of $\kappa$. Retain the difference of the two products of test functions in
\eqref{eq:rectangle}, including its minus sign, until the squared
absolute value is formed. The common character, coprimality condition and norm power on each separated
side are preserved by \Src{Lemma}{18.2}; all displayed moving
primes remain counted, and the fixed coprimality restrictions remain common to both unweighted ideal
variables and both rectangles. The transformed range exponents satisfy
\[
 M'=M+J-g-g_2+t_2\le M-\sigma,
 \qquad0\le M'_{\rm act}-M'\le\delta_{{\rm fr},2}.
\]

Here $J$ is the first-transform length defect, $g,g_2$ the two
complete-support savings, $t_2$ the second common-support length,
and $\sigma>0$ the decrease in the range exponent, as in \cite[Section~18]{QRH}.

After bounding the norms of the retained Fourier frequencies, expanding the
divisibility conditions among the factors, truncating and removing the coprimality restrictions,
\[
 \begin{aligned}
 F_{\rm act}
 &=\pos{A_{\rm act}-M'_{\rm act}+(6\kappa-1)z_{\rm act}}\\
 &\le6\Delta_{\rm child}+\delta_{{\rm fr},1}+2\theta_N,
 \qquad z_{\rm act}\le z.
 \end{aligned}
\]
Here $\Delta_{\rm child}\ge0$ is the support loss in the transformed
sum and $\theta_N$ is the total error in its length exponents,
as defined in the transformed-length estimates of
\cite[Section~18]{QRH}; the inequality uses
only the original length condition and $6\kappa-1\ge0$.
Apply the pointwise bound \eqref{eq:iv-prime-pointwise} to whole prime-polynomial factors until the
remaining product satisfies the length condition or has no such factors. Removing length $d$
lowers the affine expression by
$6\kappa d$ and introduces an exponent loss $\kappa d$. With $\eta_{\rm mesh}=\ell_{\rm mesh}$,
if the removed lengths reach $F_{\rm act}/(6\kappa)$, the first partial sum
doing so exceeds that threshold by at most $\eta_{\rm mesh}$. Otherwise all factors
are removed and $d_z=z_{\rm act}<F_{\rm act}/(6\kappa)$. Thus
\begin{equation}\label{eq:iv-greedy-child}
 d_z\le\min\left\{z_{\rm act},
                      \frac{F_{\rm act}}{6\kappa}+\eta_{\rm mesh}\right\},
 \qquad\kappa d_z\le\frac{F_{\rm act}}6+\eta_{\rm mesh}.
\end{equation}

If all prime-polynomial factors have been estimated pointwise, apply the previously proved
estimate without prime-polynomial factors at the smaller range exponent; otherwise
apply the estimate with the remaining prime-polynomial factors and the restored
length condition. The remaining factors retain their coefficients and
disjoint supports. Both frequency errors are included before removing
prime-polynomial factors, and Cauchy--Schwarz averages the errors in the two
transformed sums.

For terms whose primitive inducing character belongs to the finite
group $\Theta$, use the same counting estimate as in
\cite[Section~18]{QRH} and \Src{Lemma}{18.3}: the original uncentered sum satisfies $A\le M$,
and the centered main terms cancel by the identity $X_1X_2=Y_1Y_2$ in \eqref{eq:rectangle}.
The threshold $5M/6$ is independent of $\kappa$.

\paragraph{Uniform parameter choices and finite induction.}
From the bounded ranges and $\eps$, choose
$\rho,\sigma,\epsilon_{\rm fr}>0$ with
$\epsilon_{\rm fr}\ll\min(\rho,\sigma)$ and
\[
 T_{\rm term}:=\rho+\epsilon_{\rm fr}+\rho/6+3\sigma+5\sigma/3
 <\eps/4,\qquad
 D=2+\lceil2M_{\max}/\sigma\rceil.
\]
Since $0<6\kappa-1\le5$,
\[
 |\Delta(A-M+(6\kappa-1)z)|
 \le|\Delta A|+|\Delta M|+5|\Delta z|.
\]
A numerical stage constant $C_*$ covers these fixed aggregate
operations. Before specifying the number of prime-polynomial factors, choose
\[
 \begin{aligned}
 \xi_{\rm ref}&<\min\{\epsilon_{\rm fr}/2,\rho/30,
                         \sigma/(4C_*),\eps/(16C_*D)\},\\
 \eta_{\rm mesh}&<\min\{\sigma/6,\eps/(16C_*D)\},\qquad
 \eps_0<\eps/(16C_*D).
 \end{aligned}
\]
Now fix the family of prime-polynomial factors and its aggregate support bound $H_N$;
impose $\log Z\gg H_N/\xi_{\rm ref}$ and the thresholds for applying the functional equation and
localizing the sums. The losses assigned to the local divisor and prime estimates sum to
$\eps_0$ per stage. These losses and thresholds may depend on $N$, while
$\eta_{\rm mesh}$ does not. For the simultaneous induction over surviving
subsets, take
\[
 \mathcal E_d=T_{\rm term}+(d+1)C_*(\eta_{\rm mesh}+\xi_{\rm ref}+\eps_0),
 \quad0\le d\le D-2,\qquad\mathcal E_{D-2}<\eps.
\]

First establish the estimates without auxiliary prime sums for all range
intervals. For estimates involving prime sums, each strict reduction of
the range decreases its exponent by at least $\sigma/2$, while comparisons
at a fixed range use only the preceding uncentered estimate. Every
application of induction therefore uses an estimate already proved.

Choose the seminorm and height orders needed for the integral kernels,
functional-equation transforms, prime sums and initial estimates, working backwards through
the $D$ induction steps; then choose the orders for the outer Fourier tails. Full
weighted Fourier integrals retain the height factors
of \eqref{eq:iv-prime-pointwise}: larger orders raise seminorms and
constants, not powers $Z^{J\xi_{\rm ref}}$. The coprimality moduli remain of
norm bounded by a fixed power of $Z$ through these finitely many steps.

This proves the estimate without additional coprimality restrictions;
the initial removal of those restrictions gives the proposition.
\end{proof}

\paragraph{Zero-free hypothesis.}
The required bound $\beta_*\le(1+\kappa)/2$ must precede a smaller
fixed $\kappa$. It may instead hold by equality when
$\kappa=2\beta_*-1$ in a contradiction argument.
No assertion is made below $\kappa=13/18$.

\section{Counting by simultaneous large values with a variable moment parameter}
\label{sec:iv-count}

In this section, we combine the extended moment of
Proposition~\ref{prop:iv-plain} with the inverse moment
\eqref{eq:inverse-moment}, its amplified form \eqref{eq:amplification}
and the simultaneous large values \eqref{eq:large-values}.
Keeping $\kappa$ variable yields the count exponent
$R_\kappa(\delta,x)$ in \eqref{eq:iv-balanced-count}, from which
Section~\ref{sec:iv-geometry} derives the improved length parameters.
Section~\ref{sec:iv-count-obstruction} also proves that the counting
exponent at the critical class cannot be improved using the
specified moment estimates and detecting lengths.

Recall that $\delta=2a-1$ comes from \eqref{eq:zero-class},
and $\bar g$ is the weighted mean in \eqref{eq:gbar}; the
variable $d$ satisfies $q_u\asymp Z^d$.
Put $x=\bar g/\delta\in[0,1/2]$ and
\begin{equation}\label{eq:iv-count-polynomials}
 c=\frac{2}{3\kappa},\qquad
 D_{\kappa,x}=3-(1+c)x,\qquad
 P_{\kappa,x}=(2-cx)(1-x).
\end{equation}
At $\kappa=3/4$, $(D_{\kappa,x},P_{\kappa,x})=(D_x,P_x)$.
The detecting cutoff is $t$; $d$ remains the element-norm exponent.

\begin{proposition}[Variable-parameter count]\label{prop:iv-count}
Retain the detector, character-family, coefficient, smooth-weight and height
hypotheses of Proposition~\ref{prop:fixed-count}, with the simultaneous
large values \eqref{eq:large-values} for one common character. Assume
\[
 13/18\le\kappa\le1,\qquad
 \beta_*\le(1+\kappa)/2,\qquad
 \ell/d\ge1/5+\rho_{\rm sup},\quad\rho_{\rm sup}>0.
\]
For $0<\delta\le\alpha=5/6$ and $0\le x\le1/2$, put
\begin{equation}\label{eq:iv-balanced-count}
 \mathcal J_\kappa=(\alpha-\delta)D_{\kappa,x}
                         +\delta P_{\kappa,x},\quad
 t_\kappa=1+\frac{\delta P_{\kappa,x}}{2\mathcal J_\kappa},\quad
 R_\kappa=1-\delta+
 \frac{(\alpha-\delta)\delta P_{\kappa,x}}{2\mathcal J_\kappa}.
\end{equation}
For each $\eps>0$, sufficiently fine fixed prime lengths give
\[
 \#\mathcal B\ll U^{R_\kappa+\eps}(1+T_1)^A.
\]
Here $T_1$ is the zero-detection height cutoff of
Section~\ref{sec:detector}, and $A\ge0$ is a fixed finite height
exponent for the prescribed moment parameters and arithmetic data.
\end{proposition}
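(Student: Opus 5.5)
The plan is to repeat the proof of Proposition~\ref{prop:fixed-count} line by line. The only change is to replace the unweighted moment \eqref{eq:plain-moment} at $\kappa=3/4$ by Proposition~\ref{prop:iv-plain} at the given $\kappa\in[13/18,1]$; its zero-free hypothesis is exactly $\beta_*\le(1+\kappa)/2$.

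Fix the zero class, the amplitude class and $U=Z^d$. Proposition~\ref{prop:iv-plain} gives the simultaneous large values \eqref{eq:large-values} with $r+m\ge t-O(1/\log U)$ for the same presentation $\psi$. Write $\psi(n)=\nu(n)\overline{\chi_n(u)}$ and take prime coefficients $\overline{\nu(p)}1_{p\in1_{\mathcal T}}$, as in Section~\ref{sec:counts}; these are combinations of $\Theta$-characters. A good prime dividing $u$ forces the inducing character outside $\Theta$, so the exclusions in \eqref{eq:iv-plain-cases} hold. The greedy selection \eqref{eq:greedy} then gives the two count bounds $U^{1-\delta r-2\bar g z+\eps}$ and $U^{1-2\delta m-2\bar g z+\eps}$.

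The admissible selected lengths are now
\[
z_M=(1-r)/2\quad\text{from \eqref{eq:inverse-moment}},\qquad
z_S=\frac{1-2m}{6\kappa}=\frac{c(1-2m)}{4}\quad\text{from \eqref{eq:iv-plain-cases}},
\]
each decreased by a fixed $\nu_0$.

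In the short range $r<1$, $m<1/2$, substitute $m=t-r$. This gives the affine exponents
\[
A_I(r)=1-\delta[x+(1-x)r],\qquad
A_S(r)=1-\delta\bigl[cx/2+(2-cx)(t-r)\bigr].
\]
At $\kappa=3/4$ we have $c=8/9$, which recovers $4x/9$ and $2-8x/9$. The two lines cross at
\[
r_*(t)=\frac{(2-cx)t-(1+c/2)x+x}{D_{\kappa,x}},
\]
where the denominator is $(1-x)+(2-cx)=D_{\kappa,x}$. Evaluating there gives
\[
R_{\rm short}(t)=1-\delta+\frac{\delta P_{\kappa,x}}{D_{\kappa,x}}\,(3/2-t),
\]
and this should be checked by direct algebra. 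In the long range $r\ge1$, use the amplification bound \eqref{eq:amplification}, which gives $R_{\rm long}(t)=1-\delta+(\alpha-\delta)(t-1)$. This bound does not depend on $\kappa$. Equating $R_{\rm short}$ and $R_{\rm long}$ yields $t_\kappa$ and $R_\kappa$ in \eqref{eq:iv-balanced-count}. The endpoint $\delta=\alpha$ is handled with $t=3/2$, the moment without selected primes, and amplification, exactly as before.

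The main obstacle is checking that the prime-length supplies and the strict gaps remain valid for $\kappa<3/4$ and all $x$. Since $r_*(t)\ge r_*(1)$, this requires the following:
\begin{enumerate}
\item a lower bound for $r_*(1)$ uniform in $x\in[0,1/2]$ and $\kappa\ge13/18$, so that on the inverse side $z_M\le(1-r_*)/2$ is below $1/5$ and the gap $3-2r-8z=4(1-r-2z)+(2r-1)$ stays positive;
\item on the unweighted side, $z_S=(1-2m)/(6\kappa)$ with $m\ge t-r_*\ge1/3$, which gives $z_S\le 1/(18\kappa)\le1/13<1/5$;
\item $1\le t_\kappa\le3/2$, which requires $\mathcal J_\kappa>0$; this follows from $D_{\kappa,x}-P_{\kappa,x}=1+x(1-c)+cx^2\ge0$, using $c\le 12/13\cdot\ldots\le1$ in this range.
\end{enumerate}
I would verify (1)--(3) by monotonicity in $c$ and in $x$. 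This is where the hypothesis $\ell/d\ge1/5+\rho_{\rm sup}$ enters. The remaining losses (discarding a partial interval, which costs at most $\delta\max\ell_i/d$, the dyadic decompositions, and the height separation) are absorbed by choosing the $\ell_i$ fine, as in Proposition~\ref{prop:fixed-count}.
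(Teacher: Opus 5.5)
Your plan is the paper's proof. It applies the same two moments to the common detected character, with Proposition~\ref{prop:iv-plain} on the unweighted side so that $z_S=(1-2m)/(6\kappa)=c(1-2m)/4$. It then uses the same crossing of the two affine savings along $m=t-r$, the same short/long balancing giving $t_\kappa,R_\kappa$, and the same check that both selected lengths stay below $1/5$ with positive strict gaps.

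Two intermediate formulas need correcting when you carry out the deferred algebra, although every conclusion you draw from them is still true:
\begin{itemize}
\item The crossing numerator should be $(2-cx)t+(c/2-1)x$. Your expression simplifies to $(2-cx)t-cx/2$, which does not equate the two savings and does not yield the stated $R_{\rm short}$. The correct numerator recovers Part I's $-5x/9$ at $c=8/9$.
\item The difference is $D_{\kappa,x}-P_{\kappa,x}=1+x-cx^2$, not $1+x(1-c)+cx^2$.
\end{itemize}
With these, the bound $r_\kappa\ge(6-c)/(10-2c)$ gives $z_M\le(4-c)/(20-4c)<1/5$ and $2r-1\ge1/(5-c)$ on the inverse side, exactly as in the paper.

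Also, the simultaneous large values come from \eqref{eq:large-values}, not from Proposition~\ref{prop:iv-plain}.
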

\begin{proof}
Apply Proposition~\ref{prop:iv-plain} and the inverse moment
\eqref{eq:inverse-moment} to
the same character and the pair of large values in \eqref{eq:large-values}.
Here $r,m$ are the norm-length exponents of the two detecting
polynomials of Section~\ref{sec:detector}, and we set $m=t-r$ in
the exponent comparison. The maximal selected prime-length
exponents permitted by the inverse and unweighted moments are
\[
 z_M=(1-r)/2,\qquad z_S=(1-2m)/(6\kappa).
\]
The savings divided by $\delta$ are
\[
 F_I(r)=x+(1-x)r,\qquad
 F_S(r)=2(t-r)+\frac c2x\bigl[1-2(t-r)\bigr].
\]
Their crossing is
\begin{equation}\label{eq:iv-crossing}
 r_\kappa(t,x)=
 \frac{(2-cx)t+(c/2-1)x}{3-(1+c)x}.
\end{equation}
For $1\le t\le3/2$ and $0\le x\le1/2$,
\[
\begin{gathered}
 \partial_t r_\kappa=\frac{2-cx}{D_{\kappa,x}}>0,\qquad
 \partial_x r_\kappa=\frac{(2-c)(2t-3)}{2D_{\kappa,x}^2}\le0,\\
 \partial_t(t-r_\kappa)=\frac{1-x}{D_{\kappa,x}}>0,\qquad
 \partial_x(t-r_\kappa)=-\partial_xr_\kappa\ge0,\\
 r_\kappa(3/2,x)=1,\qquad
 \frac{6-c}{10-2c}\le r_\kappa\le1,\qquad
 \frac13\le t-r_\kappa\le\frac12.
\end{gathered}
\]

Use the inverse moment for $r\ge r_\kappa$ and the unweighted moment
for $r\le r_\kappa$. The admissible prime lengths satisfy
\[
 z_M\le\frac{4-c}{20-4c}<\frac15,\qquad
 z_S\le\frac c{12}<\frac15.
\]
Decrease each selected length by a fixed $\nu>0$ and choose the
individual prime lengths sufficiently small that selecting whole prime
factors introduces a rounding loss smaller than $\eps$. Then
\[
\begin{gathered}
 1-r-2z\ge2\nu,\\
 3-2r-8z=4(1-r-2z)+(2r-1)
 \ge8\nu+\frac1{5-c}>0.
\end{gathered}
\]
If no positive prime length remains available, use the moments without
selected prime factors.

Substitution of \eqref{eq:iv-crossing} gives the short count
\[
 R_{\kappa,\rm short}(t)=1-\delta+
 \frac{\delta P_{\kappa,x}}{D_{\kappa,x}}(3/2-t).
\]
The long count and the balancing identities are
\[
\begin{gathered}
 R_{\rm long}(t)=1-\alpha+(\alpha-\delta)t,\\
 R_{\kappa,\rm short}(t_\kappa)=R_{\rm long}(t_\kappa)=R_\kappa,\\
 D_{\kappa,x}-P_{\kappa,x}=1+x-cx^2>0,
 \qquad1\le t_\kappa\le3/2.
\end{gathered}
\]
At $\delta=\alpha$, $t_\kappa=3/2$ and $R_\kappa=1-\alpha$;
use the amplified inverse estimate without selected prime factors, as in
Proposition~\ref{prop:fixed-count}. All coefficient classes,
presentations and smooth separations are unchanged.
\end{proof}

\section{Balanced averaging lengths and the critical counting class}
\label{sec:iv-geometry}

In this section, we choose the averaging lengths by balancing the
direct estimate of Proposition~\ref{prop:balanced-low} with the critical
class of the count in Proposition~\ref{prop:iv-count}.
The resulting one-parameter family is defined in
\eqref{eq:iv-geometry}, on the interval \eqref{eq:iv-interval},
and contains the Part III parameter
choice \eqref{eq:optimized-geometry}; Lemma~\ref{lem:iv-endpoint}
verifies its endpoint inequality for
all zero and amplitude classes.

Recall that $b=l_y-l_x$, $\ell=\sum_i\ell_i$, and
$h=1-l_x+\ell$. The balanced relation
$l_x+l_y+\ell=1$ and the direct comparison give
$B=11/12-\ell/4$, as in \eqref{eq:balanced-signal}.
For the parameter $\xi$ in
\begin{equation}\label{eq:iv-interval}
 I=[38858/100000,38859/100000]
\end{equation}
define
\begin{equation}\label{eq:iv-geometry}
\begin{gathered}
 \kappa(\xi)=\frac{18\xi^2-57\xi+20}
                       {6(36\xi^2-78\xi+25)},\qquad
 \ell(\xi)=\frac{5-6\xi}{9+18\xi},\qquad
 B(\xi)=\frac{18\xi+7}{18\xi+9},\\
 b(\xi)=\frac{8(3\xi-1)(12\xi^2-40\xi+15)}
                    {9(2\xi+1)(24\xi^2-20\xi+5)},\\
 l_x(\xi)=\frac{1-b(\xi)-\ell(\xi)}2,\qquad
 l_y(\xi)=\frac{1+b(\xi)-\ell(\xi)}2,\qquad
 h(\xi)=\frac{1+b(\xi)+3\ell(\xi)}2,\\
 C_\xi(s)=s-\frac{4+b(\xi)}6,
 \qquad B(\xi)=\frac{11}{12}-\frac{\ell(\xi)}4.
\end{gathered}
\end{equation}
At fixed $\xi$, write $b,\ell,\kappa,h$ for
$b(\xi),\ell(\xi),\kappa(\xi),h(\xi)$. For
$\xi_0=(49-\sqrt{921})/48$, \eqref{eq:iv-geometry} recovers
\eqref{eq:optimized-geometry} and $\kappa(\xi_0)=3/4$.

\begin{lemma}[Critical class and balanced parameters]\label{lem:iv-geometry}
For the length parameters \eqref{eq:iv-geometry}, at $(\delta,x)=(\xi,1/2)$,
\[
 R_\kappa=2/3,\qquad E_{B(\xi)}(h)=0,\qquad
 \partial_\delta E_{B(\xi)}(h)=0.
\]
The equation $R_\kappa(\delta,1/2)=2/3$ is equivalent to
\begin{equation}\label{eq:iv-critical-polynomial}
 (216\kappa-18)\delta^2-(468\kappa-57)\delta
                           +150\kappa-20=0.
\end{equation}
The solution $\delta=\xi\in I$ is its smaller root.
\end{lemma}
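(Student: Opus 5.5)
The whole statement reduces to exact rational-function identities in $\xi$ (with $\kappa=\kappa(\xi)$), so I will treat it as an algebraic verification in three steps.

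\emph{Step 1: the critical polynomial.} At $x=1/2$ we have $c=2/(3\kappa)$, $D_{\kappa,1/2}=(5-c)/2$ and $P_{\kappa,1/2}=(4-c)/4$. Substituting into \eqref{eq:iv-balanced-count}, the equation $R_\kappa=2/3$ becomes
\[
 (\alpha-\delta)\delta P=2\mathcal J(\delta-1/3),\qquad
 \mathcal J=(\alpha-\delta)D+\delta P .
\]
Clearing denominators (multiply by $36\kappa$) gives a quadratic in $\delta$. I will check that it is exactly \eqref{eq:iv-critical-polynomial}. As a consistency check, at $\kappa=3/4$ it must reduce to $288\delta^2-588\delta+185=0$ from Section~\ref{sec:optimized-endpoint}, and it does: $216\cdot\tfrac34-18=144$, $468\cdot\tfrac34-57=294$, $150\cdot\tfrac34-20=92.5$, which is half of that quadratic. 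Since \eqref{eq:iv-critical-polynomial} is linear in $\kappa$, I solve it for $\kappa$:
\[
 \kappa=\frac{18\delta^2-57\delta+20}{6(36\delta^2-78\delta+25)} .
\]
This is precisely $\kappa(\xi)$ in \eqref{eq:iv-geometry}, so $\delta=\xi$ is a root for every $\xi$ with nonzero denominator. To see that $\xi$ is the smaller root, I use Vieta: the other root is
\[
 \frac{468\kappa-57}{216\kappa-18}-\xi .
\]
On $I$ we have $\kappa(\xi)\approx0.75$, so this other root is about $1.65>\xi$. Rigorously, I evaluate at the rational endpoints of $I$ and use monotonicity, or I show the discriminant is positive with the root $\xi<$ the vertex $(468\kappa-57)/(2(216\kappa-18))$.

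\emph{Step 2: $E=0$ and $\partial_\delta E=0$.} I use \eqref{eq:balanced-extension-exponent} with $\bar g=\delta/2$ and $h=(1+b+3\ell)/2$:
\[
 E=-\tfrac14+\tfrac b6+\tfrac{5\ell}4+(\tfrac12+\tfrac{3\ell}2)\delta-h(1-R_\kappa).
\]
At $R_\kappa=2/3$ the coefficient of $b$ is $\tfrac16-\tfrac12\cdot\tfrac13=0$. So $E$ is independent of $b$ and equals
\[
 -\tfrac14+\tfrac{5\ell}4+(\tfrac12+\tfrac{3\ell}2)\xi-\tfrac16(1+3\ell).
\]
Setting this to zero gives $\ell(9+18\xi)=5-6\xi$, which is $\ell(\xi)$. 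Then $B(\xi)=11/12-\ell(\xi)/4$ simplifies to $(18\xi+7)/(18\xi+9)$.

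For the derivative I hold $b,\ell,\kappa$ fixed:
\[
 \partial_\delta E=\tfrac12+\tfrac{3\ell}2+h\,\partial_\delta R_\kappa(\xi,1/2).
\]
I compute $\partial_\delta R_\kappa$ at $x=1/2$ by the quotient rule from the explicit rational function, and eliminate $\kappa$ via $\kappa(\xi)$. The resulting expression is a rational function of $\xi$. Solving $\partial_\delta E=0$ for $h$, and then $b=2h-1-3\ell$, should give the stated $b(\xi)$.

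\emph{Main obstacle.} The real work is this last simplification: confirming that the eliminated expression factors as $8(3\xi-1)(12\xi^2-40\xi+15)/\bigl(9(2\xi+1)(24\xi^2-20\xi+5)\bigr)$. I would do it by clearing denominators and comparing polynomial coefficients, which is exact arithmetic of the kind delegated to the paper's checker. I would sanity-check it against $\xi_0$, where it must return $b_{\mathrm{alg}}$ and $\kappa(\xi_0)=3/4$. I would also confirm that the denominators $36\xi^2-78\xi+25$ and $24\xi^2-20\xi+5$ do not vanish on $I$, by sign evaluation at the endpoints.
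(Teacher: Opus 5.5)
Your proposal is correct and follows the paper's proof essentially step for step. You reduce $R_\kappa=2/3$ at $x=1/2$ to the displayed quadratic; the paper writes $R_\kappa-\tfrac23=q_\kappa(\delta)/(216\kappa\mathcal J_\kappa)$, so the clearing factor is $108\kappa$ rather than $36\kappa$, which is harmless. You then solve linearly for $\kappa$ and place the other root above one by Vieta (the sum of the roots exceeds $2$ since $\kappa>7/12$). Finally you use the vanishing $b$-coefficient at $R_\kappa=2/3$ to solve $E=0$ for $\ell$ and $\partial_\delta E=0$ for $h$, hence $b$. The elimination you flag does close: at the root, $\partial_\delta R_\kappa=-3(24\xi^2-20\xi+5)/\bigl(\xi(6\xi-5)^2\bigr)$ and $1+3\ell=8/\bigl(3(1+2\xi)\bigr)$, so $2h-(1+3\ell)$ is exactly the stated $b(\xi)$. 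Also, $24\xi^2-20\xi+5$ has negative discriminant, so it never vanishes.
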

\begin{proof}
At $x=1/2$, $D=(5-c)/2$, $P=1-c/4$, and
\[
 R_\kappa-\frac23=
 \frac{(216\kappa-18)\delta^2-(468\kappa-57)\delta
                   +150\kappa-20}{216\kappa\mathcal J_\kappa}.
\]
Thus \eqref{eq:iv-critical-polynomial} gives $\kappa(\xi)$.
The sum of its roots exceeds two; since $\xi<39/100$, the other
root exceeds one. Substitution in
\eqref{eq:balanced-extension-exponent} gives
\[
\begin{gathered}
 R_\kappa(\xi,1/2)=2/3
 \quad\Longrightarrow\quad
 \partial_bE_B(h)=R_\kappa/2-1/3=0,\\
 E_B(h)=0
 \quad\Longleftrightarrow\quad
 \ell=\frac{5-6\xi}{9+18\xi},\\
 \partial_\delta E_B(h)=0
 \quad\Longleftrightarrow\quad
 b=\frac{8(3\xi-1)(12\xi^2-40\xi+15)}
          {9(2\xi+1)(24\xi^2-20\xi+5)}.
\end{gathered}
\]
The uniform sign is proved below.
\end{proof}

\section{A uniform proof of the endpoint inequality}
\label{sec:iv-certificate}

In this section, we prove the endpoint inequality uniformly along the
family \eqref{eq:iv-geometry}: the inequality to be proved is
\eqref{eq:iv-endpoint}, including the critical class of
Lemma~\ref{lem:iv-geometry}. The polynomial identity \eqref{eq:iv-Q}
and the auxiliary bounds of Lemma~\ref{lem:iv-reserves} supply the
scalar estimates needed for Proposition~\ref{prop:iv-bootstrap}.

\begin{lemma}[Variable-parameter endpoint]\label{lem:iv-endpoint}
For every $\xi\in I$, with $I$ defined in \eqref{eq:iv-interval},
use \eqref{eq:iv-geometry} and the count exponent
$R=R_{\kappa(\xi)}$ of \eqref{eq:iv-balanced-count}.
Let $E_B(d)$ be the relative exponent in
\eqref{eq:general-extension-exponent}. Then
\begin{equation}\label{eq:iv-endpoint}
 E_{B(\xi)}(h)\le0
 \qquad(0\le\delta\le5/6,\quad0\le x\le1/2).
\end{equation}
Equality holds only at $(\delta,x)=(\xi,1/2)$.
\end{lemma}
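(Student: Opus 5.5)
We follow the pattern of Lemma~\ref{lem:optimized-endpoint}, now with $\xi$ as an additional variable. Put $y=1/2-x\in[0,1/2]$ and $c=2/(3\kappa(\xi))$. By \eqref{eq:iv-count-polynomials},
\[
 D_{\kappa,x}=3-(1+c)(1/2-y),\qquad
 P_{\kappa,x}=(2-c/2+cy)(1/2+y),
\]
and both are polynomial in $y$ with coefficients rational in $\xi$. We clear the positive denominator $\mathcal J_\kappa$. Its positivity follows from $D_{\kappa,x}-P_{\kappa,x}>0$, exactly as in the proof of Proposition~\ref{prop:iv-count}. We then substitute $R=R_\kappa$ into \eqref{eq:balanced-extension-exponent} with $b=b(\xi)$ and $\ell=\ell(\xi)$, and multiply by a positive common denominator in $\xi$. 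This produces
\[
 \mathscr Q_\xi:=N(\xi)\,\mathcal J_\kappa\bigl(-E_{B(\xi)}(h)\bigr)
 =\mathscr A_\xi(y)\delta^2+\mathscr D_\xi(y)\delta+\mathscr G_\xi(y),
\]
where the coefficients are polynomials in $(y,\xi)$. As in \eqref{eq:optimized-Q}, $\mathscr Q_\xi$ is quadratic in $\delta$, because $h(1-R_\kappa)\mathcal J_\kappa$ is quadratic in $\delta$ and the remaining terms are at most quadratic.

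The key step is an identity of the form \eqref{eq:iv-Q}:
\[
 4\mathscr A_\xi(y)\mathscr Q_\xi=\bigl(2\mathscr A_\xi(y)\delta+\mathscr D_\xi(y)\bigr)^2
 +y\,\Pi_\xi(y),
\]
with $\Pi_\xi$ a cubic in $y$. The factor $y$ must divide the discriminant. This follows from Lemma~\ref{lem:iv-geometry}: at $y=0$ the quadratic in $\delta$ has a double root at $\delta=\xi$, because both $E=0$ and $\partial_\delta E=0$ hold there. So at $y=0$ we get $\mathscr Q_\xi=\mathscr A_\xi(0)(\delta-\xi)^2$.

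It then remains to show, uniformly for $\xi\in I$, two positivity facts:
\[
 \mathscr A_\xi(y)>0\quad(0\le y\le1/2),\qquad
 \Pi_\xi(y)>0\quad(0\le y\le1/2).
\]
We prove these by expanding both in powers of $y$ with coefficients polynomial or rational in $\xi$, and bounding each coefficient on the short interval $I$. Because $I$ has width $10^{-5}$, interval arithmetic with rational endpoints, or a Bernstein expansion in $\xi$ as in Lemma~\ref{lem:rational-saving}, will settle the signs. At $\xi_0$ the coefficients are near the Part~III values $A_i>0$ and $d_i>0$. The Part~III $\xi_0$ is close to $I$ (0.3885837 versus $[0.38858,0.38859]$), so continuity in $\xi$ with explicit derivative bounds should give all signs. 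If some coefficient has a small negative part, we will instead certify positivity of the whole cubic on $[0,1/2]\times I$ by Bernstein coefficients in $(y,\xi)$.

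Given these, the identity gives $\mathscr Q_\xi\ge0$, with strict inequality for $y>0$. At $y=0$ we have $\mathscr Q_\xi=\mathscr A_\xi(0)(\delta-\xi)^2$, which yields the equality statement. The range $\delta\in[0,5/6]$ causes no issue, since the completed square is global in $\delta$. The main obstacle will be the symbolic computation of the identity with $\kappa(\xi)$ substituted, which has high-degree rational dependence on $\xi$, followed by rigorous uniform sign control over $I$ of the coefficients of $\Pi_\xi$. The smallest of these is $d_1$-like, about $490$, against large cancelling constants. We expect to delegate this to the exact arithmetic checker, reporting rational enclosures of each coefficient on $I$.
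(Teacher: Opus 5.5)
Your proposal is correct and takes essentially the same route as the paper. The paper also forms $Q=-2\mathcal J E$ and writes it as $A(y)\delta^2+B_q(y)\delta+C_q(y)$. It uses $d_0=0$ to get $4AQ=(2A\delta+B_q)^2+y(d_1+d_2y+d_3y^2+d_4y^3)$, and certifies $A_0,\dots,A_3>0$ and $d_1,\dots,d_4>0$ on $I$ by Bernstein coefficients in $\xi$. The equality case comes from $Q(\delta,0)=A_0(\delta-\xi)^2$.

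One difference: the paper states $d_0=0$ as a verified identity, whereas you derive it from the double root at $\delta=\xi$ given by Lemma~\ref{lem:iv-geometry}, which is a clean conceptual justification. Note also that $\xi_0\approx0.3885837$ lies inside $I$, not merely near it.
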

\begin{proof}
Put $y=1/2-x$, $c=2/(3\kappa(\xi))$, and
\[
 D=(5-c)/2+(1+c)y,\qquad P=1-c/4+2y+cy^2,
 \qquad \mathcal J=(\alpha-\delta)D+\delta P.
\]
Write $E=E_{B(\xi)}(h)$. By \eqref{eq:balanced-extension-exponent},
\[
 E=C_0+(1/2+\ell)\delta+\ell x\delta-h(1-R_\kappa),
 \qquad C_0=-1/4+b/6+5\ell/4.
\]
Hence
\begin{equation}\label{eq:iv-Q}
 Q=-2\mathcal J E
 =-2C_0\mathcal J+(b+2\ell y)\delta\mathcal J
                    -h(\alpha-\delta)\delta P.
\end{equation}
Write $Q=A(y)\delta^2+B_q(y)\delta+C_q(y)$ and
$\mathscr D(y)=4A(y)C_q(y)-B_q(y)^2$.
Use the following polynomial coefficients, with distinct notation
from the prime cutoff $P_0$ and height cutoff $T_1$:
\[
 \mathsf D_0=(5-c)/2,\quad \mathsf D_1=1+c,\quad \mathsf P_0=1-c/4,
 \quad \mathsf T_0=-3/2+c/4,\quad \mathsf T_1=1-c,\quad \mathsf T_2=c.
\]
The coefficients are

\begin{align*}
 A_0&=b\mathsf T_0+h\mathsf P_0,& A_1&=b\mathsf T_1+2\ell \mathsf T_0+2h,\\
 A_2&=bc+2\ell \mathsf T_1+hc,& A_3&=2\ell c,\\
 B_0&=-2C_0\mathsf T_0+\alpha \mathsf D_0b-\alpha h\mathsf P_0,\\
 B_1&=-2C_0\mathsf T_1+\alpha \mathsf D_1b+2\alpha\ell \mathsf D_0-2\alpha h,\\
 B_2&=-2C_0c+2\alpha\ell \mathsf D_1-\alpha hc,\\
 C_{q,0}&=-2C_0\alpha \mathsf D_0,& C_{q,1}&=-2C_0\alpha \mathsf D_1.
\end{align*}
The coefficients $d_j$ of $\mathscr D$ are
\begin{align*}
 d_0&=4A_0C_{q,0}-B_0^2=0,\\
 d_1&=4(A_0C_{q,1}+A_1C_{q,0})-2B_0B_1,\\
 d_2&=4(A_1C_{q,1}+A_2C_{q,0})-B_1^2-2B_0B_2,\\
 d_3&=4(A_2C_{q,1}+A_3C_{q,0})-2B_1B_2,\\
 d_4&=4A_3C_{q,1}-B_2^2.
\end{align*}
The identity $d_0=0$ and the following strict bounds hold on $I$:
\begin{center}
\begin{tabular}{c|c@{\qquad}c|c}\toprule
quantity&lower bound&quantity&lower bound\\\midrule
$A_0$&$47/100$&$d_1$&$41/1000$\\
$A_1$&$121/100$&$d_2$&$198/1000$\\
$A_2$&$86/100$&$d_3$&$267/1000$\\
$A_3$&$29/100$&$d_4$&$76/1000$\\\bottomrule
\end{tabular}
\end{center}
For $F=N/D$, cancel common factors and normalize the common sign.
For a polynomial of
degree $n$, write
\[
\begin{gathered}
 N(L+(U-L)z)=\sum_{j=0}^n a_jz^j,\qquad
 \mathfrak b_i(N)=\sum_{j=0}^i a_j
                   \frac{\binom ij}{\binom nj},\\
 L=38858/100000,\qquad U=38859/100000.
\end{gathered}
\]
The Bernstein identity \eqref{eq:bernstein-basis} gives
$N(L+(U-L)z)=\sum_i\mathfrak b_i(N)\mathsf B_{i,n}(z)$. For every
table entry, $\mathfrak b_i(N),\mathfrak b_i(D)>0$, so
\[
 F(\xi)\ge
 \frac{\min_i\mathfrak b_i(N)}{\max_i\mathfrak b_i(D)}.
\]
The exact quotients exceed the table bounds; their coefficients and
\eqref{eq:iv-Q} are verified by
\path{combine_v2-2_supplement/check_kappa_endpoint.py}. Therefore
\[
\begin{gathered}
 A(y)>0\quad(y\ge0),\qquad
 \mathscr D(y)>0\quad(y>0),\\
 4A(y)Q=(2A(y)\delta+B_q(y))^2+\mathscr D(y)\ge0,\qquad
 \mathcal J>0\quad\Longrightarrow\quad E\le0,\\
 Q(\delta,0)=A_0(\delta-\xi)^2.
\end{gathered}
\]

Thus equality holds precisely at $(\delta,x)=(\xi,1/2)$.
\end{proof}

\begin{lemma}[Uniform positive bounds in the auxiliary inequalities]\label{lem:iv-reserves}
All parameter choices in \eqref{eq:iv-geometry} satisfy
\[
\begin{gathered}
 b>0,\quad0<\ell<1/5,\quad l_x-b-\ell>0,
 \quad l_y-\ell>11b/6,\\
 h<1,\qquad5\ell>h,\qquad437/500<B<7/8.
\end{gathered}
\]
The inequality $l_x-b-\ell>0$ is stronger than
$l_x-\ell>0$, since $b>0$.
The estimates for the lowest zero class $a=a_0$ of
\eqref{eq:zero-class} and the intermediate and small norm ranges
of Lemma~\ref{lem:extension-ranges} have uniform positive
exponent savings; the detecting and rounded count formulas have uniformly
positive denominators.
\end{lemma}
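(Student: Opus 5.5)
The plan is to reduce every assertion of Lemma~\ref{lem:iv-reserves} to the sign of a univariate rational function of $\xi$ on the short interval $I$ of \eqref{eq:iv-interval}. I will then certify each sign by the Bernstein-coefficient method already used in the proof of Lemma~\ref{lem:iv-endpoint}.

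First I substitute the closed forms \eqref{eq:iv-geometry} for $b(\xi),\ell(\xi),l_x(\xi),l_y(\xi),h(\xi),B(\xi)$ into each of the following quantities:
\[
 b,\quad \ell,\quad \tfrac15-\ell,\quad l_x-b-\ell,\quad l_y-\ell-\tfrac{11b}{6},\quad 1-h,\quad 5\ell-h,\quad B-\tfrac{437}{500},\quad \tfrac78-B.
\]
Each quantity becomes $N(\xi)/D(\xi)$ with integer-coefficient polynomials. The denominators are products of $2\xi+1$, $9+18\xi$, $18\xi+9$ and $24\xi^2-20\xi+5$. The last factor has negative discriminant, so every denominator is positive. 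It then suffices to show $N>0$ on $I$. For this I write $N(L+(U-L)z)$ in the Bernstein basis and check that all coefficients $\mathfrak b_i(N)$ are positive, as in the display preceding \eqref{eq:iv-Q}.

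Several items are immediate without any computation. The bound $B<7/8$ is equivalent to $\xi<1/2$, and $\ell<1/5$ is equivalent to $\xi>16/66$. At the Part~III point the margins are of order $10^{-2}$ by \eqref{eq:optimized-geometric-margins}, while $I$ has width $10^{-5}$ and the derivatives are bounded. So each Bernstein check succeeds with a large margin, and no subdivision of $I$ is needed.

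Next I treat the exponent savings imported from Lemma~\ref{lem:extension-ranges}, which are affine in the length parameters.

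\emph{Lowest zero class.} The saving is $m_F(\xi)=-C_0-(1/2+3\ell/2)/50$, with $C_0=-1/4+b/6+5\ell/4$.

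\emph{Small-norm range.} The saving is $m_S(\xi)=l_y/2-13h/75-1/50$.

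\emph{Intermediate range.} Here I use the affine bound for $E_{B}(1/2)$ obtained with $R=76/75-2\delta/3$, namely
\[
 \frac{(225\ell+50-75b)\delta+78\ell-49b-73}{300}.
\]
Since this is affine in $\delta$, it suffices to check its negativity at the two endpoints $\delta=1/50$ and $\delta=5/6$.

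Each of these savings is again a rational function of $\xi$ with a positive denominator, and I certify it by Bernstein coefficients on $I$. Their infima over $I$ are the uniform positive savings claimed in the lemma.

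Finally I check the denominators in the detecting and count formulas. The quantity $D_{\kappa,x}=3-(1+c)x\ge 3-(1+c)/2$ is positive because $c=2/(3\kappa)\le 2/3\cdot 18/13<1$. I must also confirm that $\kappa(\xi)\in[13/18,1]$ on $I$, which is again a rational-function sign check; the denominator $36\xi^2-78\xi+25$ is positive on $I$, since its smaller root is near $0.40$. Next, $\mathcal J_\kappa\ge\alpha\min(D,P)>0$, because $D-P=1+x-cx^2>0$. The rounded count formulas of Proposition~\ref{prop:iv-count} have denominators $5-c$ and $20-4c$, both of which are positive.

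The main obstacle is bookkeeping: assembling the exact numerator polynomials correctly. I expect no genuine analytic difficulty, because the margins are large relative to the width of $I$. As in Lemma~\ref{lem:iv-endpoint}, I would delegate the coefficient verification to the exact-arithmetic checker.
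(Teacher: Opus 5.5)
Your plan is essentially the paper's proof. You certify each univariate rational expression on $I$ exactly by Bernstein coefficients: the side inequalities, $\kappa-13/18$, $36\xi^2-78\xi+25$ and the savings $m_F$, $m_S$, $m_I$. You then add elementary bounds on $D$, $P$ and $\mathcal J$. Checking the intermediate-range affine bound at $\delta=5/6$ rather than the paper's $\delta=19/25$ is also valid, since its value there is about $-2.7\times10^{-3}$. Your denominator bounds also carry over unchanged to the paper's enlarged rounding box $x\le 51/100$.

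Two side computations are wrong, though they do not change any conclusion. First, $B<7/8$ is equivalent to $\xi<7/18$, not $\xi<1/2$. Second, $\ell<1/5$ is equivalent to $\xi>1/3$, not $\xi>16/66$. As a result, the margin on $I$ is only $7/8-B\approx 4.2\times10^{-5}$, just above the paper's uniform threshold $1/25000$, not of order $10^{-2}$.
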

\begin{proof}
The exact bounds for the Bernstein coefficients give the lower bound $1/25000$
on $I$ for each expression in
\begin{equation}\label{eq:iv-reserve-list}
\begin{gathered}
 36\xi^2-78\xi+25,\quad \kappa',\quad B',\\
 \kappa'-1000B',\quad \kappa-13/18,\quad4/5-\kappa,\quad-p',\\
 b,\quad\ell,\quad1/5-\ell,\quad l_x-b-\ell,\quad l_y-\ell,\\
 l_y-\ell-11b/6,\quad1-h,\quad5\ell-h,\\
 B-43/50,\quad B-437/500,\quad7/8-B,\\
 -C_0-(1/2+3\ell/2)/50,\\
 l_y/2-13h/75-1/50-(7/8-B),\\
 -\bigl[(225\ell+50-75b)\delta+78\ell-49b-73\bigr]/300
 \quad(\delta=1/50,19/25),\\
 c,\quad1-c,\quad1/5-(4-c)/(20-4c),\quad1/5-c/12,\\
 (\alpha-19/25)(5-c)/2+(19/25)(1-c/4)-3/5,\\
 3-(1+c)51/100-2,\qquad
 (2-c51/100)(49/100)-3/4.
\end{gathered}
\end{equation}
Here $p(\xi)=1188\xi^3-2160\xi^2+171\xi+190$.
The 29 side bounds and eight endpoint bounds are recorded in the
exact checker. Put
\[
\begin{gathered}
 m_F=-C_0-(1/2+3\ell/2)/50,\\
 m_S=l_y/2-13h/75-1/50,\\
 m_I(\delta)=-\bigl[(225\ell+50-75b)\delta+78\ell-49b-73\bigr]/300.
\end{gathered}
\]
Then
\[
\begin{gathered}
 m_F>1/25000,\qquad m_S-(7/8-B)>1/25000,\\
 m_I(1/50),m_I(19/25)>1/25000
 \quad\Longrightarrow\quad
 m_I(\delta)>1/25000\quad(1/50\le\delta\le19/25),
\end{gathered}
\]
since $m_I$ is affine. These are the exponent savings for the lowest zero class and the small and
intermediate norm ranges of Lemma~\ref{lem:extension-ranges}.
The initial bound and small fixed class widths give
\[
 \delta\le2\beta_*-1+O(\eb)\le3/4+O(\eb)<19/25.
\]

On the enlarged rounding box $0\le x\le51/100$,
\eqref{eq:iv-reserve-list} gives $D>2$, $P>3/4$, and
$D-P=1+x-cx^2>0$. Hence
\[
\begin{gathered}
 \mathcal J=(\alpha-\delta)D+\delta P
 \ge2(\alpha-\delta)+(3/4)\delta
 \ge43/60>3/5,\\
 3/2-t_\kappa=\frac{(\alpha-\delta)D}{2\mathcal J}
 \ge\frac{\alpha-19/25}{2\alpha}=11/250>0.
\end{gathered}
\]
The rational expressions have uniformly bounded derivatives on this
compact box, so fixed rounding and class errors are absorbed in the
prescribed losses. The detecting cutoff retains its original range.
\end{proof}

\section{The one-step zero-free bound via the continuation criterion}
\label{sec:iv-assembly}

In this section, we combine Proposition~\ref{prop:iv-count} and the
uniform endpoint inequality of Lemma~\ref{lem:iv-endpoint}
into one application of Proposition~\ref{prop:criterion-summary}.
Proposition~\ref{prop:iv-bootstrap} gives the new bound
$\beta_*\le B(\xi)$ under the preceding-bound hypothesis
\eqref{eq:iv-antecedent}; Section~\ref{sec:iv-iteration} repeatedly
applies this implication.

Here $I$ is the interval \eqref{eq:iv-interval}, and
$B(\xi),\kappa(\xi)$ are the functions defined in
\eqref{eq:iv-geometry}; $\beta_*$ retains its definition
\eqref{eq:zeros}.
Fix $\xi\in I$, $\kappa(\xi)\le3/4$, and use
\eqref{eq:iv-geometry}. The zero-free hypothesis required for the moment estimate is
\begin{equation}\label{eq:iv-antecedent}
 \beta_*\le\frac{1+\kappa(\xi)}2.
\end{equation}
Theorem~\ref{thm:hecke-conclusion} also gives $\beta_*\le7/8$.

\begin{proposition}[One step of the iterative improvement]\label{prop:iv-bootstrap}
For $\xi\in I$ and $\kappa(\xi)\le3/4$,
\[
 \beta_*\le\frac{1+\kappa(\xi)}2
 \quad\Longrightarrow\quad \beta_*\le B(\xi).
\]
\end{proposition}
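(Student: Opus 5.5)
We argue by contradiction, exactly as in the proofs of Theorems~\ref{thm:rational-extension} and \ref{thm:optimized-extension}. Fix $\xi\in I$ with $\kappa=\kappa(\xi)\le3/4$, assume \eqref{eq:iv-antecedent}, and suppose $\Delta_\xi:=\beta_*-B(\xi)>0$. We use the lengths $b,\ell,l_x,l_y,h$ of \eqref{eq:iv-geometry}, the modified average \eqref{eq:modified} with these lengths, and the normalized $J_\eta,f_\eta$ of \eqref{eq:general-extension-signal}, whose principal exponent is $C_\xi(s)=s-(4+b)/6$.

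First we check the balanced hypotheses \eqref{eq:balanced-hypotheses}. Lemma~\ref{lem:iv-reserves} gives $b>0$, $0<\ell<1/5$, $l_x>\ell$, $l_y-\ell>11b/6$, $5\ell>h$ and $437/500<B(\xi)<7/8$. Since $h<1$ and $5\ell>h$, we have $\ell/h>1/5>7/37$, so a fixed gap $\rho_{\rm sup}$ exists. Proposition~\ref{prop:balanced-low} then yields the direct bound $|J_\eta|\ll Z^{l_x/2+b/12+\eps}=Z^{C_\xi(B(\xi))+\eps}$. Lemma~\ref{lem:extension-euler} and Proposition~\ref{prop:extension-principal}, applied with $B=B(\xi)$, give holomorphy of $H_\eta$ on $\Rea s>B(\xi)$ with $|H_\eta-1|\le1/2$, together with the principal-term estimate.

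Next we handle the nonprincipal terms. The hypothesis \eqref{eq:iv-antecedent}, combined with $13/18\le\kappa\le1$ from \eqref{eq:iv-reserve-list}, is exactly what Proposition~\ref{prop:iv-plain} requires. Hence Proposition~\ref{prop:iv-count} applies in the moderate range $1/2\le d\le h+\zeta$, where $\ell/d\ge1/5+\rho_{\rm sup}$ once $\zeta$ is small, and gives the count exponent $R_{\kappa}$. Proposition~\ref{prop:general-exponent} converts this count into the exponent $E_{B(\xi)}(d)$, and Lemma~\ref{lem:iv-endpoint} gives $E_{B(\xi)}(h)\le0$ uniformly over the zero and amplitude classes. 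For the slope condition $0\le\partial_dE\le2$ we reuse the argument of Lemma~\ref{lem:extension-ranges}(3), which needs $R_\kappa\ge1-\delta$; this follows from \eqref{eq:iv-balanced-count} because $\mathcal J_\kappa>0$. The remaining ranges come from Lemma~\ref{lem:extension-ranges}: the small-norm, intermediate-norm and lowest-class savings are the quantities $m_S,m_I,m_F$ of Lemma~\ref{lem:iv-reserves}. The bound $\delta\le 2\beta_*-1+O(\eb)<19/25$ confines the intermediate range to the interval where $m_I$ is certified.

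With all hypotheses verified, Proposition~\ref{prop:extension-assembly} gives $|J_\eta-f_\eta|\ll_\eta Z^{C_\xi(\beta_*)-\sigma}$ with $\sigma$ independent of $\eta$. The endpoint equality at $(\delta,x)=(\xi,1/2)$ causes no trouble, since measuring relative to $C_\xi(\beta_*)$ subtracts $\Delta_\xi$. Proposition~\ref{prop:criterion-summary} with $\sigma_0=B(\xi)$, $c=-(4+b)/6$ and $\omega_0=\Delta_\xi/2$ then forces $\beta_*\le B(\xi)$, a contradiction.

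The main obstacle is the ordering of choices in the moment step. The hypothesis $\beta_*\le(1+\kappa)/2$ must hold for a \emph{fixed} $\kappa(\xi)$ chosen before the target $\eta$, because the auxiliary parameters ($\ell_{\rm mesh}$, $K$, $\eb$, the savings) must all be fixed first. We also have to confirm that the rounding of class widths stays inside the enlarged box $0\le x\le 51/100$ treated in Lemma~\ref{lem:iv-reserves}, so that the positive denominators there absorb the rounding losses.
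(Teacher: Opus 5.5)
Your proposal is correct and follows the paper's proof essentially step by step. You argue by contradiction with $\Delta=\beta_*-B(\xi)>0$, obtain the balanced direct bound from Proposition~\ref{prop:balanced-low} via Lemma~\ref{lem:iv-reserves}, and get the principal term from Lemma~\ref{lem:extension-euler} and Proposition~\ref{prop:extension-principal}. The moderate range comes from Proposition~\ref{prop:iv-count} and Lemma~\ref{lem:iv-endpoint}, with the slope bound deduced from $R_\kappa\ge1-\delta$. The remaining ranges use the savings $m_S,m_I,m_F$ with $\delta<19/25$, and you conclude via Propositions~\ref{prop:extension-assembly} and \ref{prop:criterion-summary}, exactly as the paper does.

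The only cosmetic difference is in the small-norm range. The paper recomputes the local exponents of $G_p$ explicitly, whereas you cite Lemma~\ref{lem:extension-ranges}(1). Citing it is equally valid, since that lemma is stated under the general balanced hypotheses and $\beta_*\le7/8$, both of which hold here.
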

\begin{proof}
Assume \eqref{eq:iv-antecedent} and $\Delta:=\beta_*-B>0$.
Retain the modified average \eqref{eq:modified}, with the
normalization \eqref{eq:general-extension-signal}, and the
coefficients, zero extensions, disjoint supports and finite
character expansions of Part I.

\paragraph{The direct estimate.}
For $d_{\mathcal I}=\sum_{i\in\mathcal I}\ell_i\le\ell$,
Lemma~\ref{lem:iv-reserves} gives
\[
\begin{gathered}
 l_x-d_{\mathcal I}>0,\qquad
 l_y-d_{\mathcal I}-11b/6>0,\\
 M'=1-\ell-2d_{\mathcal I}\ge1-3\ell>0,\qquad
 P_a=q_{b_*}^{-1}Z^b,\qquad P_a^2/Y'\ll P_a^{1/6}.
\end{gathered}
\]
Thus $Q,Y',P_a\ge1$ for large $Z$, and the functional-equation and Gram
estimates apply to every rescaled subset. The resulting change in the exponent is
\[
 -d_{\mathcal I}+\pos{d_{\mathcal I}+5\ell-1}/8
 \le-7d_{\mathcal I}/8\le0.
\]
Proposition~\ref{prop:balanced-low}, with the positive normalization factor for an even
number of prime factors, yields
\begin{equation}\label{eq:iv-low}
 |J_\eta(Z)|\ll_{\eta,\eps}Z^{C_b(B)+\eps},\qquad
 C_b(B)=l_x/2+b/12,\qquad C_b(s)=s-(4+b)/6.
\end{equation}

\paragraph{Small norms.}
Take $(s_r,w_r,z_r)=(\beta_*+\eb,1/2,17/50)$, and fix the first-domain gap $\eps_0$ in \eqref{eq:D1}
with $0<\eps_0<B-1/2$. For $p\nmid u$, the error exponents in
\eqref{eq:extended-G} are
\[
 -s_r,\quad-51/25,\quad49/25-5s_r,\quad-77/50<0.
\]
For $p\mid u$, the boundary exponents are
\[
 -1/2,\quad3/2-2s_r,\quad
 (3/2-2s_r,2-3s_r),\quad2-3s_r,\quad3-5s_r<0.
\]
Since $s_r\ge B>437/500$, the quotient-free formula gives
\[
 G_p\ll\begin{cases}1,&p\nmid u,\\q_p^{1/2},&p\mid u,\end{cases}
 \qquad\text{full tuple sum}\ll U^\eps Z^{17\ell/50}.
\]
The numerator bound, the $O(U)$ choices of $u$ and the Mellin factor contribute
$U^{3/5+\eps}U U^{-17/50}=U^{63/50+\eps}$. Hence, for
$d\le d_{\min}:=1/100$, the relative exponent is at most
\[
 \frac{13h}{75}-\frac{l_y}{2}+2d_{\min}+O(\eb+\eps)
 =-m_S+O(\eb+\eps),\qquad
 m_S:=\frac{l_y}{2}-\frac{13h}{75}-\frac1{50}>0.
\]
This includes terms with $u$ a unit and terms for which the character
in the denominator $L$-function is principal; the character in the
numerator $L$-function is nonprincipal. Lemma~\ref{lem:iv-reserves} gives
$m_S>7/8-B$.

\paragraph{The lowest zero class and intermediate norms.}
The lowest zero class has
\[
 a_0=51/100,\quad\delta_0=1/50,\quad R=1,\quad
 \bar g\le\delta_0/2,\quad
 E_B(h)\le-m_F<0,
\]
where $m_F=-C_0-(1/2+3\ell/2)/50$ and
$C_0=-1/4+b/6+5\ell/4$. Its positive $d$-slope controls $d\le h$.
For $d_{\min}\le d\le1/2$, the unchanged unselected count gives
\[
 R=76/75-2\delta/3,\qquad
 \partial_dE_B=101/150-\delta/6>0,
\]
\[
 E_B(d)\le E_B(1/2)\le
 \frac{(225\ell+50-75b)\delta+78\ell-49b-73}{300}<0.
\]
The last affine expression is negative at $\delta=1/50,19/25$ by
Lemma~\ref{lem:iv-reserves}; all actual classes satisfy
\[
 \delta\le2\beta_*-1+O(\eb)\le3/4+O(\eb)<19/25<\alpha.
\]

\paragraph{Moderate and large norms.}
For $1/2\le d\le h$, Proposition~\ref{prop:iv-count} uses the same
character and the simultaneous large values \eqref{eq:large-values}
in both moment estimates. The length selected on the
appropriate side of the crossing satisfies
\[
 z_{\rm sel}<1/5<\ell/h.
\]
A fixed length decrement, followed by a sufficiently fine partition
into prime intervals, preserves the strict length inequalities for the
inverse moment and the original coefficients and supports. Lemma~\ref{lem:iv-endpoint} gives
\[
 E_B(h)\le0,\qquad
 0<33/50-\delta/2\le\partial_dE_B
   =R+\delta/2-17/50<2.
\]
Thus $E_B(d)\le0$ for $d\le h$. Choose $\zeta>0$ with
$\ell/(h+\zeta)>1/5$; extending to $h+\zeta$ increases the exponent by at most
$2\zeta$. On $(s_r,w_r,z_r)=(2,2,z_{\mathrm{new}})$,
\[
 \sum_{U>Z^{h+\zeta}}|\mathcal R_\eta(U;Z)|
 \ll Z^{l_x/2+1+l_y+(h+\zeta)(1+\eps)
                  -\zeta z_{\mathrm{new}}+\eps}.
\]

Choose $z_{\mathrm{new}}>2$ after $\zeta$ to obtain any prescribed saving.
For the moderate norm ranges, measuring the error relative to
$C_b(\beta_*)$ rather than $C_b(B)$ subtracts $\Delta$, as in
Proposition~\ref{prop:general-exponent}. The uniform bound for the
reciprocal denominator allows the amplitude-class and rounding
errors to be chosen smaller than this positive difference.

\paragraph{The principal term.}
Since $437/500<B<7/8$, Lemma~\ref{lem:extension-euler} gives
\[
 H_\eta(s)=1+O(P_0^{-4/5}),\qquad
 |H_\eta(s)-1|\le1/2\quad(\Rea s\ge B).
\]
Choose $P_0$ independently of $\eta$. After fixing $\eta$, enlarge
$S$ to include the primes dividing its chosen defining modulus; this
preserves the bound. The only principal frequency is $u=1$, with
residues at $w=1,z=1/6$ and normalizing constant $c_S$, including the factor
$1/6$.
Proposition~\ref{prop:extension-principal} supplies the principal term
\eqref{eq:general-extension-signal} and exponent savings
\[
 l_y/20,\qquad h/600,\qquad0<\kappa_P<B\min_i\ell_i.
\]
As in Proposition~\ref{prop:extension-principal}, bound the
approximation error on $\Rea s=\beta_*+\eb$ and move only the
main Gaussian integral to $\Rea s=2$.

\paragraph{Application of Proposition~\ref{prop:criterion-summary}.}
Let $m_I>0$ be the saving for the intermediate norm range and choose
\[
 0<\mu_0<\min\{\Delta/4,m_F/4,m_S/4,m_I,l_y/80,h/2400\},
 \qquad2\zeta<\mu_0.
\]
The order of the remaining fixed choices is
\[
\begin{gathered}
 \text{moment loss}\ \longrightarrow\ \text{prime-length decrements}
 \ \longrightarrow\ \max_i\ell_i
 \ \longrightarrow\ \text{even }K\text{ and lengths }\ell_i\\
 \longrightarrow\ \kappa_P
 \ \longrightarrow\ \text{class, amplitude and weight losses}
 \ \longrightarrow\ \eta.
\end{gathered}
\]
Decrease the common saving as necessary. Logarithmically many
configurations are absorbed in these losses.
The proof of Proposition~\ref{prop:extension-assembly} uses the chosen
count only through its exponent $R$, the displayed slope bounds and
the uniform hypotheses on exponent losses and individual prime lengths, all verified above for $R_\kappa$.

Proposition~\ref{prop:extension-assembly}, with the count of
Proposition~\ref{prop:iv-count}, now gives
\[
 |J_\eta-f_\eta|
 \ll_\eta Z^{C_b(\beta_*)-\mu}(1+T_1)^{A_\eta}
            +Z^{B_\eta}T_1^{-N},\qquad\mu>0,
\]
Here $\mu$ is target-independent; $A_\eta,B_\eta$ are finite
nonnegative exponents, and $N$ is the freely chosen order for
the discarded-height estimate, as in \eqref{eq:height-assembly}.
Fix the arithmetic data and
internal Fourier orders before $A_\eta,B_\eta$, and then choose
\[
 T_1=Z^{\tau_\eta},\qquad
 0<\tau_\eta\le\min\{d_{\min}/100,\tau_{\rm det},
                         \mu/[4(A_\eta+1)]\},
\]
\[
 B_\eta-N\tau_\eta<C_b(\beta_*)-\mu/2
 \quad\Longrightarrow\quad
 |J_\eta-f_\eta|\ll_\eta Z^{C_b(\beta_*)-\mu/2}.
\]
The external order $N$ changes seminorm constants, not the fixed
internal degrees; $J_\eta,f_\eta$ are independent of this height choice.
Apply Proposition~\ref{prop:criterion-summary} with
\[
 (\sigma_0,c,\omega_0,\sigma)
   =\bigl(B,-(4+b)/6,\Delta/2,\mu/2\bigr).
\]

The direct bound \eqref{eq:iv-low} and the bound $|H_\eta-1|\le1/2$ imply
continuation across a zero sufficiently near $\beta_*$. This
contradiction proves $\beta_*\le B$.
\end{proof}

\section{Proof of the limiting zero-free bound by iteration}
\label{sec:iv-iteration}

In this section, we iterate the one-step improvement and pass to the
limit to prove Theorem~\ref{thm:iv-limit}. The limiting root satisfies
\eqref{eq:iv-cubic}, its explicit expression is proved in
Lemma~\ref{lem:iv-explicit-root}, and the limiting parameters are
given in \eqref{eq:iv-final-parameters}.
Proposition~\ref{prop:iv-bootstrap} applies the continuation criterion
of Proposition~\ref{prop:criterion-summary} at each finite step.
Section~\ref{sec:iv-optimality} proves optimality in the stated
comparison system.

Put
\begin{equation}\label{eq:iv-cubic}
 p(\xi)=1188\xi^3-2160\xi^2+171\xi+190.
\end{equation}
The endpoint signs and $p'<0$ on $I$ give a unique zero
$\xi_{\mathrm{new}}\in I$.

\begin{lemma}[Explicit root]
\label{lem:iv-explicit-root}
The root $\xi_{\mathrm{new}}$ is given by \eqref{eq:iv-explicit-root}.
\end{lemma}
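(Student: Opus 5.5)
The plan is to derive \eqref{eq:iv-explicit-root} from the classical trigonometric (Vi\`ete) solution of a depressed cubic with three real roots. Then we identify which of the three trigonometric roots is $\xi_{\mathrm{new}}$ by a sign count, using the uniqueness of the zero of $p$ on $I$ already noted after \eqref{eq:iv-cubic}.

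\emph{Depressing the cubic.} Since $2160/(3\cdot1188)=20/33$, substitute $\xi=\zeta+20/33$ in $p(\xi)/1188$. This gives $\zeta^3+P\zeta+Q=0$ with
\[
 P=\frac{3\cdot1188\cdot171-2160^2}{3\cdot1188^2}
 =-\frac{4056156}{4234032}=-\frac{1391}{1452},
\]
the last step because $4234032=1452\cdot2916$ and $4056156=1391\cdot2916$. The constant term $Q$ is the rational number
\[
 Q=\frac{2b^3-9abc+27a^2d}{27a^3},\qquad (a,b,c,d)=(1188,-2160,171,190),
\]
to be computed exactly. Because $-P/3=1391/4356=(\sqrt{1391}/66)^2$, the Vi\`ete scaling is $2\sqrt{-P/3}=\sqrt{1391}/33$, which is exactly the prefactor in \eqref{eq:iv-explicit-root}.

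\emph{Trigonometric form.} Write $\zeta=\frac{\sqrt{1391}}{33}\cos\phi$. The identity $4\cos^3\phi-3\cos\phi=\cos3\phi$ turns the depressed cubic into
\[
 \cos3\phi=\frac{3Q}{2P}\sqrt{\frac{-3}{P}} .
\]
The main computation is to check that this quotient reduces exactly to $28470/1391^{3/2}$. This involves only rational arithmetic and one factor $\sqrt{1391}$, so it can be verified by clearing denominators; the exact checker of \cite{LeanExtension} can confirm it. We also need $0<28470<1391^{3/2}$. Squaring reduces this to $28470^2<1391^3$, that is $810540900<2691419471$. Hence the arccos is defined and lies in $(0,\pi/2)$, the discriminant is positive, and the three real roots are
\[
 \zeta_k=\frac{\sqrt{1391}}{33}\cos\!\Bigl(\tfrac13\arccos\tfrac{28470}{1391^{3/2}}-\tfrac{2\pi k}{3}\Bigr),\qquad k=0,1,2 .
\]

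\emph{Selecting the branch.} Put $\theta=\frac13\arccos(\cdot)\in(0,\pi/6)$. Then $\theta$, $\theta-2\pi/3$ and $\theta-4\pi/3$ lie in $(0,\pi/6)$, $(-2\pi/3,-\pi/2)$ and $(-4\pi/3,-7\pi/6)$ respectively. So the cosines are ordered as: $k=0$ gives the largest root, $k=2$ (the angle $\theta-4\pi/3$) the smallest, and $k=1$ the middle one. The formula in \eqref{eq:iv-explicit-root} uses $\theta-2\pi/3$, i.e.\ the middle root.

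On the other hand, $p$ has positive leading coefficient, and $p(0)=190>0$ while $p(1)=-611<0$. The sign count $p(-\infty)<0<p(0)$ and $p(1)<0<p(+\infty)$ places one root in each of $(-\infty,0)$, $(0,1)$ and $(1,\infty)$. Therefore the middle root is the unique root in $(0,1)$. Since $I\subset(0,1)$ contains a zero of $p$, that zero is the middle root. This proves the stated formula and the uniqueness claim on $(0.38858,0.38859)$.

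The only real obstacle is the exact simplification of $\cos3\phi$ to $28470/1391^{3/2}$. I would carry it out by computing $Q$ as a reduced fraction and checking the squared identity $\bigl(\tfrac{3Q}{2P}\bigr)^2\cdot\tfrac{-3}{P}=28470^2/1391^3$ in integers. I would then check the sign separately, namely that $Q<0$ and $P<0$ give a positive quotient.
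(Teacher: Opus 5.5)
Your proposal is correct and follows the paper's proof. The paper's substitution $u=66\xi-40$ is your depressed cubic rescaled by $66$, giving $u^3-4173u-56940=0$; so your deferred constant is $Q=-56940/66^3$, and the triple-angle identity gives exactly $\cos 3\theta=28470/1391^{3/2}$. The same sign count $p(0)>0>p(1)$ then identifies the branch $\theta-2\pi/3$ as the unique root in $(0,1)$; the only difference is that the paper places this root in $(0,1)$ directly from $-\sqrt{1391}<u<0$, rather than by ordering the three cosines.
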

\begin{proof}
With $u=66\xi-40$,
\[
 242p\!\left(\frac{u+40}{66}\right)
   =u^3-4173u-56940.
\]
Set
\[
 \theta=\frac13\arccos\frac{28470}{1391^{3/2}}
 \in(0,\pi/6).
\]
For $u=2\sqrt{1391}\cos(\theta-2\pi/3)$,
\[
 u^3-4173u
 =2\cdot1391^{3/2}\cos(3\theta)=56940.
\]
Moreover,
\[
 -\sqrt{1391}<u<0
 \quad\Longrightarrow\quad
 0<\frac{40-\sqrt{1391}}{66}<\xi<\frac{20}{33}<1.
\]
Since $p(0)>0>p(1)$ and
$p$ has a root in each of $(-\infty,0)$, $(0,1)$ and $(1,\infty)$,
this is its unique root in $(0,1)$, hence the isolated root in $I$.
\end{proof}

Define
\begin{equation}\label{eq:iv-final-parameters}
 \kappa_{\mathrm{new}}=\kappa(\xi_{\mathrm{new}}),\quad
 B_{\mathrm{new}}=B(\xi_{\mathrm{new}}),\quad b_{\mathrm{new}}=b(\xi_{\mathrm{new}}),\quad
 \ell_{\mathrm{new}}=\ell(\xi_{\mathrm{new}}).
\end{equation}
The identity
\begin{equation}\label{eq:iv-fixed-point-identity}
 \kappa(\xi)-(2B(\xi)-1)
 =-\frac{p(\xi)}{18(2\xi+1)(36\xi^2-78\xi+25)}
\end{equation}
gives $\kappa_{\mathrm{new}}=2B_{\mathrm{new}}-1$.

\begin{proof}[Proof of Theorem~\ref{thm:iv-limit}]
On $I$,
\[
 \kappa'(\xi)=\frac{9(24\xi^2-20\xi+5)}
                       {2(36\xi^2-78\xi+25)^2}>0,
 \qquad B'(\xi)>0,\qquad\kappa'-1000B'>0.
\]
Let $\xi(\kappa)$ denote the inverse of the strictly increasing
function $\xi\mapsto\kappa(\xi)$ on $I$, and set
$g(\kappa)=2B(\xi(\kappa))-1$. Then
\[
 0<g'(\kappa)=\frac{2B'(\xi(\kappa))}
                         {\kappa'(\xi(\kappa))}<1/500,
\]
\[
 \kappa_{\mathrm{new}}<\kappa\le3/4
 \quad\Longrightarrow\quad
 \kappa_{\mathrm{new}}<g(\kappa)<\kappa,
\]
by \eqref{eq:iv-fixed-point-identity} and $p'<0$.
Starting with the known $7/8$ bound, define
\[
 \kappa_0=3/4,\qquad\xi_n=\xi(\kappa_n),\qquad
 B_n=B(\xi_n),\qquad\kappa_{n+1}=g(\kappa_n)=2B_n-1.
\]
Proposition~\ref{prop:iv-bootstrap} gives
\[
 \beta_*\le(1+\kappa_n)/2
 \quad\Longrightarrow\quad
 \beta_*\le B_n=(1+\kappa_{n+1})/2.
\]
Thus $B_0=B_{\mathrm{alg}}$ and
\[
 \kappa_n\downarrow\kappa_{\mathrm{new}},\qquad
 B_n\downarrow B_{\mathrm{new}},\qquad
 \beta_*\le\inf_{n\ge0}B_n=B_{\mathrm{new}}.
\]
At each finite step the preceding bound supplies the zero-free
hypothesis required by Proposition~\ref{prop:iv-bootstrap}.
Passing the real inequalities $\beta_*\le B_n$ to the limit does
not require the analytic constants or prime intervals to be
uniform in $n$.
Corollary~\ref{cor:dirichlet-conclusion} supplies the Dirichlet
transfer.
\end{proof}

The limiting parameters are
\[
\begin{array}{c|c@{\qquad}c|c}
 B_{\mathrm{new}}&0.874957019420098946\ldots&
 \kappa_{\mathrm{new}}&0.749914038840197892\ldots\\
 b_{\mathrm{new}}&0.123405685419541002\ldots&
 \ell_{\mathrm{new}}&0.166838588986270882\ldots.
\end{array}
\]
Substituting $\xi=(9B-7)/(18(1-B))$ into \eqref{eq:iv-cubic} gives
\begin{equation}\label{eq:iv-boundary-cubic}
 7884B_{\mathrm{new}}^3-18819B_{\mathrm{new}}^2+14643B_{\mathrm{new}}-3686=0.
\end{equation}
The choice of root is specified by \eqref{eq:iv-explicit-root}.

\begin{corollary}[An explicit rational bound]\label{cor:iv-rational-checkpoint}
Every $L$-function in Theorem~\ref{thm:iv-limit} is zero-free for
\[
 \Rea s>B_{\rm cp}:=\frac{34998281}{40000000}=0.874957025.
\]
\end{corollary}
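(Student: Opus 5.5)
The corollary is a direct consequence of Theorem~\ref{thm:iv-limit}: if $B_{\mathrm{new}}\le B_{\rm cp}$, then the half-plane $\Rea s>B_{\rm cp}$ lies inside $\Rea s>B_{\mathrm{new}}$, and zero-freeness transfers to every Hecke and Dirichlet $L$-function covered by that theorem, with principal poles allowed. It therefore suffices to prove the strict rational inequality $B_{\mathrm{new}}<B_{\rm cp}$ by exact arithmetic, without relying on the decimal expansion.

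Since $B(\xi)=(18\xi+7)/(18\xi+9)=1-2/(18\xi+9)$ is strictly increasing in $\xi$, the inequality $B(\xi)<B_{\rm cp}$ is equivalent to $\xi<\xi_{\rm cp}$, where
\[
 \xi_{\rm cp}=\frac{9B_{\rm cp}-7}{18(1-B_{\rm cp})}.
\]
A direct computation gives $9B_{\rm cp}-7=34984529/40000000$ and $18(1-B_{\rm cp})=18\cdot1719/40000000$. Hence $\xi_{\rm cp}=34984529/30942$, which lies between $0.38858$ and $0.38859$. So I will show $\xi_{\mathrm{new}}<\xi_{\rm cp}$.

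By Lemma~\ref{lem:iv-explicit-root} and the discussion after \eqref{eq:iv-cubic}, $p$ is strictly decreasing on $I$ and $\xi_{\mathrm{new}}$ is its unique zero there. Consequently $\xi_{\mathrm{new}}<\xi_{\rm cp}$ is equivalent to $p(\xi_{\rm cp})<0$, where $p(\xi)=1188\xi^3-2160\xi^2+171\xi+190$.

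The main step is to evaluate $p(\xi_{\rm cp})$ exactly. I would clear denominators by multiplying through by $30942^3$, which turns the condition into the sign of one integer. An equivalent route is to check the sign of the boundary cubic \eqref{eq:iv-boundary-cubic} at $B_{\rm cp}$, taking care with orientation: that cubic is the transform of $p$ multiplied by a factor with a definite sign, so the inequality direction must be tracked through the substitution.

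As a sanity check on magnitudes: $B_{\rm cp}-B_{\mathrm{new}}\approx5.6\times10^{-9}$ and $dB/d\xi=36/(18\xi+9)^2\approx0.14$, so $\xi_{\rm cp}-\xi_{\mathrm{new}}\approx4\times10^{-8}$. With $|p'|\approx1002$ on $I$, this gives $p(\xi_{\rm cp})\approx-4\times10^{-5}$, which is clearly negative. The only real obstacle is doing the large-integer arithmetic correctly, which is routine, or deferring it to the exact checker as the paper does elsewhere.
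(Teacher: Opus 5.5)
Your argument is the paper's proof: set $\xi_1=(9B_{\rm cp}-7)/(18(1-B_{\rm cp}))\in I$, check $p(\xi_1)<0$, and use that $p$ is decreasing on $I$ with unique zero $\xi_{\mathrm{new}}$, together with monotonicity of $B(\xi)$.

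There is one arithmetic slip to fix. Since $1-B_{\rm cp}=5001719/40000000$, not $1719/40000000$, you get $18(1-B_{\rm cp})=90030942/40000000$ and $\xi_{\rm cp}=34984529/90030942\approx0.3885834$. The fraction $34984529/30942\approx1130.6$ you wrote is not in $I$, and $p$ is large and positive there, so the integer check you describe would fail as written. Clear denominators with $90030942^3$ instead of $30942^3$.

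Also, $|p'|\approx970$ on $I$ rather than $1002$. This does not affect your estimate $p(\xi_{\rm cp})\approx-4\times10^{-5}$.
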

\begin{proof}
For $\xi_1=(9B_{\rm cp}-7)/(18(1-B_{\rm cp}))\in I$,
\[
 p(\xi_1)<0\quad\Longrightarrow\quad
 \xi_{\mathrm{new}}<\xi_1\quad\Longrightarrow\quad B_{\mathrm{new}}<B_{\rm cp}.
\]
\end{proof}
The two improvements are
\[
\begin{aligned}
 B_{\mathrm{alg}}-B_{\rm cp}&=4.4799168740\ldots\times10^{-8},\\
 B_{\mathrm{alg}}-B_{\mathrm{new}}&=5.03790697938897\ldots\times10^{-8}.
\end{aligned}
\]
Both require changing the lengths: fixing $\ell=\ell_{\mathrm{alg}}$
leaves $11/12-\ell/4=B_{\mathrm{alg}}$.

\section{Proof of optimality in the full comparison system}
\label{sec:iv-optimality}

In this section, we prove that the specified exponent comparison system
admits no bound smaller than $B_{\mathrm{new}}$.
Theorem~\ref{thm:iv-limit} establishes that $B_{\mathrm{new}}$ is a valid zero-free bound.
Proposition~\ref{prop:iv-dual-optimality} first proves the affine lower
bound from \eqref{eq:iv-dual-constraints} and
\eqref{eq:iv-endpoint-compatibility}. We then show that the permitted
moment, length, detector and contour choices cannot evade its
constraints, leading to Theorem~\ref{thm:iv-system-optimality}.

Recall that $M=l_x+l_y$ is the total averaging-length exponent,
$b=l_y-l_x$, and $\ell$ is the total selected prime length.
The comparison exponent $E_B(d)$ is defined in
\eqref{eq:general-extension-exponent}, with $R$ chosen from the
permitted moment estimates.
For $13/18\le\kappa\le1$, let $\xi(\kappa)$ be the smaller root of
\eqref{eq:iv-critical-polynomial}, and set
\[
 I_0=[19/50,39/100],\qquad
 F(\kappa)=\frac{18\xi(\kappa)+7}{18\xi(\kappa)+9}.
\]
Writing the critical quadratic as $q_\kappa(\xi)$, one has
\[
\begin{array}{c|cc}
 &\kappa=13/18&\kappa=1\\\hline
 \xi=19/50&2776/1875&1507/625\\
 \xi=39/100&-4003/15000&-871/5000.
\end{array}
\]
Since $q_\kappa$ is affine in $\kappa$, its leading coefficient is
positive and $q_\kappa(1)=19-102\kappa<0$,
\[
 19/50<\xi(\kappa)<39/100<1<\text{the second root}.
\]
On $I_0$, put $D_0(\xi)=36\xi^2-78\xi+25$. Then
\[
 \begin{gathered}
 D_0' <0,\qquad D_0\ge D_0(39/100)=139/2500>0,\\
 \kappa'(\xi)=\frac{9(24\xi^2-20\xi+5)}{2D_0(\xi)^2}>0,
 \qquad B'(\xi)=\frac{36}{(18\xi+9)^2}>0,\\
 p''(\xi)<0,\qquad p'(\xi)\le p'(19/50)=-597474/625<0.
 \end{gathered}
\]
Thus $\xi(\kappa)$ inverts the rational function
\eqref{eq:iv-geometry} on this larger algebraic interval.
The uniform endpoint inequality is proved on $I$.

For arbitrary $M,\ell\in\mathbb R$, define
\begin{equation}\label{eq:iv-dual-branches}
\begin{gathered}
 L_0=5/6+M/12-\ell/6,\qquad
 L_K=1/3+7M/12+\ell/3,\\
 H_\xi=1+\xi-(1+\xi)M/2+\xi\ell.
\end{gathered}
\end{equation}

\begin{proposition}[Variable-moment algebraic lower bound]
\label{prop:iv-dual-optimality}
Let $13/18\le\kappa\le1$ and $M,\ell,B\in\mathbb R$. If
\begin{equation}\label{eq:iv-dual-constraints}
 B\ge L_0,\qquad B\ge L_K,\qquad B\ge H_{\xi(\kappa)},
\end{equation}
then $B\ge F(\kappa)$, with equality at
\[
 \ell=\frac{5-6\xi(\kappa)}{9+18\xi(\kappa)},\qquad M=1-\ell.
\]
Under the additional condition on the candidate bound,
\begin{equation}\label{eq:iv-endpoint-compatibility}
 B\le(1+\kappa)/2,
\end{equation}
one has $\kappa\ge\kappa_{\mathrm{new}}$ and $B\ge B_{\mathrm{new}}$.
Equality is attained by \eqref{eq:iv-final-parameters}.
\end{proposition}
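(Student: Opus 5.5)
The plan follows the proof of Proposition~\ref{prop:scalar-optimality}, with $\delta_{\rm c}$ replaced by $\xi=\xi(\kappa)$. Define the weights
\[
 w_0=\frac{4+18\xi}{9+18\xi},\qquad
 w_K=\frac{2}{9+18\xi},\qquad
 w_H=\frac{3}{9+18\xi}.
\]
They are positive because $\xi>0$ on $I_0$, and they sum to one.

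\textbf{Step 1: eliminating $M$ and $\ell$.} Check the two cancellation identities
\[
 \frac{w_0}{12}+\frac{7w_K}{12}-\frac{(1+\xi)w_H}{2}=0,
 \qquad
 -\frac{w_0}{6}+\frac{w_K}{3}+\xi w_H=0.
\]
After multiplying by $9+18\xi$ they become $(4+18\xi)/12+14/12-3(1+\xi)/2=0$ and $-(4+18\xi)/6+2/3+3\xi=0$. Both hold identically in $\xi$. Therefore
\[
 B\ge\max\{L_0,L_K,H_\xi\}\ge w_0L_0+w_KL_K+w_HH_\xi
\]
is independent of $M$ and $\ell$. The constant term is
\[
 \frac{5w_0/6+w_K/3+(1+\xi)w_H}{1}
 =\frac{(20+90\xi)/6+2/3+3+3\xi}{9+18\xi}
 =\frac{7+18\xi}{9+18\xi}=F(\kappa).
\]

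\textbf{Step 2: equality case.} At $M=1-\ell$ with $\ell=(5-6\xi)/(9+18\xi)$, compute directly:
\[
 L_0=\frac{11}{12}-\frac{\ell}{4}=\frac{18\xi+7}{18\xi+9},\qquad
 L_K=\frac{11}{12}-\frac{\ell}{4},\qquad
 H_\xi=\frac{1+\xi}{2}+\Bigl(\frac{1+\xi}{2}+\xi\Bigr)\ell .
\]
Substituting $\ell$ into $H_\xi$ gives $(18\xi+7)/(18\xi+9)$. So all three constraints are active there and equal $F(\kappa)$.

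\textbf{Step 3: adding the compatibility condition.} Combine $B\ge F(\kappa)$ with \eqref{eq:iv-endpoint-compatibility} to get
\[
 2F(\kappa)-1\le 2B-1\le\kappa .
\]
Since $\kappa=\kappa(\xi)$ on $I_0$ (with $\xi=\xi(\kappa)$ inverting \eqref{eq:iv-geometry}), this reads $2B(\xi)-1\le\kappa(\xi)$. Identity \eqref{eq:iv-fixed-point-identity} has denominator $18(2\xi+1)D_0(\xi)>0$ on $I_0$, so this inequality is equivalent to $p(\xi)\le0$.

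\textbf{Step 4: locating the root.} On $I_0$ we have $p'<0$. Also $p(19/50)>0$: the value is about $65.2-311.9+65.0+190>0$. Hence the unique root in $I_0$ is $\xi_{\mathrm{new}}$, the same root found in $I\subset I_0$. So $p(\xi)\le0$ forces $\xi\ge\xi_{\mathrm{new}}$. Because $\kappa'>0$ and $B'>0$, this gives $\kappa\ge\kappa_{\mathrm{new}}$ and $B\ge F(\kappa)\ge F(\kappa_{\mathrm{new}})=B_{\mathrm{new}}$.

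\textbf{Step 5: attainment.} Equality is attained at \eqref{eq:iv-final-parameters}. There $\kappa_{\mathrm{new}}=2B_{\mathrm{new}}-1$, so \eqref{eq:iv-endpoint-compatibility} holds with equality. Step~2 then applies with $\xi=\xi_{\mathrm{new}}$.

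\textbf{Main obstacle.} The only delicate point is Step~4: confirming that $\xi(\kappa)$ stays in $I_0$ for the whole range $13/18\le\kappa\le1$. The sign table preceding the statement gives this. One must also check that $p$ has no second sign change in $I_0$; concavity and $p'(19/50)<0$ settle that.
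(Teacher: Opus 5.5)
Your proposal is correct and follows the paper's proof essentially step for step: the same positive weights $(4+18\xi,2,3)/(9+18\xi)$ eliminate $M$ and $\ell$ to give $B\ge F(\kappa)$, and the fixed-point identity \eqref{eq:iv-fixed-point-identity} turns $2F(\kappa)-1\le\kappa$ into $p(\xi)\le0$, hence $\xi\ge\xi_{\mathrm{new}}$ by the monotonicity of $p$, $\kappa(\cdot)$ and $B(\cdot)$ on $I_0$. Your extra checks, namely positivity of the denominator $18(2\xi+1)D_0(\xi)$ and uniqueness of the root of $p$ in $I_0$, only make explicit what the paper establishes before the proposition.
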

\begin{proof}
For $\xi=\xi(\kappa)$, the weights
\[
 (w_0,w_K,w_H)=\frac{(4+18\xi,2,3)}{9+18\xi}
\]
are positive and satisfy
\[
 \begin{gathered}
 w_0+w_K+w_H=1,\\
 w_0/12+7w_K/12-(1+\xi)w_H/2=0,\\
 -w_0/6+w_K/3+\xi w_H=0.
 \end{gathered}
\]
Consequently
\begin{equation}\label{eq:iv-dual-identity}
 B\ge\max\{L_0,L_K,H_\xi\}
 \ge w_0L_0+w_KL_K+w_HH_\xi
 =\frac{18\xi+7}{18\xi+9}=F(\kappa).
\end{equation}
At the displayed $M,\ell$, $L_0=L_K=H_\xi=F(\kappa)$.
Moreover \eqref{eq:iv-fixed-point-identity} gives
\[
 \begin{aligned}
 F(\kappa)\le B\le(1+\kappa)/2
 &\Longrightarrow g(\kappa)=2F(\kappa)-1\le\kappa\\
 &\Longleftrightarrow p(\xi)\le0
 \Longleftrightarrow \xi\ge\xi_{\mathrm{new}}
 \Longleftrightarrow\kappa\ge\kappa_{\mathrm{new}}.
 \end{aligned}
\]
Monotonicity yields $B\ge F(\kappa)\ge F(\kappa_{\mathrm{new}})=B_{\mathrm{new}}$.
The exact identities for this positive weighted sum are checked in the supplement.
\end{proof}

The condition $B\le(1+\kappa)/2$ alone does not justify applying the
moment estimate; this requires a previously proved bound for the family. For a known
$\beta_*\le\widehat B$, the relations are
\[
 \kappa\ge2\widehat B-1,\qquad
 B\ge F(\kappa)\ge F(2\widehat B-1).
\]
If $2\widehat B-1\in\kappa(I)$ and is at most $3/4$,
Proposition~\ref{prop:iv-bootstrap} realizes the last boundary.
In particular $F(2B_{\mathrm{new}}-1)=B_{\mathrm{new}}$.

\subsection{The optimal counting exponent in the stated moment system}
\label{sec:iv-count-obstruction}

In this subsection, we optimize the exponents from the separate moment estimates with zero losses and
no additional bound on the total selected prime length.
We show that varying the detecting lengths and truncation exponent
cannot reduce the supplied counting exponent below $2/3$ at the
critical class, as stated in Lemma~\ref{lem:iv-count-obstruction}.
Lemma~\ref{lem:iv-necessary-branches} then derives the three necessary
comparison constraints, including the nonprincipal constraint
used in Theorem~\ref{thm:iv-system-optimality}.
The length exponents $r,m$ of $M_r,S_m$ in
Section~\ref{sec:detector} and the detecting cutoff $t$ of
Proposition~\ref{prop:iv-count} satisfy
\begin{equation}\label{eq:iv-witness-domain}
 t-1/2\le r\le t,\qquad0\le m\le1/2,
 \qquad r+m\ge t,\qquad1\le t\le3/2.
\end{equation}
Here $\kappa$ is the smallest parameter allowed by the zero-free hypothesis;
$\kappa\mapsto(1-2m)/(6\kappa)$ is decreasing.
Fix
\[
 \delta=\xi(\kappa)\in I_0,\qquad x=1/2,\qquad
 c=2/(3\kappa)\in[2/3,12/13],\qquad\alpha=5/6.
\]
The optimized exponents with combined range parameter $M=1$ are
\begin{equation}\label{eq:iv-optimal-marginal-counts}
\begin{aligned}
 e_I(r)&=1-\delta(1+r)/2 &&(r\le1),\\
 e_S(m)&=1-\delta\bigl[2m+(c/4)(1-2m)\bigr]
                                      &&(0\le m\le1/2),
\end{aligned}
\end{equation}
and the amplified long exponent is
\begin{equation}\label{eq:iv-optimal-long-count}
 e_L(r)=1-\alpha+(\alpha-\delta)r\qquad(r\ge1).
\end{equation}
The unselected alternatives satisfy
\[
 1-\delta r\ge e_I(r)\quad(r\le1),\qquad
 \{z>0:r+2z\le1\}=\varnothing\quad(r>1).
\]
For the set $\mathcal W_t$ of admissible length pairs in \eqref{eq:iv-witness-domain},
put
\[
 e_{\mathrm{inv}}(r)=
 \begin{cases}e_I(r),&r\le1,\\e_L(r),&r\ge1,\end{cases}
 \qquad
 R_{\mathrm{cert}}(t)=\sup_{(r,m)\in\mathcal W_t}
       \min\{e_{\mathrm{inv}}(r),e_S(m)\}.
\]
These are supplied exponents, not lower bounds for actual counts.

\begin{lemma}[Optimal counting exponent at the critical class]\label{lem:iv-count-obstruction}
For the moment estimates and admissible length pairs above, the smallest
uniform upper-bound exponent obtainable from these estimates is
\[
 \inf_{1\le t\le3/2}R_{\mathrm{cert}}(t)
 =R_\kappa(\delta,1/2)=2/3.
\]
Upper bounds on the selected prime lengths, estimates without selected prime factors
and H\"older interpolation between these estimates cannot lower it.
\end{lemma}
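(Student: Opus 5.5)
The plan is to compute $R_{\mathrm{cert}}(t)$ exactly for each fixed $t$, show that it equals $\max\{R_{\kappa,\rm short}(t),R_{\rm long}(t)\}$ from the proof of Proposition~\ref{prop:iv-count}, and then minimize in $t$. At $x=1/2$ the marginal exponents \eqref{eq:iv-optimal-marginal-counts} are exactly $1-\delta F_I(r)$ and $1-\delta F_S(r)$ with $m=t-r$, since $F_I(r)=1/2+r/2$ and $F_S$ matches $2m+(c/4)(1-2m)$. The minimum over $t$ is then attained at $t_\kappa$ of \eqref{eq:iv-balanced-count}. Lemma~\ref{lem:iv-geometry} then gives $R_\kappa(\xi(\kappa),1/2)=2/3$.

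\emph{Step 1: reduce to the boundary $m=\max\{0,t-r\}$.} For fixed $r$, the only constraint on $m$ in \eqref{eq:iv-witness-domain} is $m\ge\max\{0,t-r\}$. Since $e_S$ has $m$-slope $-\delta(2-c/2)<0$ (using $c\le12/13$), it is maximized at the smallest admissible $m$. The supremum is therefore a one-variable supremum over $r\in[t-1/2,t]$.

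\emph{Step 2: split at $r=1$.} On $r\le1$, $e_I$ is strictly decreasing in $r$, while $e_S(t-r)$ is strictly increasing. The supremum of the minimum is attained at their crossing, which is $r_\kappa(t,1/2)$ of \eqref{eq:iv-crossing}. The bounds recorded there give $t-1/2\le r_\kappa\le1$ and $t-r_\kappa\in[1/3,1/2]$, so the crossing is admissible. Its value is $R_{\kappa,\rm short}(t)$.

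On $1\le r\le t$, $e_L$ is increasing, and $e_S(\max\{0,t-r\})$ is nondecreasing in $r$. So the supremum is at $r=t$, $m=0$, with value $\min\{e_L(t),\,1-\delta c/4\}$. I will check that $1-\delta c/4\ge e_L(t)$ for $t\le3/2$ and $\delta\in I_0$; this is a linear comparison. The value is then $R_{\rm long}(t)$.

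Hence $R_{\mathrm{cert}}(t)=\max\{R_{\kappa,\rm short}(t),R_{\rm long}(t)\}$. The first function is decreasing in $t$ and the second is increasing, so the infimum over $t$ is their common value at $t_\kappa$, namely $R_\kappa=2/3$.

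\emph{Step 3: the alternative estimates.} Each alternative gives an exponent at least the one already used.
\begin{itemize}
\item An additional cap on the selected prime length can only shrink the $\min$ in \eqref{eq:capped-savings}, so it can only raise the exponents.
\item The unselected inverse bound satisfies $1-\delta r\ge e_I(r)$. For $r>1$ no positive selected length is admissible. The unselected unweighted bound satisfies $1-2\delta m\ge e_S(m)$.
\item H\"older interpolation of two bounds $U^{e_1},U^{e_2}$ for the same class yields an exponent $\theta e_1+(1-\theta)e_2\ge\min\{e_1,e_2\}$.
\end{itemize}
So at each witness pair the pointwise minimum, and hence the supremum, cannot decrease.

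\emph{Main obstacle.} I expect the hardest step to be the justification in Step~2 that the long branch is governed by $e_L$ rather than by $e_S(0)$. This needs a uniform sign check over $\kappa\in[13/18,1]$ and $\delta\in I_0$. I would try the endpoint table for $q_\kappa$ together with affineness in $\kappa$, as in the bounds $19/50<\xi(\kappa)<39/100$.
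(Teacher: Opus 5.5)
Your proposal is correct and is essentially the paper's proof: it uses the same crossing $r_\kappa(t,1/2)$ with its admissibility bounds, the same comparison $e_S(0)>e_L(t)$, the same minimization at $t_\kappa$, and the same handling of caps, unselected estimates and H\"older interpolation; your ``main obstacle'' is easier than you expect, since $\delta\ge19/50$, $c\le1$ and $t\le3/2$ already give the margin $-5/12+(19/50)(5/4)=7/120$, with no coupling between $\delta$ and $\kappa$ needed. The only difference is that you compute $R_{\mathrm{cert}}(t)$ exactly by monotonicity over the whole admissible domain, which gives both inequalities at once, whereas the paper bounds it from below by the two witness splits and cites the short/long derivation of Proposition~\ref{prop:iv-count} for the reverse bound.
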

\begin{proof}
On $m=t-r$, the crossing in \eqref{eq:iv-crossing} satisfies
\[
 \begin{gathered}
 t-1/2\le r_\kappa(t,1/2)\le1,\qquad
 1/3\le t-r_\kappa(t,1/2)\le1/2,\\
 e_I'=-\delta/2<0,\qquad
 \frac{d}{dr}e_S(t-r)=\delta(2-c/2)>0.
 \end{gathered}
\]
Hence this admissible split imposes
\[
 \begin{aligned}
 R_{\mathrm{cert}}(t)&\ge
 \min\{e_I(r_\kappa),e_S(t-r_\kappa)\}\\
 &=R_{\kappa,\mathrm{short}}(t)
 =1-\delta+\frac{\delta P_{\kappa,1/2}}{D_{\kappa,1/2}}(3/2-t).
 \end{aligned}
\]
At $t=3/2$, $(r,m)=(1,1/2)$; the inverse and unweighted estimates
without auxiliary prime factors both give $1-\delta$. This avoids
applying an estimate with selected prime factors when its strict length inequalities hold only at equality.

The second admissible split $(r,m)=(t,0)$ imposes
$R_{\mathrm{cert}}(t)\ge R_{\mathrm{long}}(t)=e_L(t)$, because
\[
 e_S(0)-e_L(t)
 \ge-5/12+\delta(3/2-c/4)
 \ge-5/12+(19/50)(5/4)=7/120>0.
\]
If a discrete presentation omits $m=0$, take
$(r,m)=(t-\epsilon,\epsilon)$ and pass to the scalar limit.
The short/long derivation in Proposition~\ref{prop:iv-count} gives
 the reverse uniform bound. Thus
\[
 \begin{gathered}
 R_{\mathrm{cert}}(t)=
 \max\{R_{\kappa,\mathrm{short}}(t),R_{\mathrm{long}}(t)\},\\
 R_{\kappa,\mathrm{short}}'<0,\qquad
 R_{\mathrm{long}}'=\alpha-\delta>0,\\
 R_{\kappa,\mathrm{short}}(t_\kappa)
 =R_{\mathrm{long}}(t_\kappa)=R_\kappa(\delta,1/2)=2/3.
 \end{gathered}
\]
Restoring the upper bounds on selected prime lengths weakens the savings
from the separate moment estimates. At each split,
for $0\le\theta\le1$,
\[
 \theta e_I+(1-\theta)e_S\ge\min(e_I,e_S).
\]
The same minimum includes the estimates without selected prime factors; hence neither
choice nor H\"older interpolation lowers this optimal uniform exponent.
\end{proof}

Enlarging the combined range exponent in the ordinary moment estimate also cannot help. For $M'\ge1$,
write
\[
 I'=M'-\delta r-\delta z,\qquad S'=M'-2\delta m-\delta z.
\]
The affine inequalities for their range exponents give
\[
\begin{array}{ll}
 r\le1,\ z>z_I=(1-r)/2:
 &I'-e_I(r)\ge(2-\delta)(z-z_I)>0,\\
 r>1:
 &I'\ge r(1-\delta)\ge e_L(r),\\
 z>z_S=(1-2m)/(6\kappa):
 &S'-e_S(m)\ge(6\kappa-\delta)(z-z_S)>0.
\end{array}
\]
For selected lengths below these bounds, $M'\ge1$ and smaller
amplitude savings likewise give no improvement. Other displayed
unselected exponents satisfy
\[
 \begin{gathered}
 R_{\mathrm I}(\delta)\ge1-\delta/2\ge161/200>2/3,\\
 R_N=1-\frac{2\alpha\delta}{3\alpha-\delta}
 \ge1-\frac{2\alpha(39/100)}{3\alpha-39/100}>2/3.
 \end{gathered}
\]
At $x=1/2$, each auxiliary prime sum gives a maximal saving $\delta/2$ per unit length.
Redistribution, finite unions and separate detector choices introduce
no stronger exponent: each detector permits the two adverse splits
above. A relation between the large values arising from different detector choices,
or a new joint moment estimate,
would require an additional estimate.

\begin{lemma}[Necessary constraints for a putative improvement]
\label{lem:iv-necessary-branches}
Suppose a construction in the stated comparison system has
$B<B_{\mathrm{new}}$, with $l_x,l_y>\ell>0$, $b=l_y-l_x>0$,
$M=l_x+l_y$ and $h=1-l_x+\ell>0$. Use the full direct majorant
\eqref{eq:full-low-boundary} and uniform nonprincipal comparisons
for every permitted class. If $13/18\le\kappa\le1$ and the supplied
critical count exponent is at least $2/3$, then necessarily
\[
 B\ge L_0,\qquad B\ge L_K,\qquad B\ge H_{\xi(\kappa)}.
\]
If also $B\le(1+\kappa)/2$, these inequalities contradict
Proposition~\ref{prop:iv-dual-optimality}.
\end{lemma}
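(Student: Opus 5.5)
The plan is to read each of the three inequalities off a single quantity that the comparison system must keep nonpositive, discarding everything except one term. The first two come from the direct majorant. The third comes from the nonprincipal comparison at the critical class $(\delta,x)=(\xi(\kappa),1/2)$ and endpoint $d=h$. The final sentence then follows by quoting Proposition~\ref{prop:iv-dual-optimality}.

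\emph{Direct constraints.} A construction certifying the boundary $B$ must have its direct bound at most $C(B)+\omega_0$ for arbitrarily small $\omega_0$. Under \eqref{eq:full-low-boundary} this means $B\ge B_{\rm low}$. I will bound $L_{\rm low}$ in \eqref{eq:full-low-exponent} from below in two ways, using only that the maxima dominate chosen entries.

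\begin{itemize}
\item Take the additive maximum to be at least $b/12$ (which is admissible since $b>0$) and $k(M,\ell)\ge0$. Then $B_{\rm low}\ge 5/6+l_x/6+b/12-\ell/6$. Since $l_x/6+b/12=(l_x+l_y)/12=M/12$, this is exactly $L_0$.
\item Keep the same additive term but use $k(M,\ell)\ge M+\ell-1$. This adds $(M+\ell-1)/2$ and gives $1/3+7M/12+\ell/3=L_K$.
\end{itemize}

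No sign condition on $M+\ell-1$ is needed, since Lemma~\ref{lem:free-reflection} includes both signs.

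\emph{Nonprincipal constraint.} A uniform comparison for every permitted class requires $E_B(h)\le 0$ at every class, up to losses that can be made arbitrarily small. Otherwise the nonprincipal term at that class exceeds $Z^{C(\beta_*)}$ when $\beta_*$ is arbitrarily close to $B$. I will evaluate \eqref{eq:general-extension-exponent} at $d=h$, $\delta=\xi:=\xi(\kappa)$, $a=(1+\xi)/2$ and $\bar g=x\delta=\xi/2$.

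\begin{itemize}
\item First I must check this class is genuinely permitted. We have $\xi\in I_0$, and $\xi<2\beta_*-1$ whenever $\beta_*$ is near $B\ge L_0$. The amplitude $x=1/2$ is the top of the allowed range. Class widths $\eb$ only perturb $\delta$ by $O(\eb)$ and disappear in the limit.
\item The coefficient of $R$ at $d=h$ is $h>0$, so the hypothesis $R\ge2/3$ from Lemma~\ref{lem:iv-count-obstruction} can be substituted as a lower bound.
\item With $z_0$ cancelling, this gives
\[
 E_B(h)\ge \tfrac{1+\xi}{2}(1-l_y)-B-\tfrac\ell2+\tfrac{\xi\ell}{2}+h\bigl(\tfrac12+\tfrac\xi2\bigr).
\]
\item Substituting $h=1-l_x+\ell$ and combining $(1-l_y)+(1-l_x+\ell)=2-M+\ell$, the right side collapses to $1+\xi-(1+\xi)M/2+\xi\ell-B=H_\xi-B$.
\end{itemize}

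Hence $E_B(h)\le0$ forces $B\ge H_{\xi(\kappa)}$.

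\emph{Conclusion.} With all three inequalities and $B\le(1+\kappa)/2$, Proposition~\ref{prop:iv-dual-optimality} gives $B\ge B_{\mathrm{new}}$, contradicting $B<B_{\mathrm{new}}$.

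I expect the algebra to be routine. The delicate step is justifying that the critical class must actually be controlled with zero loss. This means arguing that a strict deficit at $(\xi,1/2,h)$ cannot be absorbed by the $\Delta$-subtraction or by moving to a different contour. I would handle it through the limiting argument: the system's losses are arbitrary but fixed before the target, whereas $\beta_*-B$ may be arbitrarily small. A positive $H_\xi-B$ would then exceed every admissible saving.
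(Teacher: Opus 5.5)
Your algebra is right and matches the paper's core computation. You get $B\ge B_{\rm low}\ge L_0+k(M,\ell)/2\ge\max\{L_0,L_K\}$, and at $(d,\delta,x)=(h,\xi,1/2)$ you get exactly $E_B(h)=H_\xi-B+h(R-2/3)$, so $h>0$ and $R\ge2/3$ give $B\ge H_\xi$.

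The gap is the step you flagged but did not carry out. Here the comparison system explicitly contains the alternative contour choices and lower zero-class thresholds examined in this very proof (see the definition of $\mathscr A$). So a putative construction need not bound the critical dyad through the central contour $(a+16\eb,\,1-a-6\eb,\,z_0)$ that produces \eqref{eq:general-extension-exponent}, and your argument constrains only that one choice. The paper rules out the others one by one:
\begin{enumerate}
\item Raising the lowest-class threshold to $\delta_f\ge\xi$ gives $H_{\delta_f}-B+h/3$ with $H_{\delta_f}\ge H_\xi$ (since $1-M/2+\ell>3/4$), so it demands more.
\item On global reciprocal contours $\Rea s=\beta_*+\gamma$, $\Rea w=\omega$, $\Rea z=\zeta$, the exponent at $d=h$ stays positive: above $21/80$ for $0<\omega\le11/10$ via the strip bound $U^{3/5+\eps}$ and for $-1/100\le\omega\le0$, and positive by Euler convergence for $\omega>11/10$.
\item On absolute contours the exponent exceeds $1-B+5h/6>3/4$.
\item Principal contour shifts leave the residues $w=1$, $z=1/6$ and $C(s)$ unchanged.
\end{enumerate}

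These bounds rest on consequences of the hypotheses that your proposal never derives:
\begin{enumerate}
\item $M-2\ell>b>0$, hence $L_0>5/6$. This is also what actually places the critical class below $2\beta_*-1$.
\item $M-2\ell\le12B-10$, hence $3/4<h<1$, so $d=h$ is a moderate norm.
\item $M<13/14$, from $B\ge L_K$ and $B<7/8$, used together with $l_y<M$ and $\beta_*\le7/8$.
\end{enumerate}

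Your proposed limiting argument does not substitute for this. It addresses losses versus $\Delta=\beta_*-B$, which in $\mathscr A$ is settled by definition: losses are set to zero and $E_B(d)\le0$ is imposed outright, and a positive gap $\beta_*-B$ is explicitly not allowed to replace it. The open issue is instead which exponent a permitted choice produces at the critical dyad.

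Your justification that otherwise ``the nonprincipal term at that class exceeds $Z^{C(\beta_*)}$'' is also not correct. $E_B$ is the exponent of a supplied upper bound, and the class may even be empty. The constraint is a requirement of the comparison system, not a lower bound for the actual term.
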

\begin{proof}
For $G=\max\{0,b/12,b-l_y/2\}\ge b/12$,
\[
 \begin{aligned}
 B\ge B_{\mathrm{low}}
 &=5/6+l_x/6+G-\ell/6+k(M,\ell)/2\\
 &\ge L_0+k(M,\ell)/2\ge\max\{L_0,L_K\},
 \end{aligned}
\]
using the inequalities $k\ge0$ and $k\ge M+\ell-1$.
The inequalities for the length parameters consequently imply
\[
 \begin{gathered}
 M-2\ell>b>0,\qquad B\ge L_0>5/6,\\
 M-2\ell\le12B-10,\qquad
 h\ge6-6B+b/2>3/4,\qquad h<1,\\
 M<13/14,\qquad l_y<M,\qquad
 (1+\xi)/2<139/200<5/6<B.
 \end{gathered}
\]
Thus $d=h$ is a moderate norm and the critical zero class is below
any upper cutoff supplied by $\beta_*>B$.

At $(d,\delta,x)=(h,\xi,1/2)$, direct substitution gives
\begin{equation}\label{eq:iv-critical-physical-necessity}
 E_B(h)=H_\xi-B+h(R-2/3).
\end{equation}
All $z_0$ terms cancel, and the lower bound $H_\xi$ is independent of $b$.
Since $h>0$ and $R\ge2/3$, $E_B(h)\le0$ requires $B\ge H_\xi$.
Raising the lower zero-class threshold to $\delta_f\ge\xi$ instead gives
\[
 \begin{gathered}
 E_{B,\mathrm{floor}}(h)=H_{\delta_f}-B+h/3,\\
 H_{\delta_f}-H_\xi
 =(\delta_f-\xi)(1-M/2+\ell)\ge0,
 \qquad1-M/2+\ell>3/4.
 \end{gathered}
\]
Its trivial-count comparison therefore requires the stronger
$B\ge H_\xi+h/3$. A threshold below $\xi$ retains the critical class.

\paragraph{Global reciprocal contours.}
For these comparisons assume $B<\beta_*\le7/8$.
Put $s_r=\beta_*+\gamma$, $\gamma>0$, $w_r=\omega$ and $z_r=\zeta$.
For $\omega>0$, $\zeta>1/6$ and $s_r+\omega>1$, the local error exponents for $p\nmid u$
\[
 -s_r-\omega,\ -s_r-6\zeta,\ -\omega-6\zeta,
 \ 4-6s_r-6\zeta,\ 1-s_r-\omega-6\zeta
\]
are $<-1$ and the local error exponents for $p\mid u$ are negative. The local table and
quotient-free selected formula give
\[
 \begin{gathered}
 G_p=O(1)\ (p\nmid u),\qquad
 G_p\ll q_p^{\max(1-\omega,0)}\ (p\mid u),\\
 \text{full positive tuple sum}
 \ll U^\eps Z^{\ell\zeta},\qquad
 P_i^{\zeta-\min(\omega,1)}\le P_i^\zeta.
 \end{gathered}
\]
For $0<\omega\le11/10$, the strip bound \cite[Lemma~4.8]{QRH}
contributes $U^{3/5+\eps}$, giving the relative dyad exponent
\[
 \gamma+h(\zeta-1/6)+l_y(\omega-1)+d(8/5-\zeta)+O(\eps).
\]
At $d=h$, it is bounded below by
\[
 \begin{aligned}
 \gamma+43h/30+l_y(\omega-1)
 &\ge43h/30-\beta_*l_y+\gamma(1-l_y)\\
 &>43/40-(7/8)(13/14)=21/80>0.
 \end{aligned}
\]
For $\omega>11/10$, Euler convergence gives instead
$\gamma+5h/6+l_y(\omega-1)>0$.

For $-1/100\le\omega\le0$, admissibility requires
$s_r+\omega>1$ and $\zeta\ge17/50$.
Even when the additional nonnegative losses are omitted, the endpoint is
$>21/80$; the selected-divisor factor adds $-\omega\ell\ge0$ and
deleted numerator factors add only nonnegative conductor powers.

\paragraph{Absolute and principal contours.}
For any eligible fixed $s_r,w_r>1$ and sufficiently large $z_r$,
Euler convergence and the same local table give an endpoint exponent
relative to $C(B)$ of
\[
 (s_r-B)+l_y(w_r-1)+5h/6>1-B+5h/6>3/4.
\]
It remains positive relative to $C(\beta_*)$, since
$\beta_*-B<1/24$. At $d=h$ all $z_r$ terms cancel.
Principal contour changes affect only the exponent savings for the error terms: the residues
$w=1$, $z=1/6$ and the exponent of the principal term are fixed.
Thus the displayed central, unselected, global and absolute comparisons
cannot evade $B\ge H_\xi$. Only the stated local, strip-growth and
Euler-convergence estimates are included; new estimates or residue
extractions change the system.
\end{proof}

\subsection{The optimality theorem for the specified comparison system}
\label{sec:iv-system-optimality}

In this subsection, we specify the full comparison system and combine
the preceding lower bounds to prove
Theorem~\ref{thm:iv-system-optimality}. The admissible set $\mathscr A$
is defined immediately below; the theorem proves
$\inf_{\mathfrak C\in\mathscr A}B(\mathfrak C)=B_{\mathrm{new}}$.

Let $\mathscr A$ be the set of parameter choices satisfying the following
comparison inequalities, with all exponent losses set to zero:
\[
 \begin{gathered}
 l_x,l_y>\ell>0,\quad b=l_y-l_x>0,\quad
 M=l_x+l_y,\quad h=1-l_x+\ell>0,\\
 13/18\le\kappa\le1,\qquad B\le(1+\kappa)/2,\\
 B\ge B_{\mathrm{low}},\qquad
 E_B(d)\le0\quad\text{for every permitted zero and amplitude class}.
 \end{gathered}
\]
Here $E_B(d)$ is the exponent in \eqref{eq:general-extension-exponent},
and the zero and amplitude parameters come from
\eqref{eq:zero-class} and \eqref{eq:gbar}.
The supplied count choices are those of
Section~\ref{sec:iv-count-obstruction}; the contour choices, lower zero-class thresholds and norm
ranges are those examined in the proof of Lemma~\ref{lem:iv-necessary-branches}. There is no additional bound on the total selected prime length, and
all positive losses are suppressed. Analytic applications retain the
original estimates' coefficient, coprimality, support and domain conditions.
Uniform scalar comparisons include classes whose nonemptiness is
unknown; a fixed positive gap $\beta_*-B$ does not replace the
requirement $E_B(d)\le0$.

\begin{theorem}[Optimality of the displayed exponent comparison]
\label{thm:iv-system-optimality}
For the admissible parameter choices $\mathfrak C\in\mathscr A$,
\[
 \inf_{\mathfrak C\in\mathscr A}B(\mathfrak C)=B_{\mathrm{new}}.
\]
Every such choice satisfies $B(\mathfrak C)\ge B_{\mathrm{new}}$; equality is
attained when all exponent losses are set to zero, using
\eqref{eq:iv-final-parameters}.
This includes free $M$, all detecting cutoffs in $[1,3/2]$, the
specified estimates with and without selected prime factors, and the auxiliary choices above.
\end{theorem}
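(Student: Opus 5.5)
The proof splits into a lower bound, valid for every admissible choice, and an attainment statement, exhibited by an explicit limiting choice.

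\emph{Lower bound.} Fix $\mathfrak C\in\mathscr A$ with parameters $(l_x,l_y,\ell,\kappa,B)$, and put $M=l_x+l_y$. Set $\xi=\xi(\kappa)$, the smaller root of \eqref{eq:iv-critical-polynomial}; it lies in $I_0$.
\begin{enumerate}
\item Take the critical class $(d,\delta,x)=(h,\xi,1/2)$. Lemma~\ref{lem:iv-count-obstruction} shows that every count choice supplied in Section~\ref{sec:iv-count-obstruction} gives an exponent $R\ge2/3$ for this class. This covers every detector cutoff $t\in[1,3/2]$, both length splits, the estimates with and without selected primes, H\"older interpolation, and enlarged range exponents.
\item Apply Lemma~\ref{lem:iv-necessary-branches}, using $B\ge B_{\rm low}$ and the requirement $E_B(h)\le0$ at the critical class. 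This yields the three affine constraints
\[
 B\ge L_0,\qquad B\ge L_K,\qquad B\ge H_{\xi}.
\]
\item Part of that lemma's proof shows these constraints cannot be evaded. Contour changes do not help: global reciprocal, absolute and principal contours all have positive exponent at $d=h$. Moving the lower zero-class threshold does not help: it either retains the critical class or replaces $H_\xi$ by the stronger $H_{\delta_f}+h/3$. Only the choices listed in the definition of $\mathscr A$ need to be checked.
\item Combine the three constraints with $B\le(1+\kappa)/2$, which is part of the definition of $\mathscr A$. Proposition~\ref{prop:iv-dual-optimality} then gives $B\ge F(\kappa)$, then $\kappa\ge\kappa_{\mathrm{new}}$, and finally $B\ge F(\kappa_{\mathrm{new}})=B_{\mathrm{new}}$.
\end{enumerate}
This proves $B(\mathfrak C)\ge B_{\mathrm{new}}$.

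\emph{Attainment.} Take the parameters \eqref{eq:iv-final-parameters} with all exponent losses zero: $\xi=\xi_{\mathrm{new}}$, $\kappa=\kappa_{\mathrm{new}}$, $\ell=\ell_{\mathrm{new}}$, $b=b_{\mathrm{new}}$, and $M=1-\ell$. Check membership in $\mathscr A$ as follows.
\begin{itemize}
\item The range and length inequalities $13/18\le\kappa\le1$, $l_x,l_y>\ell>0$, $b>0$ and $h>0$ follow from Lemma~\ref{lem:iv-reserves}, since $\xi_{\mathrm{new}}\in I$.
\item By \eqref{eq:iv-fixed-point-identity}, $\kappa_{\mathrm{new}}=2B_{\mathrm{new}}-1$, so $B\le(1+\kappa)/2$ holds with equality.
\item With $M+\ell=1$, \eqref{eq:balanced-full-low} gives $B_{\rm low}=11/12-\ell/4=B(\xi_{\mathrm{new}})$. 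The positive-part terms vanish because $5\ell<1$ and $8b+3\ell<3$.
\item $E_B(d)\le0$ for every class:
 \begin{itemize}
 \item Lemma~\ref{lem:iv-endpoint} gives $E_B(h)\le0$ with the count $R_{\kappa}$.
 \item The slope bounds in Proposition~\ref{prop:iv-bootstrap} extend this to all moderate norms $1/2\le d\le h$.
 \item Lemma~\ref{lem:iv-reserves} gives strict savings for the lowest zero class and for the small and intermediate norm ranges.
 \item The large-norm range uses the absolute contour with $z_{\mathrm{new}}$ large. The short interval above $h$ would cost $2\zeta$; since all losses are suppressed, it is absorbed and the exponent stays $\le0$ in the zero-loss system.
 \end{itemize}
\end{itemize}
Hence this choice lies in $\mathscr A$ with $B=B_{\mathrm{new}}$, so the infimum is attained.

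\emph{Main obstacle.} The bookkeeping step is making sure the definition of $\mathscr A$, meaning the count choices, contours and thresholds it permits, is exactly the set covered by Lemmas~\ref{lem:iv-count-obstruction} and~\ref{lem:iv-necessary-branches}. If it is, no other choice can evade $R\ge2/3$ at the critical class. Plan: read the definition of $\mathscr A$ item by item and match each permitted choice to the case in those proofs that handles it.

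The other subtlety is attainment at the boundary. The endpoint inequality there holds only with equality at the critical class, and the moment hypothesis holds only at equality, $\beta_*\le(1+\kappa)/2$. This is acceptable because $\mathscr A$ is defined by non-strict scalar inequalities with zero losses. The analytic theorem does not need this case, since it is obtained through the iteration in Theorem~\ref{thm:iv-limit}.
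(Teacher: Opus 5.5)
Your proposal matches the paper's proof: the lower bound comes from Lemmas~\ref{lem:iv-count-obstruction} and~\ref{lem:iv-necessary-branches}, followed by Proposition~\ref{prop:iv-dual-optimality} together with $B\le(1+\kappa)/2$, and attainment comes from checking the limiting parameters \eqref{eq:iv-final-parameters} against Lemmas~\ref{lem:iv-endpoint} and~\ref{lem:iv-reserves}. One small adjustment is needed: Lemma~\ref{lem:iv-necessary-branches} assumes $B<B_{\mathrm{new}}$ (its proof uses $B<7/8$ to obtain $h>3/4$ and $M<13/14$), so, as the paper does, dispose of the trivial case $B\ge B_{\mathrm{new}}$ first and argue by contradiction rather than applying the lemma to an arbitrary $\mathfrak C\in\mathscr A$.
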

\begin{proof}
If $B<B_{\mathrm{new}}$, Lemmas~\ref{lem:iv-count-obstruction} and
\ref{lem:iv-necessary-branches} give
$B\ge\max(L_0,L_K,H_\xi)$. Proposition~\ref{prop:iv-dual-optimality}
and $B\le(1+\kappa)/2$ then give $B\ge B_{\mathrm{new}}$, a contradiction.
Conversely, at \eqref{eq:iv-final-parameters},
\[
 B_{\mathrm{low}}=B_{\mathrm{new}}=L_0=L_K=H_{\xi_{\mathrm{new}}},
 \qquad B_{\mathrm{new}}=(1+\kappa_{\mathrm{new}})/2.
\]
Lemma~\ref{lem:iv-endpoint} gives every central comparison;
Lemma~\ref{lem:iv-reserves} supplies the remaining strict auxiliary inequalities.
Hence these parameters satisfy every defining inequality of
$\mathscr A$ when the exponent losses are set to zero. Their analytic
conclusion is the limit of the finite steps justified by the preceding zero-free bounds in
Theorem~\ref{thm:iv-limit}, not a moment application with an
unproved endpoint hypothesis.
\end{proof}

\begin{corollary}[Arbitrary successive admissible constructions]
\label{cor:iv-all-constructions}
At each improving step of this system, allow arbitrary admissible choices of
the length parameters, the zero-detector truncation exponent $t$, the moment
parameter $\kappa$ and the other auxiliary parameters.
If each moment hypothesis comes from the preceding proved boundary,
every resulting bound and every real limit is at least $B_{\mathrm{new}}$.
The sequence in Section~\ref{sec:iv-iteration} attains this infimum.
\end{corollary}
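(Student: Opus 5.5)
The plan is to reduce the corollary to Theorem~\ref{thm:iv-system-optimality} and Proposition~\ref{prop:iv-dual-optimality}, arguing by induction along an arbitrary sequence of improving steps. A step is a choice $\mathfrak C_n$ of lengths $(l_x,l_y,\ell)$, cutoff $t$, moment parameter $\kappa_n$ and auxiliary data. It produces a bound $B_n=B(\mathfrak C_n)$. Its moment hypothesis $\beta_*\le(1+\kappa_n)/2$ must be supplied by a previously proved boundary $\widehat B_{n-1}$, namely the minimum of $7/8$ and the earlier $B_j$. Thus the admissibility constraint is $\kappa_n\ge2\widehat B_{n-1}-1$, not merely $B_n\le(1+\kappa_n)/2$.

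First I would show that every step with $B_n<B_{\mathrm{new}}$ is impossible. By Lemma~\ref{lem:iv-count-obstruction}, the supplied critical count exponent at $\delta=\xi(\kappa_n)$, $x=1/2$ is at least $2/3$, whatever $t$ and whatever selected or unselected estimates are used. Lemma~\ref{lem:iv-necessary-branches} then forces $B_n\ge\max\{L_0,L_K,H_{\xi(\kappa_n)}\}$. The first part of Proposition~\ref{prop:iv-dual-optimality} gives $B_n\ge F(\kappa_n)$. Monotonicity of $F$ reduces the task to showing $\kappa_n\ge\kappa_{\mathrm{new}}$. I would prove this inductively. The hypothesis is that all earlier bounds satisfy $\widehat B_{n-1}\ge B_{\mathrm{new}}$; the base case is $\widehat B_{-1}=7/8$. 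Then $\kappa_n\ge2\widehat B_{n-1}-1\ge2B_{\mathrm{new}}-1=\kappa_{\mathrm{new}}$, so $B_n\ge F(\kappa_n)\ge F(\kappa_{\mathrm{new}})=B_{\mathrm{new}}$, closing the induction. The identity $F(\kappa_{\mathrm{new}})=B_{\mathrm{new}}$ comes from \eqref{eq:iv-fixed-point-identity}. Since every term is $\ge B_{\mathrm{new}}$, every real limit or infimum is as well.

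For attainment I would cite the proof of Theorem~\ref{thm:iv-limit}. There, the choices $\kappa_{n+1}=g(\kappa_n)$ are admissible at each step via Proposition~\ref{prop:iv-bootstrap}, and $B_n\downarrow B_{\mathrm{new}}$. So the infimum is attained as a limit, and not by any single step, because $g(\kappa)>\kappa_{\mathrm{new}}$ for $\kappa>\kappa_{\mathrm{new}}$.

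The main obstacle is the case where $\kappa_n$ lies outside the range on which $\xi(\kappa)$ and $F$ were analysed, or where a step with larger $\kappa$ could be preferable. I would handle this by noting that $F$ is increasing on all of $[13/18,1]$, since $\kappa'(\xi)>0$ and $B'(\xi)>0$ on $I_0$, which was verified before Proposition~\ref{prop:iv-dual-optimality}. Taking $\kappa$ larger than the minimal allowed value therefore never helps. Values below $2\widehat B_{n-1}-1$ are excluded because the hypothesis of Proposition~\ref{prop:iv-plain} would be unproved.
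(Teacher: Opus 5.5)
Your argument is correct, but it is organized differently from the paper's. The paper does not induct. For an \emph{improving} step it chains $B\le\widehat B\le(1+\kappa)/2$, so the step satisfies the compatibility condition in the definition of $\mathscr A$. It then applies Theorem~\ref{thm:iv-system-optimality} to each step separately, and uses closedness of the lower bound for limits. That theorem's proof turns $B\le(1+\kappa)/2$ into $\kappa\ge\kappa_{\mathrm{new}}$ via the fixed-point equivalence $2F(\kappa)-1\le\kappa\iff p(\xi)\le0$ in Proposition~\ref{prop:iv-dual-optimality}.

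You instead get $\kappa_n\ge\kappa_{\mathrm{new}}$ directly from the inductive lower bound $\widehat B_{n-1}\ge B_{\mathrm{new}}$ and $\kappa_{\mathrm{new}}=2B_{\mathrm{new}}-1$. You then need only the first half of Proposition~\ref{prop:iv-dual-optimality} and the monotonicity of $F$. In effect you re-run the proof of the theorem with its compatibility step replaced by the induction hypothesis.

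Your route does not use the improving hypothesis $B_n\le\widehat B_{n-1}$ and avoids the fixed-point analysis. It does rely on the chain starting from a proved bound at least $B_{\mathrm{new}}$, such as $7/8$. The paper's route is shorter and independent of the history of $\widehat B$: the theorem applies to any step satisfying $B\le(1+\kappa)/2$, however that inequality was obtained.

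A small attribution point: $F(\kappa_{\mathrm{new}})=B_{\mathrm{new}}$ follows from the definitions \eqref{eq:iv-final-parameters}, since $\xi(\cdot)$ inverts $\kappa(\cdot)$. It is \eqref{eq:iv-fixed-point-identity} that supplies $\kappa_{\mathrm{new}}=2B_{\mathrm{new}}-1$.
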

\begin{proof}
For an improving step justified by the preceding zero-free bound,
\[
 \beta_*\le\widehat B,\qquad
 \widehat B\le(1+\kappa)/2,\qquad
 B\le\widehat B
 \Longrightarrow B\le(1+\kappa)/2.
\]
The theorem applies independently to each step; its closed lower
bound persists under real limits.
\end{proof}

The optimality statement concerns the uniform counting and
comparison exponents obtained from the estimates specified above.
An estimate excluding a critical class, a relation between the large values arising from different detector
choices, or a stronger analytic estimate changes the system.

\subsection{Rational approximation and the scope for further improvements}
\label{sec:iv-further-parameters}

In this subsection, we give a one-step improvement with rational
parameters in Proposition~\ref{prop:iv-rational-step}, with the exact
inequalities in \eqref{eq:iv-rational-certificate}, and determine the
infimum when the parameters are required to be rational.
The rational construction gives an explicit approximation to the
algebraic limiting bound without changing its infimum.
We then distinguish variation of parameters already covered by
Theorem~\ref{thm:iv-system-optimality} from stronger analytic estimates
that could improve the bound.

\begin{proposition}[A one-step bound with rational parameters]
\label{prop:iv-rational-step}
Recall the rational boundary $B_{\mathrm{cp}}$ of
Corollary~\ref{cor:iv-rational-checkpoint}. Put
\[
 (\kappa_{\mathrm{cp}},b_{\mathrm{cp}},\ell_{\mathrm{cp}})
 =\left(\frac{74991414}{10^8},\frac{617}{5000},
                      \frac{5005157}{30000000}\right),\qquad
 11/12-\ell_{\mathrm{cp}}/4=B_{\mathrm{cp}}.
\]
With the coefficient, coprimality, support and smooth-weight hypotheses of
the modified average \eqref{eq:modified} and its normalization
\eqref{eq:general-extension-signal}, these parameters give
\[
 \beta_*\le B_{\mathrm{alg}}
 \quad\Longrightarrow\quad\beta_*\le B_{\mathrm{cp}}.
\]
Thus the explicit rational bound also follows in one step from Part III.
\end{proposition}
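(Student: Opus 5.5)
The plan is to repeat the proof of Proposition~\ref{prop:iv-bootstrap} verbatim, with the rational triple $(\kappa_{\mathrm{cp}},b_{\mathrm{cp}},\ell_{\mathrm{cp}})$ in place of the one-parameter family \eqref{eq:iv-geometry}. We then replace every appeal to the uniform Bernstein certificates on $I$ by exact rational checks at this single point.

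\textbf{Moment hypothesis.} First I would verify
\[
 (1+\kappa_{\mathrm{cp}})/2\ge B_{\mathrm{alg}}
 \qquad\text{and}\qquad
 13/18\le\kappa_{\mathrm{cp}}\le3/4 .
\]
With $\kappa_{\mathrm{cp}}=0.74991414$, the first inequality reads $0.87495707>0.874957069799\ldots$. This can be certified with the bracket \eqref{eq:root-bracket} after reducing to the form $a+c\,r_{921}$. It follows that $\beta_*\le B_{\mathrm{alg}}$ supplies the hypothesis of Proposition~\ref{prop:iv-plain}, and hence the count $R_{\kappa_{\mathrm{cp}}}$ of Proposition~\ref{prop:iv-count}.

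\textbf{Auxiliary inequalities.} Next I would set $l_x=(1-b-\ell)/2$, $l_y=(1+b-\ell)/2$ and $h=(1+b+3\ell)/2$, all of which are rational. For these values I would check by exact arithmetic the list in Lemma~\ref{lem:iv-reserves}:
\begin{itemize}
\item $b>0$, $\ell<1/5$, $l_x-b-\ell>0$, $l_y-\ell>11b/6$, $5\ell>h$, $437/500<B_{\mathrm{cp}}<7/8$;
\item the savings $m_F$, $m_S-(7/8-B_{\mathrm{cp}})$ and $m_I(1/50),m_I(19/25)$ are all positive;
\item the denominator bounds hold with $c=2/(3\kappa_{\mathrm{cp}})$.
\end{itemize}
Because every quantity here is a rational number, these checks are finite computations.

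\textbf{Endpoint inequality.} This is the main step, and I would expect it to be the obstacle. We need
\[
 E_{B_{\mathrm{cp}}}(h)\le 0\qquad\text{on }[0,5/6]\times[0,1/2],
\]
in fact strictly, with $R=R_{\kappa_{\mathrm{cp}}}$. As in \eqref{eq:iv-Q}, I would form the quadratic
\[
 Q=-2\mathcal J E=A(y)\delta^2+B_q(y)\delta+C_q(y),
\]
which now has rational coefficients. Unlike the limiting family, the parameters are not on the tangency curve. So I would aim to show directly that $A(y)>0$ and $4AC_q-B_q^2>0$ for all $y\in[0,1/2]$, including $y=0$.

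The concern is the margin. The parameters differ from a tangent configuration only at order $10^{-8}$, and the value $B_{\mathrm{cp}}$ sits only about $6\times10^{-9}$ above $B_{\mathrm{new}}$. I would therefore first verify the critical slice $y=0$, where the discriminant is a single rational number and must come out positive of size roughly $10^{-9}$. After that I would treat $y>0$ by writing $4AC_q-B_q^2=d_0+y(\cdots)$ with $d_0>0$ tiny. The bracketed coefficients should be close to the values in Lemma~\ref{lem:iv-endpoint}, so the table bounds there, which are of order $10^{-2}$ with rational perturbation, should dominate.

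If the discriminant argument is too tight, the fallback is a direct Bernstein subdivision of $Q$ in $(\delta,y)$, as in Lemma~\ref{lem:rational-saving}. I would use a fine subdivision near $(\delta,y)=(\xi_{\mathrm{new}},0)$ and read off the minimum coefficient on each box.

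\textbf{Assembly.} With the endpoint inequality and the auxiliary checks in hand, the remaining argument goes through exactly as in Proposition~\ref{prop:iv-bootstrap}: the direct bound, the small, intermediate and large norm ranges, the principal extraction, and Proposition~\ref{prop:extension-assembly}. It yields $\beta_*\le B_{\mathrm{cp}}=11/12-\ell_{\mathrm{cp}}/4$ by Proposition~\ref{prop:criterion-summary}. The Dirichlet transfer then follows from Corollary~\ref{cor:dirichlet-conclusion}.
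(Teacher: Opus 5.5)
Your proposal matches the paper's proof: the paper also certifies $B_{\mathrm{alg}}<(1+\kappa_{\mathrm{cp}})/2$ (by squaring rather than via the bracket \eqref{eq:root-bracket}), substitutes the rational triple into \eqref{eq:iv-Q}, proves $A_j>1/4$, $d_j>10^{-7}$ for $0\le j\le4$ and the non-derivative entries of \eqref{eq:iv-reserve-list} by exact arithmetic, and then reruns the assembly of Proposition~\ref{prop:iv-bootstrap}. Your margin estimate is too pessimistic, since $d_0$ is of order $10^{-7}$ rather than $10^{-9}$, so the discriminant argument suffices and the Bernstein fallback is not needed.
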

\begin{proof}
Suppress the subscript $\mathrm{cp}$ and set
$l_x=(1-b-\ell)/2$, $l_y=l_x+b$, $h=1-l_x+\ell$ and
$c=2/(3\kappa)$. The moment hypothesis follows by squaring positive quantities:
\[
\begin{gathered}
 q:=\frac{1507-1653(1+\kappa)/2}{2}
   =\frac{6069596329}{200000000}>0,\\
 921-q^2=\frac{402989723759}{40000000000000000}>0
 \quad\Longrightarrow\quad B_{\mathrm{alg}}<(1+\kappa)/2,
 \qquad13/18<\kappa<3/4.
\end{gathered}
\]
Use \eqref{eq:iv-Q} at this rational triple, without imposing the
critical-class equations, and write
\[
 Q=A(y)\delta^2+B_q(y)\delta+C_q(y),\quad
 A(y)=\sum_{j=0}^3A_jy^j,\quad
 4A(y)C_q(y)-B_q(y)^2=\sum_{j=0}^4d_jy^j.
\]
Let $\mathcal S_{\mathrm{cp}}$ consist of the 24 expressions in
\eqref{eq:iv-reserve-list} after omitting
$36\xi^2-78\xi+25$, $\kappa'$, $B'$, $\kappa'-1000B'$ and $-p'$,
and substituting the rational parameters above into the remaining
expressions. Exact substitution gives the following \emph{system of exact rational inequalities} $\mathcal C_{\mathrm{cp}}$:
\begin{equation}\label{eq:iv-rational-certificate}
 A_j>\frac14\ (0\le j\le3),\qquad
 d_j>\frac1{10^7}\ (0\le j\le4),\qquad
 s>\frac1{25000}\ (s\in\mathcal S_{\mathrm{cp}}).
\end{equation}

The accompanying exact-arithmetic certificate file \cite{LeanExtension}
records these lower bounds and all 33 exact values in its
\texttt{rational\_checkpoint} entry. Consequently, for $y=1/2-x\ge0$,
\[
 4A(y)Q=(2A(y)\delta+B_q(y))^2+\sum_{j=0}^4d_jy^j>0,
 \qquad E_B(h)=-Q/(2\mathcal J_\kappa)<0
\]
on $[0,5/6]\times[0,1/2]$.

The auxiliary inequalities give the balanced direct hypotheses,
$\ell/h>1/5$, the principal domain and all required exponent savings for the lowest zero class, the
intermediate and small norm ranges, the prime lengths and the rounding errors used in
Section~\ref{sec:iv-assembly}. Proposition~\ref{prop:iv-count} therefore
applies with the preceding moment hypothesis, and its exponent satisfies
$0<\partial_dE_B<2$ on the actual class range.
If $\Delta=\beta_*-B>0$, the same small-, intermediate- and large-norm
estimates, with $\mathcal C_{\mathrm{cp}}$ in place of the two
$\xi$-family certificates, verify Proposition~\ref{prop:extension-assembly}.
Together with the direct and principal estimates, this gives
\[
 |J_\eta(Z)|\ll_\eta Z^{C_b(B)+\Delta/2},\qquad
 |J_\eta(Z)-f_\eta(Z)|\ll_\eta Z^{C_b(\beta_*)-\sigma},\qquad
 \sigma>0,
\]
with $\sigma$ independent of $\eta$ and $|H_\eta-1|\le1/2$ on
$\Rea s>B$. Proposition~\ref{prop:criterion-summary} is the desired
contradiction; Corollary~\ref{cor:dirichlet-conclusion} transfers the
bound to Dirichlet $L$-functions.
\end{proof}

At the limit,
\[
 \ell_{\mathrm{new}}=11/3-4B_{\mathrm{new}},\qquad
 53835237b_{\mathrm{new}}^3+381756141b_{\mathrm{new}}^2
                 +94706196b_{\mathrm{new}}-17602208=0.
\]
The $B$-cubic \eqref{eq:iv-boundary-cubic} and this $b$-cubic reduce
modulo $7$ to
\[
 2X^3-3X^2-X+3,\qquad X^3-3X^2-3X-1,
\]
with value lists
\[
 (3,1,5,6,2,5,6),\qquad(6,1,3,4,3,6,5).
\]
Their leading coefficients are nonzero and neither has a root;
Gauss's lemma proves irreducibility. Thus
\[
 [\mathbb Q(B_{\mathrm{new}}):\mathbb Q]
 =[\mathbb Q(\ell_{\mathrm{new}}):\mathbb Q]
 =[\mathbb Q(b_{\mathrm{new}}):\mathbb Q]=3.
\]
The supplement also verifies the eliminations.

Rational parameters have the same infimum. Given $\epsilon>0$,
choose, after finitely many iterations, a bound $B_n<B_{\mathrm{new}}+\epsilon/2$ and a
slightly larger rational bound
$B_n<\widetilde B<\min\{B_{\mathrm{new}}+\epsilon,(1+\kappa_n)/2\}$.
This makes the direct and central inequalities strict. On the compact
class rectangle their exponents are continuous; a rational
$\widetilde\kappa>\kappa_n$ sufficiently close to $\kappa_n$
preserves the preceding hypothesis, and sufficiently close rational
lengths preserve all strict exponent comparisons and auxiliary inequalities. Hence
\[
 \inf_{\mathscr A\,\mathrm{with\ rational\ parameters}} B
 =\inf_{\mathscr A}B=B_{\mathrm{new}}.
\]
The first infimum is not attained: positivity of the weights
forces $L_0=L_K=H_\xi=B_{\mathrm{new}}$ at equality, so
$\ell=\ell_{\mathrm{new}}\notin\mathbb Q$.

The total length $M$ is already free, balance occurs at equality,
and $t$ is already minimized. The variables $\delta,x,d$ are class
coordinates requiring uniform control. Upper bounds on selected prime lengths and the losses from rounding and
smoothing can only weaken the relaxed comparison; the central
and remote contour coordinates cancel at $d=h$.
A stronger analytic estimate can change the obstruction. For example,
with the present $D,P$, formally vary $\alpha$ and put
\[
 R=1-\delta+\frac{(\alpha-\delta)\delta P}{2\mathcal J},
 \qquad\mathcal J=(\alpha-\delta)D+\delta P.
\]
Then
\[
 \partial_\alpha R=\frac{\delta^2P^2}{2\mathcal J^2}>0,
 \qquad \partial_\alpha e_L(r)=r-1>0\quad(r>1).
\]
A smaller \emph{proved} amplification exponent than $5/6$ improves
these upper-bound exponents. Larger admissible prime lengths in the moment estimates, a
joint moment estimate for the detecting polynomials,
better functional-equation or Gram estimates, or restrictions on actual
critical classes could likewise alter the comparison. Such changes
require new results with the original coefficient, coprimality and height
hypotheses; they are not additional free parameters in the theorem.

\appendix
\section{Theta coefficient conventions}
\label{app:theta-conventions}

In this section, we match the theta coefficients of Section~\ref{sec:theta-definition} with
the cited cusp expansions, recording the conjugations and fixed
normalizing factors explicitly.
This identifies the same arithmetic normalization throughout
Parts I--IV.

The coefficient
formulas, support and normalization are those of
\cite[equations~(5.7)--(5.16) and Appendix~A]{DR},
with the conjugation convention in
\cite[Proposition 5.1, proof, cusp expansion and infinity coefficient]{QRH}.
Precisely, if $d_i^{\rm DR}$ denotes the coefficient in
\cite[equation~(5.12)]{DR}, then our three functions are
\[
 (d_0,d_1,d_2)(\mu)=
 \bigl(\overline{d_1^{\rm DR}(-\mu)},
       \overline{d_{19}^{\rm DR}(-\mu)},
       \overline{d_{10}^{\rm DR}(-\mu)}\bigr);
\]
the corresponding coefficient formulas are rows $1,19,10$ of
\cite[Appendix~A]{DR}, reproducing
\cite[Table~III, p.~151]{Patterson}.
The matrices $H_j$ are the representatives
$\gamma_1,\gamma_{19},\gamma_{10}$ specified in
\cite[Section~5.1, before equation~(5.9)]{DR}.
These locators refer to the version
specified in the bibliography.

For clarity, write $\tau^{\rm DR}$ for the infinity coefficient
in \cite[equation~(5.7)]{DR}, and
$g^{\rm DR}(1,c)=\sum_{x\bmod c}(x/c)_3\check e(x/c)$
for its unnormalized cubic Gauss sum. The fourth case of that
formula, with its integer exponent equal to zero, gives
\[
 \tau^{\rm DR}(\pm\lambda^{-3}cn^3)
 =3^{5/2}\frac{|n|}{|c|}\overline{g^{\rm DR}(1,c)}.
\]
\cite[equation~(5.16), p.~19]{DR} uses the conjugated coefficient at
$-\mu$, while \cite[equation~(5.7), p.~17]{DR} conjugates the
Gauss sum in the infinity coefficient;
hence, since $|c|=\sqrt{q_c}$,
\[
 d_0(\lambda^{-3}cn^3)
 =\overline{\tau^{\rm DR}(-\lambda^{-3}cn^3)}
 =3^{5/2}|n|\widetilde g_3(c).
\]
Finally, \cite[equations~(5.6), (5.15), (5.16)]{DR} gives the
constant term $3^{5/2}v^{2/3}/2$ for the representative
$\gamma_1$, and zero for $\gamma_{19},\gamma_{10}$, exactly
as in Section~\ref{sec:theta-definition}.

\section{Index of cited results}
\label{app:inputs-index}

In this section, we list the cited results used in Parts I--IV and
indicate their roles in the argument.
The first table indexes the cited results and deductions in Part I. The second
records how the same results are used at the new parameters in Parts II
and III. The third records the additional results and deductions in Part IV.
Each citation identifies the proved result used in that step.

\small
\begin{longtable}{@{}>{\raggedright\arraybackslash}p{0.28\textwidth}>{\raggedright\arraybackslash}p{0.67\textwidth}@{}}
\toprule Reference & Function in the argument\\\midrule
\endfirsthead
\toprule Reference & Function in the argument\\\midrule
\endhead
\bottomrule\endfoot
\cite[Proposition 2.1]{QRH} & Holomorphic continuation with a uniform exponent.\\
\cite[Lemmas 4.1--4.4]{QRH} & Ray characters, Gauss identities and reciprocity phases.\\
\cite[Lemmas 4.5--4.10]{QRH} & Smooth separation, transform decay and Hecke bounds.\\
\cite[Proposition 5.1; Lemmas 5.2--5.8]{QRH} & Cubic theta functional equation and unweighted mean square.\\
\cite[Lemmas 6.1--6.2; Proposition 6.3]{QRH} & Balanced additive mean square and $Z^{1/4+\eps}$ estimate.\\
\cite[Lemma 7.1]{QRH} & Exact scalar Euler quotient and holomorphic correction.\\
\cite[Lemmas 8.1--8.2; Proposition 8.3]{QRH} & Zero-free rectangles and simultaneous large character polynomials.\\
\cite[Lemma 9.1; Proposition 9.2]{QRH} & Sextic sieve and preliminary character count.\\
\cite[Lemmas 10.2--10.6]{QRH} & Contour tails, exponent accounting and principal residues.\\
\cite[Lemma 11.1; Proposition 11.2; Theorem 3.1]{QRH} & Height choice, first comparison and $11/12$ conclusion.\\
\cite[Lemmas 13.1--13.4]{QRH} & Fixed-ray prime asymptotics and complete correlations.\\
\cite[Corollary 14.1; Lemmas 14.2--14.3]{QRH} & Divisibility-restricted theta functional equation and mean square.\\
\cite[Lemma 15.1; Propositions 15.2--15.3]{QRH} & Modified theta and additive norms; $Z^{3/16+\eps}$.\\
\cite[Proposition 16.1; Lemma 16.2]{QRH} & Joint local errors and absolute tuple bounds.\\
\cite[Lemmas 17.1--17.3, 17.5--17.6; Corollary 17.4]{QRH} & M\"obius-weighted moment, finite induction and sixth-power amplification.\\
\cite[Lemmas 18.1--18.3]{QRH} & Unweighted fourth moment and cancellation under $X_1X_2=Y_1Y_2$.\\
\cite[Lemma 19.1; Proposition 19.2]{QRH} & Prime sum bounds and refined character counts.\\
\cite[Lemmas 20.1--20.2; Proposition 20.3]{QRH} & Final exponent inequality and uniform parameter choices.\\
\cite[Proposition 11.3]{QRH} & Transfer to Dirichlet functions.\\
\end{longtable}
\normalsize

\small
\begin{longtable}{@{}>{\raggedright\arraybackslash}p{0.31\textwidth}>{\raggedright\arraybackslash}p{0.64\textwidth}@{}}
\toprule Cited result & Use in Parts II and III\\\midrule
\endfirsthead
\toprule Cited result & Use in Parts II and III\\\midrule
\endhead
\bottomrule\endfoot
\Src{Lemma}{14.3} & General exponents from the functional equation:
Lemmas~\ref{lem:balanced-reflection} and \ref{lem:free-reflection}.\\
\Src{Proposition}{15.2} & Direct bounds with the full additive majorant:
Propositions~\ref{prop:balanced-low} and \ref{prop:free-low}.\\
\Src{Lemma}{7.1} & Extended principal Euler region:
Lemma~\ref{lem:extension-euler}.\\
\Src{Lemma}{13.1}; \Src{Lemmas}{10.2--10.6} & Normalization, residues after changing the averaging lengths
and tails: Proposition~\ref{prop:extension-principal} and
Lemma~\ref{lem:extension-ranges}.\\
\Src{Proposition}{8.3}; \Src{Lemmas}{17.1, 17.6, 18.1, 19.1} &
Fixed-$\kappa$ count with strict length inequalities:
Proposition~\ref{prop:fixed-count}.\\
\Src{Proposition}{16.1}; \Src{Lemma}{16.2} & Joint conductor deficit and
nonprincipal exponent for the new lengths: Proposition~\ref{prop:general-exponent}.\\
\Src{Lemmas}{4.5--4.10, 10.2--10.4, 11.1} & Smooth separation and combination of height
estimates: Proposition~\ref{prop:extension-assembly}.\\
Proposition~\ref{prop:criterion-summary} and
Corollary~\ref{cor:dirichlet-conclusion} & Continuation contradiction and
Dirichlet transfer at both new boundaries.\\
\end{longtable}
\normalsize

\small
\begin{longtable}{@{}>{\raggedright\arraybackslash}p{0.31\textwidth}>{\raggedright\arraybackslash}p{0.64\textwidth}@{}}
\toprule Cited result & Use in Part IV\\\midrule
\endfirsthead
\toprule Cited result & Use in Part IV\\\midrule
\endhead
\bottomrule\endfoot
\Src{Lemma}{4.9}; \cite[Section~18, proof of Lemma~18.1]{QRH} &
Prime displacement, induction on the combined range exponent, and the extended moment:
Proposition~\ref{prop:iv-plain}. The proof in the original paper is extended, rather
than its printed parameter range invoked unchanged.\\
\Src{Proposition}{8.3}; \Src{Lemmas}{17.1, 17.6, 19.1} &
The original common detector and inverse amplification, combined
with the extended unweighted moment in Proposition~\ref{prop:iv-count}.\\
\eqref{eq:balanced-extension-exponent} and the exact Bernstein
calculation of Section~\ref{sec:iv-certificate} &
Critical length parameters, the universal endpoint inequality, and strict
auxiliary inequalities: Lemmas~\ref{lem:iv-geometry}, \ref{lem:iv-endpoint},
and \ref{lem:iv-reserves}.\\
Propositions~\ref{prop:balanced-low}, \ref{prop:extension-principal},
\ref{prop:general-exponent}; Lemma~\ref{lem:extension-euler} &
Direct and transformed representations for the new lengths, principal residues,
and every norm range in Proposition~\ref{prop:iv-bootstrap}.\\
Proposition~\ref{prop:criterion-summary};
Corollary~\ref{cor:dirichlet-conclusion} &
One-step continuation contradiction, iteration of the zero-free bounds, and
Dirichlet transfer in Theorem~\ref{thm:iv-limit}.\\
\end{longtable}

\end{document}